\newcommand{\edit}[1]{{\textcolor{blue}{#1}}}
\newcommand{\xmark}{\ding{55}}
\newcolumntype{M}[1]{>{\centering\arraybackslash}m{#1}}
\newcolumntype{N}{@{}m{0pt}@{}}
\newcommand\dataset[1]{\textsc{\texttt{#1}}}
\DeclareTextFontCommand{\textbfit}{\bfseries\itshape}
\newtheorem{theorem}{Theorem}[section]
\newtheorem{prop}[theorem]{Proposition}
\newtheorem{definition}[theorem]{Definition}
\newtheorem{lemma}[theorem]{Lemma}
\newtheorem{corollary}[theorem]{Corollary}
\theoremstyle{definition}
\newtheorem{remark}[theorem]{Remark}
\newtheorem{example}[theorem]{Example}
\def\liminf{\mathop{\rm lim\,inf}\limits}
\def\R{\mathbb{R}}
\def\E{\mathbb{E}}
\def\P{\mathbb{P}}
\def\eps{\varepsilon}
\def\x{\mathbf{x}}
\def\y{\mathbf{y}}
\def\U{\mathbf{U}}
\def\D{\mathbf{D}}
\def\param{\boldsymbol{\theta}}
\def\Param{\boldsymbol{\Theta}}
\definecolor{hancolor}{rgb}{0.1, 0.0, 0.9}
\DeclareMathOperator{\Out}{\texttt{Out}}
\DeclareMathOperator*{\argmin}{arg\,min}
\DeclareMathOperator{\surr}{\textup{\texttt{Srg}}}
\newcommand{\tr}{\textup{tr}}
\newlength\myindent
\newenvironment{customassumption}[1]
{\innercustomassumption}
{\endinnercustomassumption}
\theoremstyle{definition}
\newcommand{\addresseshere}{%
	\enddoc@text\let\enddoc@text\relax
}
\begin{document}
	
	\title[Stochastic Reguiarized Majorization-Minimization]{Stochastic Regularized Majorization-Minimization \\ with weakly convex and multi-convex surrogates}

	\author{Hanbaek Lyu}
	\address{Hanbaek Lyu, Department of Mathematics, University of Wisconsin - Madison, WI 53706, USA}
	\email{\texttt{hlyu@math.wisc.edu}}
	\thanks{Codes are available at \url{https://github.com/HanbaekLyu/SRMM}}



	\begin{abstract}
		Stochastic majorization-minimization (SMM) is a class of stochastic optimization algorithms that proceed by sampling new data points and minimizing a  recursive average of surrogate functions of an objective function. The surrogates are required to be strongly convex and convergence rate analysis for the general non-convex setting was not available.  In this paper, we propose an extension of SMM where surrogates are allowed to be only weakly convex or block multi-convex, and the averaged surrogates are approximately minimized with proximal regularization or block-minimized within diminishing radii, respectively. For the general nonconvex constrained setting with non-i.i.d. data samples,  we show that the first-order optimality gap of the proposed algorithm decays at the rate $O((\log n)^{1+\eps}/n^{1/2})$ for the empirical loss and $O((\log n)^{1+\eps}/n^{1/4})$ for the expected loss, where $n$ denotes the number of data samples processed. Under some additional assumption, the latter convergence rate can be improved to $O((\log n)^{1+\eps}/n^{1/2})$. As a corollary, we obtain the first convergence rate bounds for various optimization methods under general nonconvex dependent data setting: Double-averaging projected gradient descent and its generalizations, proximal point empirical risk minimization,  and online matrix/tensor decomposition algorithms. We also provide experimental validation of our results. 
	\end{abstract}
	
	${}$
	\vspace{-0.5cm}
	${}$
	\maketitle

	\tableofcontents

	\section{Introduction}
	\label{Introduction}
	
	\textit{Empirical loss minimization} is a classical problem setting regarding parameter estimation with a growing number of observations,  where one seeks to minimize a recursively defined empirical loss function as new data arrives. Some of its well-known applications include maximum likelihood estimation, or more generally, $M$-estimation \cite{geyer1994asymptotics, geer2000empirical, stefanski2002calculus}, as well as the online dictionary learning literature \cite{mairal2010online, mairal2013stochastic, mensch2017stochastic, lyu2020online}. On the other hand, the \textit{expected loss minimization} seeks to estimate a parameter by minimizing the loss function with respect to random data. It provides a general framework for stochastic optimization literature \cite{schneider2007stochastic, marti2005stochastic, bottou2008tradeoffs, nemirovski2009robust}. Optimization algorithms for empirical or expected loss minimization are in nature `online',  meaning that sampling new data points and adjusting the current estimation occurs recursively. Such online algorithms have proven to be particularly efficient in large-scale problems in statistics, optimization, and machine learning \cite{bottou1998online, duchi2009efficient, ghadimi2013stochastic, kingma2014adam}.

	First-order methods for expected loss minimization include (projected or proximal) \textit{Stochastic Gradient Descent} (SGD)  have been extensively studied for many decades, which usually consist of stochastically estimating the full gradient of the expected loss function and performing a gradient descent step followed by a projection onto the parameter space. For a general constrained and nonconvex (weakly convex) setting,  global convergence to stationary (first-order optimal) points of such methods was recently established in \cite{davis2019stochastic} with a rate of convergence  $O(\log n/\sqrt{n})$, where $n$ denotes the number of iterations (processed samples). This result assumes arbitrary initialization, although faster convergence with special initialization is known in a matrix factorization setting \cite{wang2017unified}. Throughout this paper, we are concerned with the convergence of online algorithms with arbitrary initialization, which is often referred to as a `global convergence' in the literature. 
	
	On the other hand, \textit{Stochastic Majorization-Minimization} (SMM) is one of the most popular approaches that directly solve empirical loss minimization 
	\cite{mairal2013stochastic} by sampling data points from a target data distribution and minimizing a recursively defined majorizing surrogate of the empirical loss function. This method is an online extension of the classical majorization-minimization \cite{lange2000optimization} principle and encompasses Expectation-Minimization in statistics \cite{neal1998view, cappe2009line}, and also generalizes the celebrated online matrix factorization algorithm in \cite{mairal2010online}. While it was observed empirically that SMM shows competitive performance while requiring less parameter tuning than SGD for online dictionary learning problems \cite{mairal2010online}, the theoretical convergence guarantee of SMM in the literature only ensures asymptotic convergence to stationary points and lacks any convergence rate bounds \cite{mairal2010online, mairal2013stochastic, mairal2013optimization, mensch2017stochastic}. 
	
	Most theoretical analyses on online optimization algorithms assume the ability to sample i.i.d. data points from the target distribution. This is a classical and convenient assumption for analysis, but it is violated frequently in practice, especially when the samples are accessed through Markov chain-based methods. A common practice is to first sample a single Markov chain trajectory of dependent data samples (after some burn-in period), which is then thinned (subsampled) to reduce the dependence. However, such practice only weakens the dependence between data samples and does make it truly independent in general. One may re-initialize independent Markov chains for every sample to attain true independence, but this approach suffers from a huge computational burden. Optimization algorithms with Markovian data samples were studied in \cite{johansson2007simple, johansson2010randomized} in the context of distributed optimization in networks. More recently, it was shown in \cite{sun2018markov}  that arbitrarily initialized SGD almost surely converges to critical points of unconstrained nonconvex objectives at rate $O( (\log n)^{2}/\sqrt{n} )$, even when the data samples have a Markovian dependence. Block Coordinate Descent with Markovian coordinate selection was also studied recently in \cite{sun2020markov}. 
	
		
		On the other hand, in \cite{lyu2020online}, the online matrix factorization algorithm in \cite{mairal2010online} based on SMM is extended and shown to converge to stationary points for constrained matrix factorization problems in the Markovian setting. Based on the result and combined with an MCMC network sampling algorithm in \cite{lyu2023sampling}, an online dictionary learning algorithm for learning latent motifs in networks is proposed in \cite{lyu2020learning}. More recently, an online algorithm for nonnegative tensor dictionary learning utilizing CANDECOMP/PARAFAC (CP) decomposition is developed in \cite{lyu2020online_CP}, where convergence to stationary points under the Markovian setting is established. The algorithm is based on SMM but uses new components such as block coordinate descent with diminishing radius \cite{lyu2020convergence}. Although the works \cite{lyu2020online,lyu2020online_CP} are the first to establish global convergence of SMM-type methods to stationary points in the Markovian setting, there has not been a convergence rate bounds, which is still the case in the i.i.d. setting. A rate of convergence result is important also in practice since it provides bounds on the number of iterations and data samples sufficient to guarantee a to obtain an approximate solution up to the desired precision.

		Our goal in this paper is twofold. First, we generalize the framework of SMM so that not only strongly convex surrogates can be used, but also the more general class of weakly convex or multi-convex surrogates can be used with suitable regularization. Second, we intend to provide the missing convergence rate analysis of SMM for general constrained nonconvex objectives, both in the context of empirical and expected loss minimization. 
		We call our algorithm \textit{stochastic regularized majorization-minimization} (SRMM), which generalizes the original SMM  \cite{mairal2013optimization}, online matrix factorization  \cite{mairal2010online,lyu2020online}, stochastic approximate majorization-minimization and subsampled online matrix factorization  \cite{mensch2017stochastic}, and the online CP-dictionary learning algorithm  \cite{lyu2020online_CP}.

		\subsection{Contribution}

		Our algorithm and analysis of SMM consider three cases: 1) Strongly convex surrogates without regularization; 2) Multi-convex surrogates with block coordinate descent with diminishing radius; 3) Strongly convex surrogates with proximal regularization. A concise summary of our results is given in Table \ref{table:datasets_big} and in the following bullet points.
		
		
		\begin{description}
			\item[SMM with strongly convex surrogates] 
			\item[\qquad $\bullet$] Rate of convergence of classical SMM with strongly convex surrogates and i.i.d. data samples is established: $O((\log n)^{1+\eps} n^{-1/2})$ for empirical loss and $O((\log n)^{1+\eps} n^{-1/4})$ for the expected loss. 
			
			\item[\qquad $\bullet$] The results also hold against inexact computation of surrogates, inexact surrogate minimization, and hidden Markovian dependence in data samples.
			
			\item[SMM with multi-convex surrogates]
			\item[\qquad $\bullet$] SMM with block multi-convex surrogates is developed, where averaged surrogates are approximately block-minimized within a diminishing radius, either with cyclically or randomly chosen block coordinates.
			\item[\qquad $\bullet$] Coordinate descent for surrogate minimization reduces computational cost in high dimension. 
			\item[\qquad $\bullet$] Same convergence rate results as in the previous case hold.
			
			\item[SMM with weakly convex surrogates] 
			\item[\qquad $\bullet$] SMM with weakly convex surrogates and proximal regularization is developed. In particular, only $L$-smooth surrogates can be used without verifying convexity. 
			\item[\qquad $\bullet$] Proximal regularization improves the conditioning of surrogate minimization without changing the surrogates. 
			\item[\qquad $\bullet$] Same convergence rate results as in the previous case hold. 
		\end{description}

		{\small
			\begin{table}[htbp]
				\centering
				\begin{tabular}{ccc||ccccc}
					\hline 
					Surrogates & $\begin{matrix} \textup{Coordinate} \\ \textup{Decent}  \end{matrix}$ 	&$\begin{matrix} \textup{Regularization}  \end{matrix}$ &  $\begin{matrix} \textup{Asymptotic} \\ \textup{Convergence} \end{matrix}$ & $\begin{matrix} \textup{Rate of} \\ \textup{Convergence} \end{matrix}$ & $\begin{matrix} \textup{Iteration} \\ \textup{Complexity} \end{matrix}$  & $\begin{matrix} \textup{Data} \\ \textup{Sampling} \end{matrix}$
					\\
					\hline
					Strongly convex & Full &  None &\checkmark &  \checkmark &  \checkmark  & \textup{Markovian}
					\\[3pt] 
					Multi-convex  & Block  &  $\begin{matrix}  \textup{Diminishing} \\ \textup{Radius} \end{matrix}$ & \checkmark &  \checkmark &  \checkmark  & \textup{Markovian}
					\\[10pt] 
					$\begin{matrix} \textup{Weakly convex} \end{matrix}$ &Full &  Proximal    & \xmark &  \checkmark &  \checkmark & \textup{i.i.d.} \\
					\hline
				\end{tabular}%
				\caption{Overview of theoretical results depending on the types of surrogates, coordinate descent, regularization, and data sampling. For the weakly convex case, asymptotic convergence to stationary points is only subsequential. See Theorem \ref{thm:global_convergence}. } 
				\label{table:datasets_big}
			\end{table}
		}

		\vspace{0.2cm}
		In a unified manner, we provide an extensive convergence analysis on the proposed algorithm in all three cases mentioned above, which we derive under possibly dependent data streams, relaxing the standard i.i.d. assumption on data samples. We obtain global convergence to stationary points of rate $O((\log n)^{1+\eps}/n^{1/2})$, matching the optimal convergence rates for SGD-based methods \cite{sun2018markov,davis2019stochastic}, where $\eps>0$ is an arbitrary constant. Interestingly, our analysis shows that SRMM (and hence SMM) is more adapted to solve empirical loss minimization than expected loss minimization, in the sense that the aforementioned rate of convergence holds for the empirical loss functions, but for the empirical loss function, a slower rate of $O((\log n)^{1+\eps}/n^{1/4})$ is obtained. This is the opposite of SGD-based methods, which converges almost surely for the expected loss at rate $O((\log n)^{2}/n^{1/2})$ and rate $O((\log n)^{1+\eps}/n^{1/4})$ for the empirical loss. However, we also show that the optimal almost sure convergence rate $O((\log n)^{1+\eps}/n^{1/2})$ is obtained when the stationary points of the surrogate functions are in the interior of the constraint set. See Theorems \ref{thm:global_convergence}, \ref{thm:rate_surrogate_gaps}, and \ref{thm:rate_stationarity} for the full statements. 
		
		Our general framework of SRMM can be specialized to various stochastic optimization algorithms. The examples include 
		\begin{description}
			\item{$\bullet$} Double-averaging projected gradient descent \cite{nesterov2015quasi} and its generalizations (Sec. \ref{sec:double_avg}) 
			
			\item{$\bullet$} Proximal point empirial loss minimization  \cite{frostig2015regularizing, lin2015accelerated} (Sec. \ref{sec:PPERM}) 
			
			\item{$\bullet$} Online (Nonnegative) Matrix Factorization \cite{mairal2010online,lyu2020online} (Sec. \ref{subsection:OMF}) 
			
			\item{$\bullet$} Subsampled Online (Nonnegative) Matrix Factorization \cite{mensch2017stochastic} (Sec. \ref{sec:SOMF}) 
			
			\item{$\bullet$} Online Nonnegative CP-dictionary learning \cite{lyu2020online_CP}  (Sec. \ref{subsection:OCPDL})
		\end{description}	
		As an immediate corollary, our general results yield the first convergence rate bounds for the above algorithms in the general setting with nonconvex objectives with constraints and possibly non-i.i.d. data samples. For all of the above algorithms, there have not been any convergence rate results for nonconvex objectives even in the special case of i.i.d. data samples. For the first two examples above, even asymptotic convergence to stationary points for nonconvex objectives was not known. 
		
		In Section \ref{sec:experiments}, we also experimentally validate the efficacy of SRMM for two tasks: Network Dictionary Learning \cite{lyu2021learning}  and image classification with deep convolutional neural networks for the CIFAR-10 dataset \cite{krizhevsky2009learning}.

		\subsection{Related work}

		In \cite{mairal2013stochastic}, convergence analysis of SMM with i.i.d. data samples are given. There, it is shown that when the objective function $f$ is convex, SMM converges to the global minimum with a rate of convergence in expectation of order $O(\log n/\sqrt{n})$ (see \cite[Prop 3.1]{mairal2013stochastic}) and of order $O(1/n)$ when $f$ is strongly convex (see \cite[Prop 3.2]{mairal2013stochastic}). Also, when $f$ is nonconvex, SMM is shown to converge almost surely to the set of stationary points of $f$ over a convex constraint set (see \cite[Prop 3.4]{mairal2013stochastic}) but no result on the rate of convergence is given. The latter nonconvex result was later extended to SMM with approximate surrogate functions in \cite{mensch2017stochastic}, which was applied to developing subsampled online matrix factorization algorithms. 
		
		In \cite{lyu2020online}, the classical online matrix factorization algorithm in \cite{mairal2010online} is shown to converge when the input data matrices are given by a function of some underlying Markov chain. A similar almost sure global convergence in the Markovian setting for online tensor factorization is obtained in \cite{lyu2020online_CP}. In both works, bounds on the rate of convergence are not given.  In \cite{lyu2020convergence}, block coordinate descent with diminishing radius is proposed for deterministic nonconvex problems and shown to converge to the stationary points of the objective function. Also, a rate of convergence of order $O(\log n/\sqrt{n})$ is obtained. 
		
		Stochastic Gradient Descent (SGD) is another popular method for various optimization problems. In \cite{sun2018markov}, a convergence of SGD under the Markovian data assumption is obtained. For the convex case, \cite[Thm. 1]{sun2018markov} shows the convergence of SGD to a global minimum with the rate of convergence of order $O((\log n)^{1+\eps}/\sqrt{n} )$ for each fixed $\eps>0$ . Moreover, \cite[Thm. 2]{sun2018markov} shows that SGD for unconstrained nonconvex optimization converges almost surely to the stationary points with rate $O((\log n)^{1+\eps}/\sqrt{n} )$ for each fixed $\eps>0$. A similar rate of convergence for nonconvex and constrained projected SGD is also known in \cite{davis2019stochastic}. In \cite{zhao2017online}, an online NMF algorithm based on projected SGD with general divergence in place of the squared $\ell_{2}$-loss is proposed, and convergence to stationary points to the expected loss function for i.i.d. data samples is shown.
		
		One of the key features of SMM-type algorithms is that the chosen majorizing surrogates in each iteration are recursively averaged. However, MM-type algorithms without such recursive averaging have also been investigated extensively. For instance, for convex objectives, an iteration complexity of $O(\eps^{-1})$ is known for block successive upper-bound minimization (BSUM) algorithm \cite{hong2017iteration}.

		\subsection{Organization}
		
		In Section \ref{section:statement}, we state the problem settings of empirical and expected loss minimization and introduce background on majorization-minimization (MM) and stochastic MM (SMM). We also introduce the proposed method of stochastic regularized MM (SRMM) at a high level.  In Section \ref{section:algorithm}, we state our SRMM algorithm in Algorithm \ref{algorithm:SMM}. Next in Section \ref{section:results}, we state our main results (Theorems \ref{thm:global_convergence}, \ref{thm:rate_surrogate_gaps}, and \ref{thm:rate_stationarity}) and their corollaries (Corollaries \ref{cor:unconstrained} and \ref{cor:iteration_complexity}). 
		In Section \ref{section:applications}, we discuss applications of our general framework on various special instances of SRMM -- double averaged PSGD and its generalizations, (subsampled) online matrix factorization, and online CP-dictionary learning. In Section \ref{sec:experiments}, we provide numerical experiments of SRMM on network dictionary learning and image classification using deep convolutional neural networks. 
		
		The rest of the sections are devoted to convergence analysis for SRMM. In Section \ref{section:preliminary} we establish some preliminary lemmas. Following in Section \ref{section:key_lemmas}, we state five key lemmas (Lemmas \ref{lem:f_n_concentration_L1_gen}, \ref{lem:pos_variation}, \ref{lem:gradient_finite_sum},  \ref{lem:asymptotic_stationarity}, and \ref{lem:finite_variation_surr}) and derive all main results stated in Section \ref{section:results} from them.  The first three key lemmas will be proved in Section \ref{section:key_lemma_pf1}, whereas the other two key lemmas will be established in Sections \ref{section:key_lemma_pf2} and \ref{sec:lem_asymptotic_stationarity}. 
		
		Lastly, we give some backgrounds on Markov chains and Markov chain Monte Carlo (MCMC) sampling in Appendix \ref{sec:MC_intro}, provide examples of various surrogate functions in Appendix \ref{sec:ex_surrogates},  and give some auxiliary lemmas in Appendix \ref{sec:auxiliary_lemmas}.

		\subsection{Notation}
		\label{subsection:notation}
		In this paper, $\R^{p}$ denote the ambient space for the parameter space $\Param$ equipped with standard dot product $\langle \cdot,\cdot \rangle$ and the induced Euclidean norm $\lVert \cdot \rVert$. We denote $\mathbf{1}(A)$ the indicator function of event $A$, which takes value $1$ on $A$ and $0$ on $A^{c}$.  For each $J\subseteq \{1,\dots,p\}$, we denote $\R^{J}$ by the $|J|$-dimensional subspace of $\R^{p}$ generated by the coordinates in $J$. We identify $\R^{\{1,\dots,p\}}$ and $\R^{p}$. We call a subset $\mathbb{J} \subseteq 2^{\{ 1,\dots,p \}} \setminus \{ \emptyset \}$ a set of  \textit{coordinate blocks} if $\bigcup \mathbb{J}=\{1,\dots,p\}$. In this case, each element $J\in \mathbb{J}$ is called a \textit{coordinate block} (or \textit{block coordinate}). Note that two distinct coordinate blocks do not need to be disjoint. For $\Param\subseteq \R^{p}$, $\param\in \R^{p}$, and a block coordinate $J\subseteq \{1,\dots, p\}$, denote $\Param^{J}:=\textup{Proj}_{\R^{J}}(\Param)$ and $\param^{J}:=\textup{Proj}_{\R^{J}}(\param)$.

		\section{Problem statement}\label{section:statement}
		
		\subsection{Empirical Loss Minimization}

		Suppose we have a loss function $\ell:\mathfrak{X}\times \Param\rightarrow  [0,\infty)$ that measures the fitness of a parameter $\param\in \Param \subseteq \R^{p}$  with respect to an observed data $\x\in \mathfrak{X}$. Consider a sequence of newly observed data $(\x_{n})_{n\ge 1}$ in $\mathfrak{X}$, where $\mathfrak{X}$ can be a general topological space. We would like to estimate a sequence of parameters $(\param_{n})_{n\ge 1}$ such that $\param_{n}$ is in some sense the best fit to all data $\x_{1},\cdots,\x_{n}$ up to time $n$. In the \textit{empirical loss minimization} framework, one seeks to minimize a recursively defined empirical loss function as a new data arrives:
		\begin{align}\label{eq:def_ELM}
			\hspace{-1cm} \textup{Upon arrival of $\x_{n}$:}\quad \param_{n}	\in \argmin_{ \param \in \Param} \big( 	\bar{f}_{n}( \param ) := (1- w_{n} ) \bar{f}_{n-1}( \param ) + w_{n} \, \ell(\x_{n}, \param ) \big), 
		\end{align}
		where $(w_{n})_{n\ge 1}$ is a sequence of \textit{adaptivity weights} in $(0,1]$ and the function $\bar{f}_{n}$ is the \textit{empirical loss function} recursively defined by the weighted average as in \eqref{eq:def_ELM} with $f_{0}\equiv 0$. More explicitly, we can write 
		\begin{align}\label{eq:ELF_closed_form}
			\bar{f}_{n}(\param) = \sum_{k=1}^{n} \ell(\x_{k},\param) w^{n}_{k}, \qquad w^{n}_{k}:=w_{k}\prod_{i=k+1}^{n}(1-w_{i}).
		\end{align}

		The adaptivity weight $w_{n}$ in \eqref{eq:def_ELM} controls how much we want our new estimate $\param_{n}$ deviate from minimizing the previous empirical loss $\bar{f}_{n-1}$ to adapting to the newly observed tensor data $\x_{n}$. In the extreme case of $w_{n}\equiv 1$, $\param_{n}$ is a minimizer of the time-$n$ loss $\ell(\x_{n},\cdot)$ and ignores the past $\bar{f}_{n-1}$. If $w_{n}\equiv \alpha\in (0,1)$ then the history is forgotten exponentially fast, that is, $\bar{f}_{n}(\cdot) = \sum_{k=1}^{n} \alpha(1-\alpha)^{n-k}\, \ell(\x_{k},\cdot)$. On the other hand, the `balanced weight' $w_{n}=1/n$ makes the empirical loss be the arithmetic mean: $\bar{f}_{n}(\cdot) = \frac{1}{n}\sum_{k=1}^{n}  \ell(\x_{s},\cdot)$, which is the canonical choice in the literature including maximum likelihood estimation and online NMF problem in \cite{mairal2010online}. Hence, one may choose the sequence of weights $(w_{n})_{n\ge 1}$ in \eqref{eq:def_ELM} to decay fast for learning average features and decay slow (or keep it constant) for learning trending features). See Figure \ref{fig:ERM_adaptivity} for illustration.

		\begin{figure}[h]
			\centering\hspace{-0.4cm}
			\includegraphics[width=0.75 \linewidth]{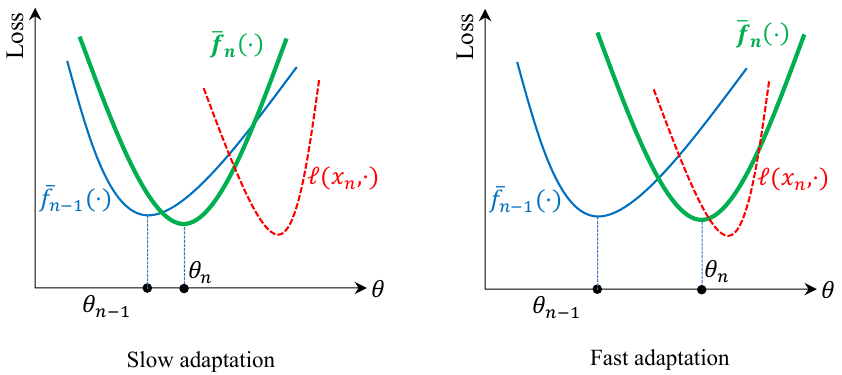}
			\caption{ Schematic plot of empirical loss $\bar{f}_{n}(\cdot)=(1-w_{n})\bar{f}_{n-1}(\cdot) + w_{n}  \ell(\x_{n}, \cdot)$ when the adaptivity weight $w_{n}$ is small (`slow adaptation' regime) or large (`fast adaptation' regime).}
			\label{fig:ERM_adaptivity}
		\end{figure}

		For instance, consider the linear regression problem where $\ell(\x,\param) = \lVert \x - \mathbf{D} \param \rVert^{2}$, where $\x\in \R^{q}$ and $\mathbf{D}\in \R^{q\times p}$ is a fixed matrix of basis features. Then by writing $\ell(\x,\param) = \param^{T} \D^{T}\D \param - 2 \param^{T}\D^{T}\x + \x^{T}\x$,  it is easy to see that 
		\begin{align}\label{eq:f_bar_LR}
			\bar{f}_{n}(\param) =  \lVert \bar{\x}_{n} - \D \param \rVert^{2} + C_{n},
		\end{align}
		where $\bar{\x}_{n}:=\sum_{k=1}^{n} \x_{k}w^{n}_{k}$ and $C_{n}$ depends only on $\x_{1},\dots,\x_{n}$. Hence $\param_{n}$ in \eqref{eq:def_ELM} in this case is the vector of best linear regression coefficients that fits the basis $\D$ to the averaged data point $\bar{\x}_{n}$. 
		
		\subsection{Expected Loss Minimization} 
		
		Instead of fitting the model underlying the loss function $\ell$ to a sequence of data points $(\x_{n})_{n\ge 1}$, consider fitting it to a single but random data point $\x$ following some probability distribution $\pi$. Then we would be seeking a single estimate $\param^{*}$ that minimizes the expected loss function $f$ as below: 
		\begin{align}\label{eq:expected_loss_minimization}
			\param^{*} \in \argmin_{\param\in \Param} \left( f(\param):= \E_{\x\sim \pi}\left[ \ell(\x,\param)  \right] \right).
		\end{align} 
		We call the above problem setting \textit{expected loss minimization}. This is a popular setting in the optimization literature for stochastic programs. A popular optimization algorithm for  solving \eqref{eq:expected_loss_minimization} is  \textit{projected stochastic gradient descent}, which proceeds by first drawing a sample $\x_{n}\sim \pi$, estimating the full gradient $\nabla f(\param_{n})$ by  the stochastic gradient $\nabla \ell(\x_{n},\param_{n-1})$, and then updating $\param_{n}\leftarrow \textup{Proj}_{\Param}(\param_{n-1} - \alpha_{n} \nabla \ell(\x_{n},\param_{n-1}))$, where $\alpha_{n}$ is a chosen stepsize and $\textup{Proj}_{\Param}$ is the projection operator onto the parameter space $\Param$.

		In the context of linear regression we discussed before, the expected loss function $f$ becomes 
		\begin{align}\label{eq:f_LR}
			f(\param) =  \lVert \E_{\x\sim \pi}[\x] - \D \param \rVert^{2} + C_{n}',
		\end{align}
		where $C_{n}'$ does not depend on $\param$ and $\D$. By comparing the empirical \eqref{eq:f_bar_LR}  and the expected \eqref{eq:f_LR} loss functions for linear regression, one can see that the two problem settings are asymptotically equivalent when $\bar{\x}_{n}\rightarrow \E_{\x\sim \pi}[\x]$ as the sample size $n$ for the empirical loss minimization tends to infinity. This is certainly true when $\x_{k}$'s are i.i.d. from $\pi$ and $w_{k}=1/k$ by the strong law of large numbers. This holds true in a more general setting where $\x_{n}$ form a Markov chain with stationary distribution $\pi$ and the weights $w_{n}$ are non-increasing and decay sufficiently faster than $1/\sqrt{n}$ (see Lemma \ref{lem:uniform_convergence_asymmetric_weights}).



		
		\subsection{Stochastic Regularized Majorization-Minimization}

		Majorization-minimization (MM) is a large class of classical approaches for solving nonconvex optimization problems (see \cite{lange2000optimization} for a recent review) that includes classical gradient descent algorithms as well as expectation-maximization (EM) for solving maximum likelihood estimate (MLE) problems in statistics \cite{cappe2009line, neal1998view}. A key idea is local convex relaxation. For an illustration, suppose we would like to minimize a  differentiable function $f$ with $L$-Lipschitz gradient (i.e., $L$-smooth) and the current estimate is $\param_{n-1}$. In order to compute the new estimate $\param_{n}$, instead of directly minimizing $f$ near $\param_{n-1}$, we may minimize its  quadratic expansion
		\begin{align}
			g_{n}(\param):\param \mapsto f(\param_{n-1}) + \langle \nabla f (\param_{n-1}),\, \param-\param_{n-1} \rangle + \frac{1}{2\eta } \lVert \param - \param_{n-1} \rVert^{2},
		\end{align}
		where $\eta>0$ is a parameter. It is easy to see that $g_{n}$ is minimized at $\param_{n}:=\param_{n-1} - \eta \nabla f(\param_{n-1})$, yielding the classical gradient descent update with stepsize $\eta$. In fact, if one chooses $\eta\le L$, then $g_{n}$ majorizes $f$ (i.e., $g_{n} \ge f$, see Lemma \ref{lem:surrogate_L_gradient}) and the resulting gradient descent algorithm converges to the global minimum at rate $O(1/k)$ when $f$ is convex (see, e.g., \cite{bottou2010large}). In general, MM algorithms proceed by iteratively minimizing a majorizing surrogate (such as $g_{n}$ above) of the objective function $f$.

		Next, consider using a similar idea for solving the ELM problem \eqref{eq:def_ELM}. There, instead of a single objective function $f$, we have a sequence of empirical loss functions $\bar{f}_{n}$ to be minimized online. It would be natural to first find a majorizing surrogate $g_{n}$ of the one-point loss function $f_{n}(\cdot):=\ell(\x_{n},\cdot)$ at the current estimate $\param_{n}$ and then undergo the same recursive averaging step as in \eqref{eq:def_ELM} to compute an averaged surrogate $\bar{g}_{n}$ so that $\bar{g}_{n}$ is a majorizing surrogate of the current objective $\bar{f}_{n}$. We can then minimize $\bar{g}_{n}$ to compute the next estimate $\param_{n}$. This method is called the \textit{stochastic marjorization-minimization} (SMM) \cite{mairal2013stochastic}, which we state concisely as follow:
		\begin{align}\label{eq:def_SMM}
			\hspace{-1cm} (\textbf{\textup{SMM}}) \quad \textup{Upon arrival of $\x_{n}$:}\quad 
			\begin{cases}
				\textup{$g_{n}\leftarrow$ Strongly convex majorizing surrogate of $f_{n}(\cdot)=\ell(\x_{n},\cdot)$;} \\
				\param_{n}	\in \argmin_{ \param \in \Param} \big( 	\bar{g}_{n}( \param ) := (1- w_{n} ) \bar{g}_{n-1}( \param ) + w_{n} \, g_{n}(\param) \big).
			\end{cases}
		\end{align}
		SMM  has been successful in a variety of settings including online versions of dictionary learning, matrix factorization, proximal gradient, and DC programming (see \cite{mairal2010online,mairal2013optimization,mensch2017stochastic}).

		A premise of SMM is that $\bar{g}_{t}$ is strongly convex so that it is easy to find a unique minimizer, which is the case for online matrix factorization problems \cite{mairal2010online,mairal2013optimization}. When the surrogate $\bar{g}_{n}$ is only block multi-convex, that is, $\bar{g}_{n}:\Param=\Theta^{(1)}\times \dots \times \Theta^{(m)}\rightarrow [0,\infty)$ and it is convex in each of the $m$ blocks, then finding a global minimum of $\bar{g}_{n}$ is not easy, if not impossible. Moreover, even when $\bar{g}_{n}$ is convex, one can still exploit its block multi-convex structure and use a more efficient coordinate descent method to minimize $\bar{g}_{n}$. After all, it may be enough to minimize $\bar{g}_{n}$  only approximately at each $n$ using a cheap coordinate descent method since we are interested in solving an online problem. 
		
		To this end, we propose the following generalization of SMM, where the surrogate $g_{n}$ functions can only be block-convex (not necessarily jointly convex) and the averaged surrogate  $\bar{g}_{n}$ can only be approximately minimized (e.g., by using a single round of block coordinate descent) within a trust region. We call our algorithm \textit{Stochastic Block Marjorization-Minimizaiton} (SRMM):
		\begin{align}\label{eq:def_SRMM}
			&\hspace{-1cm} (\textbf{\textup{SRMM}})  \quad \textup{Upon arrival of $\x_{n}$:} \\
			&\qquad \quad  
			\begin{cases}
				\textup{$g_{n}\leftarrow$ (Weakly convex or multi-convex) Majorizing surrogate of $f_{n}(\cdot)=\ell(\x_{n},\cdot)$;} \\
				\param_{n}	\approx \argmin_{ \param \in \Param} \big( 	\bar{g}_{n}( \param ) := (1- w_{n} ) \bar{g}_{n-1}( \param ) + w_{n} \, g_{n}(\param) \big) + \Psi_{n}( \lVert \param-\param_{n-1} \rVert),
				\qquad\hspace{0.15cm}
			\end{cases}
		\end{align}
		where $\Psi_{n}(\cdot)$ is a regularizer that penalizes having a large value $\lVert \param_{n}-\param_{n-1} \rVert$ of paramter change.

		A motivating example for SRMM is the recent work on online nonnegative  CP\footnote{CANDECOMP/PARAFAC for tensor decomposition}-dictionary learning in  \cite{lyu2020online_CP}, which includes online nonnegative tensor CP-decomposition as a special case. There, the variational surrogate is only convex in each of the $n$ loading matrices. The method developed in \cite{lyu2020online_CP} in order to handle a similar issue for a tensor factorization setting is that, at each round, we only approximately minimize the surrogate $\bar{g}_{n}$ by a single round of cyclic block coordinate descent (BCD) in the $n$ loading matrices. This additional layer of relaxation causes a number of technical difficulties in convergence analysis. One of the key innovations used to ensure convergence of the algorithm is the use of ``search radius restriction'' developed in \cite{lyu2020convergence}, which can be regarded as a trust region method \cite{yuan2015recent, conn2000trust} with diminishing radius. In this paper, we generalize and significantly improve this approach and analysis in \cite{lyu2020online_CP}. Most importantly, we will establish a rate of convergence of SRMM in the general nonconvex, constrained, and Markovian data setting. As a corollary, we obtain a rate of convergence results for the classical SMM in the Markovian data case, which has not been available even in the i.i.d. data case. 

	\section{Statement of the algorithm}\label{section:algorithm}

	To state the main algorithm, we first define the class of surrogate functions we use in this paper. Recall the notations in Subsection \ref{subsection:notation}.  Next, we define 

	\begin{definition}[First-order  $\rho$-multi-convex Surrogates]\label{def:block_surrogate}
		\normalfont
		Fix a set $\mathbb{J}$ of coordinate blocks and parameters $L>0$, $\rho\in \R$, and $\eps\ge 0$. A function $g:\R^{p}\rightarrow \R$ is a (first-order) \textit{$\rho$-multi-convex $\eps$-approximate surrogate} of $f:\R^{p}\rightarrow\R$ at $\param^{*}\in \R^{p}$ on block coordinates in $\mathbb{J}$ if the following hold: 
		\begin{description}
			\item[(i)] ($\eps$-majorization) $g(\param)+\eps \ge f(\param) $ for all $\param\in \R^{p}$; 
			
			\item[(ii)] (Smoothness of error) The approximation error $h:=g-f$ is differentiable and $\nabla h$ is $L$-Lipschitz continuous. Furthermore, $h(\param^{*}) \le \eps$ and $\lVert \nabla h^{*}(\param)\lVert \le \eps$. 
			
			\item[(iii)] (Block $\rho$-multi-convexity) For each  $J\in \mathbb{J}$, $\param\mapsto g(\param)-\frac{\rho}{2} \lVert \param \rVert^{2} $ is convex on $\R^{J}$. (We allow $\rho<0$.)
		\end{description}
		Note that the parameter $\rho$ is not necessarily positive and we allow it to be any value in $\R$. Then \textbf{(iii)} states that in each block $J$,  the function $g$ is \textit{$\rho$-strongly convex} If $\rho>0$; convex if $\rho=0$; \textit{$(-\rho)$-weakly convex} if $\rho<0$. To conveniently refer to all these cases, we call a function $g$ \textit{$\rho$-convex} for each $\rho\in \R$ if the function $\param\mapsto g(\param)-\frac{\rho}{2}\lVert \param \rVert^{2}$ is convex. If this holds on $\R^{J}$ for each coordinate block $J\in \mathbb{J}$, we call $g$ \textit{$\rho$-multi-convex}.  Denote  $\surr_{L,\rho}^{\mathbb{J}}(f, \param^{*},\eps)$ for the set of all $\rho$-multi-convex $\eps$-approximate surrogates of $f$ at $\param^{*}$ with parameters $L$ and $\rho$ and coordinate blocks $\mathbb{J}$. If $\mathbb{J}$ consists of the single coordinate block $ \{1,\dots,p\}$, then we write $\surr_{L,\rho}(f, \param^{*},\eps):=\surr_{L,\rho}^{\mathbb{J}}(f, \param^{*},\eps)$, which consists of $\rho$-convex surrogates $g$ of $f$ at $\param^{*}$.  Lastly, we denote $\surr_{L,\rho}^{\mathbb{J}}(f, \param^{*}):=\surr_{L,\rho}^{\mathbb{J}}(f, \param^{*},0)$.
	\end{definition}

	Now we state our main algorithm, Algorithm \ref{algorithm:SMM}, which iteratively executes the following: 1) Sample a new data point $\x_{n}\in \mathfrak{X}$ using an a priori sampling algorithm (e.g., MCMC); 2) Choose a new surrogate $g_{n}$ of the loss function $\param\mapsto \ell(\x_{n}, \param)$; 3) Update and aggregate surrogate $\bar{g}_{n}$ by taking a weighted average of $\bar{g}_{n-1}$ and $g_{n}$; 4) Find an approximate minimizer of $\bar{g}_{n}$ plus a regularizer $\Psi_{n}(\lVert \param-\param_{n-1} \rVert)$. 	In the simplest case when $g_{n}$'s are strongly convex, we directly minimize the strongly convex function $\bar{g}_{n}$ over $\Param$; When $g_{n}$'s are $\rho$-weakly convex, then we minimize the strongly convex function $ \bar{g}_{n}\left( \param \right) + \frac{\lambda_{n}}{2} \lVert \param - \param_{n-1} \rVert^{2}$ over $\Param$, where $\lambda_{n}>\rho$. These two cases are covered by Algorithm \ref{algorithm:BSM-PR}. 
	\begin{algorithm}[H]
		\small
		\caption{Stochastic Regularized Majorization-Minimization (SRMM)}
		\label{algorithm:SMM}
		\begin{algorithmic}[1]
			\State \textbf{Input:} $\param_{0} \in  \Param\subseteq\R^{p}$ (initial estimate); \, $N$ (number of iterations); \,$(w_{n})_{n\ge 1}$, (non-increasing weights in $(0,1]$); $(\eps_{n})_{n\ge 1}$, (non-increasing surrogate tolerance in $[0,1]$);  \, $\mathbb{J}\in 2^{\{1,\dots,p\}}$ (set of coordinate blocks);\, $L>0$ (surrogate smoothness parameter);\, $(\rho_{n})_{n\ge 1}$ (surrogate convexity parameters in $[0,\infty)$);
			\State \quad Initialize the approximate surrogate $\bar{g}_{0}:\param \mapsto \frac{\rho}{2} \lVert \param-\param_{0} \rVert^{2}$; $\bar{\param}_{0}=\param_{0}$; $\hat{\param}=\param_{0}$;
			\State \quad \textbf{for} $n=1,\cdots,N$ \textbf{do} 
			\State \quad\quad sample a training point $\x_{n}\in \mathfrak{X}$; define $f_{n}:\param\mapsto \ell(\x_{n},\param)$;
			\State \quad\quad choose a surrogate function $g_{n}\in \surr_{L,\rho_{n}}^{\mathbb{J}}(f_{n},\param_{n-1}, \eps_{n})$;
			\State \quad\quad update the average surrogate: $\bar{g}_{n}=(1-w_{n})\bar{g}_{n-1} + w_{n}g_{n}$;
			\State \quad\quad use Algorithm \ref{algorithm:BSM-PR} or \ref{algorithm:BSM-DR}   to compute updated estimate $\param_{n}$ near $\param_{n-1}$:
			\begin{align}\label{eq:alg1_param_update}
				\qquad 	\qquad 	\param_{n} \approx \argmin_{\param\in \Param} \left(  \bar{g}_{n}(\param)  + \Psi_{n}(\lVert\param-\param_{n-1}\rVert )  \right); \qquad \left( \triangleright \,  \begin{matrix} \textup{Inexact surrogate minimization} \\ \textup{with regularization} \end{matrix} \right) 
			\end{align}
			\State \quad \textbf{end for}
			\State \textbf{output:}  $\param_{N}$ 
		\end{algorithmic}
	\end{algorithm}

	\begin{algorithm}[H]
		\small
		\caption{Surrogate Minimization with Proximal Regularization}
		\label{algorithm:BSM-PR}
		\begin{algorithmic}[1]
			\State \textbf{Input:}  all input of Algorithm \ref{algorithm:SMM}; \, $\param_{n-1}\in \Param$ (current estimate); $m\in \mathbb{N}$ (number of sub-iterations); \, $\hat{\rho}>-\rho$ (proximal regularization parameter); 
			\State \textbf{Require:} $\rho\le 0$ (i.e., $\param\mapsto g_{n}(\param)+\frac{|\rho|}{2}\lVert \param \rVert^{2}$ is convex for each $n\ge 1$)
			\State   \textbf{Do}: \vspace{-0.3cm}
			\begin{align}
				&\qquad \param_{n}\in \argmin_{\param \in \Param}  \, \left[  \bar{g}_{n}\left( \param \right) + \frac{\hat{\rho}}{2} \lVert \param - \param_{n-1} \rVert^{2}\right];  \label{eq:BSM_factor_update_PR}
			\end{align} 
			
			\State \textbf{output}: $\param_{n}\in \Param$ 
		\end{algorithmic}
	\end{algorithm}

	\begin{algorithm}[H]
		\small
		\caption{Block Surrogate Minimization with Diminishing Radius}
		\label{algorithm:BSM-DR}
		\begin{algorithmic}[1]
			\State \textbf{Input:}  all input of Algorithm \ref{algorithm:SMM}; \, $\param_{n-1}\in \Param$ (current estimate); $m\in \mathbb{N}$ (number of sub-iterations); \, $c'\in (0,\infty)$ (search radius constant);  
			\State \textbf{Require:}  $g_{n}$ is block multi-convex with block structure $\mathbb{J}$ for each $n\ge 1$;
			\State \quad $\param_{n}\leftarrow \param_{n-1}$;
			\State \quad   \textbf{for} $i=1,\cdots,m$ \textbf{do}:
			\State \quad    \quad choose a block coordinate $J=J_{i}(n)\in \mathbb{J}$ independently of everything else;

			\State \quad    \quad freeze coordinates in $J^{c}$:  \vspace{-0.2cm}
			\begin{align}\label{eq:param_J_n_def}
				&\qquad \Param^{J}_{n} \leftarrow \left\{ \param \in \Param  \,\bigg|\, \param^{J^{c}}=(\param_{n})^{J^{c}}  \right\} 
			\end{align}
			
			\State \quad \quad update $\param_{n}$ on coordinates in $J$ while holding the rest of coordinates in $J^{c}$:
			\begin{align}
				&\hspace{3cm}  \param_{n}^{(i)}\in \argmin_{\param \in \Param^{J}_{n}}  \, \left[  \bar{g}_{n}\left( \param \right) + \left( \infty\cdot \mathbf{1}( \lVert \param-\param_{n-1} \rVert > c'w_{n}/m ) \right) \right] ; \quad \left( \triangleright \, \textup{Set $\infty\cdot 0 = 0$} \right) \label{eq:BSM_factor_update_DR}
			\end{align}
			
			\State \quad \textbf{end for}
			\State \textbf{output}: $\param_{n}=\param_{n}^{(m)}\in \Param$ 
		\end{algorithmic}
	\end{algorithm}


	On the other hand, when the surrogates $g_{n}$ are block multi-convex with respect to the coordinate blocks in $\mathbb{J}$, then so is their weight average $\bar{g}_{n}$. Unless $\bar{g}_{n}$ itself is convex, finding an exact minimizer of $\bar{g}_{n}$  over $\Param$ in step \eqref{eq:alg1_param_update} is infeasible. In fact, computing an exact minimizer of $\bar{g}_{n}$ in every step may not be necessary for the convergence of the algorithm, as it was observed for the problem of online CP-dictionary learning \cite{lyu2020online_CP}. Instead, by exploiting the block multi-convex structure of $\bar{g}_{n}$, we may solve a fixed number of convex sub-problems over deterministically or randomly chosen blocks in $\mathbb{J}$.

	In each step of solving \eqref{eq:alg1_param_update}, it is crucial to ensure that the new estimate $\param_{n}$ obtained by approximately minimizing $\bar{g}_{n}$ is not too far from the previous estimate $\param_{n-1}$. When $\bar{g}_{n}$ is strongly convex on full coordinates and if $\param_{n}$ is an exact minimizer of $\bar{g}_{n}$, a simple argument shows that $\lVert \param_{n}-\param_{n-1} \rVert=O(w_{n})$ (see \cite[Lem. B.8]{mairal2013stochastic}). In this case, we may directly find an exact minimizer $\param_{n}$ of $\bar{g}_{n}$ over $\Param$ as in Algorithm \ref{algorithm:BSM-PR} with $\hat{\rho}=0$. This specialization corresponds to the original SMM algorithm in \cite{mairal2013stochastic}. However, such property is not a priori satisfied in the general case when $\bar{g}_{n}$ is nonconvex or $\param_{n}$ is an inexact minimizer of $\bar{g}_{n}$ over $\Param$. 
	
	A key idea behind Algorithms \ref{algorithm:BSM-PR} and \ref{algorithm:BSM-DR} for averaged surrogate minimization is to use an additional regularization that penalizes large values of $\lVert \param-\param_{n-1} \rVert$ in \eqref{eq:alg1_param_update}. We consider two such regularization schemes stated in Algorithm \ref{algorithm:BSM-DR}:  A `hard' regularization of \textit{Diminishing Radius} (DR) and a `soft' regularization of \textit{Proximal Regularization} (PR). DR can also be viewed as a trust region method, where our trust-region takes the form of the Euclidean ball of diminishing radius $r_{n}=O(w_{n})$ in the order of adaptivity weights $w_{n}$ used for iterated averaging of the objective functions. PR is a standard regularization scheme that quadratically penalizes the distance from the old estimate \cite{parikh2014proximal}. The first-order optimality conditions for these two regularization schemes are equivalent assuming the solution of the latter lies in the interior of the trust region, but not necessarily in general. Another difference between the two regularization methods is that DR does not change the objective gradient but PR does. Nonetheless, in the context of block coordinate descent (BCD), both regularization methods guarantee convergence to stationary points \cite{ grippo2000convergence, xu2013block, lyu2020convergence}. 
	
	
	Note that each of the block minimization problems in \eqref{eq:BSM_factor_update_PR} and \eqref{eq:BSM_factor_update_DR} is a constrained convex optimization problem so it can be easily solved by a number of known algorithms (e.g., projected gradient descent \cite{beck2017first},  LARS \cite{efron2004least}, LASSO \cite{tibshirani1996regression}, and feature-sign search \cite{lee2007efficient}). Indeed,  for \eqref{eq:BSM_factor_update_PR}, the averaged surrogate $\bar{g}_{n}$ is $-\rho$-weakly convex (recall $\rho<0$) and so its proximal point modification with $\hat{\rho}>-\rho$ is $(\hat{\rho}+\rho)$-strongly convex; Also, \eqref{eq:BSM_factor_update_DR} is equivalent to minimizing $\bar{g}_{n}$  over the convex set $\Param_{n}\cap \{ \param\,|\, \lVert \param-\param_{n-1} \rVert  \le c'w_{n}/m \}$ restricted on $\R^{J}$, where the retriction of $\bar{g}_{n}$ on $R^{J}$ is convex. In particular, using standard projected gradient descent algorithms, one can decrease the optimality gap sub-linearly for convex sub-problems and linearly if the restricted objectives are strongly convex (see, e.g., \cite[Thm. 10.29]{beck2017first}).

	\section{Main results}
	\label{section:results}
	
	\subsection{Optimality conditions and convergence measures} 
	In this subsection, we introduce some notions on optimality conditions and related quantities. Here we denote $f$ to be a general objective function $\Param\rightarrow \R$, but elsewhere $f$ will denote the expected loss function in \eqref{eq:expected_loss_minimization} unless otherwise mentioned. 
	
	Recall that we say $\param^{*}\in \Param$ is a \textit{stationary point} of $f$ over $\Param$ if 
	\begin{align}\label{eq:stationary}
		\inf_{\param\in \Param} \, \langle \nabla f(\param^{*}) ,\,  \param - \param^{*} \rangle \ge 0,
	\end{align}	
	where $\langle \cdot,\, \cdot \rangle$ denotes the dot project on $\R^{p}\supseteq \Param$. This is equivalent to saying that $-\nabla f(\param^{*})$ is in the normal cone of $\Param$ at $\param^{*}$. If $\param^{*}$ is in the interior of $\Param$, then it implies $\lVert \nabla f(\param^{*}) \rVert=0$. For iterative algorithms, such a first-order optimality condition may hardly be satisfied exactly in a finite number of iterations, so it is more important to know how the worst-case number of iterations required to achieve an $\eps$-approximate solution scales with the desired precision $\eps$. More precisely, we say $\param^{*}\in \Param$ is an \textit{$\eps$-approxiate stationary point} of $f$ over $\Param$ if 
	\begin{align}\label{eq:stationary_approximate}
		-\inf_{\param\in \Param} \, \left\langle \nabla f(\param^{*}),\,  \frac{\param - \param^{*} }{\lVert \param-\param^{*} \rVert} \right\rangle  \le \sqrt{\eps}.
	\end{align}	
	This notion of $\eps$-approximate solution is consistent with the corresponding one for unconstrained problems. Indeed, if $\param^{*}$ is an interior point of $\Param$, then \eqref{eq:stationary_approximate} reduces to $\lVert \nabla f(\param^{*}) \rVert^{2}\le \eps$. It is also equivalent to a similar notion in \cite[Def. 1]{nesterov2013gradient}, which is stated for non-smooth objectives using subdifferentials instead of gradients as in \eqref{eq:stationary_approximate}. 
	
	Next, for each $\eps>0$ we define the \textit{iteration complexity} $N_{\eps}$ of an algorithm for minimizing $f$ over $\Param$ with initialization $\param_{0}\in \Param$  as 
	\begin{align}\label{eq:Neps}
		N_{\eps}=N_{\eps}(f, \param_{0}):=  \, \inf\, \{ n\ge 1 \,|\, \text{$\param_{n}$ is an $\eps$-approximate stationary point of $f$ over $\Param$} \}, 
	\end{align}
	where $(\param_{n})_{n\ge 0}$ is a sequence of estimates produced by the algorithm with an initial estimate $\param_{0}$. An upper bound on $N_{\eps}$ can be regarded as the \textit{worst-case} bound on the number of iterations for an algorithm to achieve an $\eps$-approximate solution.

	\subsection{Assumptions}
	
	In this subsection, we state all assumptions we use for establishing the main results. Throughout this paper, we denote by $\mathcal{F}_{n}$ the $\sigma$-algebra generated by the data points $\x_{1},\dots,\x_{n}$ as well as the choice of coordinate blocks $J_{1}(k),\dots,J_{m}(k)$ for $1\le k \le m$. Clearly $(\mathcal{F}_{n})_{n\ge 1}$ defines a filtration, that is, $\mathcal{F}_{0}\subseteq \mathcal{F}_{1}\subseteq \cdots$.

	\begin{customassumption}{(A1)} \label{assumption:A1-ell_smooth}
		(Per-sample loss function) There exists constants $R,L>0$ such that for each data point $\x\in \mathfrak{X}$, the function $\param\mapsto \ell(\x,\, \param)$ over $\param\in \Param$ is $R$-Lipscthiz continuous and its gradient is $L$-Lipschitz continuous. 
	\end{customassumption}
	
	\begin{customassumption}{(A2)}\label{assumption:A2-MC}
		(Data sampling) The observed data points $\x_{n}\in \mathfrak{X}$ are given by $\x_{n}=\varphi(Y_{n})$, where $Y_{n}$ is Markov chain on state space $\Omega$ and $\varphi:\Omega \rightarrow \mathbb{R}^{p}$ is a function with compact image. Furthermore, $Y_{n}$ has a unique stationary distribution $\pi$ and satisfies exponential mixing with rate $\lambda\in [0,1)$:
		\begin{align}
			\sup_{\y\in \Omega} \, \lVert P^{m}(\mathbf{y},\cdot) - \pi \rVert_{TV} \le \lambda^{m}.
		\end{align}
	\end{customassumption}

	\begin{customassumption}{(A3)} \label{assumption:A3-cvx_constraint}
		(Constraint sets for parameters) The constraint set $\Param$ is a  compact and convex subset of $\R^{p}$. Let $\mathbb{J}\subseteq 2^{\{1,\dots,p\}}\setminus \{\emptyset\}$ denote the set of coordinate blocks used in Algorithm \ref{algorithm:SMM}. Then for each block coodinate $J\in \mathbb{J}$, $\Param^{J}$ contains an open ball in $\R^{J}$. 
	\end{customassumption}

	\begin{customassumption}{(A4)}\label{assumption:A4-w_t}
		(Adaptivity weight decay) The weights $w_{n}\in (0,1]$ are non-increasing and satisfy $w_{n}^{-1} - w_{n-1}^{-1}\le 1$ for all sufficiently large $n\ge 1$, 	$\sum_{n=1}^{\infty}w_{n}=\infty$,  and $w_{n}\sqrt{n}=O(1/(\log n)^{1+\eps})$ for some $\eps>0$. Also, there exists a constant $C>0$ such that for $a_{n}=\lfloor C\log n \rfloor$,  the following hold:
		\begin{align}
			&\sum_{n=1}^{\infty} w_{n}^{2} \sum_{k=1}^{n+1} \E[\eps_{k}]<\infty ,\qquad  \sum_{n=1}^{\infty } a_{n}w_{n} w_{n-a_{n}}<\infty, \qquad \sum_{n=1}^{\infty} w_{n}\lambda^{a_{n}}<\infty,\\
			&	\hspace{2cm} \qquad \left( \textup{optionally, } \sum_{n=1}^{\infty } w_{n} w_{n-a_{n}}\sqrt{n}<\infty  \right),
		\end{align}
		where $(\eps_{n})_{n\ge 1}$ is given in Algorithm \ref{algorithm:SMM} and   $\lambda\in (0,1]$ is given in \ref{assumption:A2-MC}. 
	\end{customassumption}

	\begin{customassumption}{(A5)}\label{assumption:A5_sufficient_surrogate_decay}
		(Surrogate minimization) If Algorithm \ref{algorithm:BSM-PR} is used for \eqref{eq:alg1_param_update}, define 
		\begin{align}\label{eq:def_optimality_gap_PR}
			\hspace{-0.7cm} \Delta_{n}= \Delta_{n}^{(1)}:=\bar{G}_{n}(\param_{n}^{(i)}) - \bar{G}_{n}(\param_{n}^{(i\star)}) , \quad \param_{n}^{(i\star)} := \argmin_{\param\in \Param} \bar{G}_{n}(\param),\quad \bar{G}_{n}(\param):=\bar{g}_{n}(\param) + \frac{\hat{\rho}}{2}\lVert \param-\param_{n-1} \rVert^{2}.
		\end{align}
		If Algorithm \ref{algorithm:BSM-DR} is used for \eqref{eq:alg1_param_update}, define 
		\begin{align}\label{eq:def_optimality_gap}
			\hspace{-0.7cm} \Delta_{n}:=\sum_{i=1}^{m} \Delta_{n}^{(i)},\qquad  \Delta_{n}^{(i)}:=\bar{g}_{n}(\param_{n}^{(i)}) - \bar{g}_{n}(\param_{n}^{(i\star)}) , \qquad \param_{n}^{(i\star)} := \argmin_{\param\in \Param_{n}^{J_{i}}} \bar{g}_{n}(\param),
		\end{align}
		where  $\Param_{n}^{J_{i}}$ is defined in \eqref{eq:param_J_n_def}. 
		Then in both cases, $ \E\left[ \sum_{n=1}^{\infty}	\Delta_{n} \right] <\infty$.
	\end{customassumption}

	\begin{customassumption}{(A6)}\label{assumption:A6-faithful_sampling}
		(Block coordinate sampling) If Algorithm \ref{algorithm:BSM-DR} is used for \eqref{eq:alg1_param_update}, then the joint distribution of coordinate blocks $(J_{1}(n), \dots, J_{m}(n))$  chosen in Algorithm \ref{algorithm:BSM-DR} at iteration $n$ of Algorithm \ref{algorithm:SMM} does not depend on $n$.  Furthermore, the coordinate blocks are disjoint, and the expected number of each coordinate in $\{1,\dots,p\}$ appearing in all coordinate blocks $J_{1}(n), \dots, J_{m}(n)$ is constant. 
	\end{customassumption}

	\begin{customassumption}{(A7)}\label{assumption:A7_param_surr}
		(Parameterized surrogates) The averaged surrogates $\bar{g}_{n}$  are parameterized by some variable $\kappa_{n}$ in some compact set $\mathcal{K}$. That is, there exists a function $\bar{g}:\mathcal{K}\times \Param\rightarrow [0,\infty)$ such that $\bar{g}_{n}(\param)= \bar{g}(\kappa_{n}, \param,\eps_{n})$ for some $\kappa_{n}\in \mathcal{K}$.  Furthermore, $\bar{g}$ is Lipschitz in the first coordinate. 
	\end{customassumption}
	
	Assumptions \ref{assumption:A1-ell_smooth} and \ref{assumption:A3-cvx_constraint} are standard in the literature of constrained stochastic nonconvex optimization and online dictionary learning \cite{mairal2010online, mairal2013stochastic, mensch2017stochastic, lyu2020online, lyu2020online_CP}. Relaxing the standard i.i.d. data sampling assumption in \cite{mairal2010online, mairal2013stochastic, mensch2017stochastic}, the Markovian data assumption with exponential mixing was considered in \cite{lyu2020online, lyu2020online_CP}, which is trivially satisfied when the data samples $\x_{n}$ are i.i.d. from the target distribution $\pi$. 
	
	Assumption \ref{assumption:A4-w_t} states that the sequence of weights $w_{n}\in (0,1]$ we use to recursively define the empirical loss \eqref{eq:def_ELM} and surrogate loss \eqref{eq:def_SRMM} does not decay too fast so that $\sum_{n=1}^{\infty} w_{n}=\infty$ but  decay fast enough so that $\sum_{n=1}^{\infty} w_{n}^{2}<\infty$. This is analogous to requirements for stepsizes in stochastic gradient descent algorithms, where the stepsizes are usually required to be non-summable but square-summable (see, e.g., \cite{sun2018markov}). Note that our general results do not require the stronger assumption $\sum_{n=1}^{\infty} w_{n}^{2}\sqrt{n}<\infty$, which is standard in the literature \cite{mairal2010online, mairal2013stochastic, mensch2017stochastic,lyu2020online, lyu2020online_CP}. Also, the condition $w_{n}^{-1}-w_{n-1}^{-1}\le 1$ for all suficiently large $n$ is equivalent for the recursively defined weights $w^{n}_{k}$ in \eqref{eq:ELF_closed_form} being non-decreasing in $k$ for all sufficiently large $k$, which is required to use Lemma \ref{lem:uniform_convergence_asymmetric_weights}. We also remark that \ref{assumption:A4-w_t} is implied by the following simpler condition: 
	\begin{customassumption}{(A4')}\label{assumption:A4'-w_t}
		The sequence of non-increasing weights $w_{n}\in (0,1]$ in Algorithm \ref{algorithm:SMM} satisfy either $w_{n}=n^{-1}$ for $n\ge 1$ or $w_{n}=\Theta(n^{-\beta}(\log n)^{-\delta})$ for some $\delta >1$ and $\beta\in [1/2, 1)$ (Optionally, $\beta\in [3/4,1)$).  Also, the sequence $(\eps_{n})_{n\ge 1}$ in Algorithm \ref{algorithm:SMM} satisfies $\sum_{n=1}^{\infty} w_{n+1}^{2} \sum_{k=1}^{n+1} \E[\eps_{k+1}]<\infty$.
	\end{customassumption}
	
	\noindent If we consider the weight $w_{n}=n^{-\beta}/(\log n)^{\delta}$ for some $\beta\in [1/2,1)$ and $\delta>1$ as in \ref{assumption:A4'-w_t}, we  have 
	\begin{align}\label{eq:bound_typical_choice}
		\left( \sum_{k=1}^{n} w_{n}\right)^{-1} = O(n^{\beta-1} (\log n)^{\delta}).
	\end{align}
	Hence the above bound is optimized when $\beta=1/2$ for each fixed $\delta>1$.
	
	Next, we give some remarks on \ref{assumption:A5_sufficient_surrogate_decay}. When the averaged surrogate $\bar{g}_{n}$ is strongly convex on each coordinate block $J_{i}(n)$, then the expected optimality gap in \ref{assumption:A5_sufficient_surrogate_decay} decays exponentially fast in the number of iterations of standard constrained convex optimization algorithms such as projected gradient descent (see, e.g., \cite[Thm. 10.29]{beck2017first}). See also stochastic gradient descent or random coordinate descent (see, e.g., \cite[Thm 4.6]{bottou2018optimization} and \cite[Thm. 1]{wright2015coordinate}) for unconstrained cases. Hence, for instance, one can ensure $\E[\Delta_{n}] = O(n^{-2})$ in $O(m\log n)$ sub-iterations for computing $\param_{n}^{(1)},\dots,\param_{n}^{(m)}$ in Algorithm \ref{algorithm:BSM-DR}. Consequently, the total computational cost of Algorithm \ref{algorithm:SMM} would be the iteration complexity times a log factor, which is negligible. 
	
	On the other hand, when Algorithm \ref{algorithm:BSM-PR} is used for \eqref{eq:alg1_param_update} in Theorem \ref{thm:global_convergence}, we will always be minimizing a strongly convex function over $\Param$ to find $\param_{n}$, so the same remark applies.  In a special case when the surrogates $g_{n}$ are strongly convex and Algorithm \ref{algorithm:BSM-PR} is used with $\hat{\rho} =0$, then one can ensure $\E[\Delta_{n}]=O(w_{n}^{2})$ in $O(1)$ sub-iterations, instead of $O(\log n)$ (see Lemma \ref{lem:strongly_convex_surrogate_A6'}). More precisely,  \ref{assumption:A5_sufficient_surrogate_decay} is implied by the following simple condition, which was also used in \cite[Assumption (\textbf{I})]{mensch2017stochastic} to analyze SMM with inexact surrogate minimization:
	\begin{customassumption}{(A5')}\label{assumption:A5'_sufficient_surrogate_decay}
		(Surrogate optimality gap decay) Suppose the surrogates $g_{n}$ are $\rho$-strongly convex for some $\rho>0$ Algorithm \ref{algorithm:BSM-PR} is used with  $\hat{\rho}= 0$. Then there exists a constant $\mu\in (0,1]$ such that for all $n\ge 1$,
		\begin{align}\label{eq:surrogate_linear_decay}
			\E\left[\bar{g}_{n}(\param_{n}) - \bar{g}_{n}(\param_{n}^{\star}) \,\bigg|\, \mathcal{F}_{n-1}  \right]  \le (1-\mu) \, \left[\bar{g}_{n}(\param_{n-1}) -  \bar{g}_{n}(\param_{n}^{\star})  \right],
		\end{align}
		where $\param_{n}^{(\star)}$ denotes the exact minimizer of $\bar{g}_{n}$  over $\Param$. 
	\end{customassumption}
	
	Next, \ref{assumption:A6-faithful_sampling} asserts some properties of a random sampling of block coordinates $J_{1}(n),\dots, J_{m}(n)$ in Algorithm \ref{algorithm:BSM-DR}, which are crucially used in the proof of Lemma \ref{lem:first_order_optimality} that is pivotal to establishing a rate of convergence results in Theorems \ref{thm:rate_surrogate_gaps} and \ref{thm:rate_stationarity}. We note that \ref{assumption:A6-faithful_sampling}  is trivially satisfied if the deterministic cyclic block coordinate descent is used. Namely, if each surrogate $g_{n}$ is block multi-convex with block structure $\mathbb{J}=\{J_{1},\dots,J_{m}\}$ where $J_{1}\cup \dots \cup J_{m}$ gives a partition of full coordinates $\{1,\dots,p\}$, then we can deterministically cycle through the $m$ coordinate blocks by setting $J_{i}(n)=J_{i}$ for $1\le i \le m$. Then \ref{assumption:A6-faithful_sampling} is satisfied. Such cyclic block coordinate descent was used in the online CP-dictionary learning \cite{lyu2020online_CP}. Hence the present work generalizes such a deterministic block coordinate schedule to a possibly randomized schedule. 
	
	Lastly, \ref{assumption:A7_param_surr} asserts that the averaged surrogates $\bar{g}_{n}$ can be parameterized by a compact index set $\mathcal{K}$, which is satisfied by most practical use cases of SMM-type algorithms \cite{mairal2013stochastic} including online matrix factorization (see Subsection \ref{subsection:OMF}) and online CP-dictionary learning (see Subsection \ref{subsection:OCPDL} as well as \cite[Alg. 2]{lyu2020online_CP}). While \ref{assumption:A7_param_surr} is crucially used in deriving some of the main results (Theorem \ref{thm:global_convergence}), it is also of practical importance since it allows one to store averaged surrogates $\bar{g}_{n}$ only by storing some sufficient statistics living in a compact set $\mathcal{K}$, without needing to store all past data $\x_{1},\dots,\x_{n}$. 

	\subsection{Statement of main results}
	
	Throughout this section, let $(\param_{n})_{n\ge 1}$ denote the output of Algorithm \ref{algorithm:SMM} with arbitrary initialization $\param_{0}$. We will consider one of the following instances:
	\begin{customassumption}{C1}\label{C1}
		(Strongly convex surrogates without regularization) $g_{n}\in \surr_{L,\rho}(f_{n},\param_{n-1}, \eps_{n})$ for $n\ge 1$ for some $L,\rho>0$. Use Algorithm \ref{algorithm:BSM-PR} with $\hat{\rho}= 0$ for solving \eqref{eq:alg1_param_update}. In this case, assume \ref{assumption:A5'_sufficient_surrogate_decay}.
	\end{customassumption}
	
	\begin{customassumption}{C2}\label{C2}
		(Multi-convex surrogates with radius restriction) $g_{n}\in \surr_{L,0}^{\mathbb{L}}(f_{n},\param_{n-1},\eps_{n})$ for $n\ge 1$ for some set $\mathbb{J}$ of coordinate blocks and for some $L>0$. Use Algorithm \ref{algorithm:BSM-DR}  for solving \eqref{eq:alg1_param_update}. Assume \ref{assumption:A5_sufficient_surrogate_decay}-\ref{assumption:A6-faithful_sampling}.
	\end{customassumption}

	\begin{customassumption}{C3}\label{C3}
		(Weakly convex surrogates with proximal regularization) $g_{n}\in \surr_{L,\rho}(f_{n},\param_{n-1}, \eps_{n})$ for $n\ge 1$ for some $L>0$ and $\rho\le 0$. Use Algorithm \ref{algorithm:BSM-PR} with $\hat{\rho}>-\rho$ for solving \eqref{eq:alg1_param_update}. Assume \ref{assumption:A5_sufficient_surrogate_decay} and that the data sequence $(\x_{n})_{n\ge 1}$ is i.i.d. from the stationary distribution $\pi$.
	\end{customassumption}

	Now we state our first main result, Theorem \ref{thm:global_convergence}, which states that Algorithm \ref{algorithm:SMM} converges globally (w.r.t. initialization) to the set of stationary points of both the empirical loss $\bar{f}_{n}$ and the expected loss $f$. Moreover, it also states that the averaged surrogate $\bar{g}_{n}$ is asymptotically an accurate approximation of the empirical and the expected loss functions at $\param_{n}$ both in the function values and gradients.

	\begin{theorem}[Global Convergence]\label{thm:global_convergence}
		
		Assume \ref{assumption:A1-ell_smooth}-\ref{assumption:A4-w_t} and \ref{assumption:A7_param_surr} hold. Then for cases \textup{\ref{C1}}-\textup{\ref{C2}}, the following hold:
		\begin{description}
			\item[(i)] (Empirical Loss Minimization) Both $|\bar{g}_{n}(\param_{n}) - \bar{f}_{n}(\param_{n})|$ and  $\lVert \nabla \bar{g}_{n}(\param_{n}) - \nabla \bar{f}_{n}(\param_{n}) \rVert $ converge to zero almost surely. Furthermore,  $\param_{n}$ is asymptotically a stationary point of $\bar{f}_{n}$ over $\Param$ almost surely if $\Delta_{n}=o(1)$ for \ref{C1}  and $\Delta_{n}=o(\lVert \param_{n}-\param_{n-1} \rVert)$ for \ref{C2}. 		
			
			\vspace{0.2cm}
			\item[(ii)] (Expected Loss Minimization) All of $ | \bar{g}_{n}(\param_{n}) - f(\param_{n})|$,  $ \lVert  \nabla \bar{g}_{n}(\param_{n})  - \nabla f(\param_{n}) \rVert$ and $\lVert \E[ \nabla \bar{g}_{n}(\param_{n})]  - \nabla f(\param_{n}) \rVert$ converge to zero alsmot surely. Furthermore,  $\param_{n}$ converges to the set of a stationary points of $f$ over $\Param$ almost surely if $\Delta_{n}=o(1)$ for \ref{C1}  and $\Delta_{n}=o(\lVert \param_{n}-\param_{n-1} \rVert)$ for \ref{C2}. 		
		\end{description}
		For the case \textup{\ref{C3}}, the following subsequential versions of \textup{\textbf{(i)}-\textbf{(ii)}} hold:
		\begin{description}
			\item[(iii)] All five quantities in \textup{\textbf{(i)}}-\textup{\textbf{(ii)}} converge to zero almost surely on some subsequence of $(\param_{n})_{n\ge 1}$. Furthermore, $\param_{n}$ is asymptotically a stationary point of $\bar{g}_{n}$ over $\Param$ almost surely provided $\Delta_{n}=o(\lVert \param_{n}-\param_{n-1} \rVert)$. 
		\end{description}
	\end{theorem}

	We note that standard convergence results in the literature of SMM \cite{mairal2010online, mairal2013optimization, lyu2020online} asserts that the SMM algorithm converges globally to the set of the stationary points of the expected loss function $f$ almost surely, which is recovered by  Theorem \ref{thm:global_convergence} \textbf{(ii)} for the first case of strongly convex surrogates without radius restriction. Note that we establish Theorem \ref{thm:global_convergence} without the optional condition in \ref{assumption:A4-w_t}, which essentially states that $\sum_{n=1}^{\infty} w_{n}^{2}\sqrt{n}<\infty$. Such condition was standard in the literature (see, e.g., \cite[Assumption (E)]{mairal2013stochastic}). The same statement for the second case of block multi-convex surrogates with radius restriction was recently obtained in \cite{lyu2020online_CP} for the context of online CP-dictionary learning. 
	
	The convergence of gradient norms $\lVert  \nabla \bar{g}_{n}(\param_{n}) - \nabla \bar{f}_{n}(\param_{n})\rVert$ and $\lVert  \nabla \bar{g}_{n}(\param_{n}) - \nabla f(\param_{n})\rVert$ are new and the asymptotic stationarity for the empirical loss functions in Theorem \ref{thm:global_convergence} \textbf{(i)} has not been elaborated very much in the aforementioned literature, since the main focus of using SMM was to solve the expected loss minimization. It is worth noting that the hypothesis for the expected loss minimization stated in Theorem \ref{thm:global_convergence} \textbf{(ii)} is a bit stronger than that for the empirical loss minimization stated in Theorem \ref{thm:global_convergence} \textbf{(i)}. This is an indication that SRMM (and hence SMM) is generically more suited to solve the empirical loss minimization than the expected loss minimization, which we will elaborate on in the forthcoming results.  
	
	In the following two results, Theorems \ref{thm:rate_surrogate_gaps} and \ref{thm:rate_stationarity}, we establish bounds on the rate of convergence of surrogate gaps as well as approximate optimality. Below, we denote by $\E[\cdot]$ the coditional expectation $\E[\cdot \,|\, \mathcal{F}_{0}]$ with respect to the time-0 information that contains the initial estimate $\param_{0}$. 
	
	\begin{theorem}[Rate of Convergence of Surrogate Gaps and Variation]\label{thm:rate_surrogate_gaps}
		Let $(\param_{n})_{n\ge 1}$ be an output of Algorithm \ref{algorithm:SMM}. 
		Make the same assumption as in Theorem \ref{thm:global_convergence}. Then the following hold:
		\begin{description}
			\item[(i)] (Empirical Loss Minimization)  Asymptotically almost surely, 
			\begin{align}\label{eq:thm_nonconvex_empirical_rate}
				\min_{1\le k \le n}  \begin{pmatrix}  \left|\bar{g}_{k}(\param_{k}) - \bar{f}_{k}(\param_{k})\right| + \lVert \nabla \bar{g}_{k}(\param_{k})-\nabla \bar{f}_{n}(\param_{k}) \rVert^{2}   \end{pmatrix} &= O \left( \left(\sum_{k=1}^{n} w_{k}\right)^{-1}  \right).
			\end{align}
			Furthermore, if the optional condition in \ref{assumption:A4-w_t} holds, then  asymptotically almost surely, 
			\begin{align}\label{eq:thm_nonconvex_empirical_variation}
				\min_{1\le k \le n}  \begin{pmatrix} 
					\,\, \left| \E[\bar{g}_{k}(\param_{k})] - \bar{f}_{k}(\param_{k}) \right| +  \lVert \E[\nabla \bar{g}_{k}(\param_{k})]- \nabla \bar{f}_{k}(\param_{k}) \rVert^{2} \end{pmatrix} &= O \left( \left(\sum_{k=1}^{n} w_{k}\right)^{-1}  \right).
			\end{align}

			\vspace{0.1cm}
			\item[(ii)] (Expected Loss Minimization) Asymptotically almost surely, 
			\begin{align}\label{eq:thm_nonconvex_expected_variation}
				\min_{1\le k \le n}  \begin{pmatrix} 
					\,\, \left| \E[\bar{g}_{k}(\param_{k})] - f(\param_{k}) \right| +  \lVert \E[\nabla \bar{g}_{k}(\param_{k})]- \nabla f(\param_{k}) \rVert^{2} \end{pmatrix} &= O \left( \left(\sum_{k=1}^{n} w_{k}\right)^{-1}  \right).
			\end{align}
			Furthermore, if the optional condition in \ref{assumption:A4-w_t} holds, then asymptotically almost surely, 
			\begin{align}\label{eq:thm_nonconvex_expected_rate}
				\min_{1\le k \le n}  \left(  \left| \bar{g}_{k}(\param_{k}) - f(\param_{k}) \right| + \lVert  \nabla \bar{g}_{k}(\param_{k})- \nabla f(\param_{k}) \rVert^{2} \right) = O \left( \left(\sum_{k=1}^{n} w_{k}\right)^{-1}  \right).
			\end{align}
		\end{description}
	\end{theorem}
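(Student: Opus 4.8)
The plan is to reduce every one of the four displayed estimates to the finiteness of a single $w_k$-weighted series and then apply the elementary averaging bound: for any nonnegative sequence $(a_k)_{k\ge 1}$,
\[
\Big(\min_{1\le k\le n} a_k\Big)\sum_{k=1}^n w_k \;\le\; \sum_{k=1}^n w_k\,a_k ,
\]
so that, since $\sum_k w_k=\infty$ by \ref{assumption:A4-w_t}, any bound $\sum_{k=1}^\infty w_k a_k<\infty$ (almost surely, resp. with finite expectation) yields $\min_{1\le k\le n}a_k=O\big((\sum_{k=1}^n w_k)^{-1}\big)$ in the same mode of convergence. The whole theorem therefore comes down to four summability statements, assembled from the key lemmas of Section \ref{section:key_lemmas}.

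For the raw empirical estimate \eqref{eq:thm_nonconvex_empirical_rate} I take $a_k=|\bar g_k(\param_k)-\bar f_k(\param_k)|+\lVert\nabla\bar g_k(\param_k)-\nabla\bar f_k(\param_k)\rVert^2$. By the $\eps$-majorization property the surrogate gap is essentially one-signed, and its $w_k$-weighted sum is controlled almost surely by the positive-variation and finite-variation bounds of Lemmas \ref{lem:pos_variation} and \ref{lem:finite_variation_surr}; the squared-gradient sum $\sum_k w_k\lVert\nabla\bar g_k(\param_k)-\nabla\bar f_k(\param_k)\rVert^2<\infty$ almost surely is exactly Lemma \ref{lem:gradient_finite_sum}. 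Adding the two and invoking the averaging bound proves \eqref{eq:thm_nonconvex_empirical_rate}, and only the baseline part of \ref{assumption:A4-w_t} is used.

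The remaining three estimates are obtained by interpolating through two intermediate quantities and deciding, in each case, which interpolation is needed. First, to replace the empirical loss $\bar f_k$ by the expected loss $f$ I use the inequalities $|\bar g_k-f|\le|\bar g_k-\bar f_k|+|\bar f_k-f|$ and $\lVert\nabla\bar g_k-\nabla f\rVert^2\le 2\lVert\nabla\bar g_k-\nabla\bar f_k\rVert^2+2\lVert\nabla\bar f_k-\nabla f\rVert^2$, all evaluated at $\param_k$; the concentration residuals (function values and gradients alike) are handled by the $L^1$ concentration bound of Lemma \ref{lem:f_n_concentration_L1_gen}, which is where the Markovian mixing of \ref{assumption:A2-MC} enters. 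Second, to replace a raw value $\bar g_k(\param_k)$ by its conditional mean $\E[\bar g_k(\param_k)\mid\mathcal{F}_0]$ I write, e.g., $\E[\bar g_k]-\bar f_k=\E[\bar g_k-\bar f_k]+(\E[\bar f_k]-\bar f_k)$, which reduces the discrepancy to the empirical fluctuation $\bar f_k-\E[\bar f_k]$, again governed by Lemma \ref{lem:f_n_concentration_L1_gen}. Each passage has a \emph{cheap} form available under the baseline weights (the concentration series is summable in expectation; the expected surrogate gap is summable because the finite-variation bounds of Lemmas \ref{lem:pos_variation}, \ref{lem:finite_variation_surr} hold in expectation as well) and an \emph{expensive} form requiring the optional summability $\sum_n w_n w_{n-a_n}\sqrt n<\infty$ (almost-sure concentration, and almost-sure control of the empirical fluctuation). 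The four displayed bounds are exactly the four combinations: \eqref{eq:thm_nonconvex_empirical_rate} uses neither expensive passage; \eqref{eq:thm_nonconvex_expected_variation} combines the expected-gap bound with in-expectation concentration, both cheap; while \eqref{eq:thm_nonconvex_empirical_variation} and \eqref{eq:thm_nonconvex_expected_rate} each invoke one expensive passage (the empirical fluctuation and the almost-sure concentration, respectively) and hence require the optional condition.

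I expect the genuine obstacle to sit entirely inside Lemma \ref{lem:f_n_concentration_L1_gen}: in the Markovian regime the fluctuation of $\bar f_k$ does not decay fast enough for a Borel--Cantelli/martingale argument to give almost-sure summability without the extra $\sqrt n$ factor, so the precise threshold separating the cheap (in-expectation) from the expensive (almost-sure) concentration is exactly the optional condition in \ref{assumption:A4-w_t}. Granting that lemma in both forms, together with the finite-sum Lemmas \ref{lem:pos_variation}, \ref{lem:gradient_finite_sum}, \ref{lem:finite_variation_surr}, the theorem follows by the bookkeeping above with no estimate beyond the averaging inequality.
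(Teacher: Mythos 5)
Your proposal is correct and follows essentially the same route as the paper: each of the four displayed bounds is reduced to a $w_k$-weighted summability statement assembled from Lemma \ref{lem:pos_variation}, Lemma \ref{lem:gradient_finite_sum}, and the concentration Lemma \ref{lem:f_n_concentration_L1_gen}, and then finished by the averaging bound of Lemma \ref{lem:positive_convergence_lemma}, with the optional condition in \ref{assumption:A4-w_t} entering exactly for \eqref{eq:thm_nonconvex_empirical_variation} and \eqref{eq:thm_nonconvex_expected_rate} as you place it. Two cosmetic remarks: Lemma \ref{lem:finite_variation_surr} is superfluous for the surrogate gap (Lemma \ref{lem:pos_variation} \textbf{(iii)} alone controls that term), and for \eqref{eq:thm_nonconvex_empirical_variation} the paper interpolates through the expected loss $f$ rather than through $\E[\bar f_k]$ --- an equivalent decomposition differing from yours by one extra triangle inequality, resting on the same $O(w_k\sqrt{k})$ concentration and the same optional condition.
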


	\begin{theorem}[Rate of Convergence to Stationarity]\label{thm:rate_stationarity}
		Let $(\param_{n})_{n\ge 1}$ be an output of Algorithm \ref{algorithm:SMM}. 
		Make the same assumption as in Theorem \ref{thm:global_convergence}.   Then the following hold:
		\begin{description}
			\item[(i)] (Surrogate and Empirical Loss Stationarity) Asymptotically almost surely, 
			\begin{align}
				&\min_{1\le k \le n}  \,\, \left[ -\inf_{\param\in \Param} \left\langle \nabla \bar{g}_{k}(\param_{k}),\, \frac{(\param - \param_{k}) }{\lVert \param - \param_{k}\rVert} \right\rangle \right]  = O \left( \left(\sum_{k=1}^{n} w_{k}\right)^{-1}  \right). \label{eq:thm_convergence_bd_surrogate} \\
				& \min_{1\le k \le n}  \,\, \left[ -\inf_{\param\in \Param} \left\langle \nabla \bar{f}_{k}(\param_{k}),\, \frac{(\param - \param_{k}) }{\lVert \param - \param_{k}\rVert} \right\rangle \right]  = O \left( \left(\sum_{k=1}^{n} w_{k}\right)^{-1/2}  \right). \label{eq:thm_convergence_bd_f_t}
			\end{align}
			\item[(ii)] (Expected Loss Stationarity)   It holds that 
			\begin{align}
				&\min_{1\le k \le n}  \,\,\E \left[  -	\inf_{\param\in \Param} \left\langle \nabla f(\param_{k}),\, \frac{(\param - \param_{k}) }{\lVert \param - \param_{k}\rVert}\right\rangle \right] =  O \left( \left(\sum_{k=1}^{n} w_{k}\right)^{-1/2} + w_{n}\sqrt{n}  \right). \label{eq:thm_convergence_bd_f_E}
			\end{align} 
			Further assume the optional condition in \ref{assumption:A4-w_t} holds. Then asymptotically almost surely, 
			\begin{align}
				&\min_{1\le k \le n}  \,\, \left[  -	\inf_{\param\in \Param} \left\langle \nabla f(\param_{k}),\, \frac{(\param - \param_{k}) }{\lVert \param - \param_{k}\rVert}\right\rangle \right] =  O \left( \left(\sum_{k=1}^{n} w_{k}\right)^{-1/2}  \right). \label{eq:thm_convergence_bd_f}
			\end{align}
		\end{description}
	\end{theorem}

	To our best knowledge, the rate of convergence results in Theorems \ref{thm:rate_surrogate_gaps} and \ref{thm:rate_stationarity} are entirely new for SMM-type algorithms even under the classical setting with i.i.d. data and strongly convex surrogates. Only almost sure convergence to stationary points was known before \cite{mairal2010online, mairal2013stochastic,zhao2017online,  lyu2020online, lyu2020online_CP} in such cases. Moreover, \eqref{eq:thm_nonconvex_empirical_variation} and \eqref{eq:thm_nonconvex_expected_variation} in Theorem \ref{thm:rate_surrogate_gaps} give bounds on the variation of the random objective values $\bar{f}_{n}(\param_{n})$ and $f(\param_{n})$ against the deterministic quantity $\E[\bar{g}_{n}(\param_{n})]$ with respect to the randomness of data samples $\x_{1},\dots,\x_{n}$ and of Algorithm \ref{algorithm:SMM} (e.g., possibly randomized choice of blocks in Algorithm \ref{algorithm:BSM-DR}). We remark that \ref{assumption:A7_param_surr} in fact is not necessary for deriving Theorems \ref{thm:rate_surrogate_gaps} and  \ref{thm:rate_stationarity}.
	
	Next, we state a corollary of Theorems \ref{thm:rate_surrogate_gaps} and \ref{thm:rate_stationarity}, which specializes these results to a more familiar setting of unconstrained nonconvex optimization. 
	\begin{corollary}\label{cor:unconstrained}
		Let $(\param_{n})_{n\ge 1}$ be an output of Algorithm \ref{algorithm:SMM}. 
		Make the same assumption as in Theorem \ref{thm:global_convergence}. Further, assume that $\param_{n}$ is in the interior of $\Param$ for all $n\ge 1$. Then asymptotically almost surely, 
		\begin{align}\label{eq:cor_rates_bounds}
			&	\min_{1\le k \le n} 	\left\lVert  \nabla \bar{g}_{k}(\param_{k}) \right\rVert^{2} = O \left( \left(\sum_{k=1}^{n} w_{k}\right)^{-2}  \right), \quad 	\min_{1\le k \le n} 	\left\lVert  \nabla \bar{f}_{k}(\param_{k}) \right\rVert^{2} = O \left( \left(\sum_{k=1}^{n} w_{k}\right)^{-1}  \right),\\
			&\hspace{2.7cm} \quad 	\min_{1\le k \le n} 	\left\lVert  \nabla f(\param_{k}) \right\rVert^{2} = O \left( \left(\sum_{k=1}^{n} w_{k}\right)^{-1}  \right).
		\end{align}
	\end{corollary}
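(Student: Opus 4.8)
The plan is to read each of the three estimates directly off Theorem \ref{thm:rate_stationarity}, using the interior hypothesis to convert the constrained first-order optimality measures appearing there into ordinary gradient norms, and then squaring. The one substantive ingredient is the interior-point identity: for any differentiable $h:\Param\to\R$ and any $\param_{k}$ in the interior of $\Param$,
\begin{align}
	-\inf_{\param\in\Param}\left\langle \nabla h(\param_{k}),\, \frac{\param-\param_{k}}{\lVert \param-\param_{k}\rVert}\right\rangle = \lVert \nabla h(\param_{k})\rVert .
\end{align}
I would justify this exactly as in the reduction noted after \eqref{eq:stationary_approximate}: since $\param_{k}$ is interior there is a ball $B(\param_{k},\delta)\subseteq\Param$, so every unit vector $u$ arises as a feasible direction $(\param-\param_{k})/\lVert\param-\param_{k}\rVert$ by taking $\param=\param_{k}+(\delta/2)u$; hence the infimum over feasible directions equals $\inf_{\lVert u\rVert=1}\langle\nabla h(\param_{k}),u\rangle=-\lVert\nabla h(\param_{k})\rVert$, attained at $u=-\nabla h(\param_{k})/\lVert\nabla h(\param_{k})\rVert$ (and equal to $0$ when the gradient vanishes).

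Next I would apply this identity with $h=\bar{g}_{k}$, $h=\bar{f}_{k}$, and $h=f$. Under the interior hypothesis the left-hand sides of \eqref{eq:thm_convergence_bd_surrogate}, \eqref{eq:thm_convergence_bd_f_t}, and \eqref{eq:thm_convergence_bd_f} become, respectively, $\min_{1\le k\le n}\lVert\nabla\bar{g}_{k}(\param_{k})\rVert$, $\min_{1\le k\le n}\lVert\nabla\bar{f}_{k}(\param_{k})\rVert$, and $\min_{1\le k\le n}\lVert\nabla f(\param_{k})\rVert$. Theorem \ref{thm:rate_stationarity} then yields, asymptotically almost surely,
\begin{align}
	\min_{1\le k\le n}\lVert\nabla\bar{g}_{k}(\param_{k})\rVert = O\left(\left(\sum_{k=1}^{n}w_{k}\right)^{-1}\right),\qquad \min_{1\le k\le n}\lVert\nabla\bar{f}_{k}(\param_{k})\rVert = \min_{1\le k\le n}\lVert\nabla f(\param_{k})\rVert = O\left(\left(\sum_{k=1}^{n}w_{k}\right)^{-1/2}\right).
\end{align}
Finally I would square these bounds, using that gradient norms are nonnegative and $x\mapsto x^{2}$ is increasing on $[0,\infty)$, so that $\min_{k}\lVert\nabla h(\param_{k})\rVert^{2}=\left(\min_{k}\lVert\nabla h(\param_{k})\rVert\right)^{2}$ for each choice of $h$; the three displayed estimates in \eqref{eq:cor_rates_bounds} follow at once.

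I expect no genuine analytic difficulty here, since all the quantitative content is inherited from Theorem \ref{thm:rate_stationarity} (the surrogate-gap bounds of Theorem \ref{thm:rate_surrogate_gaps} are not needed for this derivation). The points that require care are purely bookkeeping. First, one must check that commuting the minimum with squaring is compatible with the ``asymptotically almost surely'' quantifier and with the random Landau constants; this is immediate because squaring is monotone on $[0,\infty)$ and the random constant is merely squared, while the eventual-in-$n$ quantifier is unaffected. Second, one must track hypotheses: the expected-loss estimate \eqref{eq:thm_convergence_bd_f} is established in Theorem \ref{thm:rate_stationarity} only under the optional condition in \ref{assumption:A4-w_t}, so the third bound in \eqref{eq:cor_rates_bounds} inherits that requirement, whereas the surrogate and empirical-loss bounds need only the hypotheses of Theorem \ref{thm:global_convergence}.
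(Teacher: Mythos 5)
Your treatment of the first two bounds is correct and coincides with the paper's proof: the interior hypothesis turns the constrained stationarity measure $-\inf_{\param\in\Param}\langle\nabla h(\param_{k}),(\param-\param_{k})/\lVert\param-\param_{k}\rVert\rangle$ into $\lVert\nabla h(\param_{k})\rVert$, and squaring the bounds in \eqref{eq:thm_convergence_bd_surrogate} and \eqref{eq:thm_convergence_bd_f_t} gives the first two estimates in \eqref{eq:cor_rates_bounds}.

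The gap is in the third bound. You derive it from \eqref{eq:thm_convergence_bd_f}, which Theorem \ref{thm:rate_stationarity} \textbf{(ii)} establishes only under the optional condition in \ref{assumption:A4-w_t} (essentially $\sum_{n}w_{n}^{2}\sqrt{n}<\infty$). But the corollary assumes only the hypotheses of Theorem \ref{thm:global_convergence}, which explicitly exclude that optional condition; you noticed the mismatch, but resolving it by declaring that the third bound ``inherits that requirement'' amounts to proving a weaker statement than the one asserted. The paper proves the third bound \emph{without} the optional condition by a different decomposition: it splits $\nabla f(\param_{k})$ through the deterministic quantity $\E[\nabla\bar{g}_{k}(\param_{k})]$ rather than through $\nabla\bar{g}_{k}(\param_{k})$ itself. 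Concretely, it combines the bound \eqref{eq:thm2_pf_variation11} from the proof of Theorem \ref{thm:rate_surrogate_gaps}, namely $\sum_{n}w_{n+1}\lVert\E[\nabla\bar{g}_{n}(\param_{n})]-\nabla f(\param_{n})\rVert^{2}<\infty$ (which only needs the in-expectation concentration $\sup_{\param}\lVert\nabla f(\param)-\E[\nabla\bar{f}_{n}(\param)]\rVert\le Cw_{n}$ of Lemma \ref{lem:f_n_concentration_L1_gen}, hence no optional condition), with Lemma \ref{lem:finite_variation_surr}, which at interior points controls $\sum_{n}w_{n+1}\E[\lVert\nabla\bar{g}_{n}(\param_{n})\rVert]$. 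Applying Lemma \ref{lem:positive_convergence_lemma} to the sum of these two terms, picking the minimizing subsequence $t_{n}$, and using the triangle inequality
\begin{align}
	\lVert\nabla f(\param_{t_{n}})\rVert \le \E\left[\lVert\nabla\bar{g}_{t_{n}}(\param_{t_{n}})\rVert\right] + \left\lVert\E[\nabla\bar{g}_{t_{n}}(\param_{t_{n}})]-\nabla f(\param_{t_{n}})\right\rVert = O\left(\left(\sum_{k=1}^{t_{n}}w_{k}\right)^{-1/2}\right)
\end{align}
yields $\min_{1\le k\le n}\lVert\nabla f(\param_{k})\rVert^{2}=O((\sum_{k=1}^{n}w_{k})^{-1})$ under exactly the stated hypotheses. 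This detour — trading the pathwise quantity $\lVert\nabla\bar{g}_{k}(\param_{k})-\nabla f(\param_{k})\rVert$ (which needs the strong square-summability) for its in-expectation counterpart — is the substantive idea your proposal is missing.
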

	
	Below we unpack the above results and give some remarks. For a more direct interpretation of our results, we will consider the weight $w_{n}=n^{-\beta}/(\log n)^{1+\eps}$ for some $\beta\in [1/2,1)$ and $\eps>0$ as in \ref{assumption:A4'-w_t}. Then the bound in \eqref{eq:bound_typical_choice} is optimized at $\beta=1/2$ for each fixed $\eps>0$ to 
	\begin{align}
		\left( \sum_{k=1}^{n} w_{n}\right)^{-1} = O(n^{-1/2} (\log n)^{1+\eps}).
	\end{align}
	Suppose all iterate $\param_{n}$ are in the interior of $\Param$. This would be a reasonable assumption when all stationary points of the averaged surrogates $\bar{g}_{n}$ are in the interior of $\Param$. Then	by Corollary \ref{cor:unconstrained}, 
	\begin{align}\label{eq:cor_rates_bounds1}
		&	\min_{1\le k \le n} 	\left\lVert  \nabla \bar{g}_{k}(\param_{k}) \right\rVert^{2} = O\left(  \frac{(\log n)^{2+2\eps}}{n} \right),  \quad 	\min_{1\le k \le n} 	\left\lVert  \nabla \bar{f}_{k}(\param_{k}) \right\rVert^{2} = O\left(  \frac{(\log n)^{1+\eps}}{\sqrt{n}} \right) ,\\
		&\hspace{2.7cm} \quad 	\min_{1\le k \le n} 	\left\lVert  \nabla f(\param_{k}) \right\rVert^{2} = O\left(  \frac{(\log n)^{1+\eps}}{\sqrt{n}} \right).
	\end{align}
	The last asymptotic expresses a well-known rate of convergence bound for nonconvex and unconstrained SGD  \cite{sun2018markov, ward2019adagrad}. A similar rate of convergence for nonconvex and constrained projected SGD is also known in \cite{davis2019stochastic}, although a different measure (using Moreu envelope) of the rate of convergence for the constrained nonconvex problems was used.
	
	For the general case when $\bar{g}_{n}$ may have its stationary points at the boundary of $\Param$, we cannot measure first-order optimality simply by the gradient norms as above. In this case, for the empirical loss minimization, \eqref{eq:thm_convergence_bd_f_t} in Theorem \ref{thm:rate_stationarity} states that 
	\begin{align}
		\min_{1\le k \le n}  \,\,\left[ -\inf_{\param\in \Param} \left\langle \nabla \bar{f}_{k}(\param_{k}),\, \frac{(\param - \param_{k}) }{\lVert \param - \param_{k}\rVert} \right\rangle \right]^{2}  &= O\left(  \frac{(\log n)^{1+\eps}}{\sqrt{n}} \right),
	\end{align}
	which obtains the same asymptotic rate of convergence as in the interior stationary points case above. As for the expected loss minimization, we need to assume the strong er condition $\sum_{n=1}^{\infty} w_{n}^{2}\sqrt{n}<\infty$ (i.e., the optional condition in \ref{assumption:A4-w_t}). In this case, the optimal value of $\beta$ is $3/4$, in which case we obtain 
	\begin{align}
		\min_{1\le k \le n}  \,\, \left[  -	\inf_{\param\in \Param} \left\langle \nabla f(\param_{k}),\, \frac{(\param - \param_{k}) }{\lVert \param - \param_{k}\rVert}\right\rangle \right]^{2} = O\left(  \frac{(\log n)^{1+\eps}}{n^{1/4}} \right).
	\end{align}
	Hence we lose a factor of $n^{-1/4}$ from the optimal bound in \eqref{eq:cor_rates_bounds}. While the $n^{-1/4}$ bound on the rate of convergence for the expected loss in the general case may not be sharp, this discussion gives a more quantitative indication that SRMM (and hence SMM) is generically more suited to solve the empirical loss minimization than the expected loss minimization for constrained nonconvex objectives.

	Lastly, we state a corollary of the earlier results on the iteration complexity of Algorithm \ref{algorithm:SMM}. Corollary  \ref{cor:iteration_complexity} gives a ``worst-case'' rate of convergence of Algorithm \ref{algorithm:SMM} until reaching an $\eps$-stationary points of the objective functions. 
	
	\begin{corollary}[Iteration Complexity]\label{cor:iteration_complexity}
		Let $(\param_{n})_{n\ge 1}$ be an output of Algorithm \ref{algorithm:SMM}. 
		Make the same assumption as in Theorem \ref{thm:global_convergence}.   Then the following hold:
		\begin{description}
			\item[(i)] (Empirical Loss Minimization) Suppose $w_{n}=n^{-1/2}(\log n)^{\delta}$ for some $\delta>1$. Then we have the following worst-case iteration complexity for Algorithm \ref{algorithm:SMM} with the empirical loss objective:
			\begin{align}
				N_{\eps}(\bar{f}_{n}) = O(\eps^{-2} (\log \eps^{-1})^{\delta} ).
			\end{align}
			
			\item[(ii)] (Expected Loss Minimization) Suppose $w_{n}=n^{-3/4}(\log n)^{\delta}$ for some $\delta>1$.  
			Then we have the following worst-case iteration complexity  for Algorithm \ref{algorithm:SMM} with the expected loss objective:
			\begin{align}
				N_{\eps}(f) = O(\eps^{-4} (\log \eps^{-1})^{\delta} ).
			\end{align}
			Furthermore, suppose $\param_{n}$ is in the interior of $\Param$ for $n\ge 1$. Then we may choose $w_{n}=n^{-1/2}(\log n)^{\delta}$ for some $\delta>1$ and obtain
			\begin{align}
				N_{\eps}(f) = O(\eps^{-2} (\log \eps^{-1})^{\delta} ).
			\end{align}
		\end{description}
	\end{corollary}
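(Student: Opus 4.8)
The plan is to read the three complexity bounds directly off the stationarity-rate estimates of Theorem~\ref{thm:rate_stationarity} and Corollary~\ref{cor:unconstrained}, and then to invert the growth of the partial weight sum $S_n:=\sum_{k=1}^{n}w_k$ to locate the first index at which the relevant stationarity measure is guaranteed to drop below the target tolerance. All the cited estimates hold asymptotically almost surely, so the resulting bounds on $N_\eps$ hold almost surely, with a sample-path-dependent constant hidden in the $O(\cdot)$.

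First I would carry out the reduction common to all three cases. By \eqref{eq:stationary_approximate} and \eqref{eq:Neps}, the iterate $\param_k$ is $\eps$-approximate stationary exactly when the stationarity measure appearing on the left-hand side of \eqref{eq:thm_convergence_bd_f_t} (resp. \eqref{eq:thm_convergence_bd_f}) is at most $\sqrt{\eps}$. Because those results bound the \emph{minimum} of this measure over $k\le n$ by a deterministic decreasing function of $S_n$, we obtain $N_\eps\le n$ as soon as that function reaches $\sqrt{\eps}$. For the empirical loss (part (i)), \eqref{eq:thm_convergence_bd_f_t} gives a bound of order $S_n^{-1/2}$, so $N_\eps(\bar f_n)\le\min\{\,n:\,C\,S_n^{-1/2}\le\sqrt{\eps}\,\}=\min\{\,n:\,S_n\ge C^2\eps^{-1}\,\}$ (here $N_\eps(\bar f_n)$ is read as the first $k$ at which $\param_k$ is $\eps$-stationary for the time-$k$ loss $\bar f_k$, matching the gradient $\nabla\bar f_k(\param_k)$ in \eqref{eq:thm_convergence_bd_f_t}). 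For the general expected-loss case (part (ii)) the argument is identical, now invoking \eqref{eq:thm_convergence_bd_f}, which is available because the choice $\beta=3/4$ meets the optional condition in \ref{assumption:A4-w_t}. For the interior expected-loss case I would instead use $\min_{k\le n}\lVert\nabla f(\param_k)\rVert^{2}=O(S_n^{-1})$ from \eqref{eq:cor_rates_bounds}; here the remark following \eqref{eq:stationary_approximate} reduces $\eps$-stationarity at an interior point to $\lVert\nabla f(\param_k)\rVert^{2}\le\eps$, so again $N_\eps(f)\le\min\{\,n:\,S_n\ge C\eps^{-1}\,\}$. In every case the task collapses to finding the least $n$ with $S_n\gtrsim\eps^{-1}$.

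Next I would estimate $S_n$ and invert. For $w_n=n^{-\beta}(\log n)^{-\delta}$ with $\beta\in[1/2,1)$, integral comparison---equivalently the bound \eqref{eq:bound_typical_choice}---gives $S_n=\Theta\!\big(n^{1-\beta}(\log n)^{-\delta}\big)$, so the threshold condition becomes $n^{1-\beta}(\log n)^{-\delta}\gtrsim\eps^{-1}$. I would solve this by bootstrapping: to leading order $n\asymp\eps^{-1/(1-\beta)}$, whence $\log n\asymp(1-\beta)^{-1}\log\eps^{-1}$; feeding this estimate of $\log n$ back into the $(\log n)^{-\delta}$ factor and solving for $n$ yields $N_\eps=O\!\big(\eps^{-1/(1-\beta)}(\log\eps^{-1})^{\delta/(1-\beta)}\big)$. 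Substituting $\beta=\tfrac12$ produces the $O(\eps^{-2}\,\mathrm{polylog}(\eps^{-1}))$ bounds of part (i) and of the interior case of part (ii), while $\beta=\tfrac34$ produces the $O(\eps^{-4}\,\mathrm{polylog}(\eps^{-1}))$ bound of the general part (ii).

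The only genuinely delicate step is this inversion: the relation $n^{1-\beta}(\log n)^{-\delta}\gtrsim\eps^{-1}$ is transcendental, and the polylogarithmic factor in $N_\eps$ is dictated entirely by the self-consistent estimate $\log n\asymp(1-\beta)^{-1}\log\eps^{-1}$. I would make it rigorous by a one-sided check rather than an exact solve: set $n_\ast:=\lceil C\,\eps^{-1/(1-\beta)}(\log\eps^{-1})^{\delta/(1-\beta)}\rceil$, verify that $S_{n_\ast}\ge C'\eps^{-1}$ for suitable constants $C,C'$, and use the monotonicity of $n\mapsto S_n$ to conclude $N_\eps\le n_\ast$. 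Everything else---the asymptotics of $S_n$, the reduction through \eqref{eq:stationary_approximate}, and the case split according to whether the optional condition in \ref{assumption:A4-w_t} or the interiority hypothesis is in force---is routine once the rate estimates of Theorems~\ref{thm:rate_surrogate_gaps}--\ref{thm:rate_stationarity} are available.
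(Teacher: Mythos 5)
Your proposal is correct and takes essentially the same route as the paper's proof: both reduce $\eps$-approximate stationarity (with its $\sqrt{\eps}$ convention) through Theorem~\ref{thm:rate_stationarity} and Corollary~\ref{cor:unconstrained} to the threshold condition $\left(\sum_{k=1}^{n}w_{k}\right)^{-1}\lesssim \eps$, and then invert the transcendental relation $n^{-1/2}(\log n)^{\delta}\lesssim \eps$ by the same self-consistent estimate $\log n \asymp \mathrm{const}\cdot \log \eps^{-1}$ --- the paper's verification of the explicit threshold $n\ge \eps^{-2}(3\log \eps^{-1})^{2\delta}$ is exactly your one-sided check at $n_{*}$. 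The only discrepancy is cosmetic: both your inversion and the paper's own proof produce log exponents $2\delta$ (for $\beta=1/2$) and $4\delta$ (for $\beta=3/4$) rather than the exponent $\delta$ stated in the corollary, which you sensibly absorb into a polylogarithmic factor.
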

	
	\subsection{Remarks on main results} 
	
	Here we give some remarks regarding the main results stated in the previous subsection.

	\begin{remark}[SRMM vs. PSGD on empirical and expected loss minimization]
		
		Recall that our Theorem \ref{thm:rate_surrogate_gaps} gives the rate of convergence of our proposed algorithm of SRMM (Algorithm \ref{algorithm:SMM}) both for empirical and expected loss minimization. It is interesting to compare the rate of convergence bounds for SRMM and projected stochastic gradient descent (PSGD) algorithms with respect to both empirical and expected loss minimization. In this discussion, we omit log factors in all error bounds for simplicity and use $\widetilde{O}$ notation for big-O modulo log factors. 
		
		\begin{figure*}[h]
			\centering
			\includegraphics[width=0.55 \linewidth]{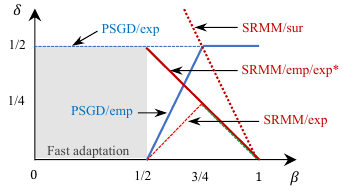}
			\caption{Diagram of upper bounds of the rate of convergence $\widetilde{O}(n^{-\delta})$ of algorithms SRMM (red) and PSGD (blue) for empirical loss minimization (denoted `emp' in solid lines) with adaptivity weight $w_{n}=n^{-\beta}$,  for expected loss minimization (denoted `exp' in dashed lines), and for surrogate loss minimization (denoted `sur' in dotted line). The red solid line SRMM/exp* is the case when the iterates are assumed to be in the interior of the constraint set. The red dotted line linearly extends to $\beta=1/2$. 
			}
			\label{fig:diagram_rates}
		\end{figure*}
		
		It is known that the optimal asymptotic rate of convergence of SGD for nonconvex problems measured in terms of gradient norm squared is $\widetilde{O}(1/\sqrt{n})$ \cite{bottou2018optimization, xu2019convergence}. Similarly, PSGD for constrained nonconvex problems also has the same optimal rate of convergence \cite{davis2019stochastic}, although a direct comparison is not immediate in this case since a near-stationarity measure using gradient norm squared of Moreau envelope is used. It should be noted that all these convergence rate bounds are with respect to expected loss functions and considering (P)SGD rate of convergence for empirical loss is less standard in the literature. Nonetheless, one can transfer the rate of convergence between these two objectives, at least in expectation, using the following inequality:
		\begin{align}\label{eq:conv_rate_transfer1}
			\E\left[ \left| \inf_{\param\in \Param} \left\langle \nabla \bar{f}_{k}(\param_{k}),\, \frac{\param - \param_{k} }{\lVert \param - \param_{k}\rVert} \right\rangle  -  \inf_{\param\in \Param} \left\langle \nabla f(\param_{k}),\, \frac{\param - \param_{k} }{\lVert \param - \param_{k}\rVert} \right\rangle  \right|  \right] 
			&\le \E\left[ \sup_{\param\in \Param} \lVert \nabla \bar{f}(\param) - \nabla f(\param) \rVert \right] \\
			&= O(w_{n}\sqrt{n}). 
		\end{align}
		Indeed, the inequality follows from the Cauchy-Schwarz inequality and the equality is from Lemma \ref{lem:f_n_concentration_L1_gen}. Assuming $w_{n}=n^{-\beta}$ for $\beta \in [0,1]$, this implies 
		\begin{align}
			\min_{1\le k \le n}  \,\, \E \left[ -\inf_{\param\in \Param} \left\langle \nabla \bar{f}_{k}(\param_{k}),\, \frac{\param - \param_{k} }{\lVert \param - \param_{k}\rVert} \right\rangle \right]  &\le \min_{1\le k \le n}  \,\, \E \left[ -\inf_{\param\in \Param} \left\langle \nabla f(\param_{k}),\, \frac{\param - \param_{k} }{\lVert \param - \param_{k}\rVert} \right\rangle \right]    + O(n^{-\beta+1/2}).
		\end{align}
		
		For PSGD (also for SGD), the first term on the right-hand side is of $\widetilde{O}(n^{-1/4})$, so this implies that the rate of convergence for PSGD with respect to the empirical loss function is bounded by $\tilde{O}(n^{ -\delta  })$ with $\delta=\min(1/2, 2\beta-1)$. Note that $\delta$ stays constant $1/2$ for $\beta\in [3/4, 1]$ and decays linearly to 0 when $\beta$ decreases from $3/4$ to $1/2$ (see the solid blue line in Figure \ref{fig:diagram_rates}). This means that PSGD becomes inefficient in minimizing the empirical loss with adaptivity weight $w_{n}=n^{-\beta}$  when $\beta$ is closed to $1/2$. 
		
		On the contrary, according to Theorem \ref{thm:rate_stationarity}, SRMM rate of convergence for the empirical loss function in fact improves linearly as $\beta$ decays from 1 to $1/2$, with a better rate than PSGD over the interval $\beta\in [1/2, 2/3)$, achieving rate $\widetilde{O}(n^{-1/2})$ at $\beta=1/2$. (See the solid red line in Figure \ref{fig:diagram_rates}). In our analysis, for SRMM, we obtain twice the rate of convergence $\widetilde{O}(n^{-2(1-\beta)})$ when the averaged surrogates are minimized approximately (see the red dotted line in Figure \ref{fig:diagram_rates}), and then we transferred it to a rate bound for the empirical loss and then to the expected loss. This comparison indicates that SRMM is more adapted to and also more effective in minimizing empirical loss function than PSGD is, but PSGD may in general be more adapted to minimizing the expected loss than SRMM is. 
		
		Our analysis of SRMM critically relies on the fact that we can compare the averaged surrogate loss $\bar{g}_{n}$ with the empirical loss $\bar{f}_{n}$, as stated in Lemma \ref{lem:f_n_concentration_L1_gen}. This result is only valid in what we call the `slow adaptation regime' $\beta\in (1/2, 1]$. This is where we can use CLT-type arguments to connect the empirical and the expected loss minimization. Hence, it is an interesting open problem to analyze the rate of convergence of SRMM in the `fast adaptation regime' of $\beta\in [0,1/2]$, which is depicted as the grey region in Figure \ref{fig:diagram_rates}. Empirical loss in this regime will fast adapt to newly observed data, so minimizing it will capture more fast-paced, short-time-scale features from streaming data. We speculate that SRMM is more effective than PSGD for such a purpose. 
	\end{remark}

	\begin{remark}[Iterate stability and regularization]
		We give some remarks on the use of proximal regularization and diminishing radius in Algorithms \ref{algorithm:BSM-DR} and \ref{algorithm:BSM-PR}, and also why we need to assume i.i.d. data sampling for the case \ref{C3} of weakly convex surrogates with proximal regularization. 
		
		As we mentioned earlier in Section \ref{section:algorithm}, a key property of Algorithm \ref{algorithm:SMM} for cases \ref{C1}-\ref{C2} stated in Theorem \ref{thm:global_convergence} is the iterate stability $\lVert \param_{n}-\param_{n-1} \rVert=O(w_{n})$ (see Lemma \ref{lem:stability}), which was critically used in the SMM literature \cite{mairal2010online, mairal2013stochastic, mensch2017stochastic, lyu2020online, lyu2020online_CP}. It is crucial in deducing asymptotic convergence to stationary points stated in Theorem \ref{thm:global_convergence} by using Lemma \ref{lem:positive_convergence_lemma}. Since $w_{n}$'s are square summable (see \ref{assumption:A4-w_t}), it implies the  `weak iterate stability', $\sum_{n=1}^{\infty} \lVert \param_{n}-\param_{n-1} \rVert^{2}<\infty$. Our analysis shows that we can still retain the rate of convergence results stated in Theorems \ref{thm:rate_surrogate_gaps} and \ref{thm:rate_stationarity} if we only had this weak iterate stability. While in the classical block coordinate descent setting of non-stochastic optimization the weak iterate stability is sufficient to derive asymptotic stationarity of the iterates (see, e.g., \cite{grippo2000convergence}, \cite{xu2013block}), it does not seem to be immediate for the stochastic setting we consider in this work.

		The technique of diminishing radius constraints we used in Algorithm \ref{algorithm:BSM-DR} to handle multi-convex surrogates was first introduced in \cite{lyu2020convergence} to analyze convergence and complexity of cyclic BCD algorithms for minimizing block multi-convex functions on convex constraint sets. Enforcing additional trust region condition $\lVert \param_{n}-\param_{n-1}\rVert =O(w_{n})$ with diminishing radii $O(w_{n})$ bakes iterate stability directly into the algorithm, although that this auxiliary constraint is weak enough to retain asymptotic stability with respect to the original objective function needs to be argued (Lemma \ref{lem:asymptotic_stationarity}). In comparison, with proximal regularization in Algorithm \ref{algorithm:BSM-PR}, we were able to only show weak asymptotic stability of the form $\E[\sum_{n=1}^{\infty} \lVert \param_{n}-\param_{n-1} \rVert^{2}]<\infty$ (see Lemma \ref{lem:pos_variation} \textbf{(vi)}). This was assuming i.i.d. data sampling assumption in comparison to the more general Markovian data assumption for cases \ref{C1}-\ref{C2}. In order to handle Markovian dependence (especially in proving Lemma \ref{lem:pos_variation}), we use the technique of `conditioning on distant past' first introduced in \cite{lyu2020online}. It is to bound the error involving $\param_{n+1}$ by conditioning on the past $\sigma$-algebra $\mathcal{F}_{n-N}$ and using Markov chain mixing during the interval $[n-N, n]$. This requires to control $\lVert \param_{n} - \param_{n-N} \rVert$, which is difficult without a priori iterate stability with proximal regularization. However, when the data samples are i.i.d., then we can condition directly on $\mathcal{F}_{n}$ so that we avoid using iterate stability.

		\begin{remark}[Asymptotic stationarity and inexact surrogate minimization]
			We discuss the effect of using inexact minimization of block multi-convex surrogates on convergence rates. First, consider the case \ref{C1} in Theorem \ref{thm:global_convergence} together with identically zero optimality gaps (see \ref{assumption:A5_sufficient_surrogate_decay}). In this case, each $\param_{n}$ is the minimizer of the strongly convex surrogates $\bar{g}_{n}$ over $\Param$ so that $-\nabla \bar{g}_{n}(\param_{n})$ is in the normal cone of $\Param$ at $\param_{n}$. Hence Theorem \ref{thm:rate_surrogate_gaps} can be directly used to obtain \eqref{eq:thm_convergence_bd_f_t} and \eqref{eq:thm_convergence_bd_f}. In particular, $\lVert \nabla \bar{g}_{n}(\param_{n}) \rVert$ is identically zero when there are no boundary stationary points of $\bar{g}_{n}$.
			
			In all other cases covered in \ref{C1}-\ref{C3} in Theorem \ref{thm:global_convergence}, each estimate $\param_{n}$ is only an approximate minimizer of the averaged surrogate $\bar{g}_{n}$ over $\Param$ due to the combination of the following factors:  non-convexity of $\bar{g}_{n}$; the use of regularization in surrogate minimization in Algorithms \ref{algorithm:BSM-PR} and \ref{algorithm:BSM-DR};  possible nonzero optimality gap (see \ref{assumption:A5_sufficient_surrogate_decay}) in solving convex subproblems in surrogate minimization. Hence, $-\nabla \bar{g}_{n}(\param_{n})$ is not necessarily in the normal cone of $\Param$ at $\param_{n}$ in the general case. However, \eqref{eq:thm_convergence_bd_surrogate} shows that $-\nabla \bar{g}_{n}(\param_{n})$ should subsequentially converge to some vector in the normal cone of $\Param$ at $\param_{n}$ at rate $O((\sum_{k=1}^{n} w_{k})^{-1})$, and by \eqref{eq:thm_convergence_bd_f}, the bound on the rate of subsequential convergence to stationary points of $f$ over $\Param$ is slower at order $O((\sum_{k=1}^{n} w_{k})^{-1/2})$. We can ensure that these two convergence rates are achieved over the same subsequence. Hence, the asymptotic rate of convergence is unchanged when we relax strongly convex surrogates to weakly convex or block multi-convex surrogates.
		\end{remark}

	\end{remark}
	

	\section{Applications and Experiments}
	\label{section:applications}

\subsection{Applications}  

In this subsection, we discuss some applications of our general results in the setting of online matrix and tensor factorization problems. 

\subsubsection{Double-averaging PSGD and its generalization}
\label{sec:double_avg}

In this section, we will apply our general framework of SRMM \eqref{eq:def_SRMM} to derive convergence results for variants of PSGD such as the `double-averaging PSGD' due to Nesterov and Shikhman \cite{nesterov2015quasi} as well as its generalization.

Suppose we have a prescribed weight sequence $(w_{n})_{m\ge 1}$ and hyperparameters $L,\lambda\ge 0$. Consider the following iterates 
\begin{align}\label{eq:two_way_PSGD_gen}
	\begin{cases}
		\overline{\nabla}_{n} &\leftarrow (1-w_{n}) 	\overline{\nabla}_{n-1} + w_{n} \nabla_{\param} \ell(\x_{n}, \param_{n-1}) \qquad (\overline{\nabla}_{0}:=\mathbf{0}) \\
		\bar{\param}_{n-1} &\leftarrow (1-w_{n}) 	\bar{\param}_{n-2} + w_{n} \param_{n-1} \qquad (\bar{\param}_{0}:=\param_{0}) \\
		\tilde{\param}_{n-1} &\leftarrow \frac{L}{L+\lambda} 		\bar{\param}_{n-1} + \frac{\lambda}{L+\lambda} \param_{n-1} \qquad (\widetilde{\param}_{0}:=\param_{0}) \\
		\param_{n} &\leftarrow \textup{Proj}_{\Param} \left(  \widetilde{\param}_{n-1} - \frac{1}{L+\lambda} \overline{\nabla}_{n}\right).
	\end{cases}
\end{align}
Setting $\lambda=0$ reduces \eqref{eq:two_way_PSGD_gen} to the following `double-averaging PSGD', which was first investigated by  Nesterov and Shikhman \cite{nesterov2015quasi}:
\begin{align}\label{eq:two_way_PSGD}
	\begin{cases}
		\overline{\nabla}_{n} &\leftarrow (1-w_{n}) 	\overline{\nabla}_{n-1} + w_{n} \nabla_{\param} \ell(\x_{n}, \param_{n-1}) \qquad (\overline{\nabla}_{0}:=\mathbf{0}) \\
		\bar{\param}_{n-1} &\leftarrow (1-w_{n}) 	\bar{\param}_{n-2} + w_{n} \param_{n-1} \qquad (\bar{\param}_{0}:=\param_{0}) \\
		\param_{n} &\leftarrow \textup{Proj}_{\Param} \left(  \bar{\param}_{n-1} - \frac{1}{L} \overline{\nabla}_{n}\right).
	\end{cases}
\end{align}

Compared to the standard PSGD update $\param_{n} \leftarrow \textup{Proj}_{\Param} \left(  \param_{n-1} - \frac{1}{L}\nabla_{\param} \ell(\x_{k}, \param_{k-1})  \right)$ with fixed step size $1/L$, the algorithm \eqref{eq:two_way_PSGD} uses the recursively averaged gradient $\overline{\nabla}_{n}$ as well as the recursively averaged iterate $\bar{\param}_{n-1}$. 
The update $\param_{n} \leftarrow \textup{Proj}_{\Param} \left(  \param_{n-1} - \frac{1}{L} \overline{\nabla}_{n}  \right)$, where one uses the averaged gradients in place of the stochastic gradient, is known as `dual-averaging'  \cite{nesterov2009primal, xiao2009dual}. For convex objectives in the online setting with i.i.d. observations, \cite{xiao2009dual} obtained a bound on the regret of this method of order $O(\sqrt{t})$. Compared to such dual-averaging methods, the double-averaging method \eqref{eq:two_way_PSGD} uses additional inline averaging of the iterates, which is known to be equivalent to using a momentum  \cite{defazio2020understanding}. 

In \cite{nesterov2015quasi}, Nestrove and Shikhman showed that for convex (possibly non-smooth) objective functions in the offline setting, the double-averaging method \eqref{eq:two_way_PSGD} generates a sequence of iterates that decreases the optimality gap in the objective value as $O(t^{-1/2})$. Moreover, such a rate of convergence is justified for the \textit{whole sequence} of test points for the first time for subgradient methods. However, to the author's best knowledge, the double-averaging method has not been analyzed beyond this setting. This is in contrast to the standard PSGD case, which has been analyzed for general non-smooth nonconvex objectives under i.i.d. \cite{davis2019stochastic} and Markovian \cite{alacaoglu2022convergence} data assumption. As an application of our general SRMM analysis, we will derive very general convergence and complexity results for the iterates \eqref{eq:two_way_PSGD_gen} (and hence for its specialization \eqref{eq:two_way_PSGD} as well). 

Assuming the per-sample loss function $f_{n}(\param):= \ell(\x, \param)$ is $L$-smooth for each data point $\x$, the following prox-linear surrogate (see Ex. \ref{ex:prox_linear}) is indeed a majorizing surrogate of $f_{n}$:
\begin{align}\label{eq:def_prox_linear_PSGD}
	g_{n}(\param) := \ell(\x_{n}, \param_{n-1}) + \langle \nabla_{\param} \ell(\x_{n}, \param_{n-1}),\, \param - \param_{n-1}  \rangle  + \frac{L}{2} \lVert \param- \param_{n-1} \rVert^{2}.
\end{align}
Now we claim that the generalized double-averaging scheme \eqref{eq:two_way_PSGD_gen} is in fact equivalent to the following SRMM iterate 
\begin{align}\label{eq:PSGD_SRMM1}
	\param_{n} &= \argmin_{\param\in \Param} \left( \bar{g}_{n}(\param) + \frac{\lambda}{2}\lVert \param-\param_{n-1} \rVert^{2} \right), 
\end{align}
where $\bar{g}_{n}(\param):=(1-w_{n}) \bar{g}_{n-1}(\param) + w_{n} g_{n}(\param)$ denotes the recursive average of the surrogates as in \eqref{eq:def_SRMM}. Indeed, letting  $\widetilde{\param}_{n-1}:=\frac{L}{L+\lambda} \bar{\param}_{n-1} + \frac{\lambda}{L+\lambda} \param_{n-1} $, one can easily see that \eqref{eq:PSGD_SRMM1} is equivalent to 
\begin{align}
	\param_{n} 
	&= \argmin_{\param\in \Param}\,\,  \langle \overline{\nabla}_{n},\, \param   \rangle + \frac{L+\lambda}{2} \lVert \param \rVert^{2} - (L+\lambda) \left\langle \param,\, \widetilde{\param}_{n-1}  \right\rangle \\
	&= \argmin_{\param\in \Param}\,\,  \left\lVert  \param - (\widetilde{\param}_{n-1} - \frac{1}{L+\lambda} \overline{\nabla}_{n} ) \right\rVert^{2}\\
	&= \textup{Proj}_{\Param} \left(  \widetilde{\param}_{n-1} - \frac{1}{L+\lambda} \overline{\nabla}_{n}\right). 
\end{align}

Interestingly, the above derivation also tells us that the double-averaging scheme \eqref{eq:two_way_PSGD} is equivalent to the SMM update $\param_{n} = \argmin_{\param\in \Param}  \bar{g}_{n}(\param)$ with the prox-linear surrogates in \eqref{eq:def_prox_linear_PSGD}. Using the additional proximal regularization with parameter $\lambda \ge 0$ as in \eqref{eq:PSGD_SRMM1} has the effect of additional averaging of the parameters, giving a constant weight $\frac{\lambda}{L+\lambda}$ to the most recent iterate $\param_{n-1}$ (see \eqref{eq:two_way_PSGD_gen}) instead of the possibly decaying weight $w_{n}$ (see \eqref{eq:two_way_PSGD}).

Now that we have verified the generalized double-averaging PSGD \eqref{eq:two_way_PSGD_gen} is a special instance of SRMM \eqref{eq:def_SRMM} with prox-linear surrogate and proximal regularizer, we obtain the following general convergence and complexity results for the \eqref{eq:two_way_PSGD_gen} as well as \eqref{eq:two_way_PSGD}. 

\begin{corollary}[Convergence and complexity of double-averaging  PSGD]
	Theorems \ref{thm:global_convergence}, \ref{thm:rate_surrogate_gaps}, and \ref{thm:rate_stationarity} hold for the double-averaging PSGD \eqref{eq:two_way_PSGD} (case \ref{C1} with prox-linear surrogates) and its generalization \eqref{eq:two_way_PSGD_gen} (case \ref{C3} with prox-linear surrogates and proximal regularization). 
\end{corollary}

Next, our general framework enables us to consider a block coordinate version of the double-averaging PSGD \eqref{eq:two_way_PSGD}, which will be an example of Algorithm \ref{algorithm:SMM} in case \ref{C2}. For instance, instead of updating the entire parameter $\param_{n}$, we may only update its $i$th block $\theta_{n}^{(i)}$ where $i$ is chosen uniformly at random independently at each iteration:
\begin{align}\label{eq:two_way_PSGD_block}
	\begin{cases}
		i &\sim \textup{Uniform}(\{1,\dots,m\}) \\
		\overline{\nabla}_{n}^{(i)} &\leftarrow (1-w_{n}) 	\overline{\nabla}_{n-1}^{(i)} + w_{n} \nabla_{\theta^{(i)}} \ell(\x_{n}, \param_{n-1}) \qquad (\overline{\nabla}_{0}^{(i)}:=\mathbf{0}) \\
		\bar{\theta}_{n-1}^{(i)} &\leftarrow (1-w_{n}) 	\bar{\theta}_{n-2}^{(i)} + w_{n} \theta_{n-1}^{(i)} \qquad (\bar{\theta}_{0}^{(i)}:=\theta_{0}^{(i)}) \\
		\theta_{n}^{(i)} &\leftarrow \textup{Proj}_{\Theta^{(i)} \cap \{ \theta\,:\, \lVert \theta - \theta_{n-1}^{(i)} \rVert \le w_{n} \} } \left(  \bar{\theta}_{n-1}^{(i)} - \frac{1}{L} \overline{\nabla}_{n}^{(i)}\right).
	\end{cases}
\end{align}
Note that in \eqref{eq:two_way_PSGD_block}, we used the diminishing radius regularization to update $\theta_{n}^{(i)}$. Also notice that the above block version of  \eqref{eq:two_way_PSGD} has the computational advantage that one only needs to compute the partial stochastic gradient $\nabla_{\theta^{(i)}} \ell(\x_{k}, \param_{k-1})$ instead of the full stochastic gradient $\nabla_{\param} \ell(\x_{k}, \param_{k-1})$, which could be significant when the parameter dimension $p$ is large. For instance, here we can choose individual coordinates as single blocks so the per-iteration computational cost of \eqref{eq:two_way_PSGD_block} is of $O(1)$ plus the cost of sampling the new data point $\x_{n}$ and for the projection onto the convex set  $\Theta^{(i)} \cap \{ \theta\,:\, \lVert \theta - \theta_{n-1}^{(i)} \rVert \le w_{n} \}$.

As before, our general result immediately implies the following convergence and complexity result for the randomized block variant of the double-averaging PSGD \eqref{eq:two_way_PSGD_block}. 

\begin{corollary}[Convergence and complexity of double-averaging PSGD]
	Theorems \ref{thm:global_convergence}, \ref{thm:rate_surrogate_gaps}, and \ref{thm:rate_stationarity} hold for the randomized block variant  of the double-averaging PSGD \eqref{eq:two_way_PSGD_block} (case \ref{C2} with prox-linear surrogates and randomized coordinate descent)
\end{corollary}

We remark that the above corollary holds if we used cyclic block coordinate descent in \eqref{eq:two_way_PSGD_block} instead of randomly choosing block coordinates to update. 

\subsubsection{Proximal point empirical loss minimization} 
\label{sec:PPERM}

Consider the following iterates \eqref{eq:prox_ERM}, where we recursively update the empirical loss function $\bar{f}_{n}$ and minimize is proximal point modification $\tilde{f}_{n}$ over the parameter space $\Param$. 
\begin{align}\label{eq:prox_ERM}
	\begin{cases}
		\bar{f}_{n}(\param) &\leftarrow (1-w_{n}) \bar{f}_{n-1}(\param) + w_{n} \ell(\x_{n}, \param) \\
		\param_{n} &\in \argmin_{\param\in \Param} \,\left(  \tilde{f}_{n}(\param):=\bar{f}_{n}(\param) + \frac{\lambda}{2} \lVert \param-\param_{n-1} \rVert^{2} \right)
	\end{cases}
\end{align}
Under \ref{assumption:A1-ell_smooth}, each $\bar{f}_{n}$ is $L$-smooth so if $\lambda>L$, then the proximal point modification $\tilde{f}_{n}$ is $(\lambda-L)$-strongly convex (see Lemma \ref{lem:L_smooth_weak_convex}). The proximal point method has been used in empirical loss minimization problems with convex objectives \cite{frostig2015regularizing, lin2015accelerated}.

It is easy to see that \eqref{eq:prox_ERM} is an instance of case \ref{C3}, where we use the trivial surrogate $g_{n}(\cdot)=\ell(\x_{n},\cdot)$. Hence our general results yield that the proximal point ERM \eqref{eq:prox_ERM} in the general non-convex constrained setting with dependent data. 

\begin{corollary}[Convergence and complexity of Proximal Point ERM]
	Theorems \ref{thm:global_convergence}, \ref{thm:rate_surrogate_gaps}, and \ref{thm:rate_stationarity} hold for the proximal point ERM \eqref{eq:prox_ERM}.
\end{corollary}

\subsubsection{Online (Nonnegative) Matrix Factorization }
\label{subsection:OMF}

Consider the matrix factorization loss $f(W,H):=\lVert X - WH \rVert^{2} + \lambda \lVert H \rVert_{1}$, where $X\in \R^{p\times d}$ is a given data matrix to be factorized into the product of dictionary $W\in \R^{p\times r}$ and the code $H\in \R^{r\times d}$ with $\lambda\ge 0$ being the $L_{1}$-regularization parameter for $H$. Clearly $f$ is two-block multi-convex and differentiable with respect to $W$ with gradient $\nabla_{W} f(W,H) = (X-WH)H^{T}$. Fix compact and convex constraint sets $\Theta_{1}\subseteq \R^{p\times r}$ and $\Theta_{2}\subseteq \R^{r\times d}$.  Then $W\mapsto \nabla f(W,H)$ and $H\mapsto \nabla f(W,H)$ are both Lipschitz over the compact and convex set $\Theta_{1}\times \Theta_{2}$. 

Suppose we have a sequence of data matrices $(X_{n})_{n\ge 1}$ and a sequence of weights $(w_{n})_{n\ge 1}$. For each $n\ge 1$, the function $f_{n}:W\mapsto \ell(X_{n},W)= \inf_{H\in \Theta_{2}}\lVert X_{n}-WH \rVert^{2} + \lambda \lVert H \rVert_{1}$ is the minimum reconstruction error for factorizing $X_{n}$ using the dictionary matrix $W$. The corresponding empirical loss minimization problem is 
\begin{align}
	W_{n} \in \argmin_{W\in \Theta_{1}} \bar{f}_{n}(W):= (1-w_{n}) \bar{f}_{n-1}(W) + w_{n} \ell(X_{n}, W).  
\end{align}
If we have a target distribution $\pi$ for the data matrices $X_{n}$, then  the corresponding expected loss minimization problem is 
\begin{align}\label{eq:OMF_expectation}
	W \in \argmin_{W\in \Theta_{1}} \, f(W):=\E_{X\sim \pi}\left[ \ell(X, W) \right]. 
\end{align}
The latter is known as the online matrix factorization problem introduced in \cite{mairal2010online}. A well-known instance is the online nonnegative matrix factorization, which corresponds to \eqref{eq:OMF_expectation} with nonnegativity constraints for $W$ and $H$. 

In order to apply Algorithm \ref{algorithm:SMM} in this setting, denote  $H_{n}\in \argmin_{H\in \Theta_{2}}\lVert X_{n} - W_{n-1}H \rVert_{F}^{2} + \lambda \lVert H\rVert_{1}$, which is an optimal code for factorizing $X_{n}$ using the previous dictionary $W_{n-1}$. Then the function $g_{n}(W)=\lVert X_{n}-W H_{n} \rVert_{F}^{2} + \lambda \lVert H_{n} \rVert_{1}$ is a  surrogate of $f_{n}:W\mapsto \ell(X_{n},W)$ at $W_{n-1}$ and belongs to $\surr_{L'',0}(f_{n})$ for some $L''>0$ (see Ex. \ref{ex:cvx_variational_surrogate}.) A simple calculation shows that the resulting averaged surrogate function $\bar{g}_{n}$ becomes 
\begin{align}
	\bar{g}_{n}(W) =  \tr(W A_{n} W^{T})  - 2\tr(W B_{n})  + C_{n}, 
\end{align}
where the matrices $A_{n}, B_{n},C_{n}$ are recursively defined as 
\begin{align}
	&A_{n} = (1-w_{n})A_{n-1}+w_{n}H_{n}H_{n}^{T}; \quad B_{n} = (1-w_{n})B_{n-1}+w_{n}H_{n}X_{n}^{T}; \\ 
	&C_{n} = (1-w_{n})\bar{X}_{n-1}+w_{n} X_{n}X_{n}^{T}.
\end{align}
Thus Algorithm \ref{algorithm:SMM} in this case reduces to 
\begin{align}\label{eq:OMF1}
	\textit{Upon arrival of $X_{n}$:\quad }
	\begin{cases}
		H_{n} = \argmin_{H\in \Theta_{2}\subseteq \R^{r\times d}} \lVert X_{n} - W_{n-1}H \rVert_{F}^{2} + \lambda \lVert H \rVert_{1}\\
		A_{n} = (1-w_{n})A_{n-1}+w_{n}H_{n}H_{n}^{T} \\
		B_{n} = (1-w_{n})B_{n-1}+w_{n}H_{n}X_{n}^{T} \\
		W_{n} = \argmin_{W\in \Theta_{1} \subseteq \R^{p\times r}} \left(  \tr(W A_{n} W^{T})  - 2\tr(W B_{n})\right),
	\end{cases}
\end{align}
which is the online matrix factorization algorithm (OMF) proposed in  \cite{mairal2010online} for i.i.d. data matrices. 
Later in \cite{lyu2020online}, this algorithm was analyzed in a Markovian data setting. The following corollary is a direct consequence of our general results. 

\begin{corollary}[Rate of convergence of OMF]
	Theorems \ref{thm:rate_surrogate_gaps} and \ref{thm:rate_stationarity} hold for the online matrix factorization algorithm \eqref{eq:OMF1} (case \ref{C1}).
\end{corollary}

To the author's best knowledge, such a result was not known for online matrix factorization even under the i.i.d. data setting.

\vspace{0.2cm}
\subsubsection{Subsampled Online Matrix Factorization}
\label{sec:SOMF}

While the OMF algorithm \eqref{eq:OMF1} is efficient in handling matrices with a large number of columns $d$, its computational cost with respect to the data dimension $p$ (i.e., the number of rows) is not reduced. In \cite{mensch2017stochastic}, subsampled OMF was proposed to improve the computational efficiency with respect to $p$, by using only a random subsample of rows. For a preliminary version, consider the following variant of the online matrix factorization algorithm \ref{eq:OMF1}, which uses a random coordinate descent on subsampled rows of $W_{n}$:
\begin{align}\label{eq:OMF2}
	&\textit{Upon arrival of $X_{n}$:\qquad } \nonumber \\
	&\begin{cases}
		H_{n} = \argmin_{H\in \Theta_{2}\subseteq \R^{r\times d}} \lVert X_{n} - W_{n-1}H \rVert_{F}^{2} + \lambda \lVert H \rVert_{1}\\
		A_{n} = (1-w_{n})A_{n-1}+w_{n}H_{n}H_{n}^{T} \\
		B_{n} = (1-w_{n})B_{n-1}+w_{n}H_{n}X_{n}^{T} \\
		J \leftarrow \textup{Random subset of $\{1,\dots, p\}$}\\
		W_{n} = \argmin_{W\in \Theta_{1} \subseteq \R^{p\times r}} \left(  \tr(W A_{n} W^{T})  - 2\tr(W B_{n})\right) \\
		\hspace{1cm}  \textup{while freezing rows of $W_{n}$ not in $J$ to the rows of $W_{n-1}$}.
	\end{cases}
\end{align}
For instance, $J$ may be taken as the uniform subset of $\{1,\dots,p\}$ of a fixed size $p'<p$ or it may contain each index $i\in \{1,\dots,p\}$ independently with a fixed probability. All our main results in this paper (Theorems \ref{thm:global_convergence}, \ref{thm:rate_surrogate_gaps}, and \ref{thm:rate_stationarity}) apply to the OMF algorithm in \eqref{eq:OMF2}. 

However, note that the code computation for $H_{n}$ in \eqref{eq:OMF2} still involves solving a least squares problem with $p$ dimensional data matrix $X_{n}$. In order to fully reduce the dependency on $p$, one may also use row subsampling to compute only $p'$ rows of $H_{n}$. The approach used in \cite{mensch2017stochastic} was to replace $X_{n}$ with an averaged $p'$-dimenisonal matrix $\bar{X}_{n}^{(i)}$ assuming $X_{n}=X^{(i)}$, where a finite pool of data matrices $\mathfrak{X}=\{X^{(1)},\dots,X^{(l)}\}$ was assumed and $\bar{X}^{(i)}_{n}$ is a recursively defined matrix of size $p'\times d$ based on previous occurances of the matrix $X^{(i)}$. See \cite[Alg. 3]{mensch2017stochastic} for more details. 

An almost sure convergence to stationary points of subsampled OMF algorithm under i.i.d. data samples was shown in \cite{mensch2017stochastic}.  The analysis there is based on a general convergence result on stochastic approximate majorization-minimization algorithm \eqref{eq:def_SMM} using strongly convex $\eps$-approximate surrogate functions (see \cite[Prop. 3]{mensch2017stochastic}). A similar convergence result is retained by Theorem \ref{thm:global_convergence}, although the assumptions are slightly different. In addition, Theorems \ref{thm:rate_surrogate_gaps}-\ref{thm:rate_stationarity} also provide a rate of convergence, as stated in the following corollary.

\begin{corollary}[Rate of convergence of Subsampled OMF]
	Theorems \ref{thm:rate_surrogate_gaps} and \ref{thm:rate_stationarity} hold for the subsampled online matrix factorization algorithm \eqref{eq:OMF2} (case \ref{C1} with randomized block coordinate descent).
\end{corollary}

\vspace{0.2cm}
\subsubsection{Online CP-dictionary Learning }
\label{subsection:OCPDL}

Suppose we have  $\mathbf{X}_{1},\dots,\mathbf{X}_{N}\in \R_{\ge 0}^{I_{1}\times \dots \times I_{m}}$, which are $N$ observed $m$-mode tensor-valued signals. Consider  the following tensor-valued dictionary learning problem
\begin{align}\label{eq:NTF_intro_1}
	[\mathbf{X}_{1},\dots, \mathbf{X}_{b}] \approx [\mathbf{D}_{1},\dots, \mathbf{D}_{r}]\times_{m+1} H,
\end{align}
where $\times_{m+1}$ denotes the mode-$(m+1)$ tensor-matrix product and we impose the tensor dictionary atoms $\D_{1},\dots,\D_{r}\in \R^{I_{1}\times \dots \times I_{m} \times r}$ to be of rank-1 and $H\in \R^{R\times b}$ is called a code matrix. Equivalently, we assume that there exist \textit{loading matrices} $[U^{(1)},\dots,U^{(n)}]\in \R_{\ge 0}^{I_{1}\times r} \times \dots \times \R_{\ge 0}^{I_{n}\times r}$ such that 
\begin{align}\label{eq:def_OUT}
	[\mathbf{D}_{1},\dots, \mathbf{D}_{r}] &= \Out(U^{(1)},\dots,U^{(m)}) \\
	&:= \left[\bigotimes_{k=1}^{m} U^{(k)}(:,1),\, \bigotimes_{k=1}^{m} U^{(k)}(:,2),\, \dots \,, \bigotimes_{k=1}^{m} U^{(k)}(:,r)  \right] \in \R_{\ge 0}^{I_{1}\times \dots \times I_{m} \times r},
\end{align} 
where $U^{(k)}(:,j)$ denotes the $i^{\textup{th}}$ column of the $I_{k}\times r$ matrix $U^{(k)}$ and $\otimes$ denotes the outer product. Since we impose  a CANDECOMP/PARAFAC (CP) \cite{tucker1966some, harshman1970foundations, carroll1970analysis} structure for the tensor-valued dictionary $[\D_{1},\dots,\D_{r}]$, the above is called a CP-dictionary learning problem introduced in \cite{lyu2020online_CP}. When $m=1$, it reduces to the standard vector-valued dictionary learning problem. 

In \cite{lyu2020online_CP}, the following online CP-dictionary learning problem was proposed and analyzed. Fix compact and convex constraint sets for code and loading matrices $\mathcal{C}^{\textup{code}}\subseteq \R^{R\times b}$ and $\mathcal{C}^{(i)}\subseteq \R^{I_{i}\times r}$, $i=1,\dots,n$, respectively. Write  $\Param:=\mathcal{C}^{(1)}\times \cdots \times \mathcal{C}^{(n)}$. For each $\mathcal{X}\in\R_{\ge 0}^{I_{1}\times \dots\times I_{n}\times b}$, $\U:=[U^{(1)},\dots,U^{(n)}]\in \R^{I_{1}\times r}\times \dots \times \R^{I_{n}\times r}$, $H\in \R^{R\times b}$, define  
\begin{align}
	\ell(\mathcal{X},\U,H) &:= \lVert 
	\mathcal{X} - \Out(\U) \times_{n+1} H
	\rVert_{F}^{2} + \lambda \lVert H \rVert_{1},  \label{eq:def_loss_full} \\
	\ell(\mathcal{X},\U) &:= \inf_{H\in \mathcal{C}^{\textup{code}}}\,\, \ell(\mathcal{X},\U,H),\label{eq:def_loss}
\end{align}
where $\lambda\ge 0$  is a regularization parameter. Fix a sequence of non-increasing weights $(w_{t})_{t\ge 0}$ in $(0,1]$. Here $\mathcal{X}$ denotes a minibatch of $b$ tensors in $\R^{I_{1}\times \dots \times I_{n}}$, so minimizing $\ell(\mathcal{X}, \U)$ with respect to $\U$ amounts to fitting the CP-dictionary $\Out(\U)$ to the minibatch of $b$ tensors in $\mathcal{X}$. The corresponding empirical loss minimization problem is 
\begin{align}\label{eq:CPDL_empirical}
	\mathbf{U}_{n} \in \argmin_{\mathbf{U} \in \Param} \left( \bar{f}_{n}(\mathbf{U}):= (1-w_{n}) \bar{f}_{n-1}(\U) + w_{n} \ell(\mathcal{X}_{n}, \U) \right).  
\end{align}
If we have a target distribution $\pi$ for the data tensors $\mathcal{X}_{n}$, then  the corresponding expected loss minimization problem is 
\begin{align}\label{eq:CPDL_expected}
	\U\in \argmin_{\U\in \Param} \,\left( f(W):=\E_{X\sim \pi}\left[ \ell(\mathcal{X}, \U) \right] \right).
\end{align}

In order to solve \eqref{eq:CPDL_empirical} and \eqref{eq:CPDL_expected}, the following online CP-dictionary learning algorithm was proposed and analyzed in \cite{lyu2020online_CP}:
\begin{align}\label{eq:CPDL1}
	&\textit{Upon arrival of $\mathcal{X}_{n}$:\quad } \\
	&\begin{cases}
		H_{n} = \argmin_{H\in \mathcal{C}^{\textup{code}}\subseteq \R^{r\times b}} \ell(\mathcal{X}_{n}, \U_{n-1}, H) \\
		\bar{g}_{n}(\U) = (1-w_{n}) \bar{g}_{n-1}(\U)  + w_{n} \ell(\mathcal{X}_{n}, \U, H_{n}) \\
		\textup{for $i=1,\dots,m$}: \nonumber \\
		\quad U_{n}^{(i)} \in \argmin_{\substack{U\in \mathcal{C}^{ (i) } \\ \lVert U-U_{n-1}^{(i)} \rVert \le c'w_{n} } }  \,\, \bar{g}_{n}(U_{n}^{(1)}, \dots, U_{n}^{(i-1)}, U, U_{n-1}^{(i+1)}, \dots, U_{n-1}^{(m)}). 
	\end{cases}
\end{align}
An almost sure convergence to the stationary points of the above algorithm under the Markovain data setting was obtained in \cite{lyu2020online_CP}. In fact, the algorithm \eqref{eq:CPDL1} is a special case of Algorithm \ref{algorithm:SMM} with $m$-block multi-convex variational surrogate corresponding (see Example \ref{ex:multiconvex_variational}) to the function $\ell$ in \eqref{eq:def_loss_full} with $\mathbb{J}=\{ J_{1},\dots,J_{m} \}$, where $J_{i}$ denotes the coordinate block corresponding to the $i$th loading matrix in $\R^{I_{i}\times r}$ with cyclic block coordinate descent in Algorithm \ref{algorithm:BSM-DR}.

Our generalized algorithm and refined analysis improve the results in \cite{lyu2020online_CP} in multiple ways. First, Theorems \ref{thm:rate_surrogate_gaps} and \ref{thm:rate_stationarity} provide a rate of convergence results for the online CP-dictionary learning algorithm \eqref{eq:CPDL1}, which has not been known before.

\begin{corollary}[Rate of convergence of OCPDL]
	Theorems \ref{thm:rate_surrogate_gaps} and \ref{thm:rate_stationarity} hold for the online CP-dictionary learning algorithm \eqref{eq:CPDL1} (case \ref{C2}).
\end{corollary}

Second, due to the flexibility of using approximate surrogate functions, the convergence results of  \eqref{eq:CPDL1} hold under inexact code or factor matrix computation, following a similar argument as in \cite{mensch2017stochastic}. Third, one can only optimize a small number of subsampled rows when updating the factor matrices $U_{n}^{(i)}$ in \eqref{eq:CPDL1} by using a refined block coordinate structure (see Example \ref{ex:refine_block}). A similar idea of using subsampling for tensor CP-decomposition was also used recently in  \cite{kolda2020stochastic}. 

Lastly, we remark that random row subsampling can be utilized for the code computation for $H_{n}$ in \eqref{eq:CPDL1}, which essentially involves solving a least-squares problem with the mode-$(m+1)$ unfolding of $\mathcal{X}_{n}$, which has size $(I_{1}\times \cdots \times I_{m}) \times b$. Since the number of rows of a such matrix $I_{1}\times \cdots \times I_{m}$ can be very large, the computational gain in this approach should be significant. We believe it would be straightforward to adapt the approach in \cite{mensch2017stochastic} for subsampled online matrix factorization for a subsampled online CP-dictionary learning setting. For our theoretical results to apply, one only needs to verify that the resulting inexact code computation for $H_{n}$ yields $\eps_{n}$-approximate surrogates $\bar{g}_{n}$ that verifies the assumption \ref{assumption:A4-w_t}. We do not proceed with this line of research in the present paper.

	\subsection{Experiments}
	\label{sec:experiments}
	
	In this section, we provide experimental results of SRMM on two tasks - Network Dictionary Learning \cite{lyu2021learning} and Image classification with Deep Convolutional Neural Networks for the CIFAR-10 dataset \cite{krizhevsky2009learning}. 
	
	\subsubsection{Network Dictionary Learning}
	
	Network Dictionary Learning (NDL) \cite{lyu2020online, lyu2021learning} is the task of learning a fixed number of `latent motifs' from a large number of connected $k$-node subgraphs in a given large and possibly sparse networks. The learned latent motifs provide a concise description of a network's mesoscale structure and can be used for reconstructing and denoising networks (see \cite{lyu2021learning} for more details). NDL is naturally formulated as a nonconvex and constrained stochastic optimization problem, where the input data is a stream of $k$-node subgraphs sampled by the MCMC motif-sampling algorithm \cite{lyu2023sampling}. 
	
	The standard optimization algorithm for NDL is based on SMM for online nonnegative matrix factorization (see Sec. \ref{subsection:OMF}). We compare the performance of SMM (with adaptivity weights $w_{t}=1/t$) with that of various stochastic optimization algorithms --  PSGD, PSGD with momentum (PSGD-HB) (with step size schedule $\alpha_{t}=1/t$) and AdaGrad \cite{ward2020adagrad}.

	\begin{figure*}[h]
		\centering
		\includegraphics[width=1 \linewidth]{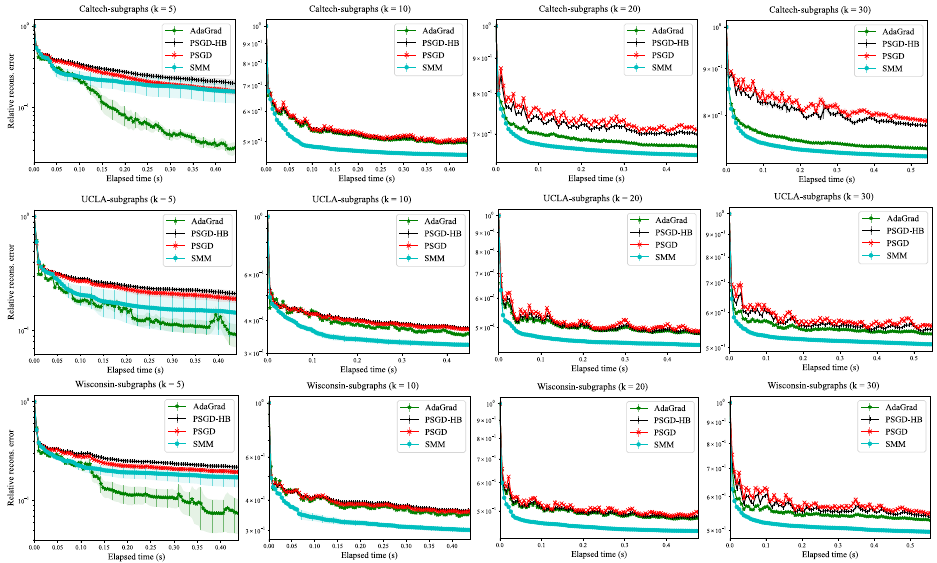}
		\caption{Plot of reconstruction error vs. elapsed time for four algorithms for online NMF: AdaGrad, PSGD-Heavy Ball, PSGD, and SRMM. The data stream is a sequence of 4-node subgraph adjacency matrices sampled by an MCMC motif-sampling algorithm in \cite{lyu2023sampling} from three college Facebook networks \cite{traud2012social}. 
			The shaded region represents one standard deviation from ten runs.
		}
		\label{fig:NDL_exp1}
	\end{figure*}

	We consider three facebook networks of schools  \dataset{Caltech}, \dataset{UCLA},  and \dataset{Wisconsin} from the Facebook100 dataset~\citep{traud2012social}, following a similar setup to~\citep{lyu2020online}.
	We then used the MCMC motif-sampling algorithm of~\citep{lyu2023sampling} to generate $300$ correlated subgraphs from the networks. This gives a stream of 300 $k\times k$ binary subgraph adjacency matrices, from which the latent motifs need to be learned by solving an online nonnegative matrix factorization problem. We ran each stochastic optimization algorithm on this streaming data (in the order that each subgraph are sampled) exactly once (e.g., one epoch). We used four subgraph sizes: $k\in \{ 5,10,20,30 \}$. 
	
	In Fig.~\ref{fig:NDL_exp1}, we see the convergence of all the algorithms with respect to the normalized reconstruction error, which is in line with our theoretical results. We observe that SMM shows significantly faster convergence in all cases except the smallest subgraph size $k=4$ is used, in which case AdaGrad seems to converge faster than all the other methods. Overall SMM seems to be providing robust performance for NDL with various choices of subgraph sizes. 
	

	\subsubsection{Image classification using CNN}
	\label{sec:cifar10}
	
	Using DenseNet-121 \cite{huang2017densely} and ResNet-34 \cite{he2016deep}, we then consider the task of image classification on the standard CIFAR-10 dataset \cite{krizhevsky2009learning}. We compared six different optimization algorithms: (1) The generalized double-averaging scheme \eqref{eq:two_way_PSGD_gen} (labeled as SRMM);  (2) The double-averaging scheme \eqref{eq:two_way_PSGD} (labeled as SMM); (3) SGD-HB (SGD with momentum); (4) AdaGrad \cite{duchi2011adaptive}; (5) Adam \cite{kingma2015adam}; and (6) AMSGrad \cite{Reddi2018on}. We use these algorithms for a total of 130 epochs and report the train and test accuracies in Figure \ref{fig:cifar10}. We see that SRMM achieves the highest train and test accuracies (around 93$\%$) the most rapidly both for DenseNet and ResNet. SMM also shows a competitive performance against the benchmark methods for DenseNet, and outperforms SGD-HB for ResNet in test accuracy. 
	
	\begin{figure*}[h]
		\centering
		\includegraphics[width=1 \linewidth]{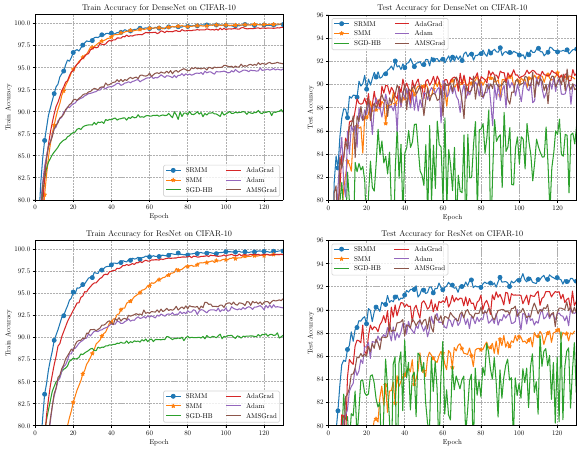}
		\caption{Plot of training and test accuracies for CIFAR-10 image classification using DenseNet-121 and ResNet-34 with various optimization algorithms -- SRMM, SMM, SGD, AdaGrad, Adam, and AMSGrad. 
		}
		\label{fig:cifar10}
	\end{figure*}
	
	In the experiments reported in Figure \ref{fig:cifar10}, the hyperparameters are chosen as follows. (1) SRMM: $w_{n}=n^{-1/2}$ for $n\ge 1$, $L\in \{10, 1\}$, and $\lambda\in \{10, 1, 10^{-1}, 10^{-2}, 10^{-3}\}$; (2) SMM: $w_{n}=n^{-1/2}$ for $n\ge 1$ and  $L\in \{10, 1\}$; (3) SGD-HB: step sizes in $\{ 10^{-2}, 10^{-1}, 1, 10, 10^{2} \}$ and momentum parameter of $0.9$; (4) AdaGrad: stepsizes  in $\{ \textup{5e-4}, \textup{1e-3},  \textup{5e-3},  \textup{5e-2},  \textup{1e-1} \}$ and accumulator value 0; (5)-(6) Adam and AMSGrad: step sizes in $\{ \textup{1e-4}, \textup{5e-4},  \textup{1e-3},  \textup{5e-3},  \textup{1e-2} \}$, $\beta_{1}\in \{0.9, 0.99\}$, and $\beta_{2}\in \{0.99, 0.999\}$. For SRMM and SMM, we reset the iteration counter to zero at the beginning of each epoch in order to avoid the adaptivity weight $w_{n}$ becomes too small after many epochs.

	\vspace{0.2cm}
	
	\section{Preliminary analysis}
	\label{section:preliminary}
	
	In this section, we give some preliminary lemmas that we will use in our analysis in the following sections. Recall the definition of the optimality gaps $\Delta_{n}^{(i)}$  in \ref{assumption:A5_sufficient_surrogate_decay}. Also, we assume throughout that $g_{n}\in \surr_{L,\rho}^{\mathbb{L}}(f_{n},\param_{n-1},\eps_{n})$ for $n\ge 1$ for some fixed set $\mathbb{J}$ of coordinate blocks, $L>0$, and $\rho\in \R$.

	We first observe some regularity properties of the surrogate gradients $\nabla g_{n}$ and $\nabla \bar{g}_{n}$. 
	
	\begin{prop}\label{prop:g_grad_Lipschitz}
		Assume \ref{assumption:A1-ell_smooth} and \ref{assumption:A3-cvx_constraint}. Suppose that the data sequence $(\x_{n})_{n\ge 1}$ is contained in some compact subset $\mathfrak{X}_{0}\subseteq \mathfrak{X}$. Then the following hold for all $n\ge 1$:
		\begin{description}[itemsep=0.1cm] 
			\item[\textbf{(i)}] Both $\nabla g_{n}$ and $\nabla \bar{g}_{n}$ are $L$-Lipschitz over $\Param$; 
			\item[\textbf{(ii)}] $g_{n},\bar{g}_{n}$, $\nabla g_{n}$, and  $\nabla \bar{g}_{n}$ are uniformly bounded over $\Param$;
			\item[\textbf{(iii)}] There exists a constant $L'>0$ such that for all $n\ge 1$, $g_{n}, \bar{g}_{n}$ are $L'$-Lipschitz continuous over $\Param$ and $ \lVert \nabla \bar{g}_{n+1}(\param) - \nabla \bar{g}_{n}(\param) \rVert\le L'w_{n+1}$. 
		\end{description}
	\end{prop}
	
	\begin{proof}
		
		Denote  $f_{n}=\ell(\x_{n},\cdot)$ for $n\ge 1$. Note that $\nabla \ell(\cdot,\cdot)$ is $L$-Lipscthiz continuous (see \ref{assumption:A1-ell_smooth}). Also, $\nabla (g_{n}-f_{n})$ is $L$-Lipscthiz by Definition \ref{def:block_surrogate}. It follows that $\nabla g_{n}$ is also $L$-Lipschitz over $\Param$. This holds for all $n\ge 1$, so by an induction, $\nabla \bar{g}_{n}$ is also $L$-Lipschitz over $\Param$ for $n\ge 1$. This shows \textbf{(i)}.

		Note that by Definition \ref{def:block_surrogate}, $\nabla (g_{n}-f_{n})$ is $L$-Lipschitz continuous and has norm bounded by $\eps_{n}\le 1$  at $\param_{n-1}$. Since the parameter space $\Param$ is compact by \ref{assumption:A3-cvx_constraint}, it follws that $\nabla (g_{n}-f_{n})$ is uniformly bounded by some constant $L'>0$ over $\Param$. Also, by the assumption, the continuous map $(\x,\param)\mapsto \lVert \nabla \ell(\x,\param) \rVert$ over the compact domain $\mathfrak{X}_{0} \times \Param$  assumes bounded values. Hence $\nabla \ell$ is uniformly bounded over $\Param$ by \ref{assumption:A1-ell_smooth} and \ref{assumption:A3-cvx_constraint}. It follows that $\nabla f_{n}$ is uniformly bounded, so $\nabla g_{n}$ is also uniformly bonded. Then by induction, it follows that $\nabla \bar{g}_{n}$ is also uniformly bounded. Similarly, as $\lVert g_{n}(\param_{n-1})-f_{n}(\param_{n-1}) \rVert \le \eps_{n}\le 1$ and $f_{n}$ is uniformly bounded, it follows that $g_{n}$ is bounded over $\Param$. By induction, it follows that $\bar{g}_{n}$ is also uniformly bounded over $\Param$. 
		
		To show \textbf{(iii)}, we first let $L'>0$ be a uniform bound on $\lVert \nabla g_{n} \rVert$ and $\lVert \nabla \bar{g}_{n} \rVert$ over $\Param$. Fix $\param,\param'\in \Param$ and consider the linear curve $t\mapsto \gamma(t):=t\param' + (1-t)\param$ in $\Param$. Then 
		\begin{align}
			\lVert g_{n}(\param) -  g_{n}(\param') \rVert = \left| \int_{0}^{1} \langle \nabla g_{n}(\gamma(t)),\, \param-\param' \rangle \,dt \right| \le L' \lVert \param-\param' \rVert. 
		\end{align}
		This shows that $g_{n}$ is $L'$-Lipscthiz for all $n\ge 1$. From this, one also concludes that $\bar{g}_{n}$ is $L'$-Lipscthiz for all $n\ge 1$. 
		
		Lastly, note  that $\nabla \bar{g}_{n+1} = (1-w_{n+1}) \nabla \bar{g}_{n} + w_{n+1} \nabla g_{n}$. Hence by \textbf{(ii)}, there exists $L'\in (0,\infty)$  such that 
		\begin{align}\label{eq:g_bar_grad_lipschitz_variation}
			\lVert \nabla \bar{g}_{n+1}(\param) - \nabla \bar{g}_{n}(\param) \rVert \le w_{n+1} \lVert \nabla \bar{g}_{n}(\param) - \nabla g_{n}(\param) \rVert \le L' w_{n+1}
		\end{align}
		for all $\param\in \Param$. This shows \textbf{(iii)}.
	\end{proof}

	Define a sequence $(\bar{\eps}_{n})_{n\ge 1}$ recursively as 
	\begin{align}
		\bar{\eps}_{n} = (1-w_{n}) \bar{\eps}_{n-1} + w_{n} \eps_{n},\qquad \bar{\eps}_{0}=0,
	\end{align}
	where $(\eps_{n})_{n\ge 1}$ is the sequence of surrogate error tolerance in Algorithm \ref{algorithm:SMM}. Then 
	\begin{align}\label{eq:bar_eps_bound}
		\bar{\eps}_{n} = \sum_{k=1}^{n} w^{n}_{k} \eps_{k} 	\le w_{n} \sum_{k=1}^{n} \eps_{k},
	\end{align}
	where  $w^{n}_{k}$ is defined in \eqref{eq:ELF_closed_form} and the inequality above uses $w^{n}_{1}\le w^{n}_{2}\le \dots \le w^{n}_{n}=w_{n}$ under  \ref{assumption:A4-w_t}. Since $g_{n}-f_{n} \ge -\eps_{n}$ for all $n\ge 1$, it follows that 
	\begin{align}\label{eq:g_f_bar_eps_bound}
		\bar{g}_{n} -\bar{f}_{n} \ge -\bar{\eps}_{n} \ge -w_{n} \sum_{k=1}^{n} \eps_{k}.
	\end{align}

	\begin{lemma}\label{lem:stability}
		Let $(\param_{n})_{n\ge 1}$ be an output of Algorithm \ref{algorithm:SMM}. Assume \ref{assumption:A5_sufficient_surrogate_decay}. Then for all $n\ge 1$, the following hold:
		\begin{description}[itemsep=0.1cm]
			\item[(i)] (Forward Monotonicity) \quad $\bar{g}_{n}(\param_{n-1}) - \bar{g}_{n}(\param_{n}) \ge \Psi_{n}(\lVert \param_{n}-\param_{n-1} \rVert)  -\Delta_{n}$;

			\item[(ii)] (Stability of Estimates I) \quad Under case \ref{C1} in Theorem \ref{thm:global_convergence}, $\lVert \param_{n}-\param_{n-1}\rVert = O(w_{n})$;
			
			\item[(iii)] (Stability of Estimates II) \quad Under case \ref{C2} in Theorem \ref{thm:global_convergence}, $\lVert \param_{n}-\param_{n-1}\rVert \le  c'w_{n}$. 
		\end{description}
	\end{lemma}
	
	\begin{proof}
		First suppose Algorithm \ref{algorithm:BSM-PR} is used for \eqref{eq:alg1_param_update}. By the definition of the optimality gap $\Delta_{n}$ in \ref{assumption:A5_sufficient_surrogate_decay}, 
		\begin{align}
			\bar{g}_{n}(\param_{n}) + \Psi_{n}(\lVert \param_{n}-\param_{n-1} \rVert)  \le \bar{g}_{n}(\param_{n-1})  + \Delta_{n}. 
		\end{align}
		On the other hand, suppose Algorithm \ref{algorithm:BSM-DR} is used for \eqref{eq:alg1_param_update}. Consider the computation of $\param_{n}$ in Algorithm \ref{algorithm:BSM-DR}. Let $J_{1},\dots,J_{m}\in \mathbb{J}$ denote the coordinate blocks used in order, and let  $\param_{n}^{(1)},\dots,\param_{n}^{(m)}(=\param_{n})$ denote the outputs of \eqref{eq:BSM_factor_update_DR} after each block minimization in Algorithm \ref{algorithm:BSM-DR}.  Denote  $\param_{n}^{(0)}=\param_{n-1}$. Note that $\param_{n}^{(i)}$ is an approximate  minimizer of the convex function $\param\mapsto \bar{g}_{n}(\param) $ over the convex set $\Param^{J_{i}}$. Also note that $\param_{n}^{(i-1)}\in \Param^{J_{i}}$, by definition of the optimality gap $\Delta_{n}^{(i)}$ in \ref{assumption:A5_sufficient_surrogate_decay}, for $1\le i \le m$, 
		\begin{align}
			\bar{g}_{n}(\param_{n}^{(i)}) + \Psi_{n}(\lVert \param_{n}^{(i)}-\param_{n-1} \rVert)  \le \bar{g}_{n}(\param_{n}^{(i-1)}) + \Psi_{n}(\lVert \param_{n}^{(i-1)}-\param_{n-1} \rVert) + \Delta_{n}^{(i)},
		\end{align}
		Summing over all $i=1,\dots,m$ then gives $\bar{g}_{n}(\param_{n-1}) - \bar{g}_{n}(\param_{n}) \ge -\Delta_{n}$.  Recall that in this case we take the regularizer $\Psi_{n}(\lVert \param-\param_{n-1} \rVert)$ equals zero if $\lVert \param-\param_{n-1} \rVert\le c'w_{n}/m$ and $\infty$ otherwise. Hence this shows \textbf{(i)}. Note that \textbf{(iii)} is trivial by the search radius restriction in Algorithm \ref{algorithm:SMM}. 
		
		It remains to show \textbf{(ii)}. We show the assertion under zero surrogate optimality gap $\Delta_{n}\equiv 0$ (see \ref{assumption:A5_sufficient_surrogate_decay}) so that each $\param_{n}$ is the exact minimizer of the $\rho$-strongly convex function $\bar{g}_{n}$ over $\Param$. Indeed, by the second-order growth property (Lemma \ref{lem:second_order_growh_univariate}) and using $L'$-Lipschitz continuity of $g_{n}$ (see Lemma \ref{prop:g_grad_Lipschitz}), almost surely,
		\begin{align}
			\frac{\rho}{2} \lVert \param_{n}-\param_{n-1} \rVert^{2}&\le \bar{g}_{n}(\param_{n-1} ) - \bar{g}_{n}(\param_{n}) \\
			&= (1-w_{n} ) \left( \bar{g}_{n-1}(\param_{n-1} ) - \bar{g}_{n-1}(\param_{n} ) \right) + w_{n} \left( g_{n}(\param_{n-1} ) - g_{n}(\param_{n} ) \right) \\
			&\le w_{n} \left( g_{n}(\param_{n-1} ) - g_{n}(\param_{n} ) \right) \le w_{n} L' \lVert \param_{n}-\param_{n-1} \rVert. 
		\end{align}  
		This shows $\lVert \param_{n}-\param_{n-1} \rVert= O(w_{n})$, as desired. The proof for the general nonzero surrogate optimality gap can be found in Appendix \ref{sec:auxiliary_lemmas} (see Lemma \ref{lem:strongly_convex_surrogate_A6'}). 
	\end{proof}

	\begin{prop}\label{prop:Wt_bd}
		Let $(\param_{n})_{n\ge 1}$ be an output of Algorithm \ref{algorithm:SMM}. Assume \ref{assumption:A4-w_t}. Then for each $n\ge 0$, the following hold:	
		\begin{description}
			\item[(i)] $\bar{g}_{n+1}(\param_{n+1}) - \bar{g}_{n}(\param_{n}) \le w_{n+1}\left(  \ell(\x_{n+1},\param_{n}) - \bar{f}_{n}(\param_{n})  \right) +  w_{n}^{2}\left(\sum_{k=1}^{n+1}\eps_{k} \right) + \sum_{i=1}^{m} \Delta_{n+1}^{(i)}$.
			\vspace{0.2cm}
			\item[(ii)] $0  \le w_{n+1}\left( \bar{g}_{n}(\param_{n}) - \bar{f}_{n}(\param_{n}) \right)  \le    w_{n+1}\left( \ell(\x_{n+1},\param_{n}) - \bar{f}_{n}(\param_{n}) \right)  + \bar{g}_{n}(\param_{n}) - \bar{g}_{n+1}(\param_{n+1})  + w_{n}^{2}\left(\sum_{k=1}^{n+1}\eps_{k} \right)$.
			
		\end{description}
	\end{prop}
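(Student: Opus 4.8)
The plan is to obtain both inequalities from a single one-step estimate on the averaged-surrogate values, combining the forward monotonicity of Lemma \ref{lem:stability}\textbf{(i)} with the recursive definition $\bar g_{n+1} = (1-w_{n+1})\bar g_n + w_{n+1}g_{n+1}$ and the $\eps$-approximation properties of Definition \ref{def:block_surrogate}. The structural point to keep in mind is that, since $\param_{n+1}$ is produced by only a single pass of radius-restricted block minimization (Algorithm \ref{algorithm:BSM}), I cannot claim $\param_{n+1}$ globally minimizes $\bar g_{n+1}$; I may only use Lemma \ref{lem:stability}\textbf{(i)}, which (after shifting the index) still gives $\bar g_{n+1}(\param_{n+1}) \le \bar g_{n+1}(\param_n)$.

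First I would prove \textbf{(i)}. Starting from $\bar g_{n+1}(\param_{n+1}) \le \bar g_{n+1}(\param_n)$ and expanding the right-hand side by the update rule evaluated at $\param_n$, the common $\bar g_n(\param_n)$ contribution cancels and I get
\begin{align}
	\bar g_{n+1}(\param_{n+1}) - \bar g_n(\param_n) \le w_{n+1}\big( g_{n+1}(\param_n) - \bar g_n(\param_n) \big).
\end{align}
Next I would convert the two surrogate values into loss values: the error-at-basepoint bound $h_{n+1}(\param_n)\le \eps_{n+1}$ of Definition \ref{def:block_surrogate}\textbf{(ii)} yields $g_{n+1}(\param_n) \le \ell(\x_{n+1},\param_n) + \eps_{n+1}$, while \eqref{eq:g_f_bar_eps_bound} yields $\bar g_n(\param_n) \ge \bar f_n(\param_n) - \bar\eps_n$ with $\bar\eps_n \le w_n\sum_{k=1}^n \eps_k$. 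Substituting both produces the advertised main term $w_{n+1}\big(\ell(\x_{n+1},\param_n) - \bar f_n(\param_n)\big)$ plus a surrogate-tolerance error $w_{n+1}(\eps_{n+1} + \bar\eps_n)$, which I then absorb into $w_n^2\sum_{k=1}^{n+1}\eps_k$ using the weight monotonicity $w_{n+1}\le w_n$ together with the regularity $w_n^{-1}-w_{n-1}^{-1}\le 1$ from \ref{assumption:A4-w_t}.

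For \textbf{(ii)}, the left inequality is simply the majorization $\bar g_n(\param_n) \ge \bar f_n(\param_n)$ (propagate $g_k \ge f_k$ through the convex averaging, cf. \eqref{eq:g_f_bar_eps_bound}), multiplied by $w_{n+1}>0$. The right inequality is the same intermediate estimate $\bar g_{n+1}(\param_{n+1}) \le (1-w_{n+1})\bar g_n(\param_n) + w_{n+1}\big(\ell(\x_{n+1},\param_n)+\eps_{n+1}\big)$ used in \textbf{(i)}, merely rearranged: subtracting $(1-w_{n+1})\bar g_n(\param_n)$, adding and subtracting $w_{n+1}\bar f_n(\param_n)$, and identifying $\bar g_n(\param_n)-\bar g_{n+1}(\param_{n+1})$ turns it directly into the claimed bound on $w_{n+1}\big(\bar g_n(\param_n)-\bar f_n(\param_n)\big)$. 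Thus \textbf{(i)} and \textbf{(ii)} share the same core computation.

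I expect the only genuinely delicate step to be the accounting of the tolerance terms $\eps_k$: one must track how the fresh per-step error $\eps_{n+1}$ and the accumulated error $\bar\eps_n$ enter, and then check that the weight hypotheses of \ref{assumption:A4-w_t} are precisely what is needed to collapse $w_{n+1}(\eps_{n+1}+\bar\eps_n)$ into the clean, later-summable form $w_n^2\sum_{k=1}^{n+1}\eps_k$ (e.g.\ via $w_{n+1}w_n \le w_n^2$ for the $\bar\eps_n$ part, and using $\eps_{n+1}\le (n+1)^{-1}\sum_{k=1}^{n+1}\eps_k$ with the near-$1/n$ decay forced by $w_n^{-1}-w_{n-1}^{-1}\le 1$ for the $\eps_{n+1}$ part). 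Everything else is routine bookkeeping around the recursion and Lemma \ref{lem:stability}\textbf{(i)}.
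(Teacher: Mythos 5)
Your proof follows essentially the same route as the paper's: both rest on forward monotonicity (Lemma \ref{lem:stability}\textbf{(i)}), the recursion $\bar{g}_{n+1}=(1-w_{n+1})\bar{g}_{n}+w_{n+1}g_{n+1}$ evaluated at $\param_{n}$, and the approximation bounds $g_{n+1}(\param_{n})-\ell(\x_{n+1},\param_{n})\le\eps_{n+1}$ and $\bar{g}_{n}-\bar{f}_{n}\ge-\bar{\eps}_{n}$ from \eqref{eq:g_f_bar_eps_bound}, with \textbf{(i)} and \textbf{(ii)} obtained by rearranging the same one-step identity exactly as in the paper. If anything, your accounting of the tolerance terms is more careful than the paper's, which absorbs $w_{n+1}\eps_{n+1}+w_{n+1}\bar{\eps}_{n}$ into $w_{n}^{2}\sum_{k=1}^{n+1}\eps_{k}$ with only the remark that $w_{n+1}\le w_{n}$, whereas you note that handling the fresh term $w_{n+1}\eps_{n+1}$ additionally needs the monotonicity of $(\eps_{k})$ and the near-$1/n$ lower bound on $w_{n}$ forced by $w_{n}^{-1}-w_{n-1}^{-1}\le 1$.
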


	\begin{proof}
		We begin by observing that 
		\begin{align}\label{eq:surrogate_loss_recursion0}
			\bar{g}_{n+1}(\param_{n}) &=  (1-w_{n+1}) \bar{g}_{n}(\param_{n}) + w_{n+1}\ell(\x_{n+1},\param_{n}) +w_{n+1} (g_{n+1}(\param_{n}) - \ell(\x_{n+1},\param_{n}) )
		\end{align}
		for all $t\ge 0$. Hence    
		\begin{align}\label{eq:surrogate_loss_recursion}
			&\bar{g}_{n+1}(\param_{n+1}) - \bar{g}_{n}(\param_{n}) \\
			&\quad =  \bar{g}_{n+1}(\param_{n+1}) - \bar{g}_{n+1}(\param_{n}) + \bar{g}_{n+1}(\param_{n}) - \bar{g}_{n}(\param_{n})\\
			&\quad=\bar{g}_{n+1}(\param_{n+1}) - \bar{g}_{n+1}(\param_{n}) + (1-w_{n+1})\bar{g}_{n}(\param_{n}) + w_{n+1}\ell(\x_{n+1},\param_{n}) - \bar{g}_{n}(\param_{n})\\
			&\quad=\bar{g}_{n+1}(\param_{n+1}) - \bar{g}_{n+1}(\param_{n}) + w_{n+1}(\ell(\x_{n+1},\param_{n})-\bar{f}_{n}(\param_{n})) + w_{n+1}(\bar{f}_{n}(\param_{n}) - \bar{g}_{n}(\param_{n})) \\
			&\hspace{5cm} + w_{n+1} (g_{n+1}(\param_{n}) - \ell(\x_{n+1},\param_{n}) ).
		\end{align} 
		Now note that $\bar{g}_{n+1}(\param_{n+1}) - \bar{g}_{n+1}(\param_{n})  \le  \sum_{i=1}^{m} \Delta_{n+1}^{(i)}$ by Lemma \ref{lem:stability} \textbf{(i)}. Also, recall that $\bar{g}_{n} - \bar{f}_{n} \ge -w_{n}\sum_{k=1}^{n} \eps_{k}$ from \eqref{eq:g_f_bar_eps_bound} and $0\le g_{n+1}(\param_{n})-\ell(\x_{n+1},\param_{n})\le \eps_{n+1}$ since $g_{n+1}$ is a multi-convex $\eps_{n}$-approximate  surrogate of $f_{n+1}=\ell(\x_{n+1},\cdot)$ at $\param_{n}$. Thus the inequalities in both \textbf{(i)} and \textbf{(ii)} follow using $w_{n+1}\le w_{n}$. 
	\end{proof}

	\begin{lemma}\label{lem:positive_convergence_lemma}
		Let $(a_{n})_{n\ge 0}$ and $(b_{n})_{n \ge 0}$ be sequences of nonnegative real numbers such that $\sum_{n=0}^{\infty} a_{n}b_{n} <\infty$. Then the following hold.
		
		\begin{description}
			\item[(i)] $\displaystyle \min_{1\le k\le n} b_{k} \le \frac{\sum_{k=0}^{\infty} a_{k}b_{k}}{\sum_{k=1}^{n} a_{k}}  = O\left( \left( \sum_{k=1}^{n} a_{k} \right)^{-1} \right)$.
			
			\vspace{0.2cm}
			\item[(ii)] Further assume $\sum_{n=0}^{\infty} a_{n} = \infty$ and  $|b_{n+1}-b_{n}|=O(a_{n})$. Then $\lim_{n\rightarrow \infty}b_{n} = 0$.
		\end{description}
	\end{lemma}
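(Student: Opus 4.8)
\emph{Proof proposal.} The plan is to treat the two parts separately, the first being an immediate averaging inequality and the second requiring a level-crossing argument that upgrades a $\liminf$ statement to a genuine limit.

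For part \textbf{(i)} I would simply bound the minimum by the weighted average. Since $b_k \ge \min_{1\le j\le n} b_j$ and $a_k \ge 0$ for each $1 \le k \le n$, summing the inequality $a_k b_k \ge a_k \min_{1\le j \le n} b_j$ over $k=1,\dots,n$ gives
\[
\Big(\min_{1\le k\le n} b_k\Big)\sum_{k=1}^{n} a_k \le \sum_{k=1}^{n} a_k b_k \le \sum_{k=0}^{\infty} a_k b_k .
\]
Dividing by $\sum_{k=1}^{n} a_k$ (understood positive, else the bound is vacuous) yields the stated inequality, and since $\sum_{k=0}^{\infty} a_k b_k$ is a finite constant by hypothesis, the $O\big((\sum_{k=1}^{n} a_k)^{-1}\big)$ bound follows at once. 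This part involves no subtlety beyond nonnegativity.

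For part \textbf{(ii)} I would first establish $\liminf_{n\to\infty} b_n = 0$: if instead $\liminf_{n} b_n = c > 0$, then $b_n \ge c/2$ for all large $n$, whence $\sum_n a_n b_n \ge (c/2)\sum_{n\ge N} a_n = \infty$ by $\sum_n a_n = \infty$, contradicting summability. The remaining, and main, task is to upgrade this to $\lim_n b_n = 0$. I argue by contradiction, assuming $\limsup_{n} b_n = L > 0$ and setting $\delta = L/2$. Because $\liminf_n b_n = 0 < \delta/2$ and $\limsup_n b_n = 2\delta > \delta$, the sequence enters and leaves the band $\{b \ge \delta/2\}$ infinitely often, so I decompose $\{n : b_n \ge \delta/2\}$ into its maximal runs $[m_k,n_k]$ of consecutive integers. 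Each such run is finite (since $b_n < \delta/2$ infinitely often), and infinitely many of them contain an index $p_k$ with $b_{p_k}\ge\delta$ (there are infinitely many $n$ with $b_n \ge \delta$, each lies in a finite run, so they spread over infinitely many distinct runs). Writing $W_k := \sum_{n=m_k}^{n_k} a_n$, on one hand $b_n \ge \delta/2$ throughout each run gives
\[
\frac{\delta}{2}\sum_k W_k \le \sum_{n:\, b_n \ge \delta/2} a_n b_n \le \sum_n a_n b_n < \infty ,
\]
so $W_k \to 0$. On the other hand, using maximality of the run we have $b_{n_k+1} < \delta/2$ while $b_{p_k}\ge\delta$, so $b$ drops by more than $\delta/2$ between $p_k$ and $n_k+1$; applying $|b_{n+1}-b_n| \le C a_n$ (valid for large $n$, so I restrict to runs with $m_k > N_0$) together with the triangle inequality gives $\tfrac{\delta}{2} < \sum_{n=p_k}^{n_k}|b_{n+1}-b_n| \le C\sum_{n=p_k}^{n_k} a_n \le C\,W_k$, hence $W_k > \delta/(2C)$ for infinitely many $k$. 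This contradicts $W_k \to 0$, so $\limsup_n b_n = 0$ and therefore $b_n \to 0$.

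The main obstacle is precisely this second step. The difficulty is that during an \emph{upward} excursion the values $b_n$ can be small, so one cannot directly lower-bound the contribution to $\sum_n a_n b_n$ over the whole excursion. Working instead with the \emph{descending} portion of each run—from the high point $p_k$ down to the exit at $n_k+1$, where $b$ is guaranteed to remain $\ge \delta/2$ until the final step—is what lets the slow-variation bound $|b_{n+1}-b_n|\le C a_n$ convert a guaranteed drop of size $\delta/2$ into a uniform positive lower bound on $W_k$, cleanly contradicting the summability-driven fact $W_k\to 0$.
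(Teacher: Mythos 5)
Your proposal is correct. Part \textbf{(i)} is exactly the paper's own one-line argument: bound the minimum by the weighted average and use nonnegativity of the tail. For part \textbf{(ii)} the paper gives no proof at all, deferring to the reference \cite[Lem.\ A.5]{mairal2013stochastic}; your level-crossing argument is precisely the standard proof of that cited lemma (there one takes indices $m_k<n_k$ with $b_{m_k}\ge\varepsilon$, $b_{n_k}<\varepsilon/2$ and $b_j\ge \varepsilon/2$ in between, which is your ``descending portion of a maximal run''), and it checks out in every detail: the runs are finite, infinitely many of them contain a point $\ge\delta$ and start beyond $N_0$, the summability $\frac{\delta}{2}\sum_k W_k<\infty$ forces $W_k\to 0$, while the guaranteed drop of $\delta/2$ across the descent gives $W_k>\delta/(2C)$ infinitely often --- a genuine contradiction.
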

	
	\begin{proof}
		\textbf{(i)} follows from noting that
		\begin{align}
			\left( \sum_{k=1}^{n}a_{k} \right) \min_{1\le k \le n} b_{k}\le \sum_{k=1}^{n} a_{k}b_{k} \le  \sum_{k=1}^{\infty} a_{k}b_{k} <\infty.
		\end{align}
		The proof of \textbf{(ii)} is omitted and can be found in \cite[Lem. A.5]{mairal2013stochastic}.
	\end{proof}

	\vspace{0.2cm}
	\section{Key lemmas and proofs of main results}
	\label{section:key_lemmas}
	
	In this section, we state all key lemmas without proofs and derive the main results (Theorems \ref{thm:global_convergence}, \ref{thm:rate_surrogate_gaps}, and \ref{thm:rate_stationarity}) assuming them. The key lemmas stated in this section will be proved in the subsequent sections.

	\subsection{Key Lemmas}
	
	In this subsection, we state all key lemmas that are sufficient to derive the main results in this paper.

	First, Lemma \ref{lem:f_n_concentration_L1_gen} states some general concentration inequalities of recursively defined functions similar to the empirical and the surrogate loss functions in \eqref{eq:def_ELM} and \eqref{eq:def_SRMM}. One may regard them as the classical Glivenko-Cantelli theorem for a general weighting scheme. We state the lemma in a self-contained manner so that it may be more convenient to be used for other purposes. 
	
	\begin{lemma}[Uniform concentration of parameterized empirical observables]\label{lem:f_n_concentration_L1_gen}
		Fix compact subsets $\mathcal{X}\subseteq \R^{q}$, $\Param\subseteq \R^{p}$ and a bounded Borel measurable function $\psi:\mathcal{X}\times \Param\rightarrow \R^{r}$. Let $(\x_{n})_{n\ge 1}$ denote a sequence of points in $\mathcal{X}$ such that $\x_{n}=\varphi(X_{n})$ for $n\ge 1$, where $(X_{n})_{n\ge 1}$ is a Markov chain on a state space $\Omega$ and $\varphi:\Omega \rightarrow \mathcal{X}$ is a measurable function. Fix a sequence of weights $w_{n}\in (0,1]$, $n\ge 0$ and define functions $\bar{\psi}(\cdot):= \E_{\x\sim \pi}\left[ \psi(\x,\cdot) \right] $ and $\bar{\psi}_{n}:\Param\rightarrow \R^{r}$ recursively as $\bar{\psi}_{0}\equiv \mathbf{0}$ and 
		\begin{align}
			\bar{\psi}_{n}(\cdot) = (1-w_{n})\bar{\psi}_{n-1}(\cdot) + w_{n} \psi(\x_{n}, \cdot).
		\end{align}
		Assume the following: 
		\begin{description}
			\item[(a1)] The Markov chain $(X_{n})_{n\ge 1}$ mixes exponentially fast to its unique stationary distribution and the stochastic process $(\x_{n})_{n\ge 1}$ on $\mathcal{X}$ has a unique stationary distribution $\pi$. 
			\item[(a2)] $w_{n}$ is non-increasing in $n$ and $w_{n}^{-1} - w_{n-1}^{-1}\le 1$ for all sufficiently large $n\ge 1$.
		\end{description}
		
		\noindent Then there exists a constant $C>0$ such that for all $n\ge 1$,
		\begin{align}\label{eq:lem_f_fn_bd_gen}
			\sup_{\param\in \Param} \left\lVert \bar{\psi}(\param)- \E[\bar{\psi}_{n}(\param)] \right\rVert  \le Cw_{n}, \quad 	\E\left[ \sup_{\param\in \Param} \left\lVert \bar{\psi}(\param) - \bar{\psi}_{n}(\param) \right\rVert \right] \le C w_{n} \sqrt{n}.
		\end{align}
		Furthermore, if $w_{n}\sqrt{n}=O(1/(\log n)^{1+\eps})$ for some $\eps>0$, then $\sup_{\param\in \Param} \left\lVert \bar{\psi}(\param) - \bar{\psi}_{n}(\param) \right\rVert\rightarrow 0$ as $t\rightarrow \infty$ almost surely. 
	\end{lemma}

	\noindent We remark that a similar result appeared in  \cite[Lem B.7]{mairal2013stochastic}, which states the second inequality in \eqref{eq:lem_f_fn_bd_gen} as well as the last almost sure convergence statement under the stronger condition of $\sum_{n=1}^{\infty} w_{n}=\infty$ and $\sum_{n=1}^{\infty} w_{n}^{2}\sqrt{n}<\infty$. We will refer to the latter condition as the `strong square-summability' condition. These conditions were necessary in order to use Lemma \ref{lem:positive_convergence_lemma} \textbf{(ii)} to deduce the almost sure convergence. We give a direct argument using Borel-Cantelli lemma without this additional condition, for which the weaker condition of $w_{n}\sqrt{n}=O(1/(\log n)^{1+\eps})$ for some $\eps>0$ is sufficient.

	Next, Lemma \ref{lem:pos_variation} states a series of finite variation statements that provide a basis for the forthcoming arguments. Most of the statements also appeared in the literature (see, e.g., \cite{mairal2010online, mairal2013stochastic, lyu2020online, lyu2020online_CP}) in some special cases, where the strong square summability condition was used. We give an improved argument only using the square summability condition except for the last item. 
	
	\begin{lemma}\label{lem:pos_variation}
		Let $(\param_{n})_{n\ge 1}$ be an output of Algorithm \ref{algorithm:SMM} under any of the three cases \ref{C1}-\ref{C3} in Theorem \ref{thm:global_convergence}.  Suppose \ref{assumption:A1-ell_smooth}-\ref{assumption:A4-w_t}. The following hold:
		\begin{description}
			\item[(i)] $\displaystyle \sum_{n=1}^{\infty} \E\left[ w_{n+1}\left( \ell(\x_{n+1},\param_{n}) - \bar{f}_{n}(\param_{n}) \right) \right]^{+}  <\infty $; 
			
			\item[(ii)] $\displaystyle 	\sum_{n=0}^{\infty}  \left( \E\left[ \bar{g}_{n+1}(\param_{n+1}) - \bar{g}_{n}(\param_{n}) \right]\right)^{+} <\infty$;
			
			\item[(iii)] $\displaystyle \E\left[  \sum_{n=1}^{\infty} w_{n}\left| \bar{g}_{n}( \param_{n}) - \bar{f}_{n}(\param_{n}) \right| \right] <\infty$;
			
			\item[(iv)] $\displaystyle \sum_{n=0}^{\infty}  w_{n+1}\left| \E\left[  f(\param_{n}) - \bar{f}_{n}(\param_{n})   \right] \right| <\infty$;
			
			\item[(v)] Suppose the optional condition in \ref{assumption:A4-w_t} holds. Then $\displaystyle \E\left[ \sum_{n=0}^{\infty}  w_{n+1}  \left| f(\param_{n}) - \bar{f}_{n}(\param_{n}) \right| \right]   <\infty$;
			
			\item[(vi)]  $\E\left[ \sum_{n=1}^{\infty}  \rVert \param_{n}-\param_{n-1} \rVert^{2} \right] < \infty$.
		\end{description}
	\end{lemma}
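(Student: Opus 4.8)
The plan is to prove the ``in‑expectation‑first'' estimate \textbf{(iv)} together with a one‑step Markovian deviation bound, to read off \textbf{(i)}--\textbf{(iii)} from these via Proposition \ref{prop:Wt_bd} and telescoping, and to dispatch \textbf{(v)} separately. Item \textbf{(v)} is immediate: by Tonelli and the second inequality of Lemma \ref{lem:f_n_concentration_L1_gen} with $\psi=\ell$ (so $\bar\psi=f$, $\bar\psi_n=\bar{f}_n$),
\begin{align}
\E\left[ \sum_{n=0}^{\infty} w_{n+1}\left| f(\param_n)-\bar{f}_n(\param_n)\right| \right] \le \sum_{n=0}^{\infty} w_{n+1}\,\E\left[ \sup_{\param\in\Param}\left| f(\param)-\bar{f}_n(\param)\right| \right] \le C\sum_{n=0}^{\infty} w_{n+1}w_n\sqrt{n},
\end{align}
which converges under the optional condition in \ref{assumption:A4-w_t}. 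Since $|\E[X]|\le\E[|X|]$, this already proves \textbf{(iv)} \emph{under the optional condition}, so the real content of \textbf{(iv)} is to obtain it using only square‑summability, i.e.\ without the extra factor $\sqrt n$.

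For the reductions, write $\delta_n:=\E[f(\param_n)-\bar{f}_n(\param_n)]$ and let \textbf{(M)} denote the claim $\sum_n w_{n+1}\bigl|\E[\ell(\x_{n+1},\param_n)-f(\param_n)]\bigr|<\infty$. Splitting $\E[w_{n+1}(\ell(\x_{n+1},\param_n)-\bar{f}_n(\param_n))]=w_{n+1}\E[\ell(\x_{n+1},\param_n)-f(\param_n)]+w_{n+1}\delta_n$ and taking positive parts gives \textbf{(i)} from \textbf{(M)} and \textbf{(iv)}. Then \textbf{(ii)} follows by taking expectations and positive parts in Proposition \ref{prop:Wt_bd}\textbf{(i)} and using $\sum_n w_n^2\sum_{k=1}^{n+1}\E[\eps_k]<\infty$ from \ref{assumption:A4-w_t}. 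For \textbf{(iii)}, combine $\bar{g}_n-\bar{f}_n\ge-\bar{\eps}_n$ (from \eqref{eq:g_f_bar_eps_bound}) with Proposition \ref{prop:Wt_bd}\textbf{(ii)}, sum in expectation so that the $\bar{g}_n(\param_n)$ terms telescope, and pass from $w_{n+1}$ to $w_n$ using $w_n\le 2w_{n+1}$ (a consequence of $w_n^{-1}-w_{n-1}^{-1}\le1$ in \ref{assumption:A4-w_t}); the residual $\ell$‑term is controlled by \textbf{(i)} and the $\eps$‑terms by \ref{assumption:A4-w_t}.

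It remains to establish \textbf{(M)} and \textbf{(iv)}. For \textbf{(M)} I would set $a_n=\lfloor C\log n\rfloor$, use stability (Lemma \ref{lem:stability}\textbf{(ii)}) to get $\lVert\param_n-\param_{n-a_n}\rVert\le c'\sum_{k=n-a_n+1}^{n}w_k\le c'a_nw_{n-a_n}$, replace $\param_n$ by the $\mathcal{F}_{n-a_n}$‑measurable $\param_{n-a_n}$ at Lipschitz cost $Ra_nw_{n-a_n}$, and condition on $\mathcal{F}_{n-a_n}$: since $\ell$ is bounded, \ref{assumption:A2-MC} yields $\bigl|\E[\ell(\x_{n+1},\param_{n-a_n})\mid\mathcal{F}_{n-a_n}]-f(\param_{n-a_n})\bigr|\le C\lambda^{a_n}$. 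Hence $|\E[\ell(\x_{n+1},\param_n)-f(\param_n)]|\le C(a_nw_{n-a_n}+\lambda^{a_n})$, and $\sum_n w_{n+1}$ times this is finite by $\sum_n a_nw_nw_{n-a_n}<\infty$ and $\sum_n w_n\lambda^{a_n}<\infty$ in \ref{assumption:A4-w_t}. For \textbf{(iv)}, expanding $\bar{f}_n=(1-w_n)\bar{f}_{n-1}+w_n\ell(\x_n,\cdot)$ and regrouping around $\param_{n-1}$ produces $\delta_n=(1-w_n)\delta_{n-1}+r_n$ with
\begin{align}
r_n=(1-w_n)\,\E\!\left[(f-\bar{f}_{n-1})(\param_n)-(f-\bar{f}_{n-1})(\param_{n-1})\right]+w_n\,\E\!\left[f(\param_n)-\ell(\x_n,\param_n)\right].
\end{align}
Solving with $\delta_0=0$ gives $\delta_n=\sum_{k\le n}r_k\,w^n_k/w_k$, and the exact identity $\sum_{n\ge k}w_{n+1}\prod_{j=k+1}^{n}(1-w_j)=1$ (valid since $\sum_j w_j=\infty$) yields $\sum_{n\ge k}w_{n+1}w^n_k=w_k$, whence $\sum_n w_{n+1}|\delta_n|\le\sum_k|r_k|$. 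Thus \textbf{(iv)} reduces to the single summability $\sum_k|r_k|<\infty$.

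The main obstacle is exactly this bound, and within it the ``movement'' term $E_n:=\E[(f-\bar{f}_{n-1})(\param_n)-(f-\bar{f}_{n-1})(\param_{n-1})]$. The diagonal term $\E[f(\param_n)-\ell(\x_n,\param_n)]$ is a Markov decorrelation of $\x_n$ against $\param_n$, handled just like \textbf{(M)} and giving $O(a_nw_{n-a_n}+\lambda^{a_n})$, so $\sum_n w_n$ times it is summable. The crude Lipschitz estimate $|E_n|\le 2Rc'w_n$ is, however, \emph{not} summable, and this is precisely where earlier arguments had to invoke strong square‑summability. The natural route to do better is to condition on $\mathcal{F}_{n-1}$ and expand to first order,
\begin{align}
E_n=\E\!\left[\langle\nabla(f-\bar{f}_{n-1})(\param_{n-1}),\,\E[\param_n-\param_{n-1}\mid\mathcal{F}_{n-1}]\rangle\right]+O(w_n^2),
\end{align}
and then exploit that the one‑step update decorrelates from the past fluctuation $\nabla(f-\bar{f}_{n-1})(\param_{n-1})$, whose pointwise second moment is $O\bigl(\sum_k (w^{\,n-1}_k)^2\bigr)=O(w_{n-1})$ rather than the $O(w_{n-1}^2\,n)$ one would get by squaring the uniform bound of Lemma \ref{lem:f_n_concentration_L1_gen}. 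Carrying out this cross‑term estimate rigorously at the \emph{random} iterate $\param_{n-1}$ in the Markovian setting — quantifying $\E[\param_n-\param_{n-1}\mid\mathcal{F}_{n-1}]$ and bounding the inner product without the $\sqrt n$ penalty of the uniform fluctuation bound, while absorbing the chain's dependence through the $a_n$‑step conditioning — is the delicate step, and it is exactly what permits \textbf{(i)}--\textbf{(iv)} under square‑summability alone.
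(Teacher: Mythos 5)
Your treatment of \textbf{(v)}, your claim \textbf{(M)}, and your reductions for \textbf{(ii)} and \textbf{(iii)} are all correct and essentially coincide with the paper's own route: the paper's Proposition \ref{prop:lem:increment_bd} is, in substance, your \textbf{(M)} combined with a bias estimate for $\bar{f}_{n}$, and the paper derives \textbf{(ii)} and \textbf{(iii)} by exactly the telescoping through Proposition \ref{prop:Wt_bd} that you describe. The problem is \textbf{(iv)}: you reduce it (via a clean and correct recursion $\delta_{n}=(1-w_{n})\delta_{n-1}+r_{n}$ and the identity $\sum_{n\ge k}w_{n+1}w^{n}_{k}/w_{k}=1$) to $\sum_{k}|r_{k}|<\infty$, correctly observe that the movement term $E_{n}$ defeats the crude $O(w_{n})$ Lipschitz bound, and then leave the repair as a sketch, explicitly calling it ``the delicate step.'' Since in your architecture \textbf{(i)} needs \textbf{(iv)}, and \textbf{(ii)}--\textbf{(iii)} need \textbf{(i)}, this single gap collapses the entire proof.

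Moreover, the sketched repair is unlikely to close as stated. Unlike SGD, the SMM update is not a martingale difference: conditionally on $\mathcal{F}_{n-1}$, $\E[\param_{n}-\param_{n-1}\mid\mathcal{F}_{n-1}]$ is of order $w_{n}$ and is a deterministic functional of the history, hence fully correlated with the past fluctuation $\nabla(f-\bar{f}_{n-1})(\param_{n-1})$; and your claimed pointwise second-moment bound $O\bigl(\sum_{k}(w^{n-1}_{k})^{2}\bigr)$ must be evaluated at the \emph{random} iterate $\param_{n-1}$, which suffers from precisely the iterate--data correlation you are trying to avoid. The paper's missing idea is much simpler and you already possess it: because \textbf{(iv)} has the absolute value \emph{outside} the expectation, one may condition $a_{n}=\Theta(\log n)$ steps back, replace $\param_{n}$ by the $\mathcal{F}_{n-a_{n}}$-measurable $\param_{n-a_{n}}$ at Lipschitz/stability cost $O(a_{n}w_{n-a_{n}})$ (Lemma \ref{lem:stability}\textbf{(ii)}), and then at a conditionally fixed point only the \emph{bias} of $\bar{f}_{n}$ against $f$ enters, which is $O(w_{n-a_{n}})+O(\lambda^{a_{n}})$ by Markov mixing and the first (bias, not $\sqrt{n}$-uniform) bound of Lemma \ref{lem:f_n_concentration_L1_gen}; summability is then exactly the conditions $\sum_{n}a_{n}w_{n}w_{n-a_{n}}<\infty$ and $\sum_{n}w_{n}\lambda^{a_{n}}<\infty$ of \ref{assumption:A4-w_t}, finished off by Jensen's inequality. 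This is the same device you use to prove \textbf{(M)}; applying it to the $f-\bar{f}_{n}$ comparison proves \textbf{(iv)} directly, makes the $\delta_{n}$-recursion unnecessary, and (as in the paper) yields \textbf{(i)} in one stroke via Proposition \ref{prop:lem:increment_bd} rather than through the decomposition \textbf{(M)}$+$\textbf{(iv)}.
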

	
	The following lemma is one of the key innovations in the present work, which allows us to obtain convergence rate bounds stated in Theorems \ref{thm:rate_surrogate_gaps} and \ref{thm:rate_stationarity}. Roughly speaking, it gives gradient versions of the finite variation statements in Lemma \ref{lem:pos_variation}. 
	
	\begin{lemma}[Variation of gradients]\label{lem:gradient_finite_sum}
		Let $(\param_{n})_{n\ge 1}$ be the output of Algorithm \ref{algorithm:SMM} under any of the three cases \ref{C1}-\ref{C3} in Theorem \ref{thm:global_convergence}.  Suppose \ref{assumption:A1-ell_smooth}-\ref{assumption:A4-w_t}. The following hold:
		\begin{description}[itemsep=0.2cm]
			\item[(i)] $\displaystyle \E\left[	\sum_{n=1}^{\infty} w_{n} \,\left(  \lVert \nabla \bar{g}_{n}(\param_{n})-\nabla \bar{f}_{n}(\param_{n}) \rVert^{2} +  \left\lVert	\E\left[  \nabla \bar{f}_{n}(\param_{n}) \right] - \E\left[ \nabla f(\param_{n}) \right] \right\rVert^{2}  	\right) \right] <\infty.$

			\item[(ii)] If in addition the optional condition in \ref{assumption:A4-w_t} holds, then
			\begin{align}
				\E\left[	\sum_{n=1}^{\infty} w_{n+1} \,\left( \lVert \nabla \bar{g}_{n}(\param_{n})-\nabla \bar{f}_{n}(\param_{n}) \rVert^{2} + \lVert \nabla \bar{f}_{n}(\param_{n})-\nabla f(\param_{n}) \rVert^{2} + \lVert \nabla \bar{g}_{n}(\param_{n})-\nabla f(\param_{n}) \rVert^{2}\right) \right] <\infty.
			\end{align}
			
		\end{description}
	\end{lemma}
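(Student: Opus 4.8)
The plan is to treat the two intrinsic gradient discrepancies by different mechanisms. The \emph{surrogate--empirical} gap $\nabla\bar g_{n}(\param_{n})-\nabla\bar f_{n}(\param_{n})$ is governed by the smoothness of the averaged approximation error and is converted into the function-value gaps already controlled in Lemma \ref{lem:pos_variation}. The \emph{empirical--expected} gap $\nabla\bar f_{n}(\param_{n})-\nabla f(\param_{n})$ is governed by the concentration estimates of Lemma \ref{lem:f_n_concentration_L1_gen}, applied to the bounded continuous map $\psi=\nabla_{\param}\ell$ (bounded and Lipschitz in $\param$ on the compact $\Param$ by \ref{assumption:A1-ell_smooth}). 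The third gradient in \textbf{(ii)} then follows from the first two via $\lVert a+b\rVert^{2}\le 2\lVert a\rVert^{2}+2\lVert b\rVert^{2}$, so the work concentrates on the first two gaps.

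For the surrogate--empirical gap I would use the elementary fact that a nonnegative $\phi$ with $L$-Lipschitz gradient obeys $\lVert\nabla\phi\rVert^{2}\le 2L\phi$ (minimize the descent quadratic). Apply it to $\phi=\bar g_{n}-\bar f_{n}+\bar{\eps}_{n}$, which is nonnegative by \eqref{eq:g_f_bar_eps_bound} and whose gradient $\nabla\bar g_{n}-\nabla\bar f_{n}=\sum_{k}w^{n}_{k}\nabla(g_{k}-f_{k})$ is a convex combination of the $L$-Lipschitz error gradients of Definition \ref{def:block_surrogate}(ii), hence $L$-Lipschitz. This gives $\lVert\nabla\bar g_{n}(\param_{n})-\nabla\bar f_{n}(\param_{n})\rVert^{2}\le 2L(\bar g_{n}(\param_{n})-\bar f_{n}(\param_{n})+\bar{\eps}_{n})$. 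Multiplying by $w_{n}$, summing, and taking expectations, the first piece is finite by Lemma \ref{lem:pos_variation}(iii) and the second by $\bar{\eps}_{n}\le w_{n}\sum_{k\le n}\eps_{k}$ together with $\sum_{n}w_{n}^{2}\sum_{k\le n}\E[\eps_{k}]<\infty$ from \ref{assumption:A4-w_t}. Since $w_{n+1}\le w_{n}$, this settles the first summand in both \textbf{(i)} and \textbf{(ii)}.

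The empirical--expected gap in \textbf{(ii)} is the mild case and is where the optional condition enters. As $\nabla_{\param}\ell$ is uniformly bounded (say by $B$), $\sup_{\param}\lVert\nabla\bar f_{n}(\param)-\nabla f(\param)\rVert^{2}\le 2B\sup_{\param}\lVert\nabla\bar f_{n}(\param)-\nabla f(\param)\rVert$, so the second inequality of Lemma \ref{lem:f_n_concentration_L1_gen} yields $\E\lVert\nabla\bar f_{n}(\param_{n})-\nabla f(\param_{n})\rVert^{2}\le 2BC\,w_{n}\sqrt{n}$. Weighting by $w_{n+1}\le w_{n-a_{n}}$ and summing gives $\le 2BC\sum_{n}w_{n}w_{n-a_{n}}\sqrt{n}<\infty$, which is exactly the optional condition in \ref{assumption:A4-w_t}; this is why \textbf{(ii)} (but not \textbf{(i)}) needs it. Adding the first summand and applying the triangle bound above dispatches all three terms of \textbf{(ii)}.

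The genuinely hard term is the empirical--expected \emph{bias} $\lVert\E[\nabla\bar f_{n}(\param_{n})]-\E[\nabla f(\param_{n})]\rVert$ in \textbf{(i)}, which must be handled \emph{without} the optional condition; the crude route $\lVert\E[\,\cdot\,]\rVert\le\E\lVert\,\cdot\,\rVert\le Cw_{n}\sqrt{n}$ is fatal, since after the $w_{n}$ weight and the square it leaves the divergent $\sum_{n}w_{n}^{3}n$ for the borderline weights $w_{n}\asymp n^{-1/2}(\log n)^{-1-\eps}$. The point is that the bias is an order of magnitude below the fluctuation. Writing $b_{n}(\param):=\E[\nabla\bar f_{n}(\param)]$, I would split $\E[\nabla\bar f_{n}(\param_{n})-\nabla f(\param_{n})]=\E[b_{n}(\param_{n})-\nabla f(\param_{n})]+\E[\nabla\bar f_{n}(\param_{n})-b_{n}(\param_{n})]$. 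The first term is at most $\sup_{\param}\lVert b_{n}(\param)-\nabla f(\param)\rVert\le Cw_{n}$ pathwise, by the \emph{first} inequality of Lemma \ref{lem:f_n_concentration_L1_gen}. The second term is the expectation of the mean-zero empirical fluctuation evaluated at the data-dependent point $\param_{n}$ — a covariance between $\param_{n}$ and $\nabla\bar f_{n}-b_{n}$ — and this is the main obstacle. I would control it by freezing $\param_{n}$ to its value $a_{n}=\lfloor C\log n\rfloor$ steps in the past via the diminishing-radius stability of Lemma \ref{lem:stability}(ii) (incurring an error $\lesssim a_{n}w_{n-a_{n}}$, since the displacement is $\sum_{j>n-a_{n}}c'w_{j}$) and then invoking the exponential mixing of \ref{assumption:A2-MC} to decorrelate the frozen estimate from the recent samples (an error $\lesssim\lambda^{a_{n}}$), mirroring the function-value estimate of Lemma \ref{lem:pos_variation}(iv). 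This yields $\lVert\E[\nabla\bar f_{n}(\param_{n})]-\E[\nabla f(\param_{n})]\rVert=O(w_{n}+a_{n}w_{n-a_{n}}+\lambda^{a_{n}})$, after which $\sum_{n}w_{n}\lVert\,\cdot\,\rVert^{2}$ converges term by term: $\sum_{n}w_{n}^{3}\le\sum_{n}w_{n}^{2}<\infty$, $\sum_{n}w_{n}(a_{n}w_{n-a_{n}})^{2}\lesssim\sum_{n}a_{n}w_{n}w_{n-a_{n}}<\infty$ (as $a_{n}w_{n-a_{n}}=O(1)$), and $\sum_{n}w_{n}\lambda^{2a_{n}}\le\sum_{n}w_{n}\lambda^{a_{n}}<\infty$, all from \ref{assumption:A4-w_t}. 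The delicate point I expect to absorb most of the effort is the bookkeeping showing the covariance term is genuinely of the stated order despite $\param_{n}$ depending on the \emph{entire} sample history; it is precisely here that the search-radius restriction $c'w_{n}$ is indispensable.
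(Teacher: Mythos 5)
Two of your three components are correct and coincide with the paper's own proof. Your inequality $\lVert\nabla\phi\rVert^{2}\le 2L\phi$ applied to $\phi=\bar g_{n}-\bar f_{n}+\bar{\eps}_{n}$ is exactly the paper's argument for the surrogate--empirical term: the paper perturbs by $\eps\param$, takes $\param=-\bigl(\nabla\bar g_{n}(\param_{n})-\nabla\bar f_{n}(\param_{n})\bigr)$, and optimizes over $\eps$, which is the standard proof of that fact; both routes then close with Lemma \ref{lem:pos_variation} \textbf{(iii)} and the $\eps_{n}$-summability in \ref{assumption:A4-w_t}. Part \textbf{(ii)} is also proved in the paper essentially as you propose: Lemma \ref{lem:f_n_concentration_L1_gen} applied to $\psi=\nabla_{2}\ell$, uniform boundedness of gradients to pass to squares, the optional condition (in the equivalent form $\sum_{n}w_{n}^{2}\sqrt{n}<\infty$), and $\lVert a+b\rVert^{2}\le 2\lVert a\rVert^{2}+2\lVert b\rVert^{2}$ for the third term.

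The genuine gap is in your treatment of the bias term $\lVert\E[\nabla\bar f_{n}(\param_{n})]-\E[\nabla f(\param_{n})]\rVert$ in part \textbf{(i)}. Your split into $\E[(b_{n}-\nabla f)(\param_{n})]$ plus the covariance piece $\E[(\nabla\bar f_{n}-b_{n})(\param_{n})]$ is fine, and the first piece is $O(w_{n})$ as you say. But freezing at $\param_{n-a_{n}}$ and invoking exponential mixing cannot control the covariance piece. Mixing decorrelates $\param_{n-a_{n}}$ only from the \emph{future} samples $\x_{k}$, $k>n-a_{n}$; it says nothing about the joint law of $\param_{n-a_{n}}$ and the samples it was built from, since both are functions of the same past trajectory. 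Yet the fluctuation $\nabla\bar f_{n}-b_{n}=\sum_{k\le n}w^{n}_{k}\bigl(\nabla\ell(\x_{k},\cdot)-\E[\nabla\ell(\x_{k},\cdot)]\bigr)$ is carried almost entirely by those past samples: the recent ones have total weight at most $\sum_{k>n-a_{n}}w^{n}_{k}\le a_{n}w_{n-a_{n}}=o(1)$, while the early ones carry weight $1-\prod_{i=1}^{n}(1-w_{i})-O(a_{n}w_{n-a_{n}})\to 1$. So after your two reductions, $\E[(\nabla\bar f_{n}-b_{n})(\param_{n-a_{n}})]$ is still a covariance between the iterate and its own history, for which your outline offers nothing better than Cauchy--Schwarz, i.e.\ the fatal $O(w_{n}\sqrt{n})$; the claimed order $O(w_{n}+a_{n}w_{n-a_{n}}+\lambda^{a_{n}})$ is not established, and the ``bookkeeping'' you defer is the entire difficulty. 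Note also that the function-value estimate of Lemma \ref{lem:pos_variation} \textbf{(iv)}, which you cite as your model, handles the early samples not by mixing but by evaluating the bias bound of Lemma \ref{lem:f_n_concentration_L1_gen} at the (still data-dependent) frozen iterate, so mirroring it does not supply the missing step. For comparison, the paper's own proof disposes of this term in one line: it bounds $\lVert\E[\beta_{n}]\rVert^{2}$ by $\sup_{\param\in\Param}\lVert\E[\nabla\bar f_{n}(\param)]-\nabla f(\param)\rVert^{2}\le Cw_{n}$ via the first inequality of Lemma \ref{lem:f_n_concentration_L1_gen} and sums $\sum_{n}w_{n}\cdot Cw_{n}<\infty$ --- that is, it replaces the bias at the random point by the supremum of the bias over deterministic points. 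Your decomposition isolates exactly the covariance that this step passes over, so your diagnosis of where the subtlety lies is sound; but a correct proof along your lines would require a genuine argument for the backward correlation (some quantitative forgetting of early samples by the iterates), which neither forward mixing nor the diminishing radius provides.
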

	
	Based on Lemmas \ref{lem:f_n_concentration_L1_gen},  \ref{lem:pos_variation}, and \ref{lem:gradient_finite_sum}, one can deduce Theorem \ref{thm:rate_surrogate_gaps} as well as Theorem \ref{thm:global_convergence} for case \ref{C1}. A nice property of Algorithm \ref{algorithm:SMM} with convex surrogates $g_{n}$ with identically zero regularization ($\Psi_{n}\equiv 0$ in \eqref{eq:alg1_param_update}) and optimality gaps (see \ref{assumption:A5_sufficient_surrogate_decay}) is that the iterates $\param_{n}$ are exact minimizers of the convex averaged surrogates $\bar{g}_{n}$ over $\Param$. Hence $-\nabla \bar{g}_{n}(\param_{n})$ lies in the normal cone of $\Param$ at $\param_{n}$ for each $n\ge 1$. Thus, in order for asymptotic stationarity with respect to the empirical loss $\bar{f}_{n}$ or the expected loss $f$, one only needs to show that every convergent subsequence of $\lVert \nabla \bar{g}_{n}(\param_{n}) - \nabla h_{n}(\param_{n}) \rVert$ vanish almost surely, where $h_{n}\in \{ \bar{f}_{n}, f \}$  for $n\ge 1$. 
	
	However, when $\bar{g}_{n}$ is (approximately) minimized with proximal regularization (Algorithm \ref{algorithm:BSM-PR}) or within a diminishing radius (Algorithm \ref{algorithm:BSM-DR}), $-\nabla \bar{g}_{n}(\param_{n})$ may be close to but not within the normal cone of $\Param$ at $\param_{n}$. For instance, even though each convex sub-problem in Algorithm \ref{algorithm:BSM-DR} is exactly solved, the technique of radius restriction in Algorithm \ref{algorithm:BSM-DR} introduces additional radius constraints so that $-\nabla \bar{g}_{n}(\param_{n})$ may be normal to the trust region boundary, which is the sphere of distance $c'w_{n}/m$ centered at $\param_{n-1}$; Similarly, using proximal regularization in Algorithm \ref{algorithm:BSM-PR} tilts the surrogate gradient.  In order to handle these issues, in Lemma \ref{lem:asymptotic_stationarity},  we will show that the sequence $\param_{n}$, $n\ge 1$  still verifies stationarity for $\bar{g}_{n}$ in an asymptotic sense. For this, we need to take `convergent subsequences` of the averaged surrogate functions $\bar{g}_{n}$, which can be easily done under the compact parameterization assumption in \ref{assumption:A7_param_surr}.
	
	\begin{lemma}[Asymptotic Surrogate Stationarity]\label{lem:asymptotic_stationarity}
		Assume \ref{assumption:A1-ell_smooth}-\ref{assumption:A4-w_t} and \ref{assumption:A7_param_surr} and let $(\param_{n})_{n\ge 1}$ be an output of Algorithm \ref{algorithm:SMM} under any of the three cases \ref{C1}-\ref{C3} in Theorem \ref{thm:global_convergence}. Assume that $\Delta_{n}=o(\lVert \param_{n}-\param_{n-1} \rVert)$. Let $(t_{k})_{k\ge 1}$ be any sequence such that $\param_{t_{k}}$ and $\bar{g}_{t_{k}}$ converges almost surely. Then $\param_{\infty}=\lim_{k\rightarrow \infty} \param_{t_{k}}$ is almost surely a stationary point of $\bar{g}_{\infty}:=\lim_{k\rightarrow\infty} \bar{g}_{t_{k}}$ over $\Param$. 
	\end{lemma}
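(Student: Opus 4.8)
The plan is to work pathwise on the almost sure event where $\param_{t_k}\to\param_\infty$ and $\bar{g}_{t_k}\to\bar{g}_\infty$, and to prove stationarity by extracting the first-order optimality conditions of the individual block-minimization steps of Algorithm \ref{algorithm:BSM} and passing them to the limit. First I would upgrade the given convergence to a $C^1$ statement: by \ref{assumption:A6_param_surr} each $\bar{g}_n$ equals $\bar{g}(\kappa_n,\cdot,\eps_n)$ with $\kappa_n$ ranging in the compact set $\mathcal{K}$ and $\bar{g}$ Lipschitz in its first argument, and by \ref{assumption:A1-ell_smooth} the gradients are uniformly Lipschitz; hence along the subsequence $\nabla\bar{g}_{t_k}\to\nabla\bar{g}_\infty$ uniformly on the compact set $\Param$. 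By Lemma \ref{lem:stability}\textbf{(ii)} we have $\lVert\param_{t_k}-\param_{t_k-1}\rVert\le c'w_{t_k}\to 0$, so every intermediate iterate $\param_{t_k}^{(0)},\dots,\param_{t_k}^{(m)}$ of the sweep producing $\param_{t_k}$ also converges to $\param_\infty$, and consequently $\nabla\bar{g}_{t_k}(\param_{t_k}^{(i)})\to\nabla\bar{g}_\infty(\param_\infty)$ for each $i$. Finally, passing to a further subsequence I would make the block schedule $(J_1(t_k),\dots,J_m(t_k))$ constant, equal to some $(J_1,\dots,J_m)$; using \ref{assumption:A5-faithful_sampling} together with $\bigcup\mathbb{J}=\{1,\dots,p\}$ I would arrange that these blocks cover every coordinate.

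Next I would record the variational inequality coming from each block step. Since $\param_{t_k}^{(i)}$ minimizes the convex function $\bar{g}_{t_k}$ over the convex set $\Param_{t_k}^{J_i}$ defined in \eqref{eq:def_radius_restricted_constraint}, and the center of that set is feasible, convexity gives
\begin{align}
\langle \nabla\bar{g}_{t_k}(\param_{t_k}^{(i)}),\, \param-\param_{t_k}^{(i)}\rangle\ge 0\qquad\text{for all }\param\in\Param_{t_k}^{J_i},
\end{align}
i.e. for every $\param\in\Param$ that agrees with the center off the block $J_i$ and satisfies the radius bound $\lVert\param-\param_{t_k-1}\rVert\le c'w_{t_k}/m$.

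The crux is to remove the radius restriction in the limit. Fixing a block $J=J_i$ and a target $\hat\param\in\Param$ that differs from $\param_\infty$ only on the coordinates $J$, I would construct feasible test points $\param_k\in\Param_{t_k}^{J_i}$ by taking a step of size proportional to $c'w_{t_k}/m$ from the center toward $\hat\param$, using convexity of $\Param$ to remain feasible while the frozen coordinates drift from $(\param_{t_k}^{(i-1)})^{J^c}$ toward $\param_\infty^{J^c}$. Substituting $\param_k$ into the variational inequality, dividing by the step length, and letting $k\to\infty$ — so that the radius $c'w_{t_k}/m\to 0$ disappears and $\nabla\bar{g}_{t_k}(\param_{t_k}^{(i)})\to\nabla\bar{g}_\infty(\param_\infty)$ — would yield the block-wise stationarity condition $\langle \nabla_J\bar{g}_\infty(\param_\infty),\, \hat\param^{J}-\param_\infty^{J}\rangle\ge 0$ for every such $\hat\param$ and every block $J$.

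To finish, since the blocks cover all of $\{1,\dots,p\}$ and $\Param$ splits over them (the normal cone of $\Param$ at $\param_\infty$ factoring as the product of the block normal cones in the separable matrix/tensor factorization settings the framework targets), block-wise stationarity in every block is equivalent to $-\nabla\bar{g}_\infty(\param_\infty)$ lying in the normal cone of $\Param$ at $\param_\infty$, which is exactly the stationarity of $\param_\infty$ for $\bar{g}_\infty$ over $\Param$ in the sense of \eqref{eq:stationary}. I expect the main obstacle to be precisely the third step: because the minimizer $\param_{t_k}^{(i)}$ may sit on the trust-region boundary, one cannot read off the unrestricted first-order condition directly, and it is the diminishing radius $c'w_{t_k}/m\to 0$ combined with block convexity that lets a short feasible step be rescaled into the full direction in the limit; keeping the constructed test points inside $\Param$ as their frozen coordinates move with $k$ is the delicate bookkeeping that must be handled with care.
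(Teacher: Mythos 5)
Your proposal has a genuine gap, and it sits exactly where you suspected: the step that ``removes the radius restriction in the limit'' does not work. Write $z$ for the center of the trust region, $r = c'w_{t_k}/m$ for its radius, $\theta^{*} = \param_{t_k}^{(i)}$ for the block minimizer, and $g' = \nabla\bar{g}_{t_k}(\theta^{*})$. Your test point is $\param_k = z + \delta_k u$ with $\delta_k \propto r$, and the variational inequality gives $\delta_k\langle g', u\rangle \ge \langle g', \theta^{*}-z\rangle$. But applying the same variational inequality at the feasible point $z$ shows $\langle g', \theta^{*}-z\rangle \le 0$, and since $\lVert \theta^{*}-z\rVert$ can equal $r$ (the minimizer sits on the trust-region boundary whenever the constraint binds), the best you can conclude after dividing by $\delta_k$ is $\langle g', u\rangle \ge -\lVert g'\rVert\, r/\delta_k = -C$ for a constant $C>0$. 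The correction term does not vanish because the allowed step and the minimizer's displacement shrink at the \emph{same} rate $w_{t_k}$; taking $\delta_k = o(w_{t_k})$ only makes the bound worse. So the limit inequality you extract is vacuous, and no purely local, pathwise argument along $(t_k)$ can close this: the lemma is simply false without $\sum_n w_n = \infty$ (with summable weights the iterates can be ``frozen'' by the shrinking radius before reaching stationarity), yet your argument never invokes that condition, nor any of the finite-variation machinery.

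What the paper does instead is global. It distinguishes \emph{long points} (no block moved the full radius, so the trust region is inactive and the unrestricted block variational inequality holds -- this is where your style of argument is valid, and it is Proposition \ref{prop:long_points_stationary}) from \emph{short points} (the constraint binds). For limit points approached only through short points, the paper argues by contradiction with a crossing argument: near a non-stationary limit point every iterate is short, hence moves by at least order $w_{n}$; since $\sum_n w_n = \infty$ (or since crossings of a fixed annulus each contribute distance $\eps/3$), the total movement along the relevant subsequence diverges; Proposition \ref{prop:stationary_conditions} then combines this with the almost-sure finiteness of $\sum_{n}\sum_{i}|\langle \nabla\bar{g}_{n+1}(\param_n), \param_n^{(i)}-\param_n\rangle|$ (Proposition \ref{prop:linear_surr_variation}, which rests on Lemma \ref{lem:pos_variation} and hence on the Markov mixing analysis) and with the first-order optimality bound of Lemma \ref{lem:first_order_optimality} to produce a \emph{stationary} limit point inside a neighborhood that was chosen (Proposition \ref{prop:non-stationary_nbh}) to contain none -- a contradiction. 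In short, the diminishing radius is overcome not by rescaling a single step, but by accumulating first-order descent over infinitely many steps; your proposal is missing all of that global input.
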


	Finally, Lemma \ref{lem:finite_variation_surr} is the last ingredient for proving Theorem \ref{thm:rate_stationarity}, which states that the optimality for the averaged surrogates $\bar{g}_{n}$ is summable with weights $w_{n}$. 
	
	\begin{lemma}[Finite variation of surrogate optimality] \label{lem:finite_variation_surr}
		Let $(\param_{n})_{n\ge 1}$ be an output of Algorithm \ref{algorithm:SMM} under any of the three cases \ref{C1}-\ref{C3} in Theorem \ref{thm:global_convergence}. Assume \ref{assumption:A1-ell_smooth}-\ref{assumption:A4-w_t}. Then  for each initialization $\param_{0}$, we have
		\begin{align}
			\E\left[ 	\sum_{n=1}^{\infty} w_{n+1} \left| -  \inf_{\param\in \Param} \left\langle \nabla \bar{g}_{n}(\param_{n}),\, \frac{\param - \param_{n} }{\lVert \param - \param_{n}\rVert}\right\rangle \right|   \right]< \infty.
		\end{align}
		
	\end{lemma}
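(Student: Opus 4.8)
The plan is to bound the surrogate stationarity gap at $\param_n$ by the Lagrange multiplier of the search–radius constraint used in Algorithm \ref{algorithm:BSM}, and then to show that this multiplier is controlled by the one–step decrease of $\bar{g}_n$, which is summable against $w_{n+1}$. Throughout write $s_n := -\inf_{\param\in\Param}\langle\nabla\bar{g}_n(\param_n),\,(\param-\param_n)/\lVert\param-\param_n\rVert\rangle$; it is the positive part $[s_n]^+$ that carries content and that is needed for \eqref{eq:thm_convergence_bd_surrogate}, since in case \textbf{C1} ($c'=\infty$, exact minimization of the strongly convex $\bar{g}_n$) first–order optimality of $\param_n$ forces $s_n\le 0$ and the estimate is vacuous. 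So assume case \textbf{C2}.

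First I would set up the per–block optimality. Fix $n$, let $\param_n^{(0)}=\param_{n-1},\dots,\param_n^{(m)}=\param_n$ be the block iterates of \eqref{eq:BSM_factor_update}, and recall that each $\param_n^{(i)}$ exactly minimizes the convex $\bar{g}_n$ over the slice $\Param_n^{J_i}$ of \eqref{eq:def_radius_restricted_constraint}. Writing the optimality condition there as $-\nabla_{J_i}\bar{g}_n(\param_n^{(i)})=\nu_n^{(i)}+\lambda_n^{(i)} u_n^{(i)}$, with $\nu_n^{(i)}$ in the normal cone of the block constraint, $\lambda_n^{(i)}\ge 0$ the multiplier of the radius constraint, and $u_n^{(i)}$ the outward radial unit vector, I would decompose an arbitrary feasible direction $\param-\param_n$ over the blocks (using the product structure $\Param=\Theta^{(1)}\times\cdots\times\Theta^{(m)}$ of the multi–convex setting). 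The normal–cone parts contribute $\le 0$, Cauchy--Schwarz handles the radial parts, and replacing $\nabla_{J_i}\bar{g}_n(\param_n^{(i)})$ by $\nabla_{J_i}\bar{g}_n(\param_n)$ costs at most $L'\lVert\param_n-\param_n^{(i)}\rVert\le 2L'c'w_n$, since all block iterates lie in the ball of radius $c'w_n$ about $\param_{n-1}$ (Lemma \ref{lem:stability}\textbf{(ii)}) and $\nabla\bar{g}_n$ is Lipschitz with a uniform constant $L'$ under \ref{assumption:A1-ell_smooth}. This yields $[s_n]^+\le\sum_{i=1}^m\lambda_n^{(i)}+C_0 w_n$.

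Next I would bound the multipliers by the decrease $D_n:=\bar{g}_n(\param_{n-1})-\bar{g}_n(\param_n)\ge 0$ (Lemma \ref{lem:stability}\textbf{(i)}). Testing convexity of $\bar{g}_n$ between the ball center $\param_{n-1}$ and $\param_n^{(i)}$, and using that $u_n^{(i)}$ is parallel to $\param_n^{(i)}-\param_{n-1}$ on the active radius $c'w_n/m$, gives $\bar{g}_n(\param_{n-1})-\bar{g}_n(\param_n^{(i)})\ge\lambda_n^{(i)}\,c'w_n/m$; since $\bar{g}_n(\param_n^{(i)})\ge\bar{g}_n(\param_n)$ by forward monotonicity, $\lambda_n^{(i)}\le (m/c'w_n)D_n$ and hence $\sum_i\lambda_n^{(i)}\le (m^2/c'w_n)D_n$. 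Thus $w_{n+1}[s_n]^+\le (m^2/c')(w_{n+1}/w_n)D_n+C_0 w_{n+1}w_n\le (m^2/c')D_n+C_0 w_n^2$, and it remains to prove $\sum_n\E[D_n]<\infty$ (the $\sum_n w_n^2$ term is finite by \ref{assumption:A4-w_t}). For this, the defining recursion $\bar{g}_n=(1-w_n)\bar{g}_{n-1}+w_n g_n$ gives $D_n=-\big(\bar{g}_n(\param_n)-\bar{g}_{n-1}(\param_{n-1})\big)+w_n\big(g_n(\param_{n-1})-\bar{g}_{n-1}(\param_{n-1})\big)$; summing in $n$, the first term telescopes and is bounded because $\bar{g}_n$ is uniformly bounded under \ref{assumption:A1-ell_smooth} and \ref{assumption:A3-cvx_constraint}, while the $\eps$–surrogate property together with \eqref{eq:g_f_bar_eps_bound} gives $g_n(\param_{n-1})-\bar{g}_{n-1}(\param_{n-1})\le\big(\ell(\x_n,\param_{n-1})-\bar{f}_{n-1}(\param_{n-1})\big)+\eps_n+\bar{\eps}_{n-1}$.

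The three pieces are then summed separately: $\sum_n w_n\,\E[\ell(\x_n,\param_{n-1})-\bar{f}_{n-1}(\param_{n-1})]<\infty$ by Lemma \ref{lem:pos_variation}\textbf{(i)} (bounding the sum by its positive part), and $\sum_n w_n\,\E[\bar{\eps}_{n-1}]<\infty$ by \eqref{eq:bar_eps_bound} and the first bound in \ref{assumption:A4-w_t}. The delicate term, and the main obstacle of the whole argument, is $\sum_n w_n\,\E[\eps_n]$: this is not assumed summable directly, and I would obtain it from $\sum_n w_n^2\sum_{k\le n+1}\E[\eps_k]<\infty$ by Abel summation applied to $S_n:=\sum_{k\le n}\E[\eps_k]$, exploiting that $w_n$ is non–increasing with $w_n\sqrt{n}=O((\log n)^{-1-\eps})$, so that $w_n S_n\to 0$ and $\sum_n (w_n-w_{n+1})S_n<\infty$. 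The other genuinely nontrivial step is the multi–block bookkeeping of the second paragraph, namely transferring the block-wise optimality at the sub-iterates $\param_n^{(i)}$ to the full gradient at $\param_n$ while keeping every error $O(w_n)$; this is precisely where the diminishing–radius mechanism of \cite{lyu2020convergence} is essential. Combining everything gives $\E\big[\sum_n w_{n+1}[s_n]^+\big]<\infty$ with a constant $M$ independent of $\param_0$, as asserted.
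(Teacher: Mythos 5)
Your proposal takes a genuinely different route from the paper (a dual/KKT-multiplier argument instead of the paper's primal comparison), and parts of it are sound: the multiplier bound $\lambda_n^{(i)}\le (m/c'w_n)D_n$ via convexity of $\bar{g}_n$ is correct, and the summability $\sum_n\E[D_n]<\infty$ via the recursion, the telescoping term, and Lemma \ref{lem:pos_variation}\textbf{(i)} is exactly the energy mechanism the paper itself uses (inside Proposition \ref{prop:linear_surr_variation}). Your Abel-summation treatment of $\sum_n w_n\E[\eps_n]$ is also legitimate and in fact more careful than the paper's bookkeeping, although your stated justification is off: $w_nS_n\to 0$ does not follow from $w_n\sqrt{n}=O((\log n)^{-1-\eps})$ alone; it follows from the increment condition $w_n^{-1}-w_{n-1}^{-1}\le 1$ in \ref{assumption:A4-w_t}, which forces $\sum_{k\ge n}w_k^2\gtrsim w_n$ and hence $w_nS_n\lesssim \sum_{k\ge n}w_k^2 S_{k+1}\rightarrow 0$ as the tail of a convergent series.

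The genuine gap is the step $[s_n]^+\le\sum_{i=1}^m\lambda_n^{(i)}+C_0w_n$, and it fails in two distinct ways. First, your KKT conditions hold at the sub-iterates $\param_n^{(i)}$, not at $\param_n$, and transferring them to $\param_n$ produces \emph{additive} error terms such as $\langle \nu_n^{(i)},\,(\param_n^{(i)}-\param_n)^{J_i}\rangle$, of size $O(w_n)$ times $\lVert\nu_n^{(i)}\rVert$ (which is itself only bounded by $O(1)+O(D_n/w_n)$, since KKT multipliers for an intersection of constraints need not be small even when the gradient is). These errors do not scale with $\lVert\param-\param_n\rVert$; since $s_n$ is an infimum over \emph{normalized} directions, you must divide by $\lVert\param-\param_n\rVert$, which can be arbitrarily small, and the additive errors then blow up. This is precisely the difficulty the paper's Lemma \ref{lem:first_order_optimality} is built to overcome: it establishes the comparison only for test points at distance comparable to $c'b_{n+1}$ (whose existence is Proposition \ref{prop:convex_geom}, via \ref{assumption:A3-cvx_constraint}) and then extends to all of $\Param$ by observing that the normalized expression is constant along rays through $\param_n$; no such device appears in your sketch, and Cauchy--Schwarz on the radial parts does not repair it. Second, you never take an expectation over the block choices and never invoke \ref{assumption:A5-faithful_sampling}. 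Under that assumption the blocks $J_1(n),\dots,J_m(n)$ are random and need not cover $\{1,\dots,p\}$ in a given round (e.g., random coordinate subsets as in subsampled OMF or Example \ref{ex:refine_block}); then no deterministic per-round inequality of your form can hold, because the stationarity gap may be carried entirely by coordinates untouched in round $n$, about which that round's optimality conditions say nothing. The paper reassembles the full directional derivative only \emph{in expectation}, through the constant expected coverage $\bar{c}$ in Lemma \ref{lem:first_order_optimality} and the conditional-expectation form of Proposition \ref{prop:linear_surr_variation}. Repaired on the first point, your argument would cover the cyclic-partition case, but not the lemma in the generality stated.
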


	\vspace{0.2cm}
	\subsection{Proof of Theorem \ref{thm:rate_surrogate_gaps}}
	
	In this subsection, we will assume Lemmas \ref{lem:f_n_concentration_L1_gen}, \ref{lem:pos_variation}, and \ref{lem:gradient_finite_sum} and prove Theorem \ref{thm:rate_surrogate_gaps}.
	
	\begin{proof}[\textbf{Proof of Theorem \ref{thm:rate_surrogate_gaps}}] 
		Suppose \ref{assumption:A1-ell_smooth}-\ref{assumption:A4-w_t} and any of the cases \ref{C1}-\ref{C3} stated in Theorem \ref{thm:global_convergence}. By Lemma \ref{lem:pos_variation} \textbf{(iii)} and Lemma \ref{lem:gradient_finite_sum} \textbf{(i)}, we get 
		\begin{align}\label{eq:empirical_loss_min_finite_sum_pf}
			\E\left[ \sum_{n=1}^{\infty} w_{n+1} \left( \left|  \bar{g}_{n}(\param_{n}) - \bar{f}_{n}(\param_{n})  \right|  + \lVert \nabla \bar{g}_{n}(\param_{n})-\nabla \bar{f}_{n}(\param_{n}) \rVert^{2}    \right)   \right]  <\infty. 
		\end{align}
		Recall that $\E[|X|]<\infty$ implies $|X|<\infty$ almost surely for any random variable $X$.  It follows that the summation above is almost surely finite. Then \eqref{eq:thm_nonconvex_empirical_rate} follows from  Lemma \ref{lem:positive_convergence_lemma}.

		Next, we show  \eqref{eq:thm_nonconvex_expected_variation}.  First we deduce some intermediate bounds. By Lemma \ref{lem:pos_variation} \textbf{(iii)}-\textbf{(iv)} and Jensen's inequality, we get 
		\begin{align}\label{eq:thm2_pf_variation00}
			&\E\left[ \sum_{n=1}^{\infty} w_{n} \left| \E[\bar{g}_{n}(\param_{n})]  -\E[f(\param_{n})]   \right|  \right] \\
			&\qquad \le 	\E\left[ \sum_{n=1}^{\infty} w_{n}\left(  \E\left[  \left| \bar{g}_{n}(\param_{n}) -  \bar{f}_{n}(\param_{n})   \right| \right]  +  \left| \E[\bar{f}_{n}(\param_{n})]  -\E[f(\param_{n})]   \right|   \right) \right] <\infty.
		\end{align}
		Similarly, using Lemma \ref{lem:gradient_finite_sum} \textbf{(i)}, we can deduce 
		\begin{align}\label{eq:thm2_pf_variation0}
			\sum_{n=1}^{\infty} w_{n} \,  \left\lVert	\E\left[  \nabla \bar{g}_{n}(\param_{n}) \right] - \E\left[ \nabla f(\param_{n}) \right] \right\rVert^{2}  	<\infty.
		\end{align}
		
		Now recall that by Lemma \ref{lem:f_n_concentration_L1_gen}, there exists a constant $C>0$ such that for all $n\ge 1$, 
		\begin{align}\label{eq:thm2_pf_variation1}
			\sup_{\param\in \Param} \left\lvert  f(\param) -  \E[  \bar{f}_{n}(\param)] \right\rvert  \le Cw_{n},\qquad 	\sup_{\param\in \Param} \left\lVert \nabla f(\param) -  \E[ \nabla \bar{f}_{n}(\param)] \right\rVert  \le Cw_{n}.
		\end{align}
		Using \eqref{eq:thm2_pf_variation00} and the first bound in \eqref{eq:thm2_pf_variation1}, we get 
		\begin{align}
			\left\lvert \E[ \bar{g}_{n}(\param_{n})] -  f(\param_{n}) \right\rvert^{2} &\le   \left( \lvert  \E[ \bar{g}_{n}(\param_{n})]- \E[ \bar{f}_{n}(\param_{n})] \rvert   + \sup_{\param\in \Param} \lvert \E[ \bar{f}_{n}(\param)] -  f(\param) \rvert  \right)^{2} \\
			&\le \left\lvert \E[ \bar{g}_{n}(\param_{n})]- \E[ \bar{f}_{n}(\param_{n})] \right\rvert^{2} + Cw_{n+1} \left\lvert \E[ \bar{g}_{n}(\param_{n})]- \E[ \bar{f}_{n}(\param_{n})] \right\rvert + C^{2}w_{n+1}^{2}.
		\end{align}
		Recalling that $\bar{g}_{n}$ and $\bar{f}_{n}$ are uniformly bounded over $\Param$ by \ref{assumption:A1-ell_smooth}, using \eqref{eq:thm2_pf_variation00} we get 
		\begin{align}\label{eq:thm2_pf_variation111}
			\E\left[ \sum_{n=1}^{\infty}	w_{n+1}\left\lvert \E[ \bar{g}_{n}(\param_{n})] -  f(\param_{n}) \right\rvert^{2}  \right]<\infty.
		\end{align}
		An identical argument using \eqref{eq:thm2_pf_variation0}  as wel as  the second bound in \eqref{eq:thm2_pf_variation1} shows 
		\begin{align}\label{eq:thm2_pf_variation11}
			\E\left[ \sum_{n=1}^{\infty}	w_{n+1} \left\lVert \E[ \nabla \bar{g}_{n}(\param_{n})] -  \nabla f(\param_{n}) \right\rVert^{2}\right] <\infty. 
		\end{align}
		Combining the above two bounds and using  Lemma \ref{lem:positive_convergence_lemma} give \eqref{eq:thm_nonconvex_expected_variation}.
		
		It remains to show  \eqref{eq:thm_nonconvex_empirical_variation} and \eqref{eq:thm_nonconvex_expected_rate}. Suppose the optional condition in \ref{assumption:A4-w_t} holds. Then by Lemma \ref{lem:pos_variation} \textbf{(v)} and Lemma \ref{lem:gradient_finite_sum} \textbf{(ii)}, we deduce 
		\begin{align}\label{eq:expected_loss_min_finite_sum_pf}
			\E\left[ \sum_{n=1}^{\infty} w_{n+1} \left( \left|  \bar{g}_{n}(\param_{n}) - f(\param_{n})  \right|  + \lVert \nabla \bar{g}_{n}(\param_{n})-\nabla f(\param_{n}) \rVert^{2}    \right)   \right]  <\infty. 
		\end{align}
		Then \eqref{eq:thm_nonconvex_expected_rate} follows from Lemma \ref{lem:positive_convergence_lemma}.

		Lastly, recall that by Lemma \ref{lem:f_n_concentration_L1_gen} and see \ref{assumption:A4-w_t}, there exists a constant $C>0$ such that for all $n\ge 1$, 
		\begin{align}\label{eq:thm2_pf_variation2}
			\E\left[ \sup_{\param\in \Param} \left\lvert  f(\param) -    \bar{f}_{n}(\param) \right\rvert \right]  \le Cw_{n}\sqrt{n} \le 2C,\qquad 	\E\left[ \sup_{\param\in \Param} \left\lVert \nabla f(\param) -   \nabla \bar{f}_{n}(\param) \right\rVert \right]  \le Cw_{n}\sqrt{n}\le 2C.
		\end{align}
		Then note that 
		\begin{align}
			\E\left[ 	\lvert \E[ \bar{g}_{n}(\param_{n})] -  \bar{f}_{n}(\param_{n}) \rvert^{2}  \right] &\le  \E\left[   \left( \lvert  \E[ \bar{g}_{n}(\param_{n})]- f(\param_{n})] \rvert   + \sup_{\param\in \Param} \, \lvert \bar{f}_{n}(\param) -  f(\param) \rvert  \right)^{2} \right] \\
			&\le \E\left[ 	\left\lvert \E[ \bar{g}_{n}(\param_{n})] -  f(\param_{n}) \right\rvert^{2} + 2C \left\lvert \E[ \bar{g}_{n}(\param_{n})]- \E[ f(\param_{n})] \right\rvert \right] + C^{2}(n+1) w_{n+1}^{2}.
		\end{align}
		Then multiply $w_{n}$ on both sides sum over all $n\ge 1$. The first two terms in the resulting infinite sum in the righthand side are finite by \eqref{eq:thm2_pf_variation111} and  \eqref{eq:thm2_pf_variation00}. For the last term, note that since $w_{n}\sqrt{n}=o(1)$ (see \ref{assumption:A4-w_t}) and using the optional condition in \ref{assumption:A4-w_t}, 
		\begin{align}
			\sum_{n=1}^{\infty} (n+1)w_{n+1}^{3} = \sum_{n=1}^{\infty} \left( w_{n+1}\sqrt{n+1} \right) w_{n+1}^{2} \sqrt{n+1} \le C' \sum_{n=1}^{\infty} w_{n+1}^{2} \sqrt{n+1}<\infty
		\end{align}
		for some constant $C'>0$.  It follows that 
		\begin{align}
			\E\left[ \sum_{n=1}^{\infty}	w_{n+1} \left\lvert \E[ \bar{g}_{n}(\param_{n})] -  \bar{f}_{n}(\param_{n}) \right\rvert^{2}  \right] <\infty.
		\end{align}
		An identical argument using the second bound in \eqref{eq:thm2_pf_variation2} as well as \eqref{eq:thm2_pf_variation0} shows 
		\begin{align}
			\E\left[ 	\sum_{n=1}^{\infty}	w_{n+1} \left\lVert \E[ \nabla \bar{g}_{n}(\param_{n})] -  \nabla \bar{f}_{n}(\param_{n}) \right\rVert^{2}  \right] <\infty. 
		\end{align}
		Combining the above two bounds gives \eqref{eq:thm_nonconvex_empirical_variation}. This completes the proof. 
	\end{proof}

	\subsection{Proof of Theorem \ref{thm:global_convergence}} 
	Next, we prove Theorem \ref{thm:global_convergence} assuming Lemmas \ref{lem:f_n_concentration_L1_gen}, \ref{lem:pos_variation}, \ref{lem:gradient_finite_sum}, and \ref{lem:asymptotic_stationarity}.
	
	\begin{proof}[\textbf{Proof of Theorem \ref{thm:global_convergence}}] 
		Suppose \ref{assumption:A1-ell_smooth}-\ref{assumption:A4-w_t}.  First assume cases \ref{C1} or \ref{C2} in Theorem \ref{thm:global_convergence}. We will first show the assertion except for the asymptotic stationarity statements.

		We first show the first part of \textbf{(i)}. By Lemma \ref{lem:pos_variation} \textbf{(iii)} and Lemma \ref{lem:gradient_finite_sum} \textbf{(i)}, we get 
		\begin{align}\label{eq:thm1_pf_eq1}
			\E\left[ \sum_{n=1}^{\infty} w_{n+1} \left( \left|  \bar{g}_{n}(\param_{n}) - \bar{f}_{n}(\param_{n})  \right|  + \lVert \nabla \bar{g}_{n}(\param_{n})-\nabla \bar{f}_{n}(\param_{n}) \rVert^{2}    \right)   \right] <\infty. 
		\end{align} 
		Denote $h^{(1)}_{n}:=\bar{g}_{n} - \bar{f}_{n}$ and $h^{(2)}_{n}:=\lVert \nabla \bar{g}_{n} -\nabla \bar{f}_{n} \rVert^{2}$. We claim that  $|h^{(i)}_{n+1}(\param_{n+1})-  h^{(i)}_{n}(\param_{n})  |=O(w_{n+1})$ for $i=1,2$. Then by Lemma \ref{lem:positive_convergence_lemma}, it holds that $|h^{(i)}_{n}(\param_{n})|\rightarrow 0$ almost surely as $n\rightarrow \infty$ for $i=1,2$.

		To show the claim, recall the recursive definitions of $\bar{g}_{n}$ and $\bar{f}_{n}$ and that $\param\mapsto \ell(\x, \param)$ is $R$-Lipschitz continuous by \ref{assumption:A1-ell_smooth} for each $\x\in \mathfrak{X}$. 
		Also, $h^{(1)}_{n}(\param) = (1-w_{n}) h^{(1)}_{n-1}(\param) + w_{n} (g_{n}(\param)-f_{n}(\param))$, where $f_{n}(\cdot)=\ell(\x_{n}, \cdot)$  and $g_{n}\in \surr_{L,\rho}^{\mathbb{L}}(f_{n}, \param_{n-1})$ (see Definition \ref{def:block_surrogate}) and $\nabla (g_{n}-f_{n})$ is $L$-Lipschitz continuous and has norm $\le \eps_{n}$ at $\param_{n-1}$. Since the parameter space $\Param$ is compact by \ref{assumption:A3-cvx_constraint}, it follws that $\nabla (g_{n}-f_{n})$ is uniformly bounded by some constant $L'>0$ over $\Param$. Then $g_{n}-f_{n}$ is $L'$-Lipschitz, and by induction, $h^{(1)}_{n}$ is also $L$-Lipschitz for $n\ge 1$. Now note that 
		\begin{align}
			|h^{(1)}_{n+1}(\param_{n+1})  - h^{(1)}_{n}(\param_{n}) | & \le	|h^{(1)}_{n+1}(\param_{n+1})  - h^{(1)}_{n+1}(\param_{n})|  + 	|h^{(1)}_{n+1}(\param_{n})  - h^{(1)}_{n}(\param_{n}) | \\
			&\le L' \lVert \param_{n+1} - \param_{n} \rVert + \left| \left( \bar{g}_{n+1}(\param_{n}) -\bar{g}_{n}(\param_{n})  \right)  -  \left( \bar{f}_{n+1}(\param_{n}) -\bar{f}_{n}(\param_{n})  \right)  \right| \\
			&\le L' c'w_{n} + w_{n+1} \left| g_{n+1}(\param_{n}) - \bar{g}_{n}(\param_{n+1}) -  f_{n+1}(\param_{n}) + \bar{f}_{n}(\param_{n})  \right|\\
			&= L' \lVert \param_{n+1} - \param_{n} \rVert + w_{n+1} \left|   \bar{g}_{n}(\param_{n+1})  - \bar{f}_{n}(\param_{n})  \right|,
		\end{align}
		where the third inequality uses Lemma \ref{lem:stability} \textbf{(iii)} as well as the recursive definitions of $\bar{g}_{n+1}$ and $\bar{f}_{n+1}$. Then note that   $\ell:\mathcal{X}\times \Param\rightarrow \R$ is bounded by  \ref{assumption:A1-ell_smooth}, so $\bar{g}_{n}$ and $\bar{f}_{n}$ are also uniformly bounded in $n$. This shows $	|h^{(1)}_{n+1}(\param_{n+1})  - h^{(1)}_{n}(\param_{n}) | =O(w_{n+1})$.
		
		Next, we verify $|h^{(2)}_{n}(\param_{n})- h^{(2)}_{n}(\param_{n-1})|=O(w_{n+1})$. Note that by linearity of gradients, $\nabla \bar{g}_{n}$ and $\nabla \bar{f}_{n}$ satisfy the same recursion as $\bar{g}_{n}$ and $\bar{f}_{n}$. Moreover, $\nabla h^{(2)}_{n}(\param) = (1-w_{n}) \nabla h^{(2)}_{n-1}(\param) + w_{n} \nabla (g_{n}-f_{n})(\param)$ and since $\nabla (g_{n}-f_{n})$ is $L$-Lipschitz, so is $\nabla h^{(2)}_{n}$ for all $n\ge 1$. Then by using a similar argument as above, 
		\begin{align}
			&\left| \sqrt{h^{(2)}_{n+1}(\param_{n+1})} - \sqrt{h^{(2)}_{n}(\param_{n})} \right|	\\
			&\qquad = \lVert \nabla h_{n+1}^{(2)}(\param_{n+1}) -  \nabla  h_{n+1}^{(2)}(\param_{n}) \rVert  + \lVert \nabla h_{n+1}^{(2)}(\param_{n}) -  \nabla h_{n}^{(2)}(\param_{n}) \rVert  \\
			&\qquad  \le L\lVert \param_{n+1}-\param_{n} \rVert + w_{n+1} \lVert \nabla (g_{n+1}-f_{n+1}) (\param_{n})  + \nabla \bar{f}_{n}(\param_{n}) - \nabla \bar{g}_{n}(\param_{n})   \rVert\\
			&\qquad  \le L c'w_{n+1} + w_{n+1} \lVert  \nabla h^{(2)}_{n}(\param_{n})   \rVert.
		\end{align}
		But recall that $\nabla h^{(2)}_{n}$ is uniformly bounded by $L'$ over $\Param$. To conclude, write 
		\begin{align}
			|h^{(2)}_{n}(\param_{n})- h^{(2)}_{n}(\param_{n-1})| = \left| \sqrt{h^{(2)}_{n}(\param_{n})} - \sqrt{h^{(2)}_{n-1}(\param_{n-1})} \right| \cdot \left| \sqrt{h^{(2)}_{n}(\param_{n})} + \sqrt{h^{(2)}_{n-1}(\param_{n-1})} \right|.
		\end{align}
		Noting that $h^{(2)}_{n}$ is uniformly bounded for all $n\ge 1$, we conclude  that $|h^{(2)}_{n}(\param_{n})- h^{(2)}_{n}(\param_{n-1})|=O(w_{n+1})$, as desired. 
		
		Next, we show the first part of \textbf{(ii)}. Recall that under \ref{assumption:A4-w_t} (without the optional condition), by Lemma \ref{lem:f_n_concentration_L1_gen} (se also Lemma \ref{lem:uniform_convergence_asymmetric_weights}), we have $\sup_{\param\in \Param} |\bar{f}_{n}(\param) - f(\param) |\rightarrow 0$ and $\sup_{\param\in \Param} \lVert \nabla \bar{f}_{n}(\param) - \nabla f(\param) \rVert\rightarrow 0$ almost surely as $n\rightarrow \infty$. Then   by \textbf{(i)}, we have 
		\begin{align}\label{eq:thm1_pf_eq2}
			| \bar{g}_{n}(\param_{n}) - f(\param_{n}) | \le 			 |\bar{g}_{n}(\param_{n}) - \bar{f}_{n}(\param_{n}) |   + \sup_{\param\in \Param} | \bar{f}_{n}(\param) - f(\param) |  \rightarrow  0\quad \textup{a.s. as $n\rightarrow \infty$}.  
		\end{align}
		Similarly, by triangle inequality and \textbf{(i)}, 
		\begin{align}\label{eq:thm1_pf_eq3}
			\lVert \nabla \bar{g}_{n}(\param_{n}) - \nabla f(\param_{n}) \rVert \le 			 \lVert \nabla \bar{g}_{n}(\param_{n}) - \nabla \bar{f}_{n}(\param_{n}) \rVert   + \sup_{\param\in \Param} \lVert \nabla \bar{f}_{n}(\param) - \nabla f(\param) \rVert  \rightarrow  0\quad \textup{a.s. as $n\rightarrow \infty$}.  
		\end{align}
		
		Now we show the second parts of \textbf{(i)} and \textbf{(ii)}, the asymptotic stationarity.  Assume case \ref{C1}. Then each $\param_{n}$ is an exact minimizer of $\bar{g}_{n}$ over $\Param$, so $-\nabla \bar{g}_{n}(\param_{n})$ is in the normal cone of $\Param$ at $\param_{n}$ for each $n\ge 1$. Since we have shown that $\lVert \nabla \bar{f}_{n}(\param_{n})- \nabla \bar{g}_{n}(\param_{n}) \rVert$ and $\lVert \nabla f(\param_{n})- \nabla \bar{g}_{n}(\param_{n}) \rVert$ both converges to zero almost surely as $n\rightarrow \infty$, it follows that  both $-\nabla \bar{f}_{n}(\param_{n})$  and $-\nabla f(\param_{n})$ belong to the normal cone of $\Param$ at $\param_{n}$ asymptotically almost surely as $n\rightarrow \infty$. 
		
		Assume case \ref{C2}. Further assume \ref{assumption:A7_param_surr}.  Let $\param_{\infty}\in \Param$ be an arbitrary limit point of the sequence $(\param_{t})_{t\ge 1}$ and let $(t_{k})_{k\ge 1}$ be a (random) sequence such that $\param_{t_{k}}\rightarrow \param_{\infty}$ almost surely as $n\rightarrow \infty$. Since $\Param \times \mathcal{K}$ is compact, we may choose a further sequence of $(t_{k})_{k\ge 1}$, which we will denote the same, so that $\kappa_{t_{k}}$ converges to some element $\kappa_{\infty}\in \mathcal{K}$. Hence  $\bar{g}_{\infty} :=\lim_{k\rightarrow \infty} \bar{g}_{t_{k}}$ is well-defined almost surely. It is important to note that $\param_{\infty}$ is a stationary point of $\bar{g}_{\infty}$ over $\Param$ by Lemma \ref{lem:asymptotic_stationarity}. Hence $-\nabla \bar{g}_{\infty}(\param_{\infty})$ is in the normal cone of $\Param$ at $\param_{\infty}$. But since we have shown that $h^{(i)}\rightarrow 0$ almost suresly as $n\rightarrow \infty$ for $i=2,4$,  we  must have
		\begin{align}
			\lim_{k\rightarrow \infty} \nabla \bar{f}_{t_{k}}(\param_{\infty})=\nabla \bar{g}_{\infty}(\param_{\infty}) = \nabla f(\param_{\infty}).
		\end{align}
		Since $\param_{\infty}$ was an arbitrary limit point of $(\param_{n})_{n\ge 1}$, this completes the proof of \textbf{(i)}-\textbf{(ii)}.

		Lastly, assume \ref{C3} in Theorem \ref{thm:global_convergence}.  In this case, we do not have iterate stability $\lVert \param_{n}-\param_{n-1} \rVert=O(w_{n})$ so we cannot use Lemma \ref{lem:positive_convergence_lemma} \textbf{(ii)} to deduce the whole sequence convergence as we did before. However, we can still deduce subsequential convergence.  Indeed, by Lemma \ref{lem:pos_variation} \textbf{(iii)} and Lemma \ref{lem:gradient_finite_sum} \textbf{(i)}, we have \eqref{eq:thm1_pf_eq1}. Then by Lemma \ref{lem:positive_convergence_lemma} \textbf{(i)} and noting that $\sum_{n=1}^{\infty} w_{n}=\infty$, we conclude that there exists a subsequence $(\param_{n_{k}})_{k\ge 1}$ of $(\param_{n})_{n\ge 1}$ such that almost surely, $\left|  \bar{g}_{n_{k}}(\param_{n_{k}}) - \bar{f}_{n_{k}}(\param_{n_{k}})  \right|  + \lVert \nabla \bar{g}_{n}(\param_{n_{k}})-\nabla \bar{f}_{n_{k}}(\param_{n_{k}}) \rVert^{2}\rightarrow 0$ as $k\rightarrow \infty$.  Also, using a similar argument as before in \label{eq:thm1_pf_eq2} and \label{eq:thm1_pf_eq3}, we can deduce that almost surely along the same subsequence, $\left|  \bar{g}_{n_{k}}(\param_{n_{k}}) - f(\param_{n_{k}})  \right|  + \lVert \nabla \bar{g}_{n}(\param_{n_{k}})-\nabla f(\param_{n_{k}}) \rVert^{2}\rightarrow 0$ as $k\rightarrow \infty$. Lastly, asymptotic stationarity for $\bar{g}_{n}$ is given by Lemma \ref{lem:asymptotic_stationarity}. 
	\end{proof}
	
	\begin{remark}
		Without appealing to Lemma \ref{lem:f_n_concentration_L1_gen} and using the convergence results for the empirical loss minimization stated in Theorem \ref{thm:global_convergence}, one can directly prove Theorem \ref{thm:global_convergence} \textbf{(ii)} using a similar argument as in the proof of \textbf{(i)}. This amounts to use the finite sum \eqref{eq:expected_loss_min_finite_sum_pf} in place of \eqref{eq:empirical_loss_min_finite_sum_pf} and showing the bounds $|h^{(i)}_{n+1}(\param_{n+1})-  h^{(i)}_{n}(\param_{n})  |=O(w_{n+1})$  for $i=3,4$, where $h^{(3)}=\bar{g}_{n+1} - f$ and $h^{(4)}_{n} = \lVert \nabla \bar{g}_{n} - \nabla f \rVert^{2}$. However, \eqref{eq:expected_loss_min_finite_sum_pf} holds under the additional optional condition in \ref{assumption:A4-w_t}, which essentially says $\sum_{n=1}^{\infty} w_{n}^{2}\sqrt{n}<\infty$. While this condition is standard in the literature (see, e.g., \cite[(E)]{mairal2013stochastic}), \cite[(M2)]{lyu2020online}, and \cite[(A3)]{lyu2020online_CP}, our proof above avoids it and only relies on the more standard square summability condition $\sum_{n=1}^{\infty} (\log n) w^{2}<\infty$ (see  \ref{assumption:A4-w_t}). However, the stronger condition $\sum_{n=1}^{\infty} w_{n}^{2}\sqrt{n}<\infty$ as well as the finite sum \eqref{eq:expected_loss_min_finite_sum_pf}  is crucial in our proof of Theorem \ref{thm:rate_stationarity}.
	\end{remark}

	\subsection{Proof of Theorem \ref{thm:rate_stationarity}}

	Next, we prove Theorem \ref{thm:rate_stationarity} assuming Lemmas  \ref{lem:pos_variation}, \ref{lem:gradient_finite_sum}, and \ref{lem:finite_variation_surr}.

	\begin{proof}[\textbf{Proof of Theorem \ref{thm:rate_stationarity}}]
		
		We first show \textbf{(i)}. Note that \eqref{eq:thm_convergence_bd_surrogate} follows immediately from Lemmas \ref{lem:finite_variation_surr}  and \ref{lem:positive_convergence_lemma}. Next, we show \eqref{eq:thm_convergence_bd_f_t}. By Cauchy-Schwarz inequality, for all $\param\in \Param$,
		\begin{align}
			\left|  \left\langle \nabla \bar{g}_{n}(\param_{n}),\, \frac{\param - \param_{n} }{\lVert \param - \param_{n}\rVert}\right\rangle  -  \left\langle \nabla \bar{f}_{n}(\param_{n}),\, \frac{\param - \param_{n} }{\lVert \param - \param_{n}\rVert}\right\rangle  \right| \le \lVert \nabla \bar{g}_{n}(\param_{n}) - \nabla \bar{f}_{n}(\param_{n})  \rVert.
		\end{align}
		It follows that for all $n\ge 1$, 
		\begin{align}\label{eq:thm3_finite_sum_pf0}
			\left| -  \inf_{\param\in \Param}  \left\langle \nabla \bar{f}_{n}(\param_{n}),\, \frac{\param - \param_{n} }{\lVert \param - \param_{n}\rVert}\right\rangle \right|   \le  \left| -  \inf_{\param\in \Param}  \left\langle \nabla \bar{g}_{n}(\param_{n}),\, \frac{\param - \param_{n} }{\lVert \param - \param_{n}\rVert}\right\rangle  \right|  + \lVert \nabla \bar{g}_{n}(\param_{n}) - \nabla \bar{f}_{n}(\param_{n})  \rVert.
		\end{align}

		On the other hand, by Lemmas \ref{lem:finite_variation_surr} and \ref{lem:gradient_finite_sum} \textbf{(i)}, we have 
		\begin{align}\label{eq:thm3_finite_sum_pf1}
			\sum_{n=1}^{\infty}w_{n+1} \left[  \left| -  \inf_{\param\in \Param}  \left\langle \nabla \bar{g}_{n}(\param_{n}),\, \frac{\param - \param_{n} }{\lVert \param - \param_{n}\rVert}\right\rangle \right|  + \lVert \nabla \bar{g}_{n}(\param_{n}) - \nabla \bar{f}_{n}(\param_{n})  \rVert^{2} \right] <\infty. 
		\end{align}
		Then by Lemma \ref{lem:positive_convergence_lemma}, we have 
		\begin{align}\label{eq:two_trem_bd}
			\min_{1\le k \le n} \left[ \left| -  \inf_{\param\in \Param}  \left\langle \nabla \bar{g}_{k}(\param_{k}),\, \frac{\param - \param_{k} }{\lVert \param - \param_{k}\rVert}\right\rangle  \right| + \lVert \nabla \bar{g}_{k}(\param_{k}) - \nabla \bar{f}_{k}(\param_{k})  \rVert^{2} \right]  = O\left( \left( \sum_{k=1}^{n} w_{k} \right)^{-1} \right).
		\end{align}
		Let $t_{n}\in \{1,\dots, n\}$ for $n\ge 1$ be such that the minimum above is achieved. Namely, denoting the term in the minimum above by $A_{k}$, we have $A_{t_{n}} = O\left( \left( \sum_{k=1}^{n} w_{k} \right)^{-1} \right)$. Since all terms in $A_{k}$ are nonnegative, it follows that there exists a constant $c_{1},c_{2}>0$ such that for all $n\ge 1$, alsmot surely,
		\begin{align}\label{eq:two_trem_bd_subseq}
			\left| -  \inf_{\param\in \Param}  \left\langle \nabla \bar{g}_{t_{n}}(\param_{t_{n}}),\, \frac{\param - \param_{t_{n}} }{\lVert \param - \param_{t_{n}}\rVert}\right\rangle  \right| \le \frac{c_{1} }{\sum_{k=1}^{n} w_{k}},\quad 	\lVert \nabla \bar{g}_{t_{n}}(\param_{t_{n}}) - \nabla f (\param_{t_{n}})   \rVert  \le \frac{c_{2} }{\sqrt{\sum_{k=1}^{n} w_{k}}}.
		\end{align}
		Hence from \eqref{eq:thm3_finite_sum_pf0}, it follows that there exists some constant $c_{3}>0$ such that for all $n\ge 1$, 
		\begin{align}
			\left| -  \inf_{\param\in \Param}  \left\langle \nabla \bar{f}_{t_{n}}(\param_{t_{n}}),\, \frac{\param - \param_{t_{n}} }{\lVert \param - \param_{t_{n}}\rVert}\right\rangle \right|    \le  \frac{c_{3} }{\sqrt{\sum_{k=1}^{n} w_{k}}}.
		\end{align}
		This completes the proof of \textbf{(i)}. 
		
		Next, we show \eqref{eq:thm_convergence_bd_f_E} in \textbf{(ii)}. Recall that by Lemma \ref{lem:f_n_concentration_L1_gen} and see \ref{assumption:A4-w_t}, there exists a constant $C>0$ such that for all $n\ge 1$, 
		\begin{align}\label{eq:thm2_pf_variation2_E}
			\E\left[ \sup_{\param\in \Param} \left\lVert \nabla f(\param) -   \nabla \bar{f}_{n}(\param) \right\rVert \right]  \le Cw_{n}\sqrt{n}.
		\end{align}
		Then first observe 	that by Cauchy-Schwarz and triangle inequalities, 
		\begin{align}
			\left|  \left\langle \nabla \bar{g}_{n}(\param_{n}),\, \frac{\param - \param_{n} }{\lVert \param - \param_{n}\rVert}\right\rangle  -  \left\langle \nabla f(\param_{n}),\, \frac{\param - \param_{n} }{\lVert \param - \param_{n}\rVert}\right\rangle  \right| &\le \lVert \nabla \bar{g}_{n}(\param_{n}) - \nabla f(\param_{n})  \rVert \\
			&\le \lVert \nabla \bar{g}_{n}(\param_{n}) - \nabla \bar{f}_{n}(\param_{n})  \rVert + \sup_{\param\in \Param} \lVert \nabla \bar{f}_{n}(\param) - \nabla f(\param)  \rVert.
		\end{align}
		It follows that for all $n\ge 1$, 
		\begin{align}\label{eq:thm3_finite_sum_pf3}
			\left| -  \inf_{\param\in \Param}  \left\langle \nabla f(\param_{n}),\, \frac{\param - \param_{n} }{\lVert \param - \param_{n}\rVert}\right\rangle \right|   &\le  \left| -  \inf_{\param\in \Param}  \left\langle \nabla \bar{g}_{n}(\param_{n}),\, \frac{\param - \param_{n} }{\lVert \param - \param_{n}\rVert}\right\rangle  \right|  + \lVert \nabla \bar{g}_{n}(\param_{n}) - \nabla \bar{f}_{n}(\param_{n})  \rVert \\
			& \qquad + \sup_{\param\in \Param} \lVert \nabla \bar{f}_{n}(\param) - \nabla f(\param)  \rVert.
		\end{align}	
		Note that by Lemmas \ref{lem:finite_variation_surr}, \ref{lem:gradient_finite_sum} \textbf{(i)}, we have 
		\begin{align}
			\sum_{n=1}^{\infty}  w_{n} \left( 	\E\left[ \left| -  \inf_{\param\in \Param}  \left\langle \nabla \bar{g}_{n}(\param_{n}),\, \frac{\param - \param_{n} }{\lVert \param - \param_{n}\rVert}\right\rangle  \right|   \right]+ \E\left[  \lVert \nabla \bar{g}_{n}(\param_{n}) - \nabla \bar{f}_{n}(\param_{n})  \rVert^{2} \right] \right)  <\infty. 
		\end{align}
		By Lemma  \ref{lem:positive_convergence_lemma}, similarly as before, we can take a sequence $t_{n}\in \{1,\dots,n\}$ for $n\ge 1$ such that for some constants $c_{3},c_{4}>0$ and for all $n\ge 1$, 
		\begin{align}\label{eq:two_trem_bd_subseq2}
			\E\left[ \left| -  \inf_{\param\in \Param}  \left\langle \nabla \bar{g}_{t_{n}}(\param_{t_{n}}),\, \frac{\param - \param_{t_{n}} }{\lVert \param - \param_{t_{n}}\rVert}\right\rangle  \right| \right] \le \frac{c_{3} }{\sum_{k=1}^{n} w_{k}},\quad  \E\left[ 	\lVert \nabla \bar{g}_{t_{n}}(\param_{t_{n}}) - \nabla f (\param_{t_{n}})   \rVert  \right] \le \frac{c_{4} }{\sqrt{\sum_{k=1}^{n} w_{k}}}.
		\end{align}
		Hence taking expectation on \eqref{eq:thm3_finite_sum_pf3} and using this sequence $t_{n}$ with \eqref{eq:thm2_pf_variation2_E}, we get 
		\begin{align}
			\E\left[ \left| -  \inf_{\param\in \Param}  \left\langle \nabla f(\param_{t_{n}}),\, \frac{\param - \param_{t_{n}} }{\lVert \param - \param_{t_{n}}\rVert}\right\rangle  \right| \right] \le   \frac{c_{5} }{\sqrt{\sum_{k=1}^{n} w_{k}}} + w_{n} \sqrt{n}.
		\end{align}
		for all $n\ge 1$ for some constant $c_{5}>0$. This shows  \eqref{eq:thm_convergence_bd_f_E}. 
		
		It remains to show \eqref{eq:thm_convergence_bd_f} in  \textbf{(ii)}. Assume the optional condition in \ref{assumption:A4-w_t} holds. Using a similar argument as before,  for all $n\ge 1$, 
		\begin{align}\label{eq:thm3_finite_sum_pf4}
			\left| -  \inf_{\param\in \Param}  \left\langle \nabla f(\param_{n}),\, \frac{\param - \param_{n} }{\lVert \param - \param_{n}\rVert}\right\rangle \right|   \le  \left| -  \inf_{\param\in \Param}  \left\langle \nabla \bar{g}_{n}(\param_{n}),\, \frac{\param - \param_{n} }{\lVert \param - \param_{n}\rVert}\right\rangle  \right|  + \lVert \nabla \bar{g}_{n}(\param_{n}) - \nabla f(\param_{n})  \rVert.
		\end{align}
		Also, by Lemmas \ref{lem:finite_variation_surr} and \ref{lem:gradient_finite_sum} \textbf{(ii)}, almost surely,
		\begin{align}\label{eq:thm3_finite_sum_pf4}
			\sum_{n=1}^{\infty}w_{n+1} \left[  \left| -  \inf_{\param\in \Param}  \left\langle \nabla \bar{g}_{n}(\param_{n}),\, \frac{\param - \param_{n} }{\lVert \param - \param_{n}\rVert}\right\rangle \right|  + \lVert \nabla \bar{g}_{n}(\param_{n}) - \nabla f(\param_{n})  \rVert^{2} \right] <\infty. 
		\end{align}
		The rest of the argument is identical to the proof of \textbf{(i)}. 
	\end{proof}
	
	\subsection{Proofs of Corollaries \ref{cor:unconstrained} and \ref{cor:iteration_complexity}}
	
	Lastly in this section, we prove Corollaries \ref{cor:unconstrained} and \ref{cor:iteration_complexity}. 
	
	\begin{proof}[\textbf{Proof of Corollary \ref{cor:unconstrained}}]
		Suppose the iterates $\param_{n}$ are all in the interior of $\Param$. Then in Theorem \ref{thm:rate_stationarity} \eqref{eq:thm_convergence_bd_surrogate}, we may choose $\param\in \Param$ in a way that $\param-\param_{n}$ is a positive scalar multiple of $\nabla \bar{g}_{n}(\param_{n})$. Then \eqref{eq:thm_convergence_bd_surrogate} reduces to the first bound in \eqref{eq:cor_rates_bounds}. The same argument shows the second bound in \eqref{eq:cor_rates_bounds}. For the last bound in \eqref{eq:cor_rates_bounds}, we use \eqref{eq:thm2_pf_variation11} we derived in the proof of Theorem \ref{thm:rate_surrogate_gaps}. Combining it with Lemma \ref{lem:finite_variation_surr} and using Fubini's theorem, we get 
		\begin{align}
			\sum_{n=1}^{\infty}	w_{n+1} \left( \E\left[ \lVert \nabla \bar{g}_{n}(\param_{n}) \rVert  \right] +  \left\lVert \E[ \nabla \bar{g}_{n}(\param_{n})] -  \nabla \bar{f}_{n}(\param_{n}) \right\rVert^{2} \right) <\infty. 
		\end{align}
		Then by Lemma \ref{lem:positive_convergence_lemma}, we have 
		\begin{align}\label{eq:two_trem_bd_cor}
			\min_{1\le k \le n} \left[  \E\left[ \lVert \nabla \bar{g}_{n}(\param_{n}) \rVert  \right]+ \left\lVert \nabla \E[\bar{g}_{k}(\param_{k})] - \nabla f(\param_{k})  \right\rVert^{2} \right] = O\left( \left( \sum_{k=1}^{n} w_{k} \right)^{-1} \right)
		\end{align}
		Choose a subsequence $t_{n}$ along which the above minimum is achieved for all $n\ge 1$. Then we have 
		\begin{align}
			\E\left[ \lVert \nabla \bar{g}_{t_{n}}(\param_{t_{n}}) \rVert  \right]= O\left( \left( \sum_{k=1}^{t_{n}} w_{k} \right)^{-1} \right),\quad  \left\lVert \nabla \E[\bar{g}_{t_{n}}(\param_{t_{n}})] - \nabla f(\param_{t_{n}})  \right\rVert  = O\left( \left( \sum_{k=1}^{t_{n}} w_{k} \right)^{-1/2} \right).
		\end{align}
		Therefore by using Jensen's inequality and the above bound, we deduce 
		\begin{align}
			\left\lVert   \nabla f(\param_{t_{n}})  \right\rVert  \le  \E\left[	\left\lVert   \nabla \bar{g}_{t_{n}}(\param_{t_{n}})  \right\rVert \right] +  \left\lVert \nabla \E[\bar{g}_{t_{n}}(\param_{t_{n}})] - \nabla f(\param_{t_{n}})  \right\rVert  = O\left( \left( \sum_{k=1}^{t_{n}} w_{k} \right)^{-1/2} \right).
		\end{align}	
		This completes the proof. 
	\end{proof}

	\begin{proof}[\textbf{Proof of Corollary \ref{cor:iteration_complexity} }]
		First, suppose $w_{n}=n^{-1/2}(\log n)^{\delta}$ for some $\delta>1$. Then the upper bound on the rate of convergence in Theorem \ref{thm:rate_stationarity} is of order $O(n^{-1/2}(\log n)^{\delta} )$. Then one can conclude by using the fact that $n\ge \eps^{-2} (3\log \eps^{-1})^{2\delta}$ implies $n^{-1/2}(\log n)^{\delta} \le \eps$ for all sufficiently small $\eps>0$. Indeed, assuming $n\ge \eps^{-2} (3\log \eps^{-1})^{2\delta}$, 
		\begin{align}
			n^{-1/2}(\log n)^{\delta} \le \frac{\eps }{(3\log \eps^{-1})^{\delta} }  (2\log \eps^{-1} + 2\delta \log (3\log \eps^{-1})  )^{\delta} 
		\end{align}
		and the last expression is at most $\eps$ for all sufficiently small $\eps>0$. This shows \textbf{(i)}. A similar argument using Theorem \ref{thm:rate_stationarity} \textbf(ii) as well as Corollary \ref{cor:unconstrained} shows \textbf{(ii)}.
	\end{proof}

	\section{Proof of Lemmas \ref{lem:f_n_concentration_L1_gen}, \ref{lem:pos_variation}, and  \ref{lem:gradient_finite_sum} }
	\label{section:key_lemma_pf1}
	
	Recall that for each $n\ge 0$, $\mathcal{F}_{n}$ denotes the $\sigma$-algebra generated by the history of underlying Markov chain $Y_{0},Y_{1},\dots,Y_{n}$ as well as the possible randomness in Algorithm \ref{algorithm:SMM} up to iteration $n$. Note that  $\x_{n}=\varphi(Y_{n})$ for $n\ge 1$ by \ref{assumption:A2-MC}. 
	
	We first prove Lemma \ref{lem:f_n_concentration_L1_gen} below. Our proof uses an auxiliary lemma, Lemma \ref{lem:uniform_convergence_asymmetric_weights} in the appendix.

	\begin{proof}[\textbf{Proof of Lemma \ref{lem:f_n_concentration_L1_gen}}]
		Recall that $\x_{k}=\varphi(Y_{k})$ under \ref{assumption:A2-MC}. Let $\pi_{k}$ denote the distribution of $Y_{k}$. Let $M:=\sup_{\x,\param} \lVert \psi(\x,\param)\rVert<\infty$. Note that, by a change of measure, 
		\begin{align}
			\E\left[ \psi(\x_{n},\param) \right] &= \sum_{\mathbf{y}\in\Omega} \psi(\varphi(\mathbf{y}),\param)\,P^{n}(Y_{0},\mathbf{y}) \\
			&= \sum_{\mathbf{y} \in\Omega} \psi(\varphi(\mathbf{y}),\param)\, \pi(\mathbf{y}) + \sum_{\mathbf{y}\in \Omega} \psi(\varphi(Y_{0}),\param)(P^{n}(Y_{0},\mathbf{y}) - \pi(\mathbf{y}))  \\
			&= \bar{\psi}(\param) + \sum_{\mathbf{y}\in \Omega} \psi(\varphi(Y_{0}),\param) \, (P^{n}(Y_{0},\mathbf{y}) - \pi(\mathbf{y})).
		\end{align}	
		By the triangle inequality, it follows that  
		\begin{align}
			\left\lVert \E\left[ \psi(\x_{n},\param) \right] -  \bar{\psi}(\param)  \right\rVert &\le \sum_{\mathbf{y}\in \Omega} \left\lVert \psi(\varphi(Y_{0}),\param) \right\rVert \,  \, \left| P^{n}(Y_{0},\mathbf{y}) - \pi(\mathbf{y})\right| \\
			&\le M  \sum_{y\in \Omega}  \left| P^{n}(Y_{0},\mathbf{y}) - \pi(\mathbf{y}) \right| \\
			&\le 2M \lVert P^{n}(Y_{0}, \cdot) - \pi \rVert_{TV},
		\end{align}
		where the last inequality follows from the relation $2\lVert \mu-\nu \rVert_{TV} = \sum_{x} |\mu(x)-\nu(x)|$ where $\mu,\nu$ are probability distributions on the same sample space (see \citep[Prop. 4.2]{levin2017markov}). 
		
		Now recall that under the hypothesis in \ref{assumption:A4-w_t}, $w^{n}_{k}=w_{k}\prod_{i=k+1}^{n} (1-w_{i})$ is non-decreasing in $k\in \{1,\dots,n\}$. So, $w^{n}_{1}\le \dots \le w^{n}_{n}=w_{n}$. 
		Then using \ref{assumption:A2-MC}, we have
		\begin{align}
			\left\lVert \bar{\psi}(\param)- \E\left[ \bar{\psi}_{n}(\x,\param)\right]  \right\rVert  
			&\le \left\lVert \sum_{k=1}^{n}  \left(   \bar{\psi}(\param) - \E [\psi(\x_{k},\param )]   \right) w^{n}_{k} \right\rVert  \\
			&\le \sum_{k=1}^{n} \left\lVert \bar{\psi}(\param) - \E [\psi(\x_{k},\param )]  \right\rVert w^{n}_{k}   \\
			&\le  2M\sum_{k=1}^{n}  \lVert P^{k}( Y_{0},\cdot ) - \pi \rVert_{TV}   w^{n}_{k} \\
			&\le  2M w_{n}  \sum_{k=1}^{n}   \sup_{\y\in \mathfrak{X}}  \, \lVert P^{k}( \y,\cdot ) - \pi \rVert_{TV}  \le \frac{2L\lambda w_{n}}{1-\lambda}.
		\end{align}
		This shows the first bound in \eqref{eq:lem_f_fn_bd_gen}. 
		
		Finally, for $d=1$, the second inequality in \eqref{eq:lem_f_fn_bd_gen}, as well as the last part of the statement, are direct consequences of Lemma \ref{lem:uniform_convergence_asymmetric_weights}, nothing that $w^{n}_{1}\le \dots \le w^{n}_{n}=w_{n}$.  Applying this to each of the $d$ coordinates of $\psi$ and applying triangle inequality will imply the assertion. Namely, write $\psi = (\psi^{(1)},\dots,\psi^{(d)})^{T}$. Then noting that $\psi,\bar{\psi}_{n}$ are uniformly bounded by some constant $M>0$, we have 
		\begin{align}
			\sup_{\param\in \Param}	\left\lVert \bar{\psi}(\param) - \bar{\psi}_{n}(\param) \right\rVert^{2} & \le \sum_{i=1}^{d} \sup_{\param\in \Param} \left\lvert \bar{\psi}^{(i)}(\param) - \bar{\psi}^{(i)}_{n}(\param) \right\rvert^{2}\le 2M \sum_{i=1}^{d} \sup_{\param\in \Param} \left\lvert \bar{\psi}^{(i)}(\param) - \bar{\psi}^{(i)}_{n}(\param) \right\rvert.
		\end{align}
		Note that by Lemma \ref{lem:uniform_convergence_asymmetric_weights}, the expectation of each summand in the last expression is of order $O(w_{n}\sqrt{n})$. Hence the second inequality in \eqref{eq:lem_f_fn_bd_gen} follows. Lastly, the summands in the last expression above converge to zero almost surely if $w^{n}_{n}\sqrt{n}=O(1/(\log n)^{1+\eps})$ for some $\eps>0$ by Lemma \ref{lem:uniform_convergence_asymmetric_weights}. Note that $w^{n}_{n}=w_{n}$. Hence the left-hand side above converges to zero almost surely $w_{n}\sqrt{n}=O(1/(\log n)^{1+\eps})$. This completes the proof. 
	\end{proof}

	To handle the issue of dependence in signals, we adopt the strategy developed in \cite{lyu2020online} in order to handle a similar issue for vector-valued signals (or matrix factorization). The key insight in \cite{lyu2020online} is that, while the 1-step conditional distribution $P(Y_{t-1}, \cdot)$ may be far from the stationary distribution $\pi$, the $N$-step conditional distribution $P^{N}(Y_{t-N}, \cdot)$ is exponentially close to $\pi$ under mild conditions. Hence we can condition much early on -- at time $t-N$ for some suitable $N=N(t)$. Then the Markov chain runs $N+1$ steps up to time $t+1$, so if $N$ is large enough for the chain to mix to its stationary distribution $\pi$, then the distribution of $Y_{t+1}$ conditional on $\mathcal{F}_{t-N}$ is close to $\pi$. The error of approximating the stationary distribution by the $N+1$ step distribution can be controlled using total variation distance and Markov chain mixing bound. We refine the analysis for \cite[Lem. 12]{lyu2020online} and obtain a more improved version below.

	\begin{prop}\label{prop:increment_bd}
		Assumptions \ref{assumption:A1-ell_smooth}-\ref{assumption:A4-w_t} hold. Let $(\param_{n})_{n\ge 1}$ be an output of Algorithm and denote $L=\lVert \ell(\cdot, \cdot) \rVert_{\infty}$. Then there exists a constant $C>0$ such that for all $0\le N \le n$, 	it holds that 
		\begin{align}\label{eq:pos_variation_bd}
			\E\left[ \E\left[ \ell(\x_{n+1},\param_{n-N}) - \bar{f}_{n}(\param_{n-N})  \,\bigg|\, \mathcal{F}_{n-N} \right]^{+} \right]  \le Cw_{n-N}    + 2 L \lambda^{N+1},
		\end{align}		
		where $\lambda\in [0,1)$ is the exponential mixing rate of the underlying Markov chain in \ref{assumption:A2-MC}. Furthermore, if $\x_{n}$'s are i.i.d. from $\pi$, then the assertion holds with $N=0$ and $\lambda =0$. 
	\end{prop}

	\begin{proof}
		Fix $\mathbf{y}\in \Omega$ and suppose $Y_{n-N} = \mathbf{y}$. By the Markov property, the distribution of $Y_{n+1}$ conditional on $\mathcal{F}_{n-N}$ equals $P^{N+1}(Y_{n-N},\cdot)$, where $P$ denotes the transition kernel of the  chain $(Y_{n})_{n\in \mathbb{N}}$. 
		Then by the Markov property, the distribution of $Y_{n+1}$ conditional on $\mathcal{F}_{n-N}$ equals $P^{N+1}(\mathbf{y},\cdot)$, where $P$ denotes the transition kernel of the  chain $(Y_{n})_{ t\in \mathbb{N}}$. Denote $L:=\lVert \ell(\cdot, \cdot) \rVert_{\infty}$. Denote  $\param:=\param_{n-N}$, which is deterministic with respect to $\mathcal{F}_{n-N}$. 
		
		We first claim that 
		\begin{align}\label{eq:stationary_mixing_claim}
			\left|  	\E\left[ \ell(\x_{n+1},\param)  \,\bigg|\, \mathcal{F}_{n-N} \right]  - f(\param) \right| \le 2 L \lVert P^{N+1}(\mathbf{y},\cdot) - \pi \rVert_{TV} \le 2L\lambda^{N+1}.
		\end{align}
		Then this would yield 
		\begin{align}\label{eq:stationary_mixing_claim2}
			\E\left[ \ell(\x_{n+1},\param) \,\bigg|\, \mathcal{F}_{n-N} \right] \le  f(\param)  + \left | \E\left[ \ell(\x_{n+1},\param) \,\bigg|\, \mathcal{F}_{n-N} \right] -f(\param) \right| \le f(\param) + 2L\lambda^{N+1}. 
		\end{align}
		To justify \eqref{eq:stationary_mixing_claim}, note that by a change of measure, 
		\begin{align}
			\E\left[ \ell(\x_{n+1},\param)  \,\bigg|\, \mathcal{F}_{n-N} \right] &= \sum_{\mathbf{y}'\in\Omega} \ell(\varphi(\mathbf{y}'),\param)\,P^{N+1}(\mathbf{y},\mathbf{y}') \\
			&= \sum_{\mathbf{y}'\in\Omega} \ell(\varphi(\mathbf{y}'),\param)\, \pi(\mathbf{y}') + \sum_{\mathbf{y}'\in \Omega} \ell(\varphi(\mathbf{y}'),\param)(P^{N+1}(\mathbf{y},\mathbf{y}') - \pi(\mathbf{y}'))  \\
			&= f(\param) + \sum_{\mathbf{y}'\in \Omega} \ell(\varphi(\mathbf{y}'),\param)(P^{N+1}(\mathbf{y},\mathbf{y}') - \pi(\mathbf{y}')).
		\end{align}	
		Then the claim follows by bouding $\ell(\varphi(\y'), \param) \le \lVert \cdot, \param \rVert_{\infty}$ for all $\y'\in \Omega$ and  $	\sum_{\mathbf{y}'\in \Omega} \left| P^{N+1}(\mathbf{y},\mathbf{y}') - \pi(\mathbf{y}') \right| = 2 \lVert P^{N+1}(\y,\cdot) - \pi \rVert_{TV}$.

		Next, we analyze $\E\left[ -\bar{f}_{n}(\param)  \,|\, \mathcal{F}_{n-N} \right]$ using a different approach. Namely, we decompose the times  interval  $[0,n]$ into two intervals $[1,n-N]$ and $[n-N+1, t]$ and consider the trajectory $(\x_{k})_{1\le k \le n}$ restricted onto each of these intervals. Since we are conditioning on $\mathcal{F}_{n-N}$, the trajectory $(\x_{k})_{1\le k\le n-N}$ is fully observed and may be far from the average behavior. The distribution of $\x_{k}$ then becomes close to the stationary distribution $\pi$ during the second interval of length at an exponential mixing rate of $\lambda$ (see \ref{assumption:A2-MC}). We first compare the trajectory on the second interval using the Markov chain mixing, and then compare the mean of the trajectory of the first interval with the stationary trajectory by integrating out the conditioning on $\mathcal{F}_{n-N}$. See Figure \ref{fig:mixing_control} for illustration. 
		\begin{figure*}[h]
			\centering
			\includegraphics[width=0.8 \linewidth]{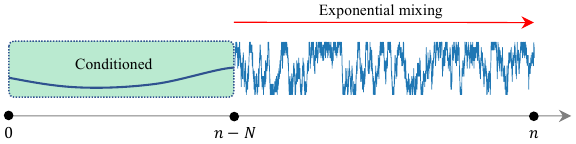}
			\caption{ Illustration of the decomposition of the process used to bound the positive variation.
			}
			\label{fig:mixing_control}
		\end{figure*}

		We proceed with the sketch given above. Denote  $\hat{f}_{n:N}:= \sum_{k=t-N+1}^{n} \ell(\x_{k}, \cdot) w^{n}_{k}$. Write 
		\begin{align}
			\E\left[ -\bar{f}_{n}(\param)  \,\bigg|\, \mathcal{F}_{n-N} \right] &=-  \E\left[ \sum_{k=1}^{n} \ell(\x_{k}, \param) w^{n}_{k} \,\bigg|\, \mathcal{F}_{n-N} \right]  = - \left( \sum_{k=1}^{n-N} \ell(\x_{k}, \param) w^{n}_{k}  \right) -  \E\left[ \hat{f}_{n:N} \,\bigg|\, \mathcal{F}_{n-N} \right] 
		\end{align}

		\noindent Next,  denote $C_{N}=\sum_{k=n-N+1}^{n}w^{n}_{k}$. Using Markov property, we have 
		\begin{align}
			\left| C_{N} f(\param) - \E\left[  \hat{f}_{n:N}(\param) \,\bigg|\, \mathcal{F}_{n-N} \right] \right| &\le \sum_{k=t-N+1}^{n} \left| f(\param) - \E_{\y\sim \pi_{k-N}} [\ell( \varphi(\y) ,\param )]  \right| w^{n}_{k}   \\
			&\le  2L\sum_{k=1}^{n-N}  \lVert P^{k}( Y_{n-N},\cdot ) - \pi \rVert_{TV}   w^{n}_{k} \\
			&\le  2Lw_{n}  \sum_{k=1}^{n}   \sup_{\y\in \mathfrak{X}}  \, \lVert P^{k}( \y,\cdot ) - \pi \rVert_{TV}  \le \frac{2Lw_{n}  }{1-\lambda},
		\end{align}
		by using $w^{n}_{1}\le \dots \le w^{n}_{n}  = w_{n}\le 1$ under \ref{assumption:A4-w_t} and also  \ref{assumption:A2-MC} to bound the total variation distance terms. Hence by triangle inequality, 
		\begin{align}\label{eq:pos_variation_pf1}
			\E\left[ -\bar{f}_{n}(\param)  \,\bigg|\, \mathcal{F}_{n-N} \right] 
			&\le   -\sum_{k=1}^{n-N} \ell(\x_{k}, \param) w^{n}_{k}  -C_{N} f(\param) + \E\left[ \left| C_{N}f(\param) -  \hat{f}_{n:N}(\param)
			\right|   \,\bigg|\, \mathcal{F}_{n-N}  \right] \\
			&\le - \sum_{k=1}^{n-N} \ell(\x_{k}, \param) w^{n}_{k} - \sum_{k=n-N+1}^{n} f(\param) w^{n}_{k} +  \frac{2L  }{1-\lambda}.
		\end{align}
		Then combining the above bounds with \eqref{eq:stationary_mixing_claim} and a triangle inequality gives
		\begin{align}
			&\left(\E\left[ \ell(\x_{n+1},\param) - \bar{f}_{n}(\param)  \,\bigg|\, \mathcal{F}_{n-N} \right] \right)^{+} \\
			&\qquad \le f(\param)  + \left | \E\left[ \ell(\x_{n+1},\param) \,\bigg|\, \mathcal{F}_{n-N} \right] -f(\param) \right|  + \E\left[ -\bar{f}_{n}(\param)  \,\bigg|\, \mathcal{F}_{n-N} \right] \\
			& \qquad \le  \left( \sum_{k=1}^{n-N} \left( f(\param)-\ell(\x_{k}, \param) \right) w^{n}_{k}  \right)  +\frac{ 2L w_{n} }{1-\lambda}   + 2 L \lambda^{N+1} \\
			& \qquad \le \left( \sum_{k=1}^{n-N} \left( f(\param)-\ell(\x_{k}, \param) \right) w^{n-N+M}_{k}  \right)  +\frac{ 2L w_{n-N}  }{1-\lambda}   + 2 L \lambda^{N+1},
		\end{align}	
		where we have used  that $w_{k}$ is non-increasing in $k$ and  $w^{n}_{k}=w_{k}\prod_{i=k+1}^{n}(1-w_{i})\le w_{k}\prod_{i=k+1}^{m}(1-w_{i})=w^{m}_{k}$ for $k\le m \le n$. Then by  Lemma \ref{lem:f_n_concentration_L1_gen}, we have 
		\begin{align}\label{eq:pos_variation_pf2}
			\E\left[  \sum_{k=1}^{n-N+M} \left( f(\param)-\ell(\x_{k}, \param) \right) w^{n-N+M}_{k} \right] & \le Cw_{n-N} 
		\end{align}	
		Therefore, the assertion follows from \eqref{eq:pos_variation_pf1} by integrating it with respect to $\mathcal{F}_{n-N}$ and using \eqref{eq:pos_variation_pf2}. 
		
		Lastly, suppose the data sequence $\x_{n}$ is i.i.d. from the stationary distribution $\pi$. We can then take $Y_{n}=\x_{n}$, so $Y_{n}$ is a Markov chain with mixing rate $\lambda=1$, since for any $\y\in \Omega$, the one-step conditional distribution $P(\y,\cdot)$ exactly equals the stationary distribution $\pi$. This shows the assertion. 
	\end{proof}

	Next, we give proof of Lemma \ref{lem:pos_variation}.

	\begin{proof}[\textbf{Proof of Lemma \ref{lem:pos_variation}}]
		We will first show \textbf{(i)}-\textbf{(v)} under cases \ref{C1}-\ref{C2} in Theorem \ref{thm:global_convergence}. We will then show these statements for case \ref{C3} in Theorem \ref{thm:global_convergence}. After that, we will finally prove \textbf{(vi)}. 
		
		Denote $V(\x,\param):=\ell(\x,\param) - \bar{f}_{n}(\param)$. By \ref{assumption:A1-ell_smooth}, $V$ is Lipschitz in $\param$ for some $R>0$. Under cases \ref{C1}-\ref{C2}, by Lemma \ref{lem:stability}, we have $\lVert \param_{k}-\param_{k-1} \rVert \le c_{0}w_{k}$ for all $k\ge 1$ for some constant $c_{0}>0$. Since $w_{k}$ is non-increasing in $k$, by triangle inequality we get $\lVert \param_{n-N} - \param_{n} \rVert\le c_{0}\sum_{k=n-N}^{N}w_{k}\le c_{0}Nw_{n-N}$.  This yields  
		\begin{align}
			\E\left[ V(\x_{n+1}, \param_{n}) \,|\, \mathcal{F}_{n-N} \right] &=  \E\left[ V(\x_{n+1}, \param_{n-N}) \,|\, \mathcal{F}_{n-N} \right]  + \E\left[ V(\x_{n+1}, \param_{n}) - V(\x_{n+1}, \param_{n-N}) \,|\, \mathcal{F}_{n-N} \right] \\
			&\le \E\left[ V(\x_{n+1}, \param_{n-N}) \,|\, \mathcal{F}_{n-N} \right]  + R \,  \E\left[ \lVert \param_{n-N} - \param_{n} \rVert \,|\, \mathcal{F}_{n-N} \right] \\
			&\le  \E\left[ V(\x_{n+1}, \param_{n-N}) \,|\, \mathcal{F}_{n-N} \right]  + c_{0}R N w_{n-N}.
		\end{align}
		Note that $\param_{n-N}$ is deterministic with respect to $\mathcal{F}_{n-N}$. Hence multiplying by $w_{n+1}\le w_{n}$, taking positive parts, and using Proposition \ref{prop:increment_bd}, we get 
		\begin{align}\label{eq:pf_main_mixing_bd_1}
			\sum_{n=1}^{\infty} \E\left[ \E\left[ w_{n+1}V(\x_{n+1}, \param_{n}) \,|\, \mathcal{F}_{n-a_{n}} \right]^{+} \right] 
			&\le    \sum_{n=1}^{\infty} C w_{n} w_{n-a_{n}}    + 2 L w_{n}\lambda^{a_{n}+1}  + c_{0}R  a_{n} w_{n}w_{n-a_{n}}. 
		\end{align}

		Now we show \textbf{(i)}. Denote $Z_{n}=w_{n+1} V(\x_{n+1}, \param_{n})$. Then by iterated expectation and Jensen's inequality, it follows that 
		\begin{align}\label{eq:pf_key_jensen}
			\sum_{n=1}^{\infty}\E[Z_{n}]^{+} &= \sum_{n=1}^{\infty}\left(\E\left[\E\left[Z_{n}\,\bigg|\, \mathcal{F}_{n-a_{t}}\right]\right]\right)^{+} \le \sum_{t=1}^{\infty}\E\left[\left( \E\left[Z_{t}\,\bigg|\, \mathcal{F}_{t-a_{t}}\right]\right)^{+}\right].
		\end{align}
		Notice that the first expression above is the summation in \textbf{(i)} that we want to bound, and the last expression above equals the left-hand side of \eqref{eq:pf_main_mixing_bd_1}. This shows \textbf{(i)}.

		Next, we show \textbf{(ii)}. Note that the left hand side in \textbf{(ii)} is bounded by the left hand side of \textbf{(i)} plus $\sum_{n=1}^{\infty} w_{n}^{2}\left(\sum_{k=1}^{n+1} \E[\eps_{k}] \right)<\infty$ by Proposition \ref{prop:Wt_bd} and \ref{assumption:A4-w_t}. So \textbf{(ii)} follows from \textbf{(i)}. 
		
		For \textbf{(iii)}, observe from Proposition \ref{prop:Wt_bd} and \eqref{eq:g_f_bar_eps_bound},  
		\begin{align}
			\sum_{n=1}^{\infty}  w_{n+1}\left( \bar{g}_{n}(\param_{n}) - \bar{f}_{n}(\param_{n}) \right)  \le \bar{g}_{1}(\param_{1}) +  \sum_{n=1}^{\infty}   w_{n+1}\left( \ell(\x_{n+1},\param_{n}) - \bar{f}_{n}(\param_{n}) \right) +\sum_{n=1}^{\infty} w_{n}^{2} \left(\sum_{k=1}^{n+1} \eps_{k} \right).
		\end{align}
		Then \textbf{(iii)} follows from taking expectation and using \textbf{(i)} with \ref{assumption:A4-w_t}.
		
		Next, we show \textbf{(iv)}. The argument is similar. Since the loss function $\ell$ is $R$-Lipstchitz by \ref{assumption:A1-ell_smooth}, so is $f-\bar{f}_{n}$. Then, as before, we can $\lVert \param_{n}-\param_{n-1} \rVert \le c_{0}w_{n}$ and $w_{n}$ non-increasing  to show 
		\begin{align}
			\left| \E\left[  f(\param_{n}) - \bar{f}_{n}(\param_{n})   \,\bigg|\, \mathcal{F}_{n-N} \right] \right| &\le \left|  \E\left[ f(\param_{n-N}) - \bar{f}_{n-N}(\param_{n})  \,\bigg|\, \mathcal{F}_{n-N} \right]  \right| + c_{0}R N w_{n-N}. 
		\end{align}
		Note that $\param_{n-N}$ is deterministic with respect to $\mathcal{F}_{n-N}$. Hence using Proposition \ref{prop:increment_bd} with $N=a_{n}$ where $a_{n}$ as in \ref{assumption:A4-w_t}, it follows that 
		\begin{align}
			\left|	\E\left[  f(\param_{n}) - \bar{f}_{n}(\param_{n})   \,\bigg|\, \mathcal{F}_{n-a_{n}} \right] \right| \le \frac{2L w_{n-a_{n}}}{1-\lambda} + c_{0}R a_{n} w_{n-a_{n}}. 
		\end{align}
		Multiplying by $w_{n+1}$ and integrating with respect to $\mathcal{F}_{n-a_{n}}$, we get 
		\begin{align}\label{eq:pf_key_4}
			w_{n+1}  \E\left[ \left|	\E\left[   f(\param_{n}) - \bar{f}_{n}(\param_{n})   \,\bigg|\, \mathcal{F}_{n-a_{n}}  \right] \right|  \right]   \le   \frac{2L w_{n}w_{n-a_{n}}}{1-\lambda} + c_{0}R a_{n} w_{n}w_{n-a_{n}}. 
		\end{align}
		By \ref{assumption:A4-w_t}, the right-hand side is summable. Then the assertion follows from Jensen's inequality similarly as in the proof of \textbf{(i)}. 
		
		We turn to prove \textbf{(v)}. Suppose the optional condition in \ref{assumption:A4-w_t} holds, that is,  $\sum_{n=1}^{\infty} w_{n} w_{n-a_{n}}\sqrt{n}<\infty$, where $a_{n}$ is the sequence in \ref{assumption:A4-w_t}. As in the proof of \textbf{(iv)}, and using the second inequality in Lemma \ref{lem:f_n_concentration_L1_gen}, 
		\begin{align}
			\E\left[ \left|  f(\param_{n}) - \bar{f}_{n}(\param_{n})  \right|  \,\bigg|\, \mathcal{F}_{n-N}  \right]  &\le   \E\left[\left| f(\param_{n-N}) - \bar{f}_{n-N}(\param_{n}) \right|  \,\bigg|\, \mathcal{F}_{n-N} \right]   + c_{0}R N w_{n-N} \\
			&\le  C w_{n-N} \sqrt{n-N} + c_{0}R N w_{n-N},
		\end{align}
		where $C>0$ is a constant in Lemma \ref{lem:f_n_concentration_L1_gen}. Then letting $N=a_{n}$ in \ref{assumption:A4-w_t} integrating out $\mathcal{F}_{n-N}$, multiplying by $w_{n+1}\le w_{n}$ and summing over all $n\ge 1$ gives 
		\begin{align}
			\sum_{n=0}^{\infty}  w_{n+1} \E\left[  \left| f(\param_{n}) - \bar{f}_{n}(\param_{n}) \right|  \right] \le	 \sum_{n=1}^{\infty} Cw_{n} w_{n-a_{n}}\sqrt{n-a_{n}} + c_{0}R a_{n} w_{n}w_{n-a_{n}}<\infty.
		\end{align}

		Now, we establish \textbf{(i)}-\textbf{(v)} under case \ref{C3} in Theorem \ref{thm:global_convergence}. This case must be dealt with separately since the stability of estimates $\lVert \param_{n}-\param_{n-1} \rVert=O(w_{n})$ is not readily available (see Lemma \ref{lem:stability}). In this case, we rely on `instant mixing' ($\lambda=0$) of the i.i.d. data sequence $\x_{n}$ stated in the second part of Proposition \ref{prop:increment_bd}. Namely, we condition on $\mathcal{F}_{n}$ and use $\lambda=0$ while avoiding using $\lVert \param_{n}-\param_{n-1} \rVert=O(w_{n})$. For \textbf{(i)}, we first use \eqref{eq:pf_main_mixing_bd_1} with $a_{n}=0=N$ to get 
		\begin{align}
			\sum_{n=1}^{\infty} \E\left[ \E\left[ w_{n+1}V(\x_{n+1}, \param_{n}) \,|\, \mathcal{F}_{n} \right]^{+} \right] 
			&\le    \sum_{n=1}^{\infty} C w_{n}^{2}  < \infty. 
		\end{align}
		The rest follows from \eqref{eq:pf_key_jensen}. Then \textbf{(ii)}-\textbf{(iii)} can be deduced similarly as in the previous case using \textbf{(i)}. Also, \textbf{(iv)} follows from \eqref{eq:pf_key_4} using $\lambda=0$ and $N=0$. \textbf{(v)} can be deduced similarly.

		Lastly, we show \textbf{(vi)}. For cases \ref{C1}-\ref{C2} in Theorem \ref{thm:global_convergence}, this is trivial since we have $\lVert \param_{n}-\param_{n-1} \rVert= O(w_{n})$ and $\sum_{n=1}^{\infty} w_{n}^{2}<\infty$ by \ref{assumption:A4-w_t}. Suppose \ref{C3}. By Lemma \ref{lem:stability} \textbf{(i)}, we have 
		\begin{align}
			\bar{g}_{n}(\param_{n-1}) - \bar{g}_{n}(\param_{n}) \ge  \frac{\hat{\rho}}{2}\lVert \param_{n}-\param_{n-1} \rVert^{2} -  \Delta_{n}.
		\end{align}
		Moreover, using  \eqref{eq:surrogate_loss_recursion} and \eqref{eq:g_f_bar_eps_bound}, we get
		\begin{align*}
			-  \Delta_{n+1} \le \bar{g}_{n+1}(\param_{n})-\bar{g}_{n+1}(\param_{n+1})\le 
			\bar{g}_{n}(\param_{n}) -\bar{g}_{n+1}(\param_{n+1}) + w_{n+1}(\ell(\x_{n+1},\param_{n})-\bar{f}_{n}(\param_{n})) + w_{n}^{2}\sum_{k=1}^{n} \eps_{k}. 
		\end{align*} 
		Taking expectation and summing over $n\ge 1$ and using \textbf{(i)} as well as \ref{assumption:A4-w_t}, 
		\begin{align}
			\left| \sum_{n=1}^{\infty}  \E\left[ \bar{g}_{n+1}(\param_{n})-\bar{g}_{n+1}(\param_{n+1}) \right]  \right| <\infty. 
		\end{align}
		Using \ref{assumption:A5_sufficient_surrogate_decay}, this implies 
		\begin{align}
			\E\left[ 	\sum_{n=1}^{N}  \frac{\hat{\rho}}{2} \rVert \param_{n}-\param_{n-1} \rVert^{2} \right] \le  \sum_{n=1}^{\infty} \E\left[ \bar{g}_{n}(\param_{n-1})-\bar{g}_{n}(\param_{n}) \right]  + \E\left[ \sum_{n=1}^{\infty} \Delta_{n} \right] < \infty,
		\end{align} 
		as desired. 
	\end{proof}

	Lastly in this section, we prove Lemma \ref{lem:gradient_finite_sum}.

	\begin{proof}[\textbf{Proof of Lemma \ref{lem:gradient_finite_sum}}]
		Denote $\alpha_{n} = \nabla\bar{g}_{n}(\param_{n})-\nabla\bar{f}_{n}(\param_{n})$ and $\beta_{n} = \nabla\bar{f}_{n}(\param_{n})-\nabla f(\param_{n})$.  Fix $\param\in \R^{p}$ and $\eps>0$. With \ref{assumption:A1-ell_smooth} and a recursion, $\bar{f}_{n}$ and $g_{n}$ are also $R$-Lipschitz continuous. By Lemma \ref{lem:surrogate_L_gradient}, we can write
		\begin{align}
			\left| \bar{g}_{n}(\param_{n}+\eps \param) - \bar{g}_{n}(\param_{n}) - \langle  \nabla \bar{g}_{n}(\param_{n}), \eps\param \rangle \right| &\le \frac{L\eps^{2}}{2} \lVert \param \rVert^{2} ,\\
			\left| \bar{f}_{n}(\param_{n}+\eps \param) - \bar{f}_{n}(\param_{n}) - \langle \nabla \bar{f}_{n}(\param_{n}), \eps \param \rangle \right| &\le  \frac{L\eps^{2}}{2} \lVert \param \rVert^{2},
		\end{align}
		for some uniform constant $L>0$ that does not depend on $\param,\eps$ and $n$.  Recall that $\bar{g}_{n}\ge \bar{f}_{n}-\bar{\eps}_{n}$ for all $n\ge 1$.  Hence 
		\begin{align}
			-	\frac{L\eps^{2}}{2} \lVert \param \rVert^{2} +  \bar{f}_{n}(\param_{n}) + \langle \nabla \bar{f}_{n}(\param_{n}),  \eps \param \rangle &\le   \bar{f}_{n}(\param_{n}+\eps \param)  \\
			&\le   \bar{g}_{n}(\param_{n}+\eps \param) + \bar{\eps}_{n}  \\
			&\le \bar{g}_{n}(\param_{n}) + \langle \nabla \bar{g}_{n}(\param_{n}, \eps\param \rangle+ \frac{L\eps^{2}}{2} \lVert \param \rVert^{2} + \bar{\eps}_{n},
		\end{align}
		so, we obtain the following key inequality
		\begin{align}\label{eq:gradient_key_inequality}
			\langle  \alpha_{n}, \eps\param \rangle = \tr\left( \left( \nabla \bar{g}_{n}(\param_{n})-\nabla \bar{f}_{n}(\param_{n})\right)^{T}(\eps\param) \right) \ge (\bar{f}_{n}(\param_{n}) - \bar{g}_{n}(\param_{n}) - \bar{\eps}_{n}) - L\eps^{2} \lVert \param\rVert^{2}.
		\end{align}
		Choosing $\param=- \alpha_{n}$, we get 
		\begin{align}
			-\eps \lVert  \alpha_{n} \rVert^{2} \ge  (\bar{f}_{n}(\param_{n}) - \bar{g}_{n}(\param_{n})- \bar{\eps}_{n}) - c\eps^{2} \lVert \alpha_{n} \rVert^{2}.
		\end{align}
		Rearranging, we get 
		\begin{align}
			(\eps-c\eps^{2}) \lVert \alpha_{n} \rVert^{2} \le  (\bar{g}_{n}(\param_{n})+\bar{\eps}_{n} - \bar{f}_{n}(\param_{n})).
		\end{align}
		Recall that this holds for all $\eps>0$ and $n\ge 1$. Taking expectation, multiplying by $w_{n}$, and summing up in $n$, 
		\begin{align}
			(\eps-c\eps^{2}) \sum_{n=1}^{\infty} w_{n} \E\left[ \lVert \alpha_{n} \rVert^{2} \right]\le \sum_{n=1}^{\infty} w_{n} \E\left[\bar{g}_{n}(\param_{n}) - \bar{f}_{n}(\param_{n}) \right] + \sum_{n=1}^{\infty} w_{n} \E[ \bar{\eps}_{n}] <\infty,
		\end{align}
		where the finiteness above uses Lemma \ref{lem:pos_variation} and \ref{assumption:A4-w_t} with \eqref{eq:bar_eps_bound}. Since $\eps>0$ was arbitrary, this shows 
		\begin{align}\label{eq:alpha_grad_bd1}
			\sum_{n=1}^{\infty} w_{n} \E\left[ \lVert \alpha_{n} \rVert^{2} \right] <\infty.
		\end{align}

		Next, we will analyze the sums $\sum_{n=1}^{\infty} w_{n} \left\lVert \E\left[ \beta_{n} \right] \right\rVert^{2}$ and $\sum_{n=1}^{\infty} w_{n} \E\left[ \lVert  \beta_{n}  \rVert^{2}\right]$.  To this end, note that  
		\begin{align}
			\nabla f(\param) = \nabla \E_{\x\sim \pi}\left[ \ell(\x,\param) \right] = \E_{\x\sim \pi}\left[\nabla_{2}  \ell(\x,\param) \right]. 
		\end{align}
		Define $\psi:(\x,\param)\mapsto \nabla_{2} \ell(\x,\param)$. Then by the linearity of gradients and induction, we can show 
		\begin{align}
			\bar{\psi}_{n}(\param) = \nabla \bar{f}_{n}(\param).
		\end{align} 
		Thus by Lemma \ref{lem:f_n_concentration_L1_gen}, there exists a constant $C>0$ such that for all $n\ge 1$,
		\begin{align}\label{eq:grad_concentration_pf}
			\sup_{\param\in \Param} \left\lVert \E\left[  \nabla f(\param) \right]- \E\left[  \nabla \bar{f}_{n}(\param)\right] \right\rVert^{2}  \le Cw_{n},\qquad 	\E\left[  \sup_{\param\in \Param} \left\lVert  \nabla f(\param)  - \nabla  \bar{f}_{n}(\param) \right\rVert^{2}\right]  \le Cw_{n}\sqrt{n}.
		\end{align}
		By \ref{assumption:A4-w_t}, this implies 
		\begin{align}\label{eq:grad_concentration_pf2}
			\sum_{n=1}^{\infty} w_{n} \, \left\lVert \E\left[  \beta_{n}\right] \right\rVert^{2} \le 	\sum_{n=1}^{\infty} w_{n}\,	\sup_{\param\in \Param} \left\lVert \E\left[  \nabla f(\param) \right]- \E\left[  \nabla \bar{f}_{n}(\param)\right] \right\rVert^{2} \le  C\sum_{n=1}^{\infty} w_{n}^{2} < \infty.	
		\end{align}
		Similarly, we also get 
		\begin{align}\label{eq:grad_concentration_pf3}
			\sum_{n=1}^{\infty} w_{n} \, \E\left[  \left\lVert  \beta_{n} \right\rVert^{2} \right] \le 	\sum_{n=1}^{\infty} w_{n}\,	\sup_{\param\in \Param} \E\left[  \left\lVert  \nabla f(\param) \right]- \E\left[  \nabla \bar{f}_{n}(\param) \right\rVert^{2} \right]   \le  C\sum_{n=1}^{\infty} w_{n}^{2}\sqrt{n}.
		\end{align}
		
		Now we conclude the assertions. For \textbf{(i)}, by Jensen's inequality, we have
		\begin{align}
			\sum_{n=1}^{\infty} w_{n} \E\left[  \lVert \alpha_{n}  \rVert^{2} \right]\le \sum_{n=1}^{\infty} w_{n} \lVert \E\left[ \alpha_{n} \right] \rVert^{2} <\infty.
		\end{align}
		Hence by Fubini's theorem, we get 
		\begin{align}
			\E\left[ \sum_{n=1}^{\infty} w_{n} \left( \lVert \alpha_{n} \rVert^{2}  + \left\lVert \E\left[  \beta_{n}\right] \right\rVert^{2} \right)\right] =	\sum_{n=1}^{\infty} w_{n}\E\left[   \lVert \alpha_{n} \rVert^{2} \right]   +	\sum_{n=1}^{\infty} w_{n} \, \left\lVert \E\left[  \beta_{n}\right] \right\rVert^{2} <\infty.
		\end{align}
		This shows \textbf{(i)}. 
		
		Next, assume the optional condition in \ref{assumption:A4-w_t} holds. since $w_{n}$ is non-increasing in $n$, it follows that $\sum_{n=1}^{\infty} w_{n}^{2}\sqrt{n} <\infty$. Then \eqref{eq:grad_concentration_pf3} shows $\sum_{n=1}^{\infty} w_{n}\E\left[ \lVert \beta_{n} \rVert^{2} \right]<\infty$. We have shown that $\sum_{n=1}^{\infty} w_{n}\E\left[ \lVert \alpha_{n} \rVert^{2} \right]<\infty$ in \eqref{eq:alpha_grad_bd1}. Then noting that $\lVert \alpha_{n} +\beta_{n}\rVert^{2} \le 2 \lVert \alpha_{n}\rVert^{2} + 2\lVert \beta_{n}\rVert^{2}$, 
		using Fubini's theorem, we get 
		\begin{align}
			\E\left[	\sum_{n=1}^{\infty} w_{n}\left( \lVert \alpha_{n} \rVert^{2} +  \lVert \beta_{n} \rVert^{2} +  \lVert \alpha_{n} + \beta_{n} \rVert^{2} \right) \right]<\infty.
		\end{align}
		Since $\bar{g}_{n}(\param_{n})-\nabla f(\param_{n})=\alpha_{n} + \beta_{n}$, this is the desired conclusion.
	\end{proof}

	\vspace{0.2cm}
	\section{Proof of Lemma \ref{lem:finite_variation_surr}}
	\label{section:key_lemma_pf2}
	
	The goal of this section is to prove Lemma  \ref{lem:finite_variation_surr}. Throughout this section, let $(\param_{n})_{n\ge 1}$ be an output of Algorithm \ref{algorithm:SMM}, where each update $\param_{n-1}\rightarrow \param_{n}$ uses either Algorithm \ref{algorithm:BSM-PR} (full coordinate descent with proximal regularization) or Algorithm \ref{algorithm:BSM-DR} (block coordinate descent with diminishing radius). For the latter case, decompose the update $\param_{n-1}\rightarrow \param_{n}$ into $\param_{n-1}=\param_{n}^{(0)}\rightarrow \param_{n}^{(1)} \rightarrow \dots \rightarrow \param_{n}^{(m)}=\param_{n}$, where each $\param_{n}^{(i)}$ is the output of \eqref{eq:BSM_factor_update_DR}  at sub-iteration $i$ of Algorithm \ref{algorithm:BSM-DR}. Namely, let  $\mathbb{J}$ denote the set of coordinate blocks used in Algorithm \ref{algorithm:SMM} and let $J_{1}(n),\dots,J_{m}(n)\in \mathbb{J}$ denote the coordinate blocks used in the $i$th iteration of Algorithm \ref{algorithm:BSM-DR} for the update $\param_{n-1}\rightarrow \param_{n}$.  Then $\param_{n}^{(i)}$ is an approximate minimizer of $\bar{g}_{n}(\param) + \Psi_{n}(\lVert \param-\param_{n-1} \rVert)$ over the convex set $\Param^{J_{i}(n)}_{n}$ (see \eqref{eq:param_J_n_def}) with optimality gap $\Delta_{n}^{(i)}$ (see \ref{assumption:A5_sufficient_surrogate_decay}), where $\Psi_{n}(x)$ takes value $0$ if $x\le c'w_{n}/m$ and $\infty$ otherwise (radius restriction). For a unified treatment, we also use the same notation when Algorithm \ref{algorithm:BSM-PR} is used instead of Algorithm \ref{algorithm:BSM-DR}. In this case, we take $m=1$ (single update), $\mathbb{J}=\{ \{1,\dots,p\} \}$,  $J_{n}^{(1)}=\{1,\dots,p\}$ , and $\Param_{n}^{J_{1}(n)}=\Param$ (no frozen coordinates), and $\Psi_{n}(x) \equiv \frac{\hat{\rho}}{2} x^{2}$ (proximal regularization). In this case, the optimality gap is denoted as $\Delta_{n}^{(1)}=\Delta_{n}$ (see \ref{assumption:A5_sufficient_surrogate_decay}).  
	

	The starting point of the analysis in this section is the following proposition, which states that the linear variation of $\bar{g}_{n}$ over the sequence $(\param_{n;i})_{n\ge 1, 1\le i \le m}$ is finite.

	\begin{prop}\label{prop:linear_surr_variation}
		Assume \ref{assumption:A1-ell_smooth}-\ref{assumption:A4-w_t}. Under any of the three cases \ref{C1}-\ref{C3} in Theorem \ref{thm:global_convergence}, we have 
		\begin{align}\label{eq:finite_surr_change_variation}
			\E\left[ 	\sum_{n=1}^{\infty}   \left| \left\langle \nabla \bar{g}_{n}(\param_{n-1}),\,  \param_{n} - \param_{n-1}  \right\rangle \right| \right] <\infty. 
		\end{align}
		Furthermore, almost surely, 
		\begin{align*}
			\sum_{n=1}^{\infty} \E_{J_{1}(n),\dots,J_{m}(n)} \left[     \left| \left\langle \nabla \bar{g}_{n}(\param_{n-1}),\,  \param_{n} - \param_{n-1}  \right\rangle \right| \,\bigg|\, \mathcal{F}_{n-1} \right] <\infty. 
		\end{align*}
	\end{prop}

	\begin{proof}
		Fix $i=1,\dots,m$. By Proposition \ref{prop:g_grad_Lipschitz},  $\nabla \bar{g}_{n}$ over $\Param$ is $L$-Lipschitz for all $n\ge 1$. Hence by Lemma \ref{lem:surrogate_L_gradient}, for all $t\ge 1$, 
		\begin{align*}
			\left|\bar{g}_{n}(\param_{n}^{(i)})-\bar{g}_{n}(\param_{n-1}) - \left\langle \nabla \bar{g}_{n}(\param_{n-1}),\,  \param_{n}^{(i)} - \param_{n-1}  \right\rangle \right| \le \frac{L}{2}\lVert \param_{n}^{(i)} - \param_{n-1} \Vert_{F}^{2}.
		\end{align*}
		Note that for $1\le i \le m$, 
		\begin{align}
			\bar{g}_{n}(\param_{n}^{(i)}) + \Psi_{n}(\lVert \param_{n}^{(i)}-\param_{n-1} \rVert) \le \bar{g}_{n}(\param_{n}^{(i-1)}) + \Psi_{n}(\lVert \param_{n}^{(i-1)}-\param_{n-1} \rVert)  + \Delta_{n}^{(i)}. 
		\end{align}
		By induction, we get 
		\begin{align}
			\bar{g}_{n}(\param_{n}^{(i)})  \le \bar{g}_{n}(\param_{n}^{(i)}) + \Psi_{n}(\lVert \param_{n}^{(i)}-\param_{n-1} \rVert) \le \bar{g}_{n}(\param_{n-1}) + \sum_{j=1}^{i} \Delta_{n}^{(j)}. 
		\end{align}
		Hence 
		\begin{align}
			|	\bar{g}_{n}(\param_{n}^{(i)})  - \bar{g}_{n}(\param_{n-1}) | \le \bar{g}_{n}(\param_{n-1})- \bar{g}_{n}(\param_{n}^{(i)}) +  2 \sum_{J=1}^{i} \Delta_{n}^{(j)},
		\end{align}
		so it follows that 
		\begin{align}\label{eq:1storder_growth_bd_pf}
			\left| \left\langle \nabla \bar{g}_{n}(\param_{n-1}),\,  \param_{n}^{(i)} - \param_{n-1}  \right\rangle \right| & \le   \frac{L}{2}\lVert \param_{n}^{(i)} - \param_{n-1} \Vert_{F}^{2} + \bar{g}_{n}(\param_{n-1})- \bar{g}_{n}(\param_{n}^{(i)})  +   2\sum_{j=1}^{i} \Delta_{n}^{(j)} \\
			&\le  \frac{L}{2}\lVert \param_{n}^{(i)} - \param_{n-1} \Vert_{F}^{2} + \bar{g}_{n}(\param_{n-1})- \bar{g}_{n}(\param_{n}) +  3\sum_{j=i}^{m} \Delta_{n}^{(j)}. 
		\end{align}
		On the other hand, using \eqref{eq:surrogate_loss_recursion} and $\bar{g}_{n}\ge \bar{f}_{n}$, note that 
		\begin{align*}
			0\le \bar{g}_{n+1}(\param_{n})-\bar{g}_{n+1}(\param_{n+1})\le 
			\bar{g}_{n}(\param_{n}) -\bar{g}_{n+1}(\param_{n+1}) + w_{n+1}(\ell(\x_{n+1},\param_{n})-\bar{f}_{n}(\param_{n})).
		\end{align*} 
		Hence using Lemma \ref{lem:pos_variation}, we have  
		\begin{align*}
			\sum_{n=1}^{\infty} \E\left[ \bar{g}_{n+1}(\param_{n})-\bar{g}_{n+1}(\param_{n+1}) \right] <\infty.
		\end{align*}
		
		Next, by Lemma \ref{lem:pos_variation}, we have 
		\begin{align}
			\sum_{n=1}^{\infty} \E\left[  \lVert \param_{n} - \param_{n+1} \Vert_{F}^{2} \right] <\infty. 
		\end{align}
		Moreover, we have $\E\left[ \sum_{n=1}^{\infty} \Delta_{n} \right]<\infty$ by \ref{assumption:A5_sufficient_surrogate_decay} for cases \ref{C2} and \ref{C3} and by Lemma \ref{lem:strongly_convex_surrogate_A6'} for case \ref{C1}. Thus the above inequalities yield 
		\begin{align*}
			\sum_{n=1}^{\infty} \E\left[ \left| \left\langle \nabla \bar{g}_{n}(\param_{n-1}),\,  \param_{n}^{(i)} - \param_{n-1}  \right\rangle \right|\right] <\infty.
		\end{align*}
		Taking $i=m$ and using Fubini's theorem, we obtain the first assertion \eqref{eq:finite_surr_change_variation}. Furthermore, one can rewrite \eqref{eq:finite_surr_change_variation} as 
		\begin{align}
			\E\left[ \sum_{n=1}^{\infty} \E_{J_{1}(n),\dots,J_{m}(n)} \left[    \left| \left\langle \nabla \bar{g}_{n}(\param_{n-1}),\,  \param_{n} - \param_{n-1}  \right\rangle \right| \,\bigg|\, \mathcal{F}_{n-1}  \right] \right] <\infty. 
		\end{align}
		Since the random variable inside the expectation above is nonnegative, it has to be finite almost surely. This shows the second part of the assertion. 
	\end{proof}

	Next, we show that the block coordinate descent we use to obtain $\param_{n}$ should always give the optimal first-order descent up to a small additive error. 
	
	\begin{lemma}[Approximate first-order optimality]\label{lem:first_order_optimality}
		Assume \ref{assumption:A1-ell_smooth}-\ref{assumption:A4-w_t}. Assume cases \ref{C1}-\ref{C3} in Theorem \ref{thm:global_convergence}. Then there exists constants $c_{1},c_{2},c_{3} >0$ such that almost surely, for any sequence $(b_{n})_{n\ge 1}$, $0<b_{n}\le w_{n}$,  
		\begin{align}\label{eq:first_order_optimality}
			\E\left[ \left\langle \nabla \bar{g}_{n}(\param_{n-1}),\,    b_{n}^{-1}(\param_{n}-\param_{n-1} )  \right\rangle  \,\bigg| \, \mathcal{F}_{n-1} \right]
			&\le  \inf_{\param\in \Param } \left\langle \nabla \bar{g}_{n-1}(\param_{n-1}),\, \frac{\param - \param_{n-1} }{\lVert \param - \param_{n-1}\rVert}\right\rangle  +  \E\left[ b_{n}^{-1}\Delta_{n} \,\bigg| \, \mathcal{F}_{n-1} \right] \\
			&\hspace{0.5cm}+ c_{1}w_{n} + \E\left[ c_{2}(1+\hat{\rho}) b_{n}  +  c_{3} b_{n}^{-1} \lVert \param_{n} - \param_{n-1} \rVert^{2} \,\bigg| \, \mathcal{F}_{n-1} \right],
		\end{align}
		where we take $\hat{\rho}= 0$ for cases \ref{C1}-\ref{C2}.	Furthermore, the above holds for all $b_{n}>0$ for cases \ref{C1} and \ref{C3}. 
	\end{lemma}
	

	\begin{proof}
		We first show the assertion for cases \ref{C1} and \ref{C3}. Fix arbitrary $\param= \Param$ such that $\lVert \param - \param_{n-1} \rVert \le   c'b_{n}$.  Recall that $\param_{n}$ is an inexact minimizer of $\bar{g}_{n}(\param) + \Psi_{n}(\lVert\param-\param_{n-1} \rVert)$ over $\Param$. Denoting $\param_{n}^{\star}$ to be an exact minimizer of the same problem and by using convexity of $\Param$, we have for each $a\in [0,1]$, 
		\begin{align}\label{eq:first_order_opt_pf1_PR}
			\bar{g}_{n}(\param_{n})  - \Delta_{n} &\le 
			\bar{g}_{n}(\param_{n}) + \Psi_{n}(\lVert \param-\param_{n-1} \rVert)  - \Delta_{n} \\
			&\le  \bar{g}_{n}(\param_{n}^{\star})  +  \Psi_{n}(\lVert \param_{n}^{\star}-\param_{n-1} \rVert)  \\
			&  \le  \bar{g}_{n}\left( \param_{n-1} + a  \left( \param-\param_{n-1}  \right) \right) +  \Psi_{n}( a \lVert \param-\param_{n-1} \rVert).
		\end{align}
		Note that by Proposition \ref{prop:g_grad_Lipschitz},  $\nabla \bar{g}_{n}$ is $L$-Lipschitz and $\lVert \nabla g_{n+1} - \nabla g_{n} \rVert\le L'w_{n+1}$ for some constant $L'>0$. Hence subtracting $\bar{g}_{n}(\param_{n-1})$ from both side, we get 
		\begin{align}\label{eq:first_order_opt_pf2_PR}
			&\left\langle \nabla \bar{g}_{n}(\param_{n-1}),\,   \param_{n}-\param_{n-1} \right\rangle - \frac{L}{2} \lVert \param_{n}-\param_{n-1} \rVert^{2} - \Delta_{n}\\
			&\qquad \le  a \left\langle \nabla \bar{g}_{n}(\param_{n-1}),\,  \param-\param_{n-1} \right\rangle + \frac{a^{2}(L+\hat{\rho}) }{2}  \lVert \param - \param_{n-1} \rVert^{2}  \\
			&\qquad \le  a \left\langle \nabla \bar{g}_{n-1}(\param_{n-1}),\,  \param-\param_{n-1} \right\rangle + \frac{a^{2}(L+\hat{\rho}) }{2}  \lVert \param - \param_{n-1} \rVert^{2}  + aL'w_{n} \lVert \param - \param_{n-1} \rVert,
		\end{align}
		where we have used $\Psi_{n}(x) \equiv \frac{\hat{\rho}}{2} x^{2}$ for the first inequality. Rearranging, we get 
		\begin{align}\label{eq:first_order_opt_pf3_PR}
			\left\langle \nabla \bar{g}_{n}(\param_{n-1}),\,   \param_{n}-\param_{n-1} \right\rangle &\le 
			a \left\langle \nabla \bar{g}_{n-1}(\param_{n-1}),\,  \param-\param_{n-1} \right\rangle + c_{1} a^{2}(L+\hat{\rho})  \lVert \param - \param_{n-1} \rVert^{2}  + aL'w_{n} \lVert \param - \param_{n-1} \rVert \\
			&\hspace{3cm} + \frac{L}{2} \lVert \param_{n}-\param_{n-1} \rVert^{2} + \Delta_{n}
		\end{align}
		for any constant $c_{1}\ge 1/2$. This holds for all $a\in [0,1]$. Viewing the right-hand side as a quadratic function in $a$, we can choose $c_{1}\ge 1$ large enough so that it is non-decreasing in $a$. Indeed, the only possibly negative term is $ a \left\langle \nabla \bar{g}_{n-1}(\param_{n-1}),\,  \param-\param_{n-1} \right\rangle$, but its absolute value is at most $a c_{2} \lVert \param-\param_{n-1} \rVert$ for some constant $c_{2}>0$ by Cauchy-Schwarz inequality and Proposition  \ref{prop:g_grad_Lipschitz}. We will make a such choice for $c_{1}$. Then by choosing $a = b_{n}/\lVert \param - \param_{n-1} \rVert$, we obtain 
		\begin{align}\label{eq:first_order_opt_pf4_PR}
			\left\langle \nabla \bar{g}_{n}(\param_{n-1}),\,   \param_{n}-\param_{n-1} \right\rangle &\le 
			b_{n} \left\langle \nabla \bar{g}_{n-1}(\param_{n-1}),\,  \frac{\param-\param_{n-1}}{\lVert \param-\param_{n-1} \rVert}  \right\rangle \\
			&\qquad + c_{1} b_{n}^{2}(L+\hat{\rho})  + L'w_{n} b_{n} + \frac{L}{2} \lVert \param_{n}-\param_{n-1} \rVert^{2} + \Delta_{n}.
		\end{align}
		
		We have shown that the above holds  for all $\param\in \Param$ such that $\lVert \param - \param_{n} \rVert \le   c'b_{n+1}$. It remains to argue  that \eqref{eq:first_order_optimality} also holds for all $\param\in\Param$ with $\lVert \param-\param_{n} \rVert\ge c'b_{n+1}$. Indeed, for such $\param$, let $\param'$ be the point in the secant line between $\param$ and $\param_{n}$ such that $\lVert \param'-\param_{n} \rVert \le c'b_{n+1}$. Then $\param'\in \Param$ and \eqref{eq:first_order_optimality} holds for $\param$ replaced with $\param'$. However, the right-hand side is unchanged when replacing $\param$ with any point on the line passing through $\param$ and $\param_{n}$. Then the assertion follows by dividing both sides by $b_{n}$ and taking the conditional expectation with respect to $\mathcal{F}_{n-1}$. 
		
		Next, we show the assertion for case \ref{C2}. The argument is similar but a bit more complicated due to the use of randomized block coordinate descent in Algorithm \ref{algorithm:BSM-DR}. As before, fix arbitrary $\param= \Param$ such that $\lVert \param - \param_{n-1} \rVert \le   c'b_{n}$.  We will suppress the dependency of random block coordinate $J_{i}(n)$ on $n$ below and denote it $J_{i}$. Let $\Param_{n}^{J_{i}}$ be as in \eqref{eq:param_J_n_def}, which is a convex subset of $\Param$ that contains both $\param_{n}^{(i-1)}$ and $\param_{n}^{(i-1)} + \sum_{j\in J_{i}}(\param-\param_{n}^{(i-1)})^{j}e_{j}$, where $e_{j}$ denote the $j$th standard basis vector of $\R^{p}$. It follows that 
		\begin{align}
			\param_{n}^{(i-1)} + a \sum_{j\in J_{i}}(\param-\param_{n}^{(i-1)})^{j}e_{j} \in \Param^{J_{i}}_{n} \qquad \textup{for $a\in [0,1]$}. 
		\end{align}
		Recalling that $\param_{n}^{(i)}$ is an approximate minimizer of $\bar{g}_{n}(\param)$ over $\Param_{n}^{J_{i}}$ with gap $\Delta_{n}^{(i)}$ (see \ref{assumption:A5_sufficient_surrogate_decay}),  
		\begin{align}\label{eq:first_order_opt_pf1}
			\bar{g}_{n}(\param_{n}^{(i)}) - \Delta_{n}^{(i)} 
			&\le  \bar{g}_{n}(\param_{n}^{(i\star)})  \le  \bar{g}_{n}\left( \param_{n}^{(i-1)} + a  \left( \sum_{j\in J_{i}}(\param-\param_{n}^{(i-1)})^{j}e_{j} \right)  \right), 
		\end{align}
		where $\param_{n}^{(i\star)}$ denotes an exact minimizer of $\bar{g}_{n}$ over $\Param_{n}^{J_{i}}$. 
		Then subtract $\bar{g}_{n}(\param_{n}^{(i-1)})$ from both sides and use Lemma \ref{lem:surrogate_L_gradient}. Note that by Proposition \ref{prop:g_grad_Lipschitz},  $\nabla \bar{g}_{n}$ is $L$-Lipschitz and $\lVert \nabla g_{n+1} - \nabla g_{n} \rVert\le L'w_{n+1}$ for some constant $L'>0$. Also, note that the coordinate blocks $J_{1},\dots,J_{m}$ are disjoint so that each coordinate of $\param_{n-1}$ is updated at most once, see \ref{assumption:A6-faithful_sampling}. This implies 
		\begin{align}
			\sum_{j\in J_{i}}(\param-\param_{n}^{(i-1)})^{j}e_{j} = \sum_{j\in J_{i}}(\param-\param_{n-1})^{j}e_{j},\qquad \lVert \param_{n}^{(i-1)}-\param_{n-1} \rVert,\, \lVert \param_{n}^{(i)}-\param_{n}^{(i-1)} \rVert\le \lVert \param_{n}-\param_{n-1} \rVert.
		\end{align}
		So it follows that 
		\begin{align}\label{eq:first_order_opt_pf2}
			&\left\langle \nabla \bar{g}_{n}(\param_{n}^{(i-1)}),\,   \param_{n}^{(i)}-\param_{n}^{(i-1)} \right\rangle - \frac{L}{2} \lVert \param_{n}^{(i)}-\param_{n}^{(i-1)} \rVert^{2} - \Delta_{n}^{(i)} \\
			&\qquad \le  a \left\langle \nabla \bar{g}_{n}(\param_{n}^{(i-1)}),\,  \sum_{j\in J_{i}}(\param-\param_{n-1})^{j}e_{j}\right\rangle + \frac{a^{2}L }{2}  \lVert \param - \param_{n-1} \rVert^{2}  \\
			&\qquad \le  a \left\langle \nabla \bar{g}_{n-1}(\param_{n}^{(i-1)}),\,  \sum_{j\in J_{i}}(\param-\param_{n-1})^{j}e_{j}\right\rangle + \frac{a^{2}L }{2}  \lVert \param - \param_{n-1} \rVert^{2} + aL'w_{n} \lVert \param-\param_{n-1} \rVert \\
			&\qquad \le  a \left\langle \nabla \bar{g}_{n-1}(\param_{n-1}),\,  \sum_{j\in J_{i}}(\param-\param_{n-1})^{j}e_{j}\right\rangle + \frac{a^{2}L }{2}  \lVert \param - \param_{n-1} \rVert^{2} + aL'w_{n} \lVert \param-\param_{n-1} \rVert \\
			&\hspace{3cm} + aL\lVert \param_{n}-\param_{n-1} \rVert \, \lVert \param-\param_{n-1} \rVert.
		\end{align}
		Also, by Cauchy-Schwarz inequality and $L$-Lipschitz continuity of $\nabla \bar{g}_{n}$,  we have 
		\begin{align}
			\left\langle \nabla \bar{g}_{n}(\param_{n}^{(i-1)}),\,   \param_{n}^{(i)}-\param_{n}^{(i-1)} \right\rangle &\ge \left\langle \nabla \bar{g}_{n}(\param_{n-1}),\,   \param_{n}^{(i)}-\param_{n}^{(i-1)} \right\rangle - L \lVert \param_{n}^{(i-1)} - \param_{n-1} \rVert \, \lVert \param_{n}^{(i)} - \param_{n}^{(i-1)} \rVert \\
			&\ge \left\langle \nabla \bar{g}_{n-1}(\param_{n-1}),\,   \param_{n}^{(i)}-\param_{n}^{(i-1)} \right\rangle - L \lVert \param_{n} - \param_{n-1} \rVert^{2}.
		\end{align}
		Thus combining the above  inequalities, we obtain 
		\begin{align}
			\left\langle \nabla \bar{g}_{n}(\param_{n-1}),\,   \param_{n}^{(i)}-\param_{n}^{(i-1)} \right\rangle &\le  	a \left\langle \nabla \bar{g}_{n-1}(\param_{n-1}),\,  \sum_{j\in J_{i}}(\param-\param_{n-1})^{j}e_{j}\right\rangle  + c_{1} \lVert \param_{n}-\param_{n-1} \rVert^{2} + a^{2} c_{2} \lVert \param-\param_{n-1} \rVert^{2}  \\
			&\qquad + a c_{3} w_{n} \lVert \param-\param_{n-1} \rVert + ac_{4} \lVert \param_{n}-\param_{n-1} \rVert \, \lVert \param-\param_{n-1} \rVert + \Delta_{n}^{(i)},
		\end{align}
		where $c_{1},\dots,c_{4}>0$ are constants. 
		Hence summing over $i=1,\dots,m$,
		\begin{align}\label{eq:first_order_opt_pf3}
			&	\left\langle \nabla \bar{g}_{n}(\param_{n-1}),\,   \param_{n} -\param_{n-1}\right\rangle   \\
			&\hspace{2cm} \le  a  \left\langle \nabla \bar{g}_{n-1}(\param_{n-1}),\, \sum_{i=1}^{m} \sum_{j\in J_{i}}(\param-\param_{n-1})^{j}e_{j} \right\rangle    + c_{1}'  \lVert \param_{n} - \param_{n-1} \rVert^{2}  +  a^{2} c_{2} ' L \lVert\param-\param_{n-1} \rVert^{2}  \\
			&\hspace{3cm} +  a c_{3}'  w_{n}  \lVert\param-\param_{n-1} \rVert + a c_{4}'  \lVert \param-\param_{n-1} \rVert \,  \lVert \param_{n} - \param_{n-1} \rVert + \Delta_{n},
		\end{align}
		where we denoted $c_{i}'=mc_{i}$ for $i=1,\dots,4$.  Note that \eqref{eq:first_order_opt_pf3} holds for all $a\in [0,1]$. By Cauchy-Schwarz inequality, Proposition \ref{prop:g_grad_Lipschitz}, and $\Param$ is compact, 
		\begin{align}
			\left|  \left\langle \nabla \bar{g}_{n-1}(\param_{n-1}),\, \sum_{i=1}^{m} \sum_{j\in J_{i}}(\param-\param_{n-1})^{j}e_{j} \right\rangle   \right| ,\, \lVert \param_{n}-\param_{n-1} \rVert \, \lVert \param-\param_{n-1} \rVert   \le c_{5} \lVert \param-\param_{n-1} \rVert 
		\end{align}
		for some constant $c_{5}>0$ independent of $\param$ (and hence $b_{n}$). Hence by choosing $c_{2}'$ in \eqref{eq:first_order_opt_pf3} large enough, we can make the right-hand side in \eqref{eq:first_order_opt_pf3} (viewed as a quadratic function in $a$) non-decreasing in $a$. We will make a such choice for $c_{2}'$. Thus \eqref{eq:first_order_opt_pf3} holds for all $a\ge 0$. So we can choose $a=b_{n}/\lVert \param - \param_{n-1} \rVert$ in \eqref{eq:first_order_opt_pf3}, divide both sides by $b_{n}$, and take the conditional expectation with respect to $\mathcal{F}_{n-1}$. Note that  \ref{assumption:A6-faithful_sampling}, for any $\param\in \Param$, 
		\begin{align}
			\E_{J_{1}(n),\dots,J_{m}(n)}\left[ \sum_{i=1}^{m} \sum_{j\in J_{i}(n)} (\param-\param_{n-1})^{j}e_{j} \,\bigg|\, \mathcal{F}_{n-1} \right] = \bar{c} (\param-\param_{n-1}), 
		\end{align}
		where $\bar{c}$ denotes the common  expected number of each coordinate $j\in \{1,\dots,m\}$ appearing in the coordinate blocks $J_{1}(n),\dots,J_{m}(n)$. This will yield \ref{eq:first_order_optimality} where the right hand side is for all $\param\in \Param$  such that $\lVert \param-\param_{n-1} \rVert\le c'b_{n}$. By using a similar argument as before, it can be easily extended for all $\param\in \Param$. 
	\end{proof}

	Now we show Lemma \ref{lem:finite_variation_surr}. 
	
	\begin{proof}[\textbf{Proof of Lemma \ref{lem:finite_variation_surr}}] 	
		By using Lemma  \ref{lem:first_order_optimality} with deterministic $b_{n}=w_{n}$ and taking the conditional expectation with respect to $\mathcal{F}_{n}$,  we have 
		\begin{align}\label{eq:pf_lem_finite_var}
			w _{n+1} \left[ - \inf_{\param\in \Param  } \left\langle \nabla \bar{g}_{n}(\param_{n}),\, \frac{\param - \param_{n} }{\lVert \param - \param_{n}\rVert}\right\rangle \right]  &\le    \E\left[   \left\langle \nabla \bar{g}_{n+1}(\param_{n}),\,  \param_{n+1} - \param_{n}  \right\rangle   \,\bigg|\, \mathcal{F}_{n}  \right]  \\
			&\hspace{2cm} + cw_{n+1}^{2}  + \E\left[   \Delta_{n+1}   \right] +  \E\left[ \lVert \param_{n+1}-\param_{n} \rVert^{2}  \,\bigg|\, \mathcal{F}_{n}  \right] 
		\end{align}
		for some constant $c>0$ for all $n\ge 1$. 	By Proposition \ref{prop:linear_surr_variation} and \ref{assumption:A4-w_t}, the right-hand side above except for the last term is summable. For the last term, note that by Lemma \ref{lem:pos_variation} \textbf{(vi)} and Fubini's theorem,  we have 
		\begin{align}
			\E\left[  \sum_{n=1}^{\infty}  \E\left[ \lVert \param_{n+1}-\param_{n} \rVert^{2}  \,\bigg|\, \mathcal{F}_{n}  \right] \right] =	\E\left[ \sum_{n=1}^{\infty} \lVert \param_{n+1}-\param_{n} \rVert^{2}   \right]		<\infty. 
		\end{align}
		Hence $ \sum_{n=1}^{\infty}  \E\left[ \lVert \param_{n+1}-\param_{n} \rVert^{2}  \,\bigg|\, \mathcal{F}_{n}  \right] <\infty$ almost surely. Thus the right-hand side of \eqref{eq:pf_lem_finite_var} is summable almost surely. 
	\end{proof}
	
	\vspace{0.3cm}
	\section{Proof of Lemma \ref{lem:asymptotic_stationarity} } 
	\label{sec:lem_asymptotic_stationarity}

	In this section, we prove the only remaining key lemma, Lemma \ref{lem:asymptotic_stationarity}, on asymptotic stationarity of the iterates with respect to the averaged surrogate loss functions. We proceed by first proving the assertion for cases \ref{C1} and \ref{C3} and then for the case \ref{C2} in Theorem \ref{thm:global_convergence}.

	Throughout this section, we will make use of the parameterized surrogate assumption \ref{assumption:A7_param_surr}. From this, there exists a compact set $\mathcal{K}$ and a function $\hat{g}:\mathcal{K}\times \Param\rightarrow [0,\infty)$ such that for all $n\ge 1$, $\bar{g}_{n}(\param)= \hat{g}(\kappa_{n}, \param)$ for some $\kappa_{n}\in \mathcal{K}$.  Furthermore, $\hat{g}$ is Lipschitz in the first coordinate. Hence by taking a subsequence of $(t_{k})_{k\ge 1}$, we may assume that $\kappa_{\infty}:=\lim_{k\rightarrow \infty} \kappa_{t_{k}}$. Hence the function $\bar{g}_{\infty}:=\lim_{k\rightarrow \infty}\bar{g}_{t_{k}} = \hat{g}(\kappa_{\infty}, \cdot)$ is well-defined. By continuity of $\nabla \bar{g}_{n}$ (see Lemma \ref{lem:surrogate_L_gradient}), we also have $\nabla g_{t_{k}}\rightarrow \nabla g_{\infty}$ almost surely. 
	
	\subsection{  Proof of Lemma \ref{lem:asymptotic_stationarity}  for cases  \ref{C1} and \ref{C3} in Theroem \ref{thm:global_convergence}.  }
	
	In this subsection, we prove Lemma \ref{lem:asymptotic_stationarity} assuming cases \ref{C1} and \ref{C3} in Theroem \ref{thm:global_convergence}. The argument for case \ref{C3} may require a bit of care since we cannot use per-iteration stability $\lVert \param_{n}-\param_{n-1} \rVert = O(w_{n})$ (see Lemma \ref{lem:stability}) and rely on the total stability $\E\left[ \sum_{n=1}^{\infty} \lVert \param_{n}-\param_{n-1} \rVert  \right]<\infty$ (see Lemma \ref{lem:pos_variation} \textbf{(vi)}). Nontheless, Lemma \ref{lem:asymptotic_stationarity} for both cases \ref{C1} and \ref{C3} can be shown using a simple argument that only uses $\Vert \param_{n}-\param_{n-1} \rVert=o(1)$. 
	
	We begin with a simple observation. 
	
	\begin{prop}\label{prop:iterate_opt_gap}
		Assume the cases \textbf{\ref{C1}} or \ref{C3} in Theorem \ref{thm:global_convergence}. Take $\hat{\rho}=0$ for case \ref{C1} and $\hat{\rho}>-\rho$ for case \ref{C2}. For each $n\ge 1$, let $\param_{n}^{\star}$ be the minimizer of the strongly convex function $\param \mapsto  \bar{G}_{n}(\param):=\bar{g}_{n}(\param)+\frac{\hat{\rho}}{2}\lVert \param-\param_{n-1} \rVert^{2}$ over the convex set $\Param$. Then there exists a constant $\alpha>0$ such that for all $n\ge 1$, 
		\begin{align}\label{eq:iterate_optimality_gap_prox}
			\lVert \param_{n} - \param_{n}^{\star} \rVert^{2} \le  \alpha \Delta_{n}.
		\end{align}
	\end{prop}
	
	\begin{proof}
		Note that $\bar{G}_{n}$ is $\gamma$-strongly convex with modulus of convexity $\gamma=\rho>0$ for case \ref{C1} and $\gamma=(\hat{\rho}-|\rho|)$ for case \ref{C2}. Then the assertion follows from 
		\begin{align}\label{eq:2nd_order_growth_prox}
			\frac{\gamma}{2} \lVert \param_{n} - \param_{n}^{\star} \rVert^{2}  \le \bar{G}_{n}(\param_{n}) - \bar{G}_{n}(\param_{n}^{\star} ) \le \Delta_{n}
		\end{align}
		for $n\ge 1$. Indeed, the first inequality follows from the second-order growth property (see Lemma \ref{lem:second_order_growh_univariate}) since  $\bar{G}_{n}^{(n)}$ is a $\gamma$-weakly convex function minimized at $\param_{n}$ over $\Param$, and the second inequality follows from the definition of optimality gap $\Delta_{n}$ in \ref{assumption:A5_sufficient_surrogate_decay}. 
	\end{proof}

	We are now ready to give proof of Lemma \ref{lem:asymptotic_stationarity} for no regularization (case \ref{C1}) and proximal regularization (case \ref{C3}). 
	
	\vspace{0.1cm}
	\begin{proof}[\textbf{Proof of Lemma \ref{lem:asymptotic_stationarity} for cases \ref{C1} and \ref{C3} in Theorem \ref{thm:global_convergence}}] 
		Assume \ref{assumption:A1-ell_smooth}-\ref{assumption:A4-w_t}, \ref{assumption:A7_param_surr}, and one of the cases \textbf{\ref{C1}} or \ref{C3} in Theorem \ref{thm:global_convergence}.  Fix an arbitrary convergent subsequence $(\param_{t_{k}})_{k\ge 1}$ of $(\param_{n})_{n\ge 1}$ denote $\param_{\infty}:=\lim_{k\rightarrow \infty} \param_{t_{k}}$. By taking a further subsequence, we may assume that the following limits $\bar{g}_{\infty}:=\lim_{k\rightarrow \infty} \bar{g}_{t_{k}}$ and $\nabla \bar{g}_{\infty}:=\lim_{k\rightarrow \infty} \nabla \bar{g}_{t_{k}}$ exist almost surely. It suffies to show that $\param_{\infty}$ is a stationary point of $\bar{g}_{\infty}$ over $\Param$.  
		
		We keep the same notation for $\hat{\rho}$ as in Proposition \ref{prop:iterate_opt_gap}. By using the first-order optimality of $\param_{n}^{\star}\in \argmin_{\param\in \Param}  \left( \bar{g}_{n}(\param) + \frac{\hat{\rho}}{2}\lVert \param-\param_{n-1} \rVert^{2} \right)$, we have  
		\begin{align}
			\left\langle \nabla \bar{g}_{n} (\param_{n}^{\star}) + \hat{\rho} (\param_{n}^{\star}- \param_{n-1}) ,\, \param - \param_{n}  \right\rangle \ge 0 \qquad \forall \param\in \Param.
		\end{align}
		On the other hand, using Cauchy-Schwarz inequality and Proposition \ref{prop:iterate_opt_gap}, 
		\begin{align}
			&\left| \left\langle \nabla \bar{g}_{n} ( \param_{n}) + \hat{\rho} (\param_{n}- \param_{n-1}) ,\, \param - \param_{n}  \right\rangle  - 	\left\langle \nabla \bar{g}_{n} (\param_{n}^{\star}) + \hat{\rho} (\param_{n}^{\star}- \param_{n-1}) ,\, \param - \param_{n}  \right\rangle \right| \\
			&\qquad \le \left| \left\langle \nabla \bar{g}_{n} (\param_{n})  - \nabla \bar{g}_{n} (\param_{n}^{\star})  + \hat{\rho} (\param_{n}- \param_{n}^{\star}) ,\, \param - \param_{n}  \right\rangle  \right|  \\
			&\qquad \le  \left( \lVert  \nabla \bar{g}_{n} (\param_{n})  -  \nabla \bar{g}_{n} (\param_{n}^{\star})   \rVert   + \hat{\rho} \lVert \param_{n} - \param_{n}^{\star} \rVert  \right) \lVert \param - \param_{n} \rVert  \\
			&\qquad \le  (L+\hat{\rho})   \lVert \param - \param_{n} \rVert \,   \lVert \param_{n}^{\star} - \param_{n} \rVert   \\
			&\qquad \le  (L+\hat{\rho})   \lVert \param - \param_{n} \rVert \, \sqrt{\alpha \Delta_{n}}.
		\end{align}
		Similarly, also note that 
		\begin{align}
			|\left\langle  \hat{\rho} (\param_{n}^{\star}- \param_{n-1}) ,\, \param - \param_{n}  \right\rangle| \le \hat{\rho} \lVert \param - \param_{n} \rVert \, \lVert \param_{n}^{\star} - \param_{n} \rVert   \le \hat{\rho} \lVert \param-\param_{n} \rVert  \lVert \param - \param_{n} \rVert \, \sqrt{\alpha \Delta_{n}}.
		\end{align}
		Note that from \ref{assumption:A5_sufficient_surrogate_decay}, $\E[\sum_{n=1}^{\infty} \Delta_{n}]=0$, and by Fubini's theorem, this implies that $\sum_{n=1}^{\infty} \Delta_{n}<\infty$ almost surely. Hence $\Delta_{n}=o(1)$. Furthermore, since $\param_{t_{k}}$ converges, the sequence $(\param_{t_{k}})_{k\ge 1}$ is bounded. So the above inequalities  imply
		\begin{align}
			\liminf_{k\rightarrow \infty} \, \left\langle \nabla \bar{g}_{t_{k}} (\param_{t_{k}}) ,\, \param - \param_{t_{k}}  \right\rangle \ge 0 \qquad \forall \param\in \Param.
		\end{align}
		Since $\nabla \bar{g}_{t_{k}}\rightarrow \nabla \bar{g}_{\infty}$ as $k\rightarrow\infty$, we conclude that $\left\langle \nabla \bar{g}_{\infty} (\param_{\infty} ) ,\, \param - \param_{\infty}  \right\rangle   \ge 0 $ for all $\param\in \Param$,  which means that $\param_{\infty}$ is a stationary point of $\bar{g}_{\infty}$ over $\Param$. 
	\end{proof}

	\subsection{  Proof of Lemma \ref{lem:asymptotic_stationarity}  for cases \ref{C2} in Theroem \ref{thm:global_convergence}.  }
	
	A key property of the iterates $(\param_{n})_{n\ge 1}$ for cases \ref{C1}-\ref{C2} in Theroem \ref{thm:global_convergence} is their stability, $\lVert \param_{n}-\param_{n-1} \rVert=O(w_{n})$, which is given in Lemma \ref{lem:stability}. For \ref{C2}, which we consider in this subsection, we have an extra complication due to the use of an additional diminishing radius condition. Roughly speaking, for each limit point $\param_{\infty}$ of the iterates  $(\param_{n})_{n\ge 1}$, we will need to show that $\param_{\infty}$ is not touching any trust region boundary we used along the way and it is indeed a stationary point of some limiting function of the averaged surrogates $\bar{g}_{n}$. The argument we provide here is adapted from \cite{lyu2020convergence}, which was first developed for (deterministic) block coordinate descent with diminishing radius.

	We first show a sufficient condition for a subsequence $(\param_{t_{k}})_{k\ge 1}$ to converge to a stationary point of a limiting averaged surrogate function. 
	
	\begin{prop}\label{prop:stationary_conditions}
		Assume \ref{assumption:A1-ell_smooth}-\ref{assumption:A4-w_t}, \ref{assumption:A7_param_surr}, and the case \ref{C2} in Theorem \ref{thm:global_convergence}. Further assume $ \Delta_{n}=o(\lVert \param_{n}-\param_{n-1} \rVert)$. 
		Suppose there exists a sequence $(t_{k})_{k\ge 1}$ in $\mathbb{N}$ such that almost surely, either 
		\begin{align}\label{eq:stationary_conditions}
			\sum_{k=1}^{\infty}  \lVert \param_{t_{k}+1} - \param_{t_{k}}  \rVert = \infty  \qquad \text{or} \qquad 	\liminf_{k\rightarrow \infty}\,\, \,  \E\left[  \left| \left\langle \nabla \bar{g}_{t_{k}+1}(\param_{t_{k}}),\,  \frac{  \param_{t_{k}+1} - \param_{t_{k}} }{ \lVert \param_{t_{k}+1} - \param_{t_{k}}  \rVert }  \right\rangle \right|   \, \bigg| \, \mathcal{F}_{t_{k}}\right] = 0.
		\end{align}
		Then there exists a further subsequence $(s_{k})_{k\ge 1}$ of $(t_{k})_{k\ge 1}$ such that $\bar{g}_{\infty}:=\lim_{k\rightarrow \infty} \bar{g}_{s_{k}}$ and $\param_{\infty}:=\lim_{k\rightarrow \infty} \param_{s_{k}}$ exist almost surely and $\param_{\infty}$ and is a stationary point of $\bar{g}_{\infty}$ over $\Param$. 
	\end{prop}
	
	\begin{proof}
		Proposition \ref{prop:linear_surr_variation}, we have that for all $i=1,\dots,m$, almost surely, 
		\begin{align}
			\sum_{k=1}^{\infty}  \lVert \param_{t_{k}+1} - \param_{t_{k}}  \rVert  \, \E\left[    \sum_{i=1}^{m} \left| \left\langle \nabla \bar{g}_{t_{k}+1}(\param_{t_{k}}),\,  \frac{ \param_{t_{k}+1} - \param_{t_{k}}}{ \lVert \param_{t_{k}+1} - \param_{t_{k}}  \rVert }   \right\rangle \right| \,\mathcal{F}_{t_{k}}\right] <\infty.
		\end{align}
		Thus the former condition in the assertion implies the latter almost surely. Thus it suffices to show that this latter condition implies the assertion. Assume this condition, and
		let $(s_{k})_{k\ge 1}$ be a subsequence of $(t_{k})_{k\ge 1}$ for which the  liminf in \eqref{eq:stationary_conditions} is achieved.  Using \ref{assumption:A7_param_surr}, we may  take a subsequence along which the following limits  $\param_{\infty}=\lim_{k\rightarrow \infty} \param_{s_{k}}$, $\bar{g}_{\infty}:=\lim_{k\rightarrow\infty} \bar{g}_{s_{k}}$, and $\nabla \bar{g}_{\infty}:=\lim_{k\rightarrow\infty} \nabla \bar{g}_{s_{k}}$ exist. 
		
		Now suppose for contradiction that $\param_{\infty}$ is not a stationary point of $\bar{g}_{\infty}$ over $\Param$. Then there exists $\param^{\star}\in \Param$ and $\delta>0$ such that 
		\begin{align}
			\left\langle \nabla \bar{g}_{\infty}(\param_{\infty}),\, \param^{\star}-\param_{\infty} \right\rangle<-\delta<0. 
		\end{align}
		Necessarily $\param^{\star}\ne \param_{\infty}$. By the triangle inequality, write 
		\begin{align}
			& \lVert \left\langle \nabla \bar{g}_{s_{k}+1}(\param_{s_{k}}),\, \param^{\star}-\param_{s_{k}} \right\rangle - \left\langle \nabla \bar{g}_{\infty}(\param_{\infty}),\, \param^{\star}-\param_{\infty} \right\rangle \rVert \\
			&\qquad \le \lVert \nabla \bar{g}_{s_{k}+1}(\param_{s_{k}}) - \nabla \bar{g}_{\infty}(\param_{\infty}) \rVert\cdot \lVert \param^{\star}-\param_{s_{k}} \rVert +  \lVert \nabla \bar{g}_{\infty}(\param_{\infty}) \rVert \cdot \lVert \param_{\infty} - \param_{s_{k}} \rVert.
		\end{align}
		By the choice of the subsequence $s_{k}$, we see that the right-hand side goes to zero as $k\rightarrow \infty$. Hence for all sufficiently large $k\ge 1$, we have 
		\begin{align}
			\left\langle \nabla \bar{g}_{s_{k}+1}(\param_{s_{k}}) ,\, \param^{\star}-\param_{s_{k}} \right\rangle< -\delta/2.
		\end{align}
		Note that by  Lemma \ref{lem:stability},   $\lVert \param_{n}-\param_{n-1} \rVert \le cw_{n}$ for all $n\ge 1$ for some constant $c>0$. Applying Proposition \ref{lem:first_order_optimality} with $b_{n}=(c'/c)  \lVert \param_{n}-\param_{n-1} \rVert  \le w_{n}$, we get 
		\begin{align}
			\E\left[ \left\langle \nabla \bar{g}_{n}(\param_{n}),\,   \frac{\param_{n}-\param_{n-1} }{ \lVert  \param_{n}-\param_{n-1} \rVert} \right\rangle \,\bigg|\, \mathcal{F}_{n-1}  \right] 
			&\le c_{1}  \inf_{\param\in \Param  } \left\langle \nabla \bar{g}_{n}(\param_{n}),\, \frac{\param - \param_{n} }{\lVert \param - \param_{n}\rVert}\right\rangle   + c_{3}w_{n}  \\
			&\qquad +   c_{3} \E\left[  \frac{ \Delta_{n} }{\lVert \param_{n}-\param_{n-1} \rVert} \,\bigg| \, \mathcal{F}_{n-1} \right]
		\end{align}
		for some constants $c_{1},c_{2},c_{3}>0$ for all $n\ge 1$.  Hence by the hypothesis on $\Delta_{n}$, and that $w_{n}=o(1)$, denoting $\lVert \Param \rVert:=\sup_{\param,\param'\in \Param} \lVert \param-\param' \rVert<\infty$, we conclude
		\begin{align}
			\liminf_{k\rightarrow \infty}\,\, \,  \E\left[	 \left\langle \nabla \bar{g}_{s_{k}+1}(\param_{s_{k}}),\,     \frac{\param_{s_{k}}-\param_{s_{k}} }{\lVert  \param_{s_{k}}-\param_{s_{k}}  \rVert} \right\rangle  \right]  \le -\frac{c_{1} \delta}{2 \lVert \Param \rVert} < 0. 
		\end{align}
		However, this contradicts the choice of the subsequence $(\param_{s_{k}})_{k\ge 1}$ and the second condition in \eqref{eq:stationary_conditions}.  This shows the assertion. 
	\end{proof}


	Recall that during the update $\param_{n-1}\rightarrow \param_{n-1}^{(1)}\rightarrow \dots \rightarrow \param_{n-1}^{(m)}=\param_{n}$ in Algorithm \ref{algorithm:BSM-DR},  each block coordinate 
	changes by at most $c'w_{n}$ in Frobenius norm. For each $n\ge 1$, we say $\param_{n}$ is a \textit{long point} if none of the block coordinates of $\param_{n-1}$ change by $c'w_{n}$ in Frobenius norm and \textit{short point} otherwise. Observe that if $\param_{n}$ is a long point, then imposing the search radius restriction in \eqref{eq:BSM_factor_update_DR} has no effect, and $\param_{n}$ is obtained from $\param_{n-1}$ by block-minimizing  $\bar{g}_{n}$ over $\Param$ without any radius restriction. 
	
	\begin{prop}\label{prop:long_points_stationary}
		Assume \ref{assumption:A1-ell_smooth}-\ref{assumption:A4-w_t}, \ref{assumption:A7_param_surr}, and the case \ref{C2} in Theorem \ref{thm:global_convergence}. If $(t_{k})_{k\ge 1}$ is such that $\bar{g}_{\infty}:=\lim_{k\rightarrow \infty} \bar{g}_{t_{k}}$ and $\param_{\infty}:=\lim_{k\rightarrow \infty} \param_{t_{k}}$ exist almost surely, then $\param_{\infty}$ is a stationary point of $\bar{g}_{\infty}$ over $\Param$. 
	\end{prop}
	
	\begin{proof}
		
		The argument is similar to that of \cite[Prop. 2.7.1]{bertsekas1997nonlinear}. However, here we do not need to assume the uniqueness of solutions to minimization problems of $\bar{g}_{t}$ in each block coordinate due to the added search radius restriction (recall that we assume $c'<\infty$ in the current section), and also note that we are considering a random choice of block coordinates in each application of Algorithm \ref{algorithm:BSM-DR}. 
		
		Recall that $J_{1}(n),\dots,J_{m}(n)$ denotes the coordinate blocks chosen by Algorithm \ref{algorithm:BSM-DR} at iteration $n$ of Algorithm \ref{algorithm:SMM}, and by \ref{assumption:A6-faithful_sampling}, their joint distribution does not depend on $n$ and also they are independent of everything else. Fix a coordinate block $J\in \mathbb{J}$ that has a positive probability of being chosen by $J_{1}(n)$. Then $J_{1}(t_{k})=J$ for infinitely many $k$'s almost surely by a Borel-Cantelli lemma. We may refine the subsequence  $(t_{k})_{k\ge 1}$ so that $J_{1}(t_{k})=J$ for all $k\ge 1$ almost surely.  For each $\param\in \Param$,  we write $\param = [\theta^{J}, \theta^{J^{c}}]$, where $\param^{J}$ is the projection of $\param$ onto the coordinate block $J$ and similarly for the complementary coordinate block $J^{c}$. In this fashion, we denote $\param_{n} = [\theta_{n}^{J}, \, \theta_{n}^{J^{c}}]$ and $\param_{\infty} = [\theta_{\infty}^{J}, \, \theta_{\infty}^{J^{c}}]$. Recall the block update $\param_{t_{k}-1}\rightarrow \param_{t_{k}-1}^{(1)}\rightarrow \dots \rightarrow \param_{t_{k}-1}^{(m)}=\param_{t_{k}}$ and denote $\param_{t_{k}-1;1} := \param_{t_{k}-1}^{(1)}$. Then since $\param_{t_{k}}$ is a long point, 
		\begin{align}\label{eq:pf_stationarity_long_point1}
			\bar{g}_{t_{k}}\left( \theta_{t_{k}-1;1}^{J}, \,\theta_{t_{k}-1;1}^{J^{c}}\right) \le \bar{g}_{t_{k}}\left(\theta, \, \theta_{t_{k}-1;1}^{J^{c}}\right)
		\end{align}
		for every $\theta \in  \Param^{J}= \textup{Proj}_{\R^{J}}(\Param)$. Indeed, in this case, his is because $\theta_{t_{k}-1;1}^{J}$ is a minimizer of $\theta\mapsto \bar{g}_{t_{k}+1}(\theta, \theta^{c}_{t_{k};1})$ over $\Param^{J}$. Noting that $\lVert \param_{n}-\param_{n-1} \rVert = O(w_{n})=o(1)$, it follows that 
		\begin{align}\label{eq:pf_stationarity_long_point2}
			\bar{g}_{t_{k}}\left( \theta_{\infty}^{J}, \,\theta_{\infty}^{J^{c}}\right) \le \bar{g}_{t_{k}}\left(\theta, \, \theta_{\infty}^{J^{c}}\right)
		\end{align}
		for all $\theta \in \Param^{J}$.  Since $\param^{J}$ is convex, it follows that 
		\begin{align}
			\left\langle \nabla_{J} \bar{g}_{\infty}(\param_{\infty}),\, \theta -\theta_{\infty}^{J}\right\rangle \ge 0 \quad \quad \text{for all $\theta \in \Param^{J}$}. 
		\end{align}
		By using a similar argument for the subsequent coordinate blocks $J_{2}(n),\dots,J_{m}(n)$, one can show that the above holds for every coordinate block $J$ that has a positive chance of being chosen by some of the blocks among $J_{1}(n),\dots,J_{m}(n)$. The union of all such $J$ covers the entire coordinates $\{1,\dots,p\}$ by \ref{assumption:A6-faithful_sampling}. Hence we deduce that $\langle \nabla \bar{g}_{\infty}(\param_{\infty}), \param-\param_{\infty} \rangle \ge 0$ for all $\param\in \Param$. This shows the assertion. 
	\end{proof}
	
	The following proposition gives a key property of a non-stationary limit point of the iterates $(\param_{n})_{n\ge 1}$, if exists.

	\begin{prop}\label{prop:non-stationary_nbh}
		Assume \ref{assumption:A1-ell_smooth}-\ref{assumption:A4-w_t}, \ref{assumption:A7_param_surr}, and the case \ref{C2} in Theorem \ref{thm:global_convergence}. Suppose there exists a sequence $(n_{k})_{k\ge 1}$ such that $\bar{g}_{\infty}:=\lim_{k\rightarrow \infty} \bar{g}_{n_{k}}$ and $\param_{\infty}:=\lim_{k\rightarrow \infty} \param_{n_{k}}$ exist almost surely and $\param_{\infty}$ is not a stationary point of $\bar{g}_{\infty}$ over $\Param$.  Then there exists $\eps>0$ such that the $\eps$-neighborhood $B_{\eps}(\param_{\infty}):=\{ \param\in \Param\,|\, \lVert \param-\param_{\infty} \rVert<\eps\}$ with the following properties: 
		\begin{description}
			\item[(a)] $B_{\eps}(\param_{\infty})$ does not contain any stationary points of $\bar{g}_{\infty}$ over $\Param$;
			
			\vspace{0.1cm} 
			\item[(b)] There exists infinitely many $\param_{t}$'s outside of $B_{\eps}(\param_{\infty})$.
		\end{description}
	\end{prop}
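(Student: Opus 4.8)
The plan is to prove Proposition~\ref{prop:non-stationary_nbh} by a standard two-part argument, extracting the neighborhood from the discreteness of limit points together with the asymptotic stationarity guaranteed by Lemma~\ref{lem:asymptotic_stationarity}. The key structural fact I would first invoke is that the set of stationary points of the (random) limiting surrogate is locally isolated from $\param_{\infty}$ in the following sense: by Lemma~\ref{lem:asymptotic_stationarity}, along any subsequence for which both $\param_{t_{k}}$ and $\bar{g}_{t_{k}}$ converge, the limit $\param_{\infty}$ of $\param_{t_{k}}$ is a stationary point of $\bar{g}_{\infty}=\lim_k \bar{g}_{t_{k}}$. Consequently, if $\param_{\infty}$ is \emph{non-stationary} (in the sense of $\Lambda$, i.e.\ with respect to the accumulated objective used in the convergence theorem), then $\param_{\infty}$ cannot be a limit of any subsequence of iterates whose surrogates also converge — this is the contradiction that drives both parts.

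First I would establish part~(a). Since $\param_{\infty}$ is non-stationary, there exist $\param^{\star}\in\Param$ and $\delta>0$ with $\langle \nabla \bar{g}_{\infty}(\param_{\infty}),\,\param^{\star}-\param_{\infty}\rangle<-\delta$, where $\bar{g}_{\infty}$ is obtained along a convergent subsequence (using compactness of $\mathcal{K}$ via \ref{assumption:A6_param_surr} to ensure $\kappa_{t_k}\to\kappa_\infty$ and hence $\bar{g}_{t_k}\to\bar{g}_\infty$). By the uniform Lipschitz bound on $\nabla\bar{g}_n$ from Proposition~\ref{prop:g_grad_Lipschitz} and the Lipschitz dependence of $\bar g$ on its parameter $\kappa$, the map $\param\mapsto\langle\nabla\bar{g}_\infty(\param),\,\cdot\rangle$ varies continuously, so one can choose $\eps>0$ small enough that the strict inequality persists on $B_\eps(\param_\infty)$. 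Any stationary point $\param'$ in $B_\eps(\param_\infty)$ would then have to satisfy $\inf_{\param\in\Param}\langle\nabla\bar{g}_\infty(\param'),\param-\param'\rangle\ge 0$, contradicting the propagated descent direction; shrinking $\eps$ further if necessary removes all such points, giving~(a).

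For part~(b), I would argue by contradiction: suppose only finitely many $\param_t$ lie outside $B_\eps(\param_\infty)$. Then $\param_t\in B_\eps(\param_\infty)$ for all large $t$, so every subsequential limit of $(\param_t)$ lies in $\overline{B_\eps(\param_\infty)}$. Taking a subsequence $(t_k)$ along which $\param_{t_k}\to\param_\infty'$ and (using \ref{assumption:A6_param_surr} and compactness of $\Param\times\mathcal K$) along which $\bar{g}_{t_k}$ also converges, Lemma~\ref{lem:asymptotic_stationarity} forces $\param_\infty'$ to be stationary for the corresponding $\bar{g}_\infty$. But by part~(a), $\overline{B_\eps(\param_\infty)}$ (after a further small shrinking of $\eps$) contains no stationary points, a contradiction — hence infinitely many iterates escape $B_\eps(\param_\infty)$.

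The main obstacle I anticipate is the interaction between the \emph{randomness} of the limiting surrogate $\bar{g}_\infty$ and the neighborhood radius $\eps$: the descent constant $\delta$, the point $\param^\star$, and hence $\eps$ all depend on the realized subsequence and the realized $\bar{g}_\infty$, so the argument must be carried out almost surely and one must verify that the continuity/Lipschitz estimates in Proposition~\ref{prop:g_grad_Lipschitz} hold with constants uniform in $n$ (which they do) so that passing to the limit $\bar g_\infty=\hat g(\kappa_\infty,\cdot)$ preserves the strict descent inequality. A secondary subtlety is ensuring that the two applications of Lemma~\ref{lem:asymptotic_stationarity} (once to characterize $\param_\infty$ as non-stationary for a specific $\bar g_\infty$, once to rule out stationary limits inside the ball) are consistent on the same probability-one event; this is handled by fixing the almost-sure event on which all relevant subsequential limits and surrogate convergences hold simultaneously.
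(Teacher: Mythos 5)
Your proposal has a fatal structural flaw: it is circular within the paper's logical architecture. In the paper, Proposition \ref{prop:non-stationary_nbh} is a stepping stone \emph{toward} Lemma \ref{lem:asymptotic_stationarity}: the lemma is proved by assuming a non-stationary limit point exists, invoking this proposition to produce the neighborhood $B_{\eps}(\param_{\infty})$, and then deriving a contradiction via a crossing argument. Your proof of part (b) — and indeed your opening framing — invokes Lemma \ref{lem:asymptotic_stationarity} itself, which is not yet available at this point. Note also the symptom: if that lemma were legitimately usable, the hypothesis of the proposition (existence of a non-stationary limit point) could never hold, and the statement would be vacuously true; a proof that works only by making the hypothesis impossible is exactly what circularity looks like here. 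The paper's actual, non-circular proof of (b) uses machinery you never touch: it first shows (via Proposition \ref{prop:long_points_stationary}) that some neighborhood of $\param_{\infty}$ contains no \emph{long points}; then, if all but finitely many iterates were trapped in $B_{\eps/2}(\param_{\infty})$, every such iterate would be a short point, so the search-radius constraint binds and $\lVert \param_{n}-\param_{n-1}\rVert$ is bounded below by (a constant times) $w_{n}$, whence $\sum_{n}\alpha_{n}=\infty$; Proposition \ref{prop:stationary_conditions} — whose proof rests on the finite-variation estimates of Proposition \ref{prop:linear_surr_variation}, not on the lemma — then yields a stationary limit point inside the closed ball, contradicting (a).

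There is also a genuine gap in your part (a). A ``stationary point of $\Lambda$'' is stationary with respect to \emph{its own} limiting surrogate, i.e.\ the limit $\bar{g}'_{\infty}$ of $\bar{g}_{t_{k}}$ along some subsequence with $\param_{t_{k}}\to\param'$; under \ref{assumption:A6_param_surr} different subsequences converging to nearby points can produce different limiting surrogates. Your argument propagates a descent direction of the single surrogate $\bar{g}_{\infty}$ attached to $\param_{\infty}$'s subsequence across the ball, which only shows that no point of $B_{\eps}(\param_{\infty})$ is stationary \emph{for $\bar{g}_{\infty}$}; it does not exclude points that are stationary for some other limiting surrogate, which is what (a) requires. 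The paper's proof of (a) handles precisely this: assuming stationary points $\param_{\infty;k}$ of $\Lambda$ (each stationary for its own $\bar{g}_{\infty;k}$) accumulate at $\param_{\infty}$, it uses the compact parameterization in \ref{assumption:A6_param_surr} to extract a further subsequence along which $\bar{g}_{\infty;k}$ converges, and then passes the variational inequality $\langle \nabla \bar{g}_{\infty;k}(\param_{\infty;k}),\, \param-\param_{\infty;k}\rangle \ge 0$ to the limit, concluding that $\param_{\infty}$ is stationary for that limit surrogate — contradicting non-stationarity. Both defects would need to be repaired along these lines for the proof to stand.
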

	
	\begin{proof}
		We will first show that there exists an $\eps$-neighborhood $B_{\eps}(\param_{\infty})$ of $\param_{\infty}$ that does not contain any long points of $\Lambda$. Suppose for contradiction that for each $\eps>0$, there exists a long point $\Lambda$ in $B_{\eps}(\param_{\infty})$. Then one can construct a sequence of long points converging to $\param_{\infty}$. But then by Proposition \ref{prop:long_points_stationary}, $\param_{\infty}$ is a stationary point, a contradiction.

		Next, we show that there exists $\eps>0$ such that $B_{\eps}(\param_{\infty})$ satisfies  \textbf{(a)}. Suppose for contradiction that there exist no such $\eps>0$. Then we have a sequence $(\param_{\infty;k})_{k\ge 1}$ of stationary points of $\Lambda$ that converges to $\param_{\infty}$. Denote the limiting surrogate loss function associated with $\param_{\infty;k}$ by $\bar{g}_{\infty;k}$. Recall that each $\bar{g}_{\infty;k}$ is parameterized by elements in a compact set according to \ref{assumption:A7_param_surr}. Hence by choosing a subsequence, we may assume that $\bar{g}_{\infty}:=\lim_{k\rightarrow \infty} \bar{g}_{\infty;k}$ is well-defined. Fix $\param\in \Param$ note that by Cauchy-Schwarz inequality,
		\begin{align}
			\left\langle	\nabla \bar{g}_{\infty}(\param_{\infty}),\, \param-\param_{\infty} \right\rangle &\ge -\lVert \nabla\bar{g}_{\infty}(\param_{\infty}) -\nabla \bar{g}_{\infty;k}(\param_{\infty;k})\rVert  \cdot \lVert \param-\param_{\infty} \rVert \\
			&\qquad -  \lVert \nabla \bar{g}_{\infty;k}(\param_{\infty;k})\rVert  \cdot \lVert \param_{\infty}-\param_{\infty;k}  \rVert + \left\langle \nabla \bar{g}_{\infty;k}(\param_{\infty;k}), \param-\param_{\infty;k} \right\rangle.
		\end{align}
		Note that $\nabla \bar{g}_{\infty;k}(\param_{\infty;k})^{T}(\param-\param_{\infty;k})\ge 0$ since $\param_{\infty;k}$ is a stationary point of $\bar{g}_{\infty;k}$ over $\Param$. Hence by taking $k\rightarrow \infty$, this shows $\nabla \bar{g}_{\infty}(\param_{\infty})^{T}(\param-\param_{\infty})\ge 0$. Since $\param\in \param^{\textup{dict}}$ was arbitrary, this shows that $\param_{\infty}$ is a stationary point of $\bar{g}_{\infty}$ over $\Param$, a contradiction.

		Lastly, from the earlier results, we can choose $\eps>0$ such that $B_{\eps}(\param_{\infty})$ has no long points of $\Lambda$ and also satisfies  \textbf{(b)}.  We will show that $B_{\eps/2}(\param_{\infty})$ satisfies \textbf{(c)}. Then $B_{\eps/2}(\param_{\infty})$ satisfies  \textbf{(a)}-\textbf{(b)}, as desired. Suppose for contradiction there are only finitely many $\param_{t}$'s outside of $B_{\eps/2}(\param_{\infty})$. Then there exists an integer $M\ge 1$ such that $\param_{t}\in B_{\eps/2}(\param_{\infty})$ for all $t\ge M$. Then each $\param_{n}$ for $n\ge M$ is a short point of $\Lambda$. By definition, it follows that $\lVert \param_{n}-\param_{n-1}\lVert_{F} \ge  w_{n}$ for all $t\ge M$. This implies $\alpha_{n}=\sqrt{\sum_{i=1}^{n} \lVert \param_{n}^{(i)}  -\param_{n-1} \rVert^{2} } \ge \lVert \param_{n+1} - \param_{n} \rVert \ge w_{n+1}$ for all $n\ge M$. Then by Proposition \ref{prop:stationary_conditions}, since $\sum_{n=1}^{\infty} w_{n}=\infty$, there exists a subsequence $(s_{k})_{k\ge 1}$ such that $\param_{\infty}':=\lim_{k\rightarrow \infty} \param_{t_{k}} $ exists and is stationary.  But since $\param'_{\infty}\in B_{\eps}(\param)$, this  contradicts \textbf{(a)} for $B_{\eps}(\param)$. This completes the proof. 
	\end{proof}

	We are now ready to give proof of Lemma \ref{lem:asymptotic_stationarity} for the diminishing radius case. 
	
	\vspace{0.1cm}
	\begin{proof}[\textbf{Proof of Lemma \ref{lem:asymptotic_stationarity} for cases \ref{C1}-\ref{C2} in Theorem \ref{thm:global_convergence}}] 
		Assume \ref{assumption:A1-ell_smooth}-\ref{assumption:A4-w_t}, \ref{assumption:A7_param_surr}, and the case \ref{C2} in Theorem \ref{thm:global_convergence}. (As mentioned at the beginning of this subsection, \ref{C1} is a special case of \ref{C2}.) Suppose there is a non-stationary limit point $\param_{\infty}$ of $\Lambda$. By Proposition \ref{prop:non-stationary_nbh}, we may choose $\eps>0$ such that $B_{\eps}(\param_{\infty})$ satisfies the conditions \textbf{(a)}-\textbf{(b)} of Proposition \ref{prop:non-stationary_nbh}. Choose $M\ge 1$ large enough so that $w_{t}<\eps/4$ whenever $t\ge M$. We call an integer interval $I:=[\ell,\ell']$ a \textit{crossing} if $\param_{\ell}\in B_{\eps/3}(\param_{\infty})$, $\param_{\ell'}\notin B_{2\eps/3}(\param_{\infty})$, and no proper subset of $I$ satisfies both of these conditions. By definition, two distinct crossings have empty intersections. Fix a crossing $I=[\ell,\ell']$, it follows that by triangle inequality,
		\begin{align}\label{eq:upcrossing_ineq}
			\sum_{t=\ell}^{\ell'-1} \lVert \param_{t+1}-\param_{t} \rVert \ge \lVert \param_{\ell'}-\param_{\ell} \rVert \ge \eps/3. 
		\end{align}
		Note that since $\param_{\infty}$ is a limit point of $\Lambda$, $\param_{t}$ visits $B_{\eps/3}(\param_{\infty})$ infinitely often. Moreover, by condition \textbf{(a)} of Proposition \ref{prop:non-stationary_nbh}, $\param_{t}$ also exits $B_{\eps}(\param_{\infty})$ infinitely often. It follows that there are infinitely many crossings.  Let $t_{k}$ denote the $k^{\textup{th}}$ smallest integer that appears in some crossing. Then $t_{k}\rightarrow \infty$ as $k\rightarrow \infty$, and by \eqref{eq:upcrossing_ineq}, 
		\begin{align}
			\sum_{k=1}^{\infty} \lVert \param_{t_{k}+1}-\param_{t_{k}} \rVert \ge (\text{$\#$ of crossings}) \, \frac{\eps}{3} = \infty. 
		\end{align}
		Then by Proposition \ref{prop:stationary_conditions}, there is a further subsequence $(s_{k})_{k\ge 1}$ of $(t_{k})_{k\ge 1}$ such that $\param_{\infty}':=\lim_{k\rightarrow \infty} \param_{s_{k}}$ exists and is stationary. However, since $\param_{t_{k}}\in B_{2\eps/3}(\param_{\infty})$, we have $\param_{\infty}'\in B_{\eps}(\param_{\infty})$. This contradicts the condition \textbf{(b)} of Proposition \ref{prop:non-stationary_nbh} for $B_{\eps}(\param_{\infty})$ that it cannot contain any stationary point of $\Lambda$. This shows the assertion. 
	\end{proof}

	\section*{Acknowledgements}

	The author appreciates helpful discussions with Stephen Wright, Rayan Saab,  Meisam Razaviyayn, and Wotao Yin. This work was partially supported by the National Science Foundation through grants DMS-2206296 and DMS-2010035.

	\newpage 
	\appendix 
	\section{Background on Markov chains and MCMC}
	
	\label{sec:MC_intro}
	
	\subsection{Markov chains}\label{subsection:MC}
	Here we give a brief account of Markov chains on countable state space (see, e.g., \cite{levin2017markov}). Fix a countable set $\Omega$. A function $P:\Omega^{2} \rightarrow [0,\infty)$ is called a \textit{Markov transition matrix} if every row of $P$ sums to 1. A sequence of $\Omega$-valued random variables $(X_{t})_{t\ge 0}$ is called a \textit{Markov chain} with transition matrix $P$ if for all $x_{0},x_{1},\dots,x_{n}\in \Omega$, 
	\begin{align}\label{eq:def_MC}
		\P(X_{n}=x_{n}\,|\, X_{n-1}=x_{n-1}, \dots, X_{0}=x_{0}) = \P(X_{n}=x_{n}\,|\, X_{n-1}=x_{n-1}) = P(x_{n-1},x_{n}). 
	\end{align}
	We say a probability distribution $\pi$ on $\Omega$ a \textit{stationary distribution} for the chain $(X_{t})_{t\ge 0}$ if $\pi = \pi P$, that is, 
	\begin{align}
		\pi(x) = \sum_{y\in \Omega} \pi(y) P(y,x).
	\end{align}
	We say the chain $(X_{t})_{t\ge 0}$ is \textit{irreducible} if for any two states $x,y\in \Omega$ there exists an integer $t\ge 0$ such that $P^{t}(x,y)>0$. For each state $x\in \Omega$, let $\mathcal{T}(x) = \{ t\ge 1\,|\, P^{t}(x,x)>0 \}$ be the set of times when it is possible for the chain to return to starting state $x$. We define the \textit{period} of $x$ by the greatest common divisor of $\mathcal{T}(x)$. We say the chain $X_{t}$ is \textit{aperiodic} if all states have period 1. Furthermore, the chain is said to be \textit{positive recurrent} if there exists a state $x\in \Omega$
	such that the expected return time of the chain to $x$ started from $x$ is finite. Then an irreducible and aperiodic Markov chain has a unique stationary distribution if and only if it is positive recurrent 
	\cite[Thm 21.21]{levin2017markov}.
	
	Given two probability distributions $\mu$ and $\nu$ on $\Omega$, we define their \textit{total variation distance} by 
	\begin{align}\label{eq:def_TV}
		\lVert \mu - \nu \rVert_{TV} = \sup_{A\subseteq \Omega} |\mu(A)-\nu(A)|.
	\end{align}
	If a Markov chain $(X_{t})_{t\ge 0}$ with transition matrix $P$ starts at $x_{0}\in \Omega$, then by \eqref{eq:def_MC}, the distribution of $X_{t}$ is given by $P^{t}(x_{0},\cdot)$. If the chain is irreducible and aperiodic with stationary distribution $\pi$, then the convergence theorem (see, e.g., \cite[Thm 21.14]{levin2017markov}) asserts that the distribution of $X_{t}$ converges to $\pi$ in total variation distance: As $t\rightarrow \infty$,
	\begin{align}\label{eq:finite_MC_convergence_thm}
		\sup_{x_{0}\in \Omega} \,\lVert P^{t}(x_{0},\cdot) - \pi \rVert_{TV} \rightarrow 0.
	\end{align}
	See \cite[Thm 13.3.3]{meyn2012markov} for a similar convergence result for the general state space chains. When $\Omega$ is finite, then the above convergence is exponential in $t$ (see., e.g., 
	\cite[Thm 4.9]{levin2017markov})). Namely, there exists constants $\lambda\in (0,1)$ and $C>0$ such that for all $t\ge 0$, 
	\begin{align}\label{eq:finite_MC_convergence_thm}
		\max_{x_{0}\in \Omega} \,\lVert P^{t}(x_{0},\cdot) - \pi \rVert_{TV} \le C \lambda^{t}.
	\end{align}
	
	An important notion in MCMC sampling is ``exponential mixing'' of the Markov chain. For a simplified discussion, suppose in \ref{assumption:A2-MC} that our data points $\x_{t}$ themselves form a Markov chain with unique stationary distribution $\pi$. Under the assumption of finite state space, irreducibility, and aperiodicity in \ref{assumption:A2-MC}  the Markov chain $\x_{t}$ ``mixes'' to the stationary distribution $\pi$ at an exponential rate. Namely, for any $\eps>0$, one can find a constant $\tau=\tau(\eps)=O(\log \eps^{-1})$, called the ``mixing time'' of $\x_{t}$,  such that the conditional distribution of
	$\x_{t+\tau}$ given $\x_{t}$ is within total variation distance $\eps$ from $\pi$ regardless of the distribution of $\x_{t}$ (see \eqref{eq:def_TV} for the definition of total variation distance).  This mixing property of Markov chains is crucial both for practical applications of MCMC sampling as well as our theoretical analysis. For instance, a common practice of using MCMC sampling to obtain approximate i.i.d. samples is to first obtain a long Markov chain trajectory $(\x_{t})_{t\ge 1}$ and then thinning it to the subsequence $(\x_{k\tau})_{k\ge 1}$ \cite[Sec. 1.11]{brooks2011handbook}. Due to the choice of mixing time $\tau$, this forms an $\eps$-approximate i.i.d. samples from $\pi$.

	\subsection{Markov chain Monte Carlo Sampling}
	\label{subsection:MCMC}
	
	Suppose we have a finite sample space $\Omega$ and probability distribution $\pi$ on it. We would like to sample a random element $\omega\in \Omega$ according to the distribution $\pi$. \textit{Markov chain Monte Carlo (MCMC)} is a sampling algorithm that leverages the properties of Markov chains we mentioned in Subsection \ref{subsection:MC}. Namely, suppose that we have found a Markov chain $(X_{t})_{t\ge 0}$ on state space $\Omega$ that is irreducible, aperiodic\footnote{Aperiodicity can be easily obtained by making a given Markov chain lazy, that is, adding a small probability $\eps$ of staying at the current state. Note that this is the same as replacing the transition matrix $P$ by $P_{\eps}:=(1-\eps)P+\eps I$ for some $\eps>0$. This `lazyfication' does not change stationary distributions, as $\pi P =\pi$ implies $\pi P_{\eps}=\pi$. }, and has $\pi$ as its unique stationary distribution. Denote its transition matrix as $P$. Then by \eqref{eq:finite_MC_convergence_thm}, for any $\eps>0$, one can find a constant $\tau=\tau(\eps)=O(\log \eps^{-1})$ such that the conditional distribution of
	$X_{t+\tau}$ given $X_{t}$ is within total variation distance $\eps$ from $\pi$ regardless of the distribution of $X_{t}$. Recall such $\tau=\tau(\eps)$ is called the \textit{mixing time} of the Markov chain  $(X_{t})_{t\ge 1}$. Then if one samples a long Markov chain trajectory $(X_{t})_{t\ge 1}$, the subsequence $(X_{k\tau})_{k\ge 1}$ gives approximate i.i.d. samples from $\pi$. 
	
	We can further compute how far the thinned sequence $(X_{k\tau })_{k\ge 1}$ is away from being independent. Namely, observe that  for any two nonempty subsets $A,B\subseteq \Omega$, 
	\begin{align}
		&\left|   \P(X_{k\tau}\in A,\, X_{\tau}\in B) - \P(X_{k\tau}\in A) \P(X_{\tau}\in B) \right| \\
		&\qquad =   \left|  \P(X_{k\tau}\in A) - \P(X_{k\tau}\in A \,|\,  X_{\tau}\in B)  \right| \, \left| \P(X_{\tau}\in B) \right| \\
		&\qquad \le \left| \P(X_{k\tau}\in A) - \P(X_{k\tau}\in A\,|\, X_{\tau}\in B) \right|  \\
		&\qquad \le \left| \P(X_{k\tau}\in A) - \pi(A) \right| + \left| \pi(A) -  \P(X_{k\tau}\in A\,|\, X_{\tau}\in B)  \right| \le \lambda^{k\tau} + \lambda^{(k-1)\tau}.
	\end{align}
	Hence the correlation between $X_{k\tau}$ and $X_{\tau}$ is $O(\lambda^{(k-1)\tau})$.
	
	For the lower bound, let us assume that $X_{t}$ is \textit{reversible} with respect to $\pi$, that is, $\pi(x)P(x,y)=\pi(y)P(y,x)$ for $x,y\in \Omega$ (e.g., random walk on graphs). Then $\tau(\eps) = \Theta(\log \eps^{-1})$ (see \cite[Thm. 12.5]{levin2017markov}), which yields $\sup_{x\in \Omega}\lVert P^{t}(x,\cdot) - \pi  \rVert_{TV} = \Theta(\lambda^{t})$. Also, $\P(X_{\tau}\in B)>\delta>0$ for some $\delta>0$ whenever $\tau$ is large enough under the hypothesis. Hence 
	\begin{align}
		&\left|   \P(X_{k\tau}\in A,\, X_{\tau}\in B) - \P(X_{k\tau}\in A) \P(X_{\tau}\in B) \right| \\
		&\qquad \ge \delta^{-1}\left| \P(X_{k\tau}\in A) - \P(X_{k\tau}\in A\,|\, X_{\tau}\in B) \right|  \\
		&\qquad \ge \big| \left| \P(X_{k\tau}\in A) - \pi(A) \right| - \left| \pi(A) -  \P(X_{k\tau}\in A\,|\, X_{\tau}\in B)  \right| \big| \ge c\lambda^{(k-1)\tau}
	\end{align}
	for some constant $c>0$. Hence the correlation between $X_{k\tau}$ and $X_{\tau}$ is $\Theta(\lambda^{(k-1)\tau})$. In particular, the correlation between two consecutive terms in $(X_{k\tau})_{k\ge 1}$ is of $\Theta(\lambda^{\tau})=\Theta(\eps)$. Thus, we can make the thinned sequence $(X_{k\tau})_{k\ge 1}$ arbitrarily close to being i.i.d. for $\pi$, but if $X_{t}$ is reversible with respect to $\pi$, the correlation within the thinned sequence is always nonzero. 
	
	In practice, one may not know how to estimate the mixing time $\tau=\tau(\eps)$. In order to empirically assess that the Markov chain has mixed to the stationary distribution, multiple chains are run for diverse mode exploration, and their empirical distribution is compared to the stationary distribution (a.k.a. multi-start heuristic). See \cite{brooks2011handbook} for more details on MCMC sampling.

	\section{Example of  Surrogate functions}
	\label{sec:ex_surrogates}
	
	In this section, we list some examples of block-convex surrogate functions, which include the usual convex surrogate functions in the literature (see, \cite{mairal2013optimization,mairal2013stochastic}).

	\begin{example}[Proximal surrogates for $L$-smooth functions]\label{ex:prox_L_smooth}
		Suppose $f$ is $L$-smooth, that is, $\nabla f$ is $L$-Lipscthiz continuous. Then $f$ is $L$-weakly convex, that is, $\param\mapsto f(\param)+\frac{L}{2} \lVert \param \rVert^{2}$ is convex (see Lemma \ref{lem:weak_convexity}). Hence for each $\rho\ge L$, the following function $g$ belongs to $\surr_{L+\rho,\rho-L}(f,\param^{\star})$:
		\begin{align}
			g:\param\mapsto f(\param) + \frac{\rho}{2} \lVert \param-\param^{\star} \rVert^{2}.
		\end{align}
		Indeed, $g\ge f$, $g(\param^{\star})=f(\param^{\star})$, $\nabla g(\param^{\star})=\nabla f(\param^{\star})$, and $\nabla g$ is $(L+\rho)$-Lipschtiz. Moreover, $g$ is convex being some of two convex functions: 
		\begin{align}
			f(\param) + \frac{L}{2}\lVert \param-\param^{\star} \rVert^{2} = \left( f(\param) + \frac{L}{2}\lVert \param\rVert^{2} \right) + \left( -L\langle \param,\param^{\star} \rangle  + \frac{L}{2}\lVert\param^{\star} \rVert^{2} \right).
		\end{align}
		Minimizing the above function over $\Param$ is equivalent to applying a proximal mapping of $f$, where the resulting estimate is denoted as  $\textup{prox}_{f/\rho}(\param^{\star})$ (see \cite{parikh2014proximal,davis2019stochastic}).\hfill $\blacktriangle$
	\end{example}
	
	\begin{example}[Proximal modification of weakly convex surrogates]
		\label{ex:prox_modification_g}
		One can also use proximal mapping to convert block weakly convex surrogates to multi-convex surrogates. Namely,  suppose $g\in \surr_{L,-\rho}^{\mathbb{J}}(f,\param^{\star})$ is a surrogate of $f$ at $\param^{\star}$ that is $\rho$-weakly convex in each block $J\in \mathbb{J}$. That is, $\param\mapsto g(\param)+\frac{\rho}{2}\lVert \param\rVert$ is convex in $\Param^{J}$ for each $J\in \mathbb{J}$. In this case, $g$ is nonconvex in each block coordinate so it cannot be directly block-minimized. In this case, we can add a quadratic term to make it a multi-convex surrogate. That is, for each $\rho'\ge \rho$, the following function $\tilde{g}$ belongs to $ \surr_{L+\rho',\rho'-\rho}^{\mathbb{J}}(f,\param^{\star})$: 
		\begin{align}
			\tilde{g}:\param\mapsto g(\param) + \frac{\rho'}{2} \lVert \param-\param^{\star} \rVert^{2},
		\end{align}
		which can be easily verified by expanding out the quadratic term. In the slightly more general case when the modulus of weak convexity of $g$ depends on the coordinate block $J$, we may add block-dependent quadratic terms to reduce the amount of proximal modification. \hfill $\blacktriangle$
		
		
	\end{example}

	\begin{example}[Prox-linear surrogates]\label{ex:prox_linear}
		\normalfont
		If  $f$ is $L$-smooth, then the following quadratic function $g$ belongs to $\surr_{2L,L}(f,\param^{\star})$: 
		\begin{align}
			g:\param \mapsto f(\param^{\star}) + \left\langle \nabla f (\param^{\star}),\, \param-\param^{\star} \right\rangle+ \frac{L}{2} \lVert \param-\param^{\star} \rVert^{2}.
		\end{align}
		(See Lemma \ref{lem:surrogate_L_gradient}  in Appendix \ref{sec:auxiliary_lemmas}.) 
		If in addition $f$ is convex, then $g\in \surr_{L,L}(f,\param^{\star})$; If $f$ is $\mu$-strongly convex, then $g\in \surr_{L-\mu,L}(f,\param^{\star})$.  \hfill $\blacktriangle$

	\end{example}

	\begin{example}[Prox-linear surrogates]
		Suppose $f=f_{1}+f_{2}$ where $f_{1}$ is differentiable with $L$-Lipschitz gradient and $f_{2}$ is convex over $\Param$. Then the following function $g$ belongs to $\surr_{2L,L}(f,\param^{\star})$: 
		\begin{align}
			g:\param \mapsto f_{1}(\param^{\star}) + \left\langle \nabla f_{1} (\param^{\star}),\, \param-\param^{\star} \right\rangle + \frac{L}{2} \lVert \param-\param^{\star} \rVert^{2} + f_{2}(\param).
		\end{align}
		Minimizing $g$ over $\Param$ amounts to performing a proximal gradient step in \cite{beck2009fast, nesterov2013gradient}.  \hfill $\blacktriangle$
	\end{example}

	\begin{example}[DC programming surrogates]
		Suppose $f=f_{1}+f_{2}$ where $f_{1}$ is convex and $f_{2}$ is concave and differentiable with $L_{2}$-Lipschitz gradient over $\Param$. One can also write $f=f_{1}-(-f_{2})$ which is the difference of convex (DC) functions $f_{1}$ and $-f_{2}$. Then the following function $g$ belongs to $\surr_{2L,L}(f,\param^{\star})$: 
		\begin{align}
			g:\param \mapsto f_{1}(\param)+ f_{2}(\param^{\star}) + \left\langle \nabla f_{2} (\param^{\star}),\, \param-\param^{\star} \right\rangle.
		\end{align}
		Such surrogates are important in DC programming (see, e.g., \cite{horst1999dc}). 
		
		In fact, our method allows $f_{1}$ to be only multi-convex with respect to the coordinate blocks in $\mathbb{J}$, that is, $f_{1}$ is convex on $\textup{Proj}_{\R^{J}}(\Param)$ for each coordinate block $J\in \mathbb{J}$. In this case, $g$ above is a multi-convex surrogate of $f$ at $\param^{\star}$ and belongs to $\surr_{2L,L}^{\mathbb{J}}(f,\param^{\star})$.  \hfill $\blacktriangle$
	\end{example}
	
	\begin{example}[Convex Variational Surrogates]\label{ex:cvx_variational_surrogate}
		\normalfont
		Let $f:\R^{p}\times \R^{q}\rightarrow \R$ be a two-block multi-convex function and let $\Theta_{1}\subseteq \R^{p}$ and  $\Theta_{2}\subseteq \R^{q}$ be two convex sets. Define a function $f_{*}:\param\mapsto \inf_{H \in \Theta_{2}} f(\param,H)$. Then for each $\param^{\star}\in \Theta$, the following function
		\begin{align}
			g:\param\mapsto f(\param, H^{\star}),\quad H^{\star}\in \argmin_{H\in \Param_{2}} f(\param^{\star},H)  
		\end{align} 	
		is convex over $\Theta_{1}$ and	satisfies $g\ge f$ and $g(\param^{\star})=f(\param^{\star})$. Further, assume that 
		\begin{description}
			\item[(i)] $\param\mapsto f(\param,H)$ is differentiable for all $H\in \Theta_{2}$  and $\param\mapsto \nabla_{\param} f(\param,H)$ is $L'$-Lipschitz for all $H\in \Theta_{2}$ ;
			\item[(ii)] $H\mapsto \nabla_{\param} f(\param,H)$ is $L$-Lipschitz for all $\param\in \Theta_{1}$;   
			\item[(iii)] $H\mapsto f(\param,H)$ is $\rho$-strongly convex for all $\param\in \R^{p}$. 
		\end{description}
		Then $g$ belongs to $\surr_{L'',\rho}(f_{*},\param^{\star})$ for some $L''>0$. When $f$ is jointly convex, then $f_{*}$ is also convex and we can choose $L''=L$. 
		\hfill $\blacktriangle$
	\end{example}
	
	\begin{example}[Multi-convex Variational Surrogates]\label{ex:multiconvex_variational}
		Let $f:\R^{p_{1}}\times \dots \times \R^{p_{m}}\times \R^{p_{m+1}}\rightarrow \R$ be a $(m+1)$-block multi-convex function and let $\Theta_{1}\subseteq \R^{p}$ and  $\Theta_{i}\subseteq \R^{p_{i}}$ for $i=1,\dots,m+1$ be  convex sets. Denote $\Param:=\Theta_{1}\times \dots \times \Theta_{m}$. Define a function $f_{*}: \param  \mapsto \inf_{H \in \Theta_{p_{m+1}}} f(\param, H)$. Then for each $\param^{\star}\in \Param$, the following function
		\begin{align}
			g:\param\mapsto f(\param, H^{\star}),\quad H^{\star}\in \argmin_{H\in \Param_{p_{m+1}}} f(\param^{\star},H)  
		\end{align} 	
		is $m$-block multi-convex over $\Param$ and 
		satisfies $g\ge f$ and $g(\param^{\star})=f(\param^{\star})$. Further, assume that 
		\begin{description}
			\item[(i)] $\param\mapsto f(\param,H)$ is differentiable for all $H\in \Theta_{p_{m+1}}$  and $\param\mapsto \nabla_{\param} f(\param,H)$ is $L'$-Lipschitz for all $H\in \Theta_{p_{m+1}}$ ;
			\item[(ii)] $H\mapsto \nabla_{\param} f(\param,H)$ is $L$-Lipschitz for all $\param\in \Param$;   
			\item[(iii)] $H\mapsto f(\param,H)$ is $\mu$-strongly convex for all $\param\in \R^{p_{m+1}}$. 
		\end{description}
		Let $\mathbb{J} = \{ J_{1},\dots,J_{m} \}$, where $J_{i}$ denotes the $i$th coordinate block corresponding to $\R^{p_{i}}$. Then $g$ belongs to $\surr_{L'',\mu}^{\mathbb{J}}(f_{*},\param^{\star})$ for some $L''>0$. When $f$ is jointly convex, then $f_{*}$ is also convex and we can choose $L''=L$.  \hfill $\blacktriangle$
	\end{example}

	\begin{example}[Refining block structure]\label{ex:refine_block}
		Suppose $f:\Param\subseteq \R^{p}\rightarrow \R$ with $\Param$ convex, and we have a convex surrogate $g\in \surr_{L,\rho}(f,\param^{\star})$. Let $\mathbb{J}$ denote an arbitrary set of coordinate bloks for $\R^{p}$ (e.g., $\mathbb{J}=\{ \{1\},\dots,\{p\} \}$). Then one can also view the fuction $g$ as a multi-convex surrogate in $\surr_{L,\rho}^{\mathbb{J}}(f_{*},\param^{\star})$. Instead of minimizing $g$ over the convex set $\Param$ with respect to all $p$ coordinates,  one can block-minimize $g$ with respect to the block structure $\mathbb{J}$. For instance, if $\mathbb{J}=\{ \{1\},\dots,\{p\} \}$ and if $J_{i}(n)$ in Algorithm \ref{algorithm:BSM-DR} is chosen uniformly at random from $\mathbb{J}$, then Algorithm \ref{algorithm:BSM-DR} becomes a random coordinate descent for $m$ iterations, which choses a random coordinate $i\in \{1,\dots,p\}$ and minimize $g$ over $\Param$ with respect to the $i$th coordinate. (See, e.g., \cite{wright2015coordinate}).
		
		A similar remark holds for multi-convex surrogates. Namely, suppose $g\in \surr_{L,\rho}^{\mathbb{J}}(f,\param^{\star})$, where $\mathbb{J}=\{J_{1},\dots,J_{m}\}$ denotes a set of coordinate blocks for $\R^{p}$. Let $\mathbb{J}'$ denote an arbitrary set of coordinate bloks for $\R^{p}$ that refines $\mathbb{J}$. Namely, each coordinate block $J\in \mathbb{J}'$ is a subset of some $J_{i}$, $i=1,\dots,m$. Then one can also view the fuction $g$ as a multi-convex surrogate in $\surr_{L'',\rho}^{\mathbb{J}'}(f_{*},\param^{\star})$ and accordingly, one can block-minimize $g$ using the finer block structure given by $\mathbb{J}'$ instead of the original block structure  $\mathbb{J}$.  \hfill $\blacktriangle$
	\end{example}
	
	See \cite{mairal2013optimization} for other types of surrogate functions such as quadratic, Jensen, and saddle point surrogates.

	\section{Auxiliary lemmas}
	\label{sec:auxiliary_lemmas}
	
	\begin{lemma}[Convex Surrogate for Functions with Lipschitz Gradient]
		\label{lem:surrogate_L_gradient}
		Let $f:\R^{p}\rightarrow \R$ be differentiable and $\nabla f$ be $L$-Lipschitz continuous. Then for each $\theta,\theta'\in \R^{p}$, 
		\begin{align}
			\left| f(\theta') - f(\theta) - \langle \nabla f(\theta),\, \theta'-\theta\rangle  \right|\le \frac{L}{2} \lVert \theta-\theta'\rVert^{2}.
		\end{align}
	\end{lemma}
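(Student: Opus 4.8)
The plan is to reduce the multivariate statement to a one-dimensional integral along the line segment joining $\theta$ and $\theta'$, and then control the integrand using the Lipschitz bound on $\nabla f$. This is the standard ``descent lemma'' argument, and no structural difficulty is expected; the only point requiring a little care is the uniform estimate of the gradient difference inside the integral.

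First I would introduce the auxiliary scalar function $\phi:[0,1]\rightarrow \R$ defined by $\phi(t):=f(\theta + t(\theta'-\theta))$. Since $f$ is differentiable, $\phi$ is differentiable with $\phi'(t) = \nabla f(\theta + t(\theta'-\theta))^{T}(\theta'-\theta)$. By the fundamental theorem of calculus,
\begin{align}
	f(\theta') - f(\theta) = \phi(1) - \phi(0) = \int_{0}^{1} \nabla f\big(\theta + t(\theta'-\theta)\big)^{T}(\theta'-\theta)\, dt.
\end{align}
Subtracting the constant term $\nabla f(\theta)^{T}(\theta'-\theta) = \int_{0}^{1}\nabla f(\theta)^{T}(\theta'-\theta)\,dt$ from both sides rewrites the quantity of interest as a single integral:
\begin{align}
	f(\theta') - f(\theta) - \nabla f(\theta)^{T}(\theta'-\theta) = \int_{0}^{1} \big(\nabla f(\theta + t(\theta'-\theta)) - \nabla f(\theta)\big)^{T}(\theta'-\theta)\, dt.
\end{align}

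Next I would take absolute values, pass the modulus inside the integral, and apply the Cauchy--Schwarz inequality to the integrand, obtaining the pointwise bound
\begin{align}
	\big| \big(\nabla f(\theta + t(\theta'-\theta)) - \nabla f(\theta)\big)^{T}(\theta'-\theta) \big| \le \lVert \nabla f(\theta + t(\theta'-\theta)) - \nabla f(\theta) \rVert \cdot \lVert \theta'-\theta \rVert.
\end{align}
The $L$-Lipschitz continuity of $\nabla f$ then gives $\lVert \nabla f(\theta + t(\theta'-\theta)) - \nabla f(\theta) \rVert \le L\, \lVert t(\theta'-\theta) \rVert = Lt\,\lVert \theta'-\theta \rVert$, so the integrand is at most $Lt\,\lVert \theta'-\theta \rVert^{2}$. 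Finally, integrating in $t$ over $[0,1]$ and using $\int_{0}^{1} t\, dt = 1/2$ yields
\begin{align}
	\left| f(\theta') - f(\theta) - \nabla f(\theta)^{T}(\theta'-\theta) \right| \le L\,\lVert \theta'-\theta \rVert^{2} \int_{0}^{1} t\, dt = \frac{L}{2}\,\lVert \theta-\theta' \rVert^{2},
\end{align}
which is the claimed inequality. The argument is self-contained and uses only differentiability of $f$, the Lipschitz hypothesis on $\nabla f$, and elementary calculus; the ``hard part,'' such as it is, amounts only to justifying the interchange of the modulus with the integral and applying Cauchy--Schwarz uniformly in $t$, both of which are routine.
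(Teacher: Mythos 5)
Your proof is correct and complete: it is the standard descent-lemma argument via the line-segment parametrization, the fundamental theorem of calculus, Cauchy--Schwarz, and the Lipschitz bound on $\nabla f$. The paper does not spell out a proof at all but simply cites the classical result (Nesterov, Lem.\ 1.2.3), and your argument is exactly the standard proof behind that citation, so nothing further is needed.
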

	
	\begin{proof}
		This is a classical Lemma. See \cite[Lem 1.2.3]{nesterov1998introductory}.
	\end{proof}

	\begin{lemma}[Characterization of weak convexity]\label{lem:weak_convexity}
		Let $f:\R^{p}\rightarrow \R$ be a smooth function. Fix a convex set $\Param\subseteq \R^{p}$ and $\rho>0$. The following conditions are equivalent. 
		\begin{description}[itemsep=0.1cm]
			\item[(i)] (Weak convexity) $\param\mapsto f(\param) + \frac{\rho}{2}\lVert \param \rVert^{2}$ is convex on $\Param$;
			\item[(ii)] (Hypermonotonicity) $ \langle \nabla f(\param) - \nabla f(\param'),\, \param-\param'  \rangle \ge - \rho\lVert \param-\param' \rVert^{2}$ for all $\param,\param'\in \Param$; 
			
			\item[(iii)] (Quadratic lower bound) $f(\param) - f(\param') \ge \langle \nabla f(\param'),\, \param-\param' \rangle - \frac{\rho}{2}\lVert \param-\param' \rVert^{2}$ for all $\param,\param'\in \Param$. 
		\end{description}
	\end{lemma}
	
	\begin{proof}
		See \cite[Thm. 7]{daniilidis2005filling} for an equivalent statement for a more general case of local Lipschitz functions. 
		\begin{description}[itemsep=0.1cm]
			\item{\textbf{(i)}$\Rightarrow$\textbf{(iii)}:} For $s\in [0,1]$, we have 
			\begin{align}\label{eq:convexity_lem}
				s \left( f(\param)  + \frac{\rho}{2} \lVert \param \rVert^{2} \right)+ (1-s) \left( f(\param')   + \frac{\rho}{2} \lVert \param' \rVert^{2} \right) \ge 	f(s\param + (1-s)\param') + \frac{\rho}{2} \lVert s\param + (1-s)\param' \rVert^{2}
			\end{align}
			so we get 
			\begin{align}
				f(\param) - f(\param') 	\ge	\frac{f(\param' + s(\param-\param') ) - f(\param')}{s}  	-   \frac{\rho}{2} \left( \lVert \param \rVert^{2} - \lVert \param' \rVert^{2} -  \frac{\lVert \param' + s(\param-\param') \rVert^{2} -	  \lVert \param' \rVert^{2}}{s}  \right) 
			\end{align}
			Taking the limit $s\rightarrow 0$, we get 
			\begin{align}
				f(\param) - f(\param')  \ge 	\langle \nabla f(\param'),\, \param-\param' \rangle 	 - \frac{\rho}{2} \left( \lVert \param \rVert^{2} -  \lVert \param' \rVert^{2} - 2\langle \param,\param-\param' \rangle \right). 
			\end{align}
			This implies \textbf{(iii)}. 
			
			\item{\textbf{(iii)}$\Rightarrow$\textbf{(ii)}:} Adding the following two inequalities
			\begin{align}
				f(\param) - f(\param') &\ge \langle \nabla f(\param'),\, \param-\param' \rangle - \frac{\rho}{2}\lVert \param-\param' \rVert^{2} \\
				f(\param') - f(\param) &\ge \langle \nabla f(\param),\, \param'-\param \rangle - \frac{\rho}{2}\lVert \param-\param' \rVert^{2}, 
			\end{align}
			we obtain $0 \ge \langle \nabla f(\param') - \nabla f(\param),\, \param-\param'  \rangle - \rho\lVert \param-\param' \rVert^{2}$. This implies \textbf{(ii)}. 
			
			\item{\textbf{(ii)}$\Rightarrow$\textbf{(i)}:} Fix $s\in [0,1]$ and $\param,\param'\in \Param$. Denote $\param_{s}:=s\param + (1-s)\param'$. Note that $\param_{s}-\param'=s(\param-\param')$ and $\param_{s}-\param=(1-s)(\param'-\param)$. By the mean value theorem, there exists $s_{*}\in [s,1]$ and $s'_{*}\in [0,s]$ such that 
			\begin{align}
				f(\param_{s}) - f(\param) = (1-s)\langle \nabla f(\param_{s_{*}}),\, \param'-\param  \rangle, \qquad 	f(\param_{s}) - f(\param') = s\langle \nabla f(\param_{s_{*}'}),\, \param-\param'  \rangle. 
			\end{align}
			Multiplying by $s$ and $1-s$ respectively and adding the resulting equations, we get 
			\begin{align}
				f(\param_{s}) - sf(\param) - (1-s)f(\param') &= - s(1-s) \langle \nabla f(\param_{s_{*}}) - \nabla f(\param_{s'_{*}}),\, \param-\param' \rangle.
			\end{align}
			Note that since $s_{*}' \le s \le s_{*}$ and $\param_{s_{*}},\param_{s_{*}'}$ are in the secant line between $\param$ and $\param'$, by \textbf{(ii)}, 
			\begin{align}
				\left\langle \nabla f(\param_{s_{*}}) - \nabla f(\param_{s'_{*}}),\, \param-\param'  \right\rangle  &= \frac{  \lVert \param - \param' \rVert }{ \lVert \param_{s^{*}} - \param_{s'_{*}} \rVert} 	\left\langle \nabla f(\param_{s_{*}}) - \nabla f(\param_{s'_{*}}),\, \param_{s_{*}}-\param_{s'_{*}} \right\rangle   \\
				& \ge 	-\rho \lVert  \param_{s^{*}}-\param_{s'_{*}} \rVert	\cdot  \lVert \param - \param' \rVert \\
				&\ge 	-\rho \lVert \param - \param' \rVert^{2}.
			\end{align}
			It follows that 
			\begin{align}
				f(\param_{s}) - sf(\param) - (1-s)f(\param') &\le  s(1-s) \rho \lVert \param-\param' \rVert^{2},
			\end{align}
			which is equivalent to \eqref{eq:convexity_lem}. 
		\end{description}
	\end{proof}

	\begin{lemma}\label{lem:L_smooth_weak_convex}
		Let $f:\R^{p}\rightarrow \R$ be a function such that $\nabla f$ is $L$-Lipscthiz for some $L>0$. Then $f$ is $L$-weakly convex, that is, $\param\mapsto f(\param)+\frac{L}{2}\lVert \param\rVert^{2}$  is convex. 
	\end{lemma} 
	
	\begin{proof}
		Follows immediately by Lemmas \ref{lem:surrogate_L_gradient} and  \ref{lem:weak_convexity}.
	\end{proof}

	\begin{lemma}[Second-Order Growth Property]
		\label{lem:second_order_growh_univariate}
		Let $f:\R^{p} \rightarrow [0,\infty)$ be  $\mu$-strongly convex and let $\Theta$ is a convex subset of $\R^{p}$. Let $\theta^{*}$ denote the minimizer of $f$ over $\Theta$. Then for all $\theta\in \Theta$,
		\begin{align}
			f(\theta) \ge  f(\theta^{*}) + \frac{\mu}{2} \lVert \theta-\theta^{*} \rVert^{2}.	
		\end{align}
	\end{lemma}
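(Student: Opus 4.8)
The plan is to prove the second-order growth property, Lemma \ref{lem:second_order_growh_univariate}, by directly exploiting the definition of $\mu$-strong convexity together with the first-order optimality condition at the minimizer $\theta^{*}$. Recall that $f$ being $\mu$-strongly convex means that $\theta \mapsto f(\theta) - \frac{\mu}{2}\lVert \theta \rVert^{2}$ is convex, or equivalently that for all $\theta,\theta'\in \R^{p}$ and any subgradient $z\in \partial f(\theta')$,
\begin{align}
	f(\theta) \ge f(\theta') + \langle z,\, \theta-\theta' \rangle + \frac{\mu}{2}\lVert \theta-\theta' \rVert^{2}.
\end{align}

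First I would instantiate this inequality at $\theta'=\theta^{*}$, choosing an appropriate subgradient $z\in \partial f(\theta^{*})$. The key is then to handle the linear term $\langle z,\, \theta-\theta^{*}\rangle$. Since $\theta^{*}$ is the minimizer of $f$ over the convex set $\Theta$, the first-order optimality condition for constrained convex minimization states that there exists $z\in \partial f(\theta^{*})$ with $\langle z,\, \theta-\theta^{*} \rangle \ge 0$ for all $\theta\in \Theta$ (equivalently, $-z$ lies in the normal cone of $\Theta$ at $\theta^{*}$). With this choice of $z$, the linear term is nonnegative for every $\theta\in \Theta$, and dropping it from the right-hand side yields exactly
\begin{align}
	f(\theta) \ge f(\theta^{*}) + \frac{\mu}{2}\lVert \theta-\theta^{*} \rVert^{2}
\end{align}
for all $\theta\in \Theta$, which is the claim.

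The main obstacle, if any, is the clean invocation of the constrained first-order optimality condition so that the subgradient used in the strong convexity bound is the \emph{same} one guaranteed by optimality. In the differentiable case this is transparent: $\langle \nabla f(\theta^{*}),\, \theta-\theta^{*}\rangle \ge 0$ is precisely the stationarity condition \eqref{eq:stationary}, and strong convexity gives the quadratic lower bound directly. In the nondifferentiable case one must cite the standard result that a minimizer over a convex set admits a subgradient whose negative lies in the normal cone; this is routine convex analysis. Since $f$ here arises as a strongly convex surrogate and the constraint set $\Theta$ is convex, this condition applies verbatim, so the proof is short. I expect the entire argument to reduce to these two cited facts (strong convexity lower bound and constrained optimality) with no substantive calculation required.
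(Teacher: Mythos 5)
Your proof is correct: the strong-convexity subgradient inequality at $\theta^{*}$ combined with the constrained first-order optimality condition (which is legitimate here because $f$ is finite-valued, hence continuous, on $\R^{p}$, so a subgradient $z\in\partial f(\theta^{*})$ with $-z$ in the normal cone of $\Theta$ at $\theta^{*}$ exists) immediately gives the bound. The paper offers no argument of its own for Lemma \ref{lem:second_order_growh_univariate}, deferring entirely to the citation \cite[Lem.~A.4]{mairal2013stochastic}, and your argument is precisely the standard proof underlying that reference, so the two approaches coincide.
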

	
	\begin{proof}
		See \cite[Lem. A.4]{mairal2013stochastic}.
	\end{proof}

	Next, we provide some probabilistic lemmas.

	\begin{lemma}\label{lem:martingale_convergence}
		Let $(X_{n})_{n\ge 1}$ be a sequence of nonnegative random variables adapted to a filtration $(\mathcal{F}_{n})_{n\ge 1}$ and $\E[X_{n}]<\infty$ for $n\ge 1$.  Suppose there exists constants $\alpha\in [0,1)$ and $M>0$ such that for sufficiently large $n\ge 1$,   
		\begin{align}\label{eq:martingale_ineq}
			\E\left[ X_{n}\,|\, \mathcal{F}_{n-1} \right] \le \alpha X_{n-1} + M.
		\end{align}
		Then $\lim_{n\rightarrow \infty} X_{n}$ exists and finite almost surely. In particular, $(X_{n})_{n\ge 1}$  is bounded almost surely. 
	\end{lemma}
	
	\begin{proof}
		Fix a constant $c>0$ and let $\widetilde{X}_{n}:=X_{n}+c$ for $n\ge 1$. Substituting for $X_{n}$ in \eqref{eq:martingale_ineq}, we get 
		\begin{align}
			\E[\tilde{X}_{n}\,|\, \mathcal{F}_{n-1}] \le \alpha \tilde{X}_{n-1}  + (1-\alpha)c+ M
		\end{align}
		for large enough $n\ge 1$. 	Thus if we choose $c=-M/(1-\alpha)$, then $(\widetilde{X}_{n})_{n\ge N}$ is a supermartingale with respect to the filtration $(\mathcal{F}_{n})_{n\ge N}$ for some $N\ge 1$. Furthermore, $\widetilde{X}_{n}$ is uniformly lower bounded by $c=-M/(1-\alpha)$ since $X_{n}\ge 0$ for $n\ge 1$. By the martingale convergence theorem (see, e.g., \cite[Thm. 4.2.11]{durrett2019probability}), $\tilde{X}_{n}$ converges almost surely to some random variable, say, $\widetilde{X}_{\infty}$, such that $\E[\widetilde{X}_{\infty}] \le \E[\widetilde{X}_{0}]<\infty$. In particular, $\widetilde{X}_{\infty}<\infty$ almost surely. It follows that $X_{n}$ converges to $\widetilde{X}_{\infty}-c$ almost surely. Lastly, note that 
		\begin{align}
			\P( \textup{$(X_{n})_{n\ge 1}$ is not bounded } ) \le \P(\textup{$X_{n}$ does not converge to $\widetilde{X}_{\infty}-c$})=0.
		\end{align}
		This shows the assertion. 
	\end{proof}

	\begin{lemma}\label{lem:strongly_convex_surrogate_A6'}
		Let $(\param_{n})_{n\ge 1}$ denote the output of Algorithm \ref{algorithm:SMM} with Algorithm \ref{algorithm:BSM-PR} used for \eqref{eq:alg1_param_update} with $\lambda_{n}\equiv 0$. Assume  \ref{assumption:A1-ell_smooth}, \ref{assumption:A3-cvx_constraint}, and  \ref{assumption:A5'_sufficient_surrogate_decay} hold. Suppose that the data sequence $(\x_{n})_{n\ge 1}$ is contained in some compact subset $\mathfrak{X}_{0}\subseteq \mathfrak{X}$. Then  $\lVert \param_{n-1}-\param_{n} \rVert=O(w_{n})$ and  $\Delta_{n}=O(w_{n}^{2})$ for $n\ge 1$. In particular, if $\sum_{n=1}^{\infty}w_{n}^{2}<\infty$, then \ref{assumption:A5_sufficient_surrogate_decay} holds. 
	\end{lemma}
	
	\begin{proof}
		The argument here follows the one given in \cite{mensch2017stochastic}. Let $\param_{n}^{\star}$ be the exact minmizer of the $\rho$-strongly convex function $\bar{g}_{n}$ over $\Param$ for $n\ge 1$.  We introduce the following random variables 
		\begin{align}
			A_{n} = \lVert \param_{n}-\param_{n-1} \rVert,\quad B_{n}= \lVert\param_{n} - \param_{n}^{\star} \rVert, \quad C_{n}= \lVert\param_{n-1}^{\star} - \param_{n}^{\star} \rVert,\quad D_{n}=\bar{g}_{n}(\param_{n}) - \bar{g}_{n}(\param_{n}^{\star}). 
		\end{align}
		By $\rho$-strong convexity of $\bar{g}_{n}$, almost surely, $\frac{\rho}{2} B_{n}^{2} \le D_{n}$.  By the second-order growth property (Lemma \ref{lem:second_order_growh_univariate}) and using $L'$-Lipschitz continuity of $g_{n}$ (see Lemma \ref{prop:g_grad_Lipschitz}), almost surely,
		\begin{align}
			\frac{\rho}{2} C_{n}^{2}&\le \bar{g}_{n}(\param_{n-1}^{\star} ) - \bar{g}_{n}(\param_{n}^{\star} ) \\
			&= (1-w_{n} ) \left( \bar{g}_{n-1}(\param_{n-1}^{\star} ) - \bar{g}_{n-1}(\param_{n}^{\star} ) \right) + w_{n} \left( g_{n}(\param_{n-1}^{\star} ) - g_{n}(\param_{n}^{\star} ) \right) \\
			&\le w_{n} \left( g_{n}(\param_{n-1}^{\star} ) - g_{n}(\param_{n}^{\star} ) \right) \le w_{n} L' C_{n}.
		\end{align} 
		This implise $C_{n} \le c_{1}w_{n}$ for all $n\ge 1$ almost surely for some constant $c_{1}>0$. By triangle inequality, we can write $A_{n} \le  B_{n} + B_{n-1} + C_{n} $, so by Cauchy-Schwarz inequality, 
		\begin{align}\label{eq:ineq_A_D}
			A_{n}^{2} \le (B_{n} + B_{n-1} + C_{n} )^{2}  &\le 	3(B_{n}^{2} + B_{n-1}^{2} + C_{n}^{2} ) \\
			&\le  3( (2/\rho) (D_{n} + D_{n-1} )  + (2L'/\rho)^{2}w_{n}^{2} ). 
		\end{align}
		Next, we assume \ref{assumption:A5'_sufficient_surrogate_decay} and show that $D_{n}=O(w^{2})$.  Denote $\widetilde{D}_{n}:=D_{n}/w_{n}^{2}$. We will show that 
		\begin{align}\label{eq:pf_D_n_martingale_ineq}
			\E\left[ \widetilde{D}_{n} \,|\, \mathcal{F}_{n-1}\right] 
			&\le  \alpha \widetilde{D}_{n-1} + M
		\end{align}	
		for all sufficiently large $n\ge 1$, where $\alpha\in [0,1)$ and $M>0$ are constants. Then by Lemma \ref{lem:martingale_convergence}, it follows that $\widetilde{D}_{n}$ is uniformly bounded almost surely. This yields $D_{n}=O(w_{n}^{2})$. The assertion then follows by combining this with \eqref{eq:ineq_A_D}. 
		
		It suffices to verify \eqref{eq:pf_D_n_martingale_ineq}. First, by using \ref{assumption:A5'_sufficient_surrogate_decay} and the previous inequalities involving $A_{n}$  and $B_{n}$, we have 
		\begin{align}
			\E\left[ D_{n} \,|\, \mathcal{F}_{n-1}\right] &\le (1-\mu) \left(\bar{g}_{n}(\param_{n-1}) - \bar{g}_{n}(\param_{n}^{\star}) \right) \\
			&= (1-\mu)  w_{n} \left(g_{n}(\param_{n-1}) - g_{n}(\param_{n})  + g_{n}(\param_{n})-g_{n}(\param_{n}^{\star}) \right)  \\
			&\hspace{1cm} + (1-\mu)(1-w_{n}) \left(\bar{g}_{n-1}(\param_{n-1}) - \bar{g}_{n-1}(\param_{n-1}^{\star}) \right) + (1-\mu)(1-w_{n}) \left(\bar{g}_{n-1}(\param_{n-1}^{\star}) - \bar{g}_{n-1}(\param_{n}^{\star}) \right)  \\ 
			&\le  (1-\mu) L' w_{n} (A_{n} + B_{n} )  + (1-\mu) D_{n-1} \\ 
			&\le (1-\mu) c_{1} w_{n} \left( \sqrt{D_{n}+ D_{n-1} + c_{2} w_{n}^{2}} + \sqrt{D}_{n}    \right)  + (1-\mu) D_{n-1},
		\end{align}
		where $c_{1},c_{2}>0$ are constants. The second inequality above uses the fact that $\param_{n-1}^{\star}$ is an exact minimizer of $\bar{g}_{n-1}$ over $\Param$. Setting $\widetilde{D}_{n}:=D_{n}/w_{n}^{2}$, this gives 
		\begin{align}
			\E\left[ \widetilde{D}_{n} \,|\, \mathcal{F}_{n-1}\right] 
			&\le (1-\mu) c_{1} \left( \sqrt{\widetilde{D}_{n}+ \widetilde{D}_{n-1} (w_{n-1}/w_{n})^{2} + c_{2}} + \sqrt{\widetilde{D}_{n}} \right) + (1-\mu) \widetilde{D}_{n-1} (w_{n-1}/w_{n})^{2}.
		\end{align}
		Fix $\eps>0$. Using the inequality $\sqrt{t} \le \eps t + \eps^{-1}$ for all $t\ge 0$, this gives 
		\begin{align}
			\E\left[ \widetilde{D}_{n} \,|\, \mathcal{F}_{n-1}\right] 
			&\le  c_{3}\eps \widetilde{D}_{n}  +(1-\mu)(1+c_{1}\eps) (w_{n-1}/w_{n})^{2} \widetilde{D}_{n-1} + c_{4}\eps^{-1},
		\end{align}
		where $c_{3},c_{4}>0$ are constants. Recall  the hypothesis that $w_{n-1}/w_{n}\rightarrow 1$ as $n\rightarrow \infty$. Hence taking the expectation conditional on $\mathcal{F}_{n-1}$ and choosing $\eps>0$ sufficiently small, we verify \eqref{eq:pf_D_n_martingale_ineq}, as desired. 
	\end{proof}

	\begin{lemma}\label{lem:uniform_convergence_symmetric_weights}
		Fix a bounded and measurable function $\psi:\mathfrak{X}\times \Param \rightarrow \R$. Under Assumptions \ref{assumption:A2-MC} and \ref{assumption:A3-cvx_constraint}, 
		\begin{align}
			\E\left[ \sup_{\param\in \Param} \sqrt{n}\left\lvert \E_{\x\sim \pi}\left[ \psi(\x,\param)\right] - \frac{1}{n} \sum_{k=1}^{n} \psi(\x_{k},\param)\right\rvert \right] = O(1).
		\end{align}
		Furthermore, $\sup_{\param\in \Param} \left\lvert \E_{\x\sim \pi}\left[ \psi(\x,\param)\right] - \frac{1}{n} \sum_{k=1}^{n} \psi(\x_{k},\param)\right\rvert \rightarrow 0$ almost surely as $t\rightarrow \infty$. 
	\end{lemma}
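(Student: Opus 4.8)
The statement to prove is Lemma \ref{lem:uniform_convergence_symmetric_weights}, a uniform Glivenko--Cantelli-type result for a bounded continuous function $\psi$ evaluated along a Markov chain trajectory under the uniform (arithmetic mean) weighting scheme $w_k = 1/n$.

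My plan is to reduce this to a maximal-inequality argument combined with a chaining or covering argument over the compact parameter space $\Param$. First I would center the summands: define $Z_k(\param) := \psi(\x_k,\param) - \E_{\x\sim\pi}[\psi(\x,\param)]$, so that we must control $\E[\sup_{\param}\sqrt{n}\,|\frac{1}{n}\sum_{k=1}^n Z_k(\param)|]$. The core difficulty is that the $Z_k$ are neither independent nor mean-zero under the actual conditional law, because the chain starts from a fixed state and $\x_k = \varphi(Y_k)$ only approaches stationarity. To handle the dependence I would use the same conditioning-far-in-the-past strategy already deployed in Proposition \ref{prop:lem:increment_bd}: split the trajectory into blocks and condition on $\mathcal{F}_{k-N}$ with $N=O(\log n)$ chosen so that the $N$-step distribution is within total variation $\lambda^N$ of $\pi$. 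This bounds the bias term $\E[Z_k(\param)]$ by $O(\lambda^{k-1})$ uniformly in $\param$ (exactly as in the computation preceding \eqref{eq:pos_variation_bd}), which is summable and therefore contributes $O(1)$ after multiplying by $\sqrt{n}/n$.

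For the fluctuation term, the key step is a variance/covariance bound exploiting exponential mixing: for a fixed $\param$, $\Cov(Z_i(\param),Z_j(\param)) = O(\lambda^{|i-j|})$ by \ref{assumption:A2-MC}, so $\var(\sum_{k=1}^n Z_k(\param)) = O(n)$, giving $\E|\frac{1}{n}\sum Z_k(\param)|^2 = O(1/n)$ pointwise. To upgrade this to a uniform bound over $\param\in\Param$ I would use that $\Param$ is compact (\ref{assumption:A3-cvx_constraint}) and $\psi$ is continuous and bounded, hence uniformly continuous on the compact domain $\mathrm{Img}(\varphi)\times\Param$; covering $\Param$ by finitely many balls of radius $\eta$, controlling the finite maximum over net points via the second moment bound and a union bound, and absorbing the oscillation within each ball using the modulus of continuity. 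Alternatively, and more cleanly, I would simply invoke Lemma \ref{lem:uniform_convergence_asymmetric_weights} with the symmetric weights $w^n_k = 1/n$ (which trivially satisfy $w^n_1\le\cdots\le w^n_n$), since that lemma is precisely the asymmetric-weight generalization of this statement and is already available in the appendix; the present lemma is then its special case, and the $O(1)$ bound follows by taking $w_n = 1/n$ so that $w_n\sqrt n = 1/\sqrt n$.

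The almost-sure convergence then follows by the same Borel--Cantelli argument sketched for Lemma \ref{lem:f_n_concentration_L1_gen}: since $w_n\sqrt n = n^{-1/2} = O(1/(\log n)^{1+\eps})$ fails marginally but the expectation bound $O(1/\sqrt n)$ combined with summing along a subsequence $n = k^2$ and controlling the gaps gives convergence along the full sequence almost surely; more directly, Lemma \ref{lem:uniform_convergence_asymmetric_weights} already furnishes the almost-sure statement under $w^n_n\sqrt n = O(1/(\log n)^{1+\eps})$, and for the arithmetic mean one argues the a.s.\ convergence via a standard subsequence-plus-monotonicity argument on the partial sums. The main obstacle I anticipate is making the uniform-over-$\param$ control rigorous with the correct $\sqrt n$ scaling while respecting the Markovian dependence; invoking the already-proven Lemma \ref{lem:uniform_convergence_asymmetric_weights} sidesteps the bulk of this work, so the cleanest route is to state this lemma as the $w_k = 1/k$ specialization of that result.
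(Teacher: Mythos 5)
The paper does not actually prove this lemma: the proof is omitted with a citation to \cite[Lem.~10]{lyu2020online}, so there is no internal argument to match yours against. Judged on its own merits, your proposal has two genuine problems. The first is that your ``cleanest route'' is circular within this paper: Lemma \ref{lem:uniform_convergence_asymmetric_weights} is \emph{proved} in the appendix by invoking Lemma \ref{lem:uniform_convergence_symmetric_weights} (this is exactly the step yielding \eqref{eq:pf_uniform_convergnece_weighted}, where each tail average $\Psi_i$ is controlled by the present lemma). You therefore cannot derive the symmetric-weight statement as the $w_k=1/k$ specialization of the asymmetric-weight one; the logical dependence runs the other way. (A minor side error: with $w_n=1/n$ one has $w_n\sqrt{n}=n^{-1/2}$, which \emph{does} satisfy $O(1/(\log n)^{1+\eps})$ --- it does not ``fail marginally'' --- but this is moot given the circularity.)

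The second problem is in your direct route. The bias control via conditioning and the covariance bound $\Cov(Z_i(\param),Z_j(\param))=O(\lambda^{|i-j|})$ are fine and give the pointwise bound $\E\left[\left|\frac{1}{n}\sum_{k=1}^n Z_k(\param)\right|^2\right]=O(1/n)$, but the step upgrading this to $\E\bigl[\sup_{\param}|\cdot|\bigr]=O(1/\sqrt{n})$ does not work as sketched. A union bound over an $\eta$-net using only second moments costs a factor of order $\sqrt{N(\eta)}$, where $N(\eta)\sim\eta^{-p}$ is the covering number, which destroys the $\sqrt{n}$ rate; even with exponential (Azuma-type) concentration for Markov additive functionals, a single-scale net only yields $O(\sqrt{\log n}/\sqrt{n})$. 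To obtain a uniform $O(1/\sqrt{n})$ bound one needs a genuine multi-scale chaining argument (a Dudley entropy integral with subgaussian increments, which in turn requires a Lipschitz-type modulus of $\psi$ in $\param$, not mere continuity) or an appeal to empirical-process/functional-CLT results for uniformly ergodic Markov chains --- which is essentially what the cited external lemma encapsulates. As written, your fluctuation step would fail, so the proposal does not constitute a complete proof.
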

	
	\begin{proof}
		Omitted. See \cite[Lem. 10]{lyu2020online}
	\end{proof}

	\begin{lemma}\label{lem:uniform_convergence_asymmetric_weights}
		Fix a continuous and measurable function $\psi:\mathfrak{X}\times \Param \rightarrow \R$. Fix a sequence $(w_{n})_{n\ge 1}$ in $(0,1]$ and   define functions $\bar{\psi}_{n}:\Param\rightarrow \R$ by
		\begin{align}
			\bar{\psi}_{n}(\param) = \sum_{k=1}^{n} \hat{w}_{k}^{n} \psi(\x_{k},\param),
		\end{align}
		where $w^{n}_{k}:=w_{k}\prod_{i=k+1}^{n} (1-w_{i})$. Suppose  \ref{assumption:A2-MC}-\ref{assumption:A3-cvx_constraint} hold. Assume that there exist an integer $T\ge 1$ such that $w_{T}\ge 1/2$ and $w_{n+1}^{-1}-w_{n}^{-1}\le 1$ for all $n\ge T$.  Suppose $w_{n} \ge cn^{-\gamma}$ for some constant $c>0$ and $\gamma\in (0,1]$ for all $n\ge 1$.

		Then there exists a constant $C=C(T)>0$ such that for all $n\ge 1$,
		\begin{align}\label{eq:uniform_CLT_bd}
			\E\left[ \sup_{\param\in \Param} \left\lvert \E_{\x\sim \pi}\left[ \psi(\x,\param)\right] - \bar{\psi}_{n}(\param) \right\rvert \right] \le Cw_{n}\sqrt{n}.
		\end{align}
		Furthermore, if $w_{n}\sqrt{n}=O(1/(\log n)^{1+\eps})$ for some $\eps>0$, then $\sup_{\param\in \Param} \left\lvert \E_{\x\sim \pi}\left[ \psi(\x,\param)\right] - \bar{\psi}_{n}(\param) \right\rvert\rightarrow 0$ almost surely as $t\rightarrow \infty$. 
	\end{lemma}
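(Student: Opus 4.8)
The statement to prove is Lemma~\ref{lem:uniform_convergence_asymmetric_weights}, a uniform (over $\param\in\Param$) law-of-large-numbers for the weighted average $\bar{\psi}_{n}$ with asymmetric weights $\hat{w}^{n}_{k}$, under Markovian data. My plan is to reduce the uniform bound to a pointwise $L^{2}$ bound plus a covering/chaining argument over the compact parameter set $\Param$, and then to upgrade the $L^{1}$ rate to almost-sure convergence via Borel--Cantelli.

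\textbf{Step 1 (pointwise variance bound).} First I would fix $\param\in\Param$ and write $D_{n}(\param):=\E_{\x\sim\pi}[\psi(\x,\param)]-\bar{\psi}_{n}(\param)=\sum_{k=1}^{n}\hat{w}^{n}_{k}\bigl(\E_{\pi}[\psi]-\psi(\x_{k},\param)\bigr)$, using $\sum_{k}\hat{w}^{n}_{k}=1$. The key quantity is $\E[D_{n}(\param)^{2}]$. Expanding the square produces diagonal terms $\hat{w}^{n}_{k}{}^{2}$ and cross terms $\hat{w}^{n}_{j}\hat{w}^{n}_{k}\,\Cov(\psi(\x_{j},\cdot),\psi(\x_{k},\cdot))$. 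Here the Markov mixing assumption \ref{assumption:A2-MC} is essential: the covariance of $\psi$ evaluated at times $j<k$ decays like $\lambda^{k-j}$ (this is exactly the exponential-mixing correlation-decay computation sketched in Appendix~\ref{subsection:MCMC}). Summing the geometric series in $k-j$ against the weights, and using that $\hat{w}^{n}_{k}$ is non-increasing-to-$w_{n}$ with $\hat{w}^{n}_{k}\le w_{k}$, I expect to get $\E[D_{n}(\param)^{2}]=O\bigl(w_{n}\sum_{k}\hat{w}^{n}_{k}\bigr)$ type control; the hypotheses $w_{n+1}^{-1}-w_{n}^{-1}\le 1$ and $w_{n}\ge cn^{-\gamma}$ (with $c\ge 1/2$ when $\gamma=1$) are the technical conditions that make $\sum_{k}\hat{w}^{n}_{k}{}^{2}=O(w_{n})$ and keep the cross-term sum bounded, yielding $\sqrt{\E[D_{n}(\param)^{2}]}=O(w_{n}\sqrt{n})$ pointwise.

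\textbf{Step 2 (uniformization over $\Param$).} To pass from pointwise to $\sup_{\param}$, I would use compactness of $\Param$ (\ref{assumption:A3-cvx_constraint}) together with continuity/boundedness of $\psi$. Since $\psi$ is continuous on the compact set $\mathrm{Img}(\varphi)\times\Param$, it is uniformly continuous, so $\param\mapsto D_{n}(\param)$ is (uniformly in $n$) Lipschitz or at least equicontinuous. A standard $\delta$-net argument covers $\Param$ by $O(\delta^{-p})$ balls; on the net I take a maximum over finitely many points (maximum of subgaussian-ish increments), and between net points I control oscillation by the modulus of continuity. Choosing $\delta$ polynomially small in $n$ makes the net-cardinality factor contribute only logarithmic or constant overhead, and gives $\E[\sup_{\param}|D_{n}(\param)|]\le Cw_{n}\sqrt{n}$, which is \eqref{eq:uniform_CLT_bd}. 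This is essentially the argument of \cite[Lem.~10]{lyu2020online}, adapted from symmetric ($w_{k}=1/n$) to asymmetric weights.

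\textbf{Step 3 (almost-sure convergence).} For the last claim, assuming $w_{n}\sqrt{n}=O((\log n)^{-(1+\eps)})$, I would apply a maximal/Markov inequality to $\sup_{\param}|D_{n}(\param)|$ to get $\P(\sup_{\param}|D_{n}(\param)|>t)\le t^{-r}\,(w_{n}\sqrt{n})^{r}$ for a suitable moment order $r$ (obtained by bounding higher moments $\E[\sup_{\param}|D_{n}|^{r}]$ via Step~1's method, e.g.\ a Burkholder-type or direct mixing moment bound). The hypothesis forces $\sum_{n}(w_{n}\sqrt{n})^{r}\le\sum_{n}(\log n)^{-r(1+\eps)}<\infty$ once $r$ is large enough, so Borel--Cantelli gives $\sup_{\param}|D_{n}(\param)|\to 0$ a.s. The main obstacle I anticipate is the moment computation in Step~1 under Markovian dependence: controlling the cross-covariances uniformly in $\param$ and confirming that the weight hypotheses genuinely yield the $O(w_{n}\sqrt{n})$ rate (rather than something worse) requires care, and getting high-enough moments for a clean Borel--Cantelli step without assuming i.i.d.\ structure is the delicate part. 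The comparison with Lemma~\ref{lem:uniform_convergence_symmetric_weights} suggests the intended route is to reduce to that symmetric case or to mimic its proof verbatim with the weights $\hat{w}^{n}_{k}$ in place of $1/n$, which is why I would lean heavily on the referenced \cite[Lem.~10]{lyu2020online} for the technical heart.
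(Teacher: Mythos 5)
Your proposal has two genuine gaps, one of which is a concrete error. \textbf{Step 3 is arithmetically false:} you claim that $\sum_{n}(w_{n}\sqrt{n})^{r}\le \sum_{n}(\log n)^{-r(1+\eps)}<\infty$ "once $r$ is large enough," but $\sum_{n}(\log n)^{-s}$ diverges for \emph{every} $s>0$, since $(\log n)^{-s}\gg n^{-1}$ eventually. So no polynomial moment bound, however high the order, can make Borel--Cantelli work over the full sequence; this is precisely why the paper's proof runs Markov's inequality plus Borel--Cantelli only along an exponentially growing subsequence $m(r)\ge c'e^{r}$, for which $(\log m(r))^{-(1+\eps/2)}\le Cr^{-(1+\eps/2)}$ \emph{is} summable. \textbf{Step 2 is underpowered as stated:} with only the $L^{2}$ bounds from Step 1, the maximum over a net of cardinality $N$ costs a factor $\sqrt{N}$ (Cauchy--Schwarz), not a logarithm; to get logarithmic overhead you need genuine exponential (subgaussian) concentration for weighted sums of Markov-dependent terms, which you neither cite nor prove. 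Even granting such concentration, a naive max-over-net produces an extra $\sqrt{\log n}$ factor, which already violates the claimed bound $Cw_{n}\sqrt{n}$ in the boundary case $\gamma=1$, $w_{n}\asymp n^{-1}$ (the canonical balanced-weight case the lemma must cover): the target is $O(n^{-1/2})$ while the union bound gives $O(\sqrt{(\log n)/n})$. Repairing this requires full Dudley chaining together with a Lipschitz (not merely continuous) dependence of $\psi$ on $\param$. You also never treat the first $T$ weights, where the monotonicity $\hat{w}^{n}_{k}\le w_{n}$ fails; controlling $\sum_{i=1}^{T}\hat{w}^{n}_{i}$ is exactly where the hypotheses ``$c\ge 1/2$ when $\gamma=1$'' enter, via the paper's Lemma \ref{lem:weight_sum_bound}, not in making $\sum_{k}(\hat{w}^{n}_{k})^{2}=O(w_{n})$ as you suggest.

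For contrast, the paper's proof requires none of this machinery: it writes the tail averages $\Psi_{i}(\param)=(n-i+1)^{-1}\sum_{j\ge i}\psi(\x_{j},\param)$ and uses the Abel-summation identity $\bar{\psi}_{n}-\bar{\psi}=\sum_{i=1}^{n}(\hat{w}^{n}_{i}-\hat{w}^{n}_{i-1})(n-i+1)(\Psi_{i}-\bar{\psi})$, which (by the monotonicity of $\hat{w}^{n}_{i}$ for $i\ge T$) expresses the asymmetrically weighted deviation as a positive combination of symmetric tail-average deviations. Each of those is controlled by the symmetric-weights result, Lemma \ref{lem:uniform_convergence_symmetric_weights} (taken as a black box from the cited reference), giving $\E[\sup_{\param}|\bar{\psi}_{n}-\bar{\psi}|]\le 2LT\sum_{i=1}^{T}\hat{w}^{n}_{i}+c_{1}w_{n}\sqrt{n}$ directly, with the leftover early-weight sum handled by Lemma \ref{lem:weight_sum_bound}. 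You flagged this reduction as the likely ``intended route'' in your closing sentence, but the proof you actually propose is the from-scratch covariance-plus-chaining route, and as written it does not close.
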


	The following lemma establishes the fluctuation bound in Lemma \ref{lem:f_n_concentration_L1_gen} for univariate observables. 
	
	\begin{lemma}\label{lem:uniform_convergence_asymmetric_weights}
		Fix compact subsets $\mathcal{X}\subseteq \R^{q}$, $\Param\subseteq \R^{p}$ and a bounded Borel measurable function $\psi:\mathcal{X}\times \Param\rightarrow \R$. Let $(\x_{n})_{n\ge 1}$ denote a sequence of points in $\mathcal{X}$ such that $\x_{n}=\varphi(X_{n})$ for $n\ge 1$, where $(X_{n})_{n\ge 1}$ is a Markov chain on a state space $\Omega$ and $\varphi:\Omega \rightarrow \mathcal{X}$ is a measurable function. Fix a sequence of weights $w_{n}\in (0,1]$, $n\ge 1$ and define functions $\bar{\psi}(\cdot):= \E_{\x\sim \pi}\left[ \psi(\x,\cdot) \right] $ and $\bar{\psi}_{n}:\Param\rightarrow \R$ recursively as $\bar{\psi}_{0}\equiv \mathbf{0}$ and 
		\begin{align}
			\bar{\psi}_{n}(\cdot) = (1-w_{n})\bar{\psi}_{n-1}(\cdot) + w_{n} \psi(\x_{n}, \cdot).
		\end{align}
		Assume the following: 
		\begin{description}
			\item[(a1)] The Markov chain $(X_{n})_{n\ge 1}$ mixes exponentially fast to its unique stationary distribution and the stochastic process $(\x_{n})_{n\ge 1}$ on $\mathcal{X}$ has a unique stationary distribution $\pi$. 
			\item[(a2)] $w_{n}$ is non-increasing in $n$ and   $w_{n}^{-1} - w_{n-1}^{-1}\le 1$ for all sufficiently large $n\ge 1$. 
		\end{description}
		
		\noindent Then there exists a constant $C>0$ such that for all $n\ge 1$,
		\begin{align}\label{eq:lem_f_fn_bd_gen_appendix}
			\E\left[ \sup_{\param\in \Param} \left\lVert \bar{\psi}(\param) - \bar{\psi}_{n}(\param) \right\rVert \right] \le Cw_{n} \sqrt{n}.
		\end{align}
		Furthermore, if $w_{n}\sqrt{n}=O(1/(\log n)^{1+\eps})$ for some $\eps>0$, then $\sup_{\param\in \Param} \left\lVert \bar{\psi}(\param) - \bar{\psi}_{n}(\param) \right\rVert\rightarrow 0$ as $t\rightarrow \infty$ almost surely. 
		
	\end{lemma}

	\begin{proof}
		We first argue for the special case when $w_{n}=1/n$ for $n\ge 1$. In this case, $w_{n}^{-1}-w_{n-1}^{-1}\equiv 1$ so \textbf{(a2)} is satisfied, and also $w^{n}_{k}\equiv 1/n$ for $1\le k \le n$ so $\bar{\psi}_{n}(\param)$ becomes the sample mean 
		\begin{align}
			\bar{\psi}_{n}(\param) = \frac{1}{n} \sum_{k=1}^{n} \psi(\x_{k}, \param). 
		\end{align}
		If $\x_{k}$'s are i.i.d. from $\pi$, then \eqref{eq:lem_f_fn_bd_gen_appendix} holds by a standard empirical process theory. More generally under the hypothesis \textbf{(a1)}, \eqref{eq:lem_f_fn_bd_gen_appendix} also holds by using a uniform functional central limit theorem for Markov chains. This statement has been shown in \cite[Lem. 10]{lyu2020online}.
		
		Now we consider a more general weighting scheme under \textbf{(a2)}. Fix $t\in \mathbb{N}$.   Define $\Psi_{i}(\param) = (t-i+1)^{-1}\sum_{j=1}^{t} \psi(\x_{j}, \param)$ for each $1\le i \le t$. Denote $\bar{\psi}(\param):=\E_{\x\sim \pi}[ \psi(\x,\param) ]$. By the previous case, there exists a constant $c_{1}>0$ such that 
		\begin{align}\label{eq:pf_uniform_convergnece_weighted}
			\E\left[ \sup_{\param\in \Param} |\Psi_{i}(\param) - \bar{\psi}(\param)| \right] \le \frac{c_{1}}{\sqrt{n-i+1}}
		\end{align} 
		for all $1\le i \le n$. A simple calculation shows the following  identity 
		\begin{align}
			\bar{\psi}_{n} - \bar{\psi} = \sum_{i=1}^{n} (w_{i}^{n} - w^{n}_{i-1}) (n-i+1) (\Psi_{i} - \bar{\psi}),
		\end{align} 
		with the convention of $w^{n}_{0}=0$. Also, suppose $T\ge 1$ is such that $w_{k}^{-1} - w_{k-1}^{-1} \le 1$ for $k\ge T$. Note that for $i\ge 2$, $w^{n}_{i-1} \le w^{n}_{i}$ if and only if $w_{i-1}(1-w_{i}) \le w_{i}$ if and only if $w_{i}^{-1}-w_{i-1}^{-1}\le 1$. Hence for each $n > T$ and $k\ge T$, we have $w^{n}_{k}\le w^{n}_{k+1}\le \dots \le w^{n}_{n}=w_{n}$. Then observe that 
		\begin{align}\label{eq:pf_uniform_convergnece_weighted2}
			\E\left[ \sup_{\param \in \Param} |\bar{\psi}_{n}(\param) - \bar{\psi}(\param)| \right] &\le \E\left[ \sum_{i=1}^{n} |w^{n}_{i} - w^{n}_{i-1}| (t-i+1) \sup_{W\in \Param} \left|\Psi_{i}(\param) - \bar{\psi}(\param) \right|  \right] \\
			&= \sum_{i=1}^{n} |w^{n}_{i} - w^{n}_{i-1}|  (n-i+1) \,  \E\left[ \sup_{\param\in \Param} \left|\Psi_{i}(\param) - \bar{\psi}(\param) \right|  \right] \\
			&\le  \sum_{i=1}^{n} |w^{n}_{i} - w^{n}_{i-1}|  c_{1}\sqrt{n-i+1} \\
			&\le  c_{1}\sqrt{n} \left( \sum_{i=1}^{T} |w_{i}^{n} -w_{i-1}^{n} | +  \sum_{i=T}^{n} (\hat{w}^{n}_{i} - \hat{w}^{n}_{i-1}) \right)  \\
			&\le c_{1}\sqrt{n}  \left( w_{n} +  \sum_{i=1}^{T} w_{i}^{n}  \right).
		\end{align} 
		By using Lemma \ref{lem:weight_sum_bound}, we have $\sum_{i=1}^{T} w_{i}^{n} =O(1/n)$. Hence the above shows \eqref{eq:lem_f_fn_bd_gen_appendix}. Lastly, note that \textbf{(a2)} implies $w_{n} \ge c_{2}/n$ for all $n\ge 1$ for some constant $c_{2}>0$ (see the proof of Lemma \ref{lem:weight_sum_bound}) for details. Therefore we can conclude \eqref{eq:lem_f_fn_bd_gen_appendix}.

		To show the second part of the assertion, suppose $w_{n}\sqrt{n}=O(1/(\log n)^{1+\eps})$ for some fixed $\eps>0$. Denote $X_{n}:=\sup_{\param\in \Param} | \bar{\psi}(\param) - \bar{\psi}_{n}(\param) |$, which is a nonnegative random variable. We wish to show $X_{n}\rightarrow 0$ almost surely as $n\rightarrow \infty$. Suppose for contradiction that there exists a diverging sequence $n(k)$ such that $X_{n(k)}$ does not converge to $0$ with some positive probability.  According to the first part, there exists a constant $c>0$ such that $\E[X_{n}] \le c (\log n)^{-1-\eps} $ for all $n\ge 1$. By Markov inequality, we have 
		\begin{align}
			\P\left( |X_{n}|\ge  \frac{1}{ (\log n)^{\eps/2}  }\right) \le \frac{ c }{(\log n)^{1+(\eps/2)}}.
		\end{align}
		We may choose a subsequence $m(r)=n(k(r))$ of $n(k)$ such that there exists a constant $c'>0$ for which $m(r) \ge c' \exp(r)$  for all $r\ge 1$. Then it follows that 
		\begin{align}
			\P\left(|X_{ m(r) }| \ge \frac{1}{ (\log m(r))^{\eps/2} }  \right) \le \frac{ c} { r^{1+(\eps/2)} }.
		\end{align}
		Since $m(r)$ diverves,  by the Borel-Cantelli lemma, $X_{m(r)}\rightarrow 0$ almost surely as $r\rightarrow \infty$. This contradicts the assumption that $X_{n(r)}$ does not converge to zero almost surely. This completes the proof. 
	\end{proof}

	The following lemma was used in the proof of Lemma \ref{lem:uniform_convergence_asymmetric_weights}.
	
	\begin{lemma}\label{lem:weight_sum_bound}
		Fix a sequence $(w_{n})_{n\ge 1}$ of numbers in $(0,1]$. Denote $w^{n}_{k}:=w_{k}\prod_{i=k+1}^{n} (1-w_{i})$ for $1\le k \le n$. Suppose $w_{n}^{-1} - w_{n-1}^{-1}\le 1$ for all sufficiently large $n\ge 1$.  Fix $T\ge 1$. Then for all $n\ge T$, 
		\begin{align}\label{eq:weight_sum_bound1}
			\sum_{i=1}^{T} w^{n}_{i}  = O(1/n).
		\end{align}
	\end{lemma}

	\begin{proof}
		Suppose $w_{n}^{-1} - w_{n-1}^{-1}\le 1$ for all $n\ge N$ for some $N\ge 1$. It follows that $w_{n}^{-1} - w_{N}^{-1} \le n-N$, so $w_{n} \ge \frac{1}{n-N+w_{N}^{-1}}$. Hence for some constant $c>0$, $w_{n}\ge \frac{1}{n+c}$ for all $n\ge N$. Denote $a\lor b = \max(a,b)$. Then note that 
		\begin{align}
			w^{n}_{k}=w_{k} \exp\left(  \sum_{i=k+1}^{n} \log (1-w_{i}) \right)& \le \exp\left( -\sum_{i=k+1}^{n} w_{i} \right)  \le \exp\left( -\int_{N\lor (k+1)}^{n} \frac{1}{x+c}\,dx \right) =  \frac{[N\lor (k+1)]+c }{n+c},
		\end{align}
		where the second inequality uses $w_{k}\le 1$ and the following inequality uses $\log(1-a)\le -a$ for $a<1$.  Hence for each fixed $1\le T\le n$, we have 
		\begin{align}
			\sum_{k=1}^{T} w^{n}_{k} \le T\left( (N\lor (T+1))  +c\right)\frac{1}{n+c} . 
		\end{align}
		This shows the assertion. 
	\end{proof}

	\begin{remark}\label{rmk:weight}
		In the original statement of \cite[Lem B.7]{mairal2013stochastic}, the assumption that $w_{n+1}^{-1}-w_{n}^{-1}\le 1$ for sufficiently large $n$ was not used, and it seems that the argument in \cite{mairal2013stochastic} needs this assumption. To give more detail, the argument begins with writing the empirical loss $f_{n}(\cdot) = \sum_{k=1}^{n} w^{n}_{k} \, \ell(\mathcal{X}_{k},\cdot)$, where $w^{n}_{k}:=w_{k}(1-w_{k-1})\cdots (1-w_{n})$, and proceeds with assuming the monotonicity $w^{n}_{1}\le \dots \le w^{n}_{n}$, which is equivalent to $w_{1}\ge 1-w_{1}$ and $w_{k}\ge w_{k-1}(1-w_{k})$ for $2\le k \le n$. In turn, this is equivalent to $w_{k}^{-1}-w_{k-1}^{-1}\le 1$ for $2\le k \le n$  and $w_{1}\ge 1/2$.  Note that this condition implies $w_{k}^{-1} \le k+1$, or $w_{k} \ge \frac{1}{k+1}$.  This means that, asymptotically, $w_{k}$ cannot decay faster than the balanced weight $1/k$. We have used this observation in the proof of Lemma \ref{lem:weight_sum_bound}. 
		
		Next, we will argue that \ref{assumption:A4'-w_t}  implies \ref{assumption:A4-w_t}. This observation was made in \cite[Remark 20]{lyu2020online_CP}, but we repeat the argument here for completeness. It is clear that if the sequence $w_{n}\in (0,1]$ satisfies \ref{assumption:A4'-w_t}, then $\sum_{n=1}^{\infty} w_{n}=\infty$ and $\sum_{n=1}^{\infty} w_{n}^{2}\sqrt{n}<\infty$. So it remains to verify $w_{n}^{-1}-w_{n-1}^{-1}\le 1$ for sufficiently large $t$. Suppose $w_{n}=\Theta(n^{-\beta} (\log t)^{-\delta} )$ for some $\beta\in [0,1]$ and $\delta\ge 0$. Let $c_{1},c_{2}>0$ be constants such that $w_{n}t^{\beta} (\log t)^{\delta} \in [c_{1},c_{2}] $ for all $n\ge 1$. Then by the mean value theorem, 
		\begin{align}
			w_{n+1}^{-1} - w_{n}^{-1} & \le c_{2} \left( (n+1)^{\beta}(\log (n+1))^{\delta} - n^{\beta}(\log n)^{\delta} \right)  \\
			&\le c_{2}\sup_{n\le s \le n+1} \left( \beta s^{\beta-1}(\log s)^{\delta} + \delta s^{\beta-1} (\log s)^{\delta-1}  \right)\\
			&\le c_{2}\sup_{n\le s \le n+1}  s^{\beta-1}(\log s)^{\delta-1} \left( (\log s)+\delta \right).
		\end{align}
		Since $n\ge 1$, the last expression is of $o(1)$ if $\beta<1$. Otherwise, $w_{n}=n^{-1}$ for $t\ge 1$ by \ref{assumption:A4'-w_t}. Then $w_{n+1}^{-1} - w_{n}^{-1} \equiv 1$ for all $t\ge 1$.
	\end{remark}

\end{document}